\newtheorem{theorem}{Theorem}[section]
\newtheorem{definition}{Definition}[section]
\newtheorem{lemma}{Lemma}[subsection]
\newtheorem{proposition}{Proposition}[section]
\newtheorem*{theorem*}{Theorem}
\newtheorem{remark}{Remark}[section]
\newcommand{\be}{\begin{eqnarray}}
\newcommand{\ee}{\end{eqnarray}}
\newcommand{\ben}{\begin{eqnarray*}}
\newcommand{\een}{\end{eqnarray*}}
\DeclareMathOperator{\dist}{dist}
\DeclareMathOperator{\qdist}{qdist}
\DeclareMathOperator{\supp}{supp}
\DeclareMathOperator{\diam}{diam}
\DeclareMathOperator{\pds}{{p_{\it d}}sin}
\DeclareMathOperator{\pdms}{{p_{{\it d}-1}sin}}
\DeclareMathOperator{\SCale}{{scale}}
\DeclareMathOperator{\scl}{{sc}}
\def\Supp{{\supp(\mu)}}
\providecommand{\norm}[1]{\lVert#1\rVert}
\newcommand{\reals}{\mathbb R}
\newcommand{\ints}{\mathbb Z}
\newcommand{\nats}{\mathbb N}
\newcommand{\natsz}{\nats_0}
\def\RR{{\mathbb{R}}}
\def\Max{{\mathrm{max}}}
\def\MM{{\mathrm{M}}}
\def\name{{\mathbf{S}}}
\def\HH{{H}}
\newcommand{\di}{\, \mathrm{d}}
\newcommand{\fdi}{\mathrm{d}}
\newcommand{\latop}[2]{\substack{{#1}\\{#2}}} 
\begin{document}

\title{High-Dimensional Menger-Type Curvatures - Part I:\\
Geometric Multipoles and Multiscale Inequalities \thanks{This work
was supported by NSF grant \#0612608 }}

\author{
Gilad Lerman\thanks{lerman@umn.edu} ~and J. Tyler
Whitehouse\thanks{jonathan.t.whitehouse@vanderbilt.edu}}

\maketitle

\begin{abstract}
We define discrete and continuous Menger-type curvatures. The
discrete curvature scales the volume of a $(d+1)$-simplex in a real
separable Hilbert space $\HH$, whereas the continuous curvature
integrates the square of the discrete one according to products of a
given measure (or its restriction to balls). The essence of this
paper is to establish an upper bound on the continuous Menger-type
curvature of an Ahlfors regular measure $\mu$ on $\HH$ in terms of
the Jones-type flatness of $\mu$ (which adds up scaled errors of
approximations of $\mu$ by $d$-planes at different scales and
locations). As a consequence of this result we obtain that uniformly
rectifiable measures satisfy a Carleson-type estimate in terms of
the Menger-type curvature. Our strategy combines discrete and
integral multiscale inequalities for the polar sine with the
``geometric multipoles'' construction, which is a multiway analog of
the well-known method of fast multipoles.
\end{abstract}
\noindent AMS Subject Classification (2000):
49Q15, 42C99, 60D05
%

\section{Introduction}
We introduce Menger-type curvatures of measures and show that they
satisfy a Carleson-type estimate when the underlying measures are
uniformly rectifiable in the sense of David and Semmes~\cite{DS91,
DS93}. The main development of the paper (implying this
Carleson-type estimate) is a careful bound on the Menger-type
curvature of an Ahlfors regular measure in terms of the sizes of
least squares approximations of that measure at different scales and
locations.

Our setting  includes a real separable Hilbert space $H$ with
dimension denoted by $\dim(\HH)$ (possibly infinite), an intrinsic
dimension $d\in\nats$, where $d< \dim(\HH)$, and a $d$-regular
(equivalently, $d$-dimensional Ahlfors regular) measure $\mu$ on
$H$. That is, $\mu$ is locally finite Borel measure on $\HH$ and
there exists a constant $C \geq 1$ such that for all $x\in\Supp$ and
$0 < r\leq\diam(\Supp)$:
\begin{equation}\label{equation:measure-comp}C^{-1}\cdot
r^d\leq\mu(B(x,r))\leq C\cdot r^d.\end{equation}
We denote the smallest constant $C$ satisfying
equation~\eqref{equation:measure-comp} by $C_{\mu},$ and refer to it
as the regularity constant of $\mu$. The estimates developed in this
paper only depend on the intrinsic dimension $d$ and the regularity
constant $C_{\mu}$, and no other parameter of either $\mu$ or $H$.
In particular, they are independent of the dimension of $H$.

Our $d$-dimensional discrete curvature is defined on vectors
$v_1,\ldots ,v_{d+2}\in H$. We denote the diameter of the set
$\{v_1,\ldots ,v_{d+2}\}$ by $\diam (v_1,$ $\ldots ,$ $v_{d+1})$,
and the $(d+1)$-dimensional volume of the parallelotope spanned by
$v_2-v_1,\ldots ,v_{d+2}-v_1$ by ${\rm Vol}_{d+1}(v_1,\ldots
,v_{d+2})$. Equivalently, ${\rm Vol}_{d+1}(v_1,\ldots ,v_{d+2})$ is
$(d+1)!$ times the volume of the simplex (i.e., convex polytope)
with vertices at $v_1,\ldots ,v_{d+2}$. The square of our
$d$-dimensional curvature $c_d(v_1,\ldots ,v_{d+2})$ has the form
$$
c_d^2(v_1,\ldots ,v_{d+2}) = \frac{1}{d+2}\cdot \frac{{\rm
Vol}^2_{d+1}(v_1,\ldots ,v_{d+2})}{\diam (v_1,\ldots
,v_{d+2})^{d\cdot (d+1)}} \,
\sum^{d+2}_{i=1}\frac{1}{\prod^{d+2}_{\latop{j=1}{j\ne i}}\Vert
v_j-v_i\Vert^2_2} \,.
$$

The one-dimensional curvature $c_1(v_1,v_2,v_3)$ is comparable to
the Menger curvature~\cite{M30,MMV96}, $c_M(v_1,v_2,v_3)$, which is
the inverse of the radius of the circle through the points
$v_1,v_2,v_3 \in H$. Indeed, we note that
$$
c^2_M(v_1,v_2,v_3) = \frac{4 \sin^2(v_2-v_1,v_3-v_1)}{\Vert
v_2-v_3\Vert^2}\,,
$$
$$
c^{2}_1(v_1,v_2,v_3) =
\frac{\sin^2(v_2-v_1,v_3-v_1)+\sin^2(v_1-v_2,v_3-v_2)
+\sin^2(v_1-v_3,v_2-v_3)}{3\cdot\diam^2(v_1,v_2,v_3)}\,,
$$
and consequently
\begin{equation*}
\label{eq:compare_menger} \frac{1}{12}\cdot c^2_M(v_1,v_2,v_3) \leq
c^{2}_1(v_1,v_2,v_3) \leq \frac{1}{4}\cdot c^2_M(v_1,v_2,v_3)\,.
\end{equation*}
We thus view the Menger-type curvature as a higher-dimensional
generalization of the Menger curvature. Clearly, one can directly
generalize the Menger curvature to the following function of $v_1$,
$\ldots$, $v_{d+2}$:
\be \label{eq:direct_menger} \frac{{\rm Vol}_{d+1}(v_1,\ldots
,v_{d+2})}{
\prod_{ \latop{i,j=1}{i \neq j}}^{d+2} \Vert v_i-v_j\Vert} \,. \ee
However, the methods developed here do not apply to that curvature
(see Remark~\ref{remark:direct_menger}).

Essentially, this paper shows how the multivariate integrals of the
discrete $d$-dimensional Menger-type curvature can be controlled
from above by $d$-dimensional least squares approximations of $\mu$,
which are used to characterize uniform rectifiability~\cite{DS91,
DS93}.

We first exemplify this in the simplest setting of
approximating $\mu$ by a fixed $d$-dimensional plane at a given
scale and location, indicated by the ball $B = B(x,t)$, for
$x\in{\rm supp} (\mu)$ and $0<t\leq\diam ({\rm supp}(\mu))$. We
denote the scaled least squares error of
approximating $\mu$ at $B = B(x,t)$ by a $d$-plane (i.e.,
$d$-dimensional affine subspace) by
$$
\beta^2_2(x,t) =\beta^2_2(B) =\min_{d-\text{planes } L}\int_B
\left(\frac{{\rm dist}(x,L)}{\diam(B)}\right)^2\frac{\fdi\mu
(x)}{\mu (B)}\, .
$$

Fixing $\lambda>0$ and sampling sufficiently
well separated simplices in $B^{d+2}$, i.e., simplices in the set
\begin{equation}\label{equation:big-U}U_{\lambda}(B)=\left\{(v_1,\ldots ,v_{d+2})\in
B^{d+2}:\min_{1\leq i<j\leq d+2}\|v_i-v_j\|\geq\lambda\cdot
t\right\},\end{equation} one can bound $\beta^2_2(x,t)$ from below
by averages of the squared curvature $c^2_d$ in the following way:
\begin{proposition}\label{proposition:upper-bound-big-scale}There exists a constant
$C_0=C_0(d,C_{\mu})\geq1$ such that
\begin{equation}\label{equation:upper-big-scale}\int_{U_{\lambda}(B(x,t))}c_d^2(X)\di\mu^{d+2}(X)\leq
\frac{C_0}{\lambda^{d(d+1)+4}}\cdot\beta_2^2(x,t)\cdot\mu(B(x,t)),\end{equation}
for all $\lambda>0$,  $x\in\Supp,$ and $t\in\RR$ with
$0<t\leq\diam(\Supp)$.
\end{proposition}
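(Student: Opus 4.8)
\noindent\emph{Plan.} The idea is to (i) rewrite $c_d^2$ in terms of polar sines, (ii) prove a pointwise ``discrete'' estimate bounding each polar sine of a $\lambda t$-separated simplex in $B$ by the distances of its vertices to a (nearly) optimal $d$-plane $L$ for $B$, and (iii) integrate this bound over $U_{\lambda}(B)\subseteq B^{d+2}$ using Fubini's theorem together with the upper $d$-regularity of $\mu$. For \emph{Step (i)}: writing the spanning parallelotope with apex at an arbitrary vertex $v_i$ gives ${\rm Vol}_{d+1}(v_1,\dots,v_{d+2})=\bigl\|\bigwedge_{j\ne i}(v_j-v_i)\bigr\|$, so with $\mathrm{psin}_i:={\rm Vol}_{d+1}(v_1,\dots,v_{d+2})/\prod_{j\ne i}\|v_j-v_i\|\le 1$ one has $c_d^2(v_1,\dots,v_{d+2})=\frac{1}{d+2}\,\diam(v_1,\dots,v_{d+2})^{-d(d+1)}\sum_{i=1}^{d+2}\mathrm{psin}_i^2$.

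\noindent\emph{Step (ii): the discrete polar-sine inequality.} Fix a $d$-plane $L$ with direction space $V$ that is (nearly) optimal for $B=B(x,t)$, and set $\delta_l:=\dist(v_l,L)$. Decompose each edge vector as $v_j-v_i=a_j+b_j$ with $a_j:=\pi_V(v_j-v_i)\in V$ and $b_j\in V^{\perp}$; then $\|a_j\|\le\|v_j-v_i\|$ and $\|b_j\|\le\delta_i+\delta_j$. Expanding $\bigwedge_{j\ne i}(a_j+b_j)$ over subsets, the all-$a$ term vanishes (it is a $(d+1)$-fold wedge inside the $d$-dimensional space $V$), and for every other term the wedge of some $b$'s with some $a$'s has norm equal to the product of the norms of the two factors (as $V\perp V^{\perp}$), which by Hadamard's inequality is at most $\prod_{k\in S}\|b_k\|\prod_{j\notin S\cup\{i\}}\|a_j\|$. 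Dividing through by $\prod_{j\ne i}\|v_j-v_i\|$ and cancelling the $\|a_j\|$ factors yields
\[
\mathrm{psin}_i\;\le\;\sum_{\emptyset\ne S\subseteq\{1,\dots,d+2\}\setminus\{i\}}\ \prod_{k\in S}\frac{\delta_i+\delta_k}{\|v_k-v_i\|}.
\]
On $U_{\lambda}(B)$ all $\|v_k-v_i\|\ge\lambda t$. If $2\max_l\delta_l\le\lambda t$, every summand is at most $2\max_l\delta_l/(\lambda t)$ and the finite sum is $\le C(d)\max_l\delta_l/(\lambda t)$; if $2\max_l\delta_l>\lambda t$ this same bound is trivial because $\mathrm{psin}_i\le1$. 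Either way $\mathrm{psin}_i^2\le C(d)\,(\lambda t)^{-2}\sum_{l=1}^{d+2}\dist(v_l,L)^2$.

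\noindent\emph{Step (iii): integration.} On $U_{\lambda}(B)$ we also have $\diam(v_1,\dots,v_{d+2})\ge\lambda t$, so Steps (i)--(ii) give the pointwise bound $c_d^2(v_1,\dots,v_{d+2})\le C(d)\,\lambda^{-(d(d+1)+2)}\,t^{-(d(d+1)+2)}\sum_{l}\dist(v_l,L)^2$. Integrating over $U_{\lambda}(B)\subseteq B^{d+2}$ and applying Fubini's theorem,
\[
\int_{U_{\lambda}(B)}c_d^2\,\di\mu^{d+2}\;\le\;\frac{C(d)\,(d+2)}{\lambda^{d(d+1)+2}\,t^{d(d+1)+2}}\,\mu(B)^{d+1}\int_B\dist(v,L)^2\,\di\mu(v).
\]
Since $L$ is (nearly) optimal, $\int_B\dist(v,L)^2\di\mu=\beta_2^2(x,t)\,\diam(B)^2\,\mu(B)\le 4t^2\,\beta_2^2(x,t)\,\mu(B)$; substituting and using $\mu(B)^{d+1}\le C_{\mu}^{d+1}t^{d(d+1)}$ to absorb the remaining powers of $t$ leaves $\int_{U_{\lambda}(B)}c_d^2\,\di\mu^{d+2}\le C(d,C_{\mu})\,\lambda^{-(d(d+1)+2)}\,\beta_2^2(x,t)\,\mu(B(x,t))$. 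Finally $U_{\lambda}(B(x,t))=\emptyset$ unless $\lambda\le 2$ (two separated vertices in $B$ are $\le\diam(B)$ apart), so there $\lambda^{-2}\le4$ and the claimed estimate with exponent $d(d+1)+4$ follows; the case $U_{\lambda}(B)=\emptyset$ is trivial.

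\noindent\emph{Main obstacle.} The crux is the pointwise polar-sine bound in Step (ii). The wedge expansion isolates the first-order contribution $\sum_{k\ne i}(\delta_i+\delta_k)\|v_k-v_i\|^{-1}\prod_{j\ne i,k}\|v_j-v_i\|$, but the higher-order-in-$\delta$ terms must also be handled uniformly; the clean device is the dichotomy above, which exploits the a priori bound $\mathrm{psin}_i\le1$ so that no comparison between $\max_l\delta_l$ and the scale needs to be assumed. (If a discrete multiscale inequality for the polar sine has already been recorded, Step (ii) is a special case of it, and only Steps (i) and (iii) remain.)
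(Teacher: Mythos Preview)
Your proof is correct and follows the same three-step architecture as the paper: express $c_d^2$ via polar sines, bound each polar sine pointwise by the $\ell_2$ deviation $D_2(X,L)$ from a near-optimal $d$-plane, then integrate over $U_\lambda(B)\subseteq B^{d+2}$ using Fubini and the upper regularity of $\mu$.

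The one genuine difference is how the pointwise polar-sine bound is obtained. The paper invokes its Proposition~\ref{proposition:psin-bound-deviations}, namely
\[
\pds_{x_0}(X)\;\le\;\frac{\sqrt{2}\,(d+1)(d+2)}{\SCale_{x_0}(X)}\cdot\frac{D_2(X,L)}{\diam(X)},
\]
whose proof (in the appendix) goes through the minimal height $h(X)$, the width $w(X)$, and the Gritzmann--Lassak inequality $h(X)\le\lceil(d+1)/2\rceil\,w(X)$. You instead expand the wedge $\bigwedge_{j\ne i}(a_j+b_j)$ directly and handle the higher-order terms by the dichotomy $2\max_l\delta_l\lessgtr\lambda t$ together with $\mathrm{psin}_i\le 1$. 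This is more elementary (no simplex width/height geometry needed) and yields the slightly sharper exponent $d(d+1)+2$ in $\lambda$, which you then relax to $d(d+1)+4$ using $\lambda\le 2$. The paper's route, by contrast, produces $d(d+1)+4$ directly because the factor $\SCale_{x_0}(X)^{-2}\le\lambda^{-2}$ (up to a constant) enters in addition to $\diam(X)^{-(d(d+1)+2)}\le(\lambda t)^{-(d(d+1)+2)}$. The advantage of the paper's formulation is that Proposition~\ref{proposition:psin-bound-deviations} is stated for arbitrary simplices (with the $\SCale_{x_0}$ correction) and is reused verbatim in the later multiscale arguments; your wedge-expansion bound is tailored to the $\lambda$-separated set $U_\lambda(B)$ and would need to be reworked for those applications.
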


An opposite inequality is established
in~\cite[Theorem~1.1]{LW-part2}. An extension of
Proposition~\ref{proposition:upper-bound-big-scale} to more general
measures and to arbitrary simplices (while slightly modifying the
curvature) will appear in~\cite{LW-volume}.

We next extend the above estimate to multiscale least squares
approximations. For this purpose we first define the Jones-type
flatness~\cite{Jo90,DS91,DS93} of the measure $\mu$ when restricted
to a ball $B\subseteq H$  as follows
\label{eq:def_jones_flat} \be J_d(\mu|_B) = \int^{\diam (B)}_0\int_B
\beta^2_2(x,t)\di \mu (x)\frac{\fdi t}{t}\,. \ee
This quantity measures total flatness or oscillation of $\mu$ around
$B$ by combining the errors of approximating it with $d$-planes at
different scales and locations. The actual weighting of the
$\beta_2$ numbers is designed to capture the uniform rectifiability
of $\mu$ (see Section~\ref{section:uniform_rect}). We also define
the continuous Menger-type curvature of $\mu$ when restricted to
$B$, $c_d(\mu|_B)$, as follows
$$ c_d(\mu|_B) = \sqrt{\int_{B^{d+2}} c_d^2(v_1,\ldots,v_{d+2})
\di \mu(v_1) \ldots \di \mu(v_{d+2})}\,.
$$

 The primary result of this paper bounds the local Jones-type flatness from below by the local Menger-type curvature in the following way.
\begin{theorem}\label{theorem:upper-main}
There exists a constant $C_1=C_1(d,C_{\mu})$ such that
$$c_d^2\left(\mu|_B\right)\leq C_1\cdot
J_d^{\mathcal{D}}\left(\mu|_B\right) \ \text{ for all balls } B
\subseteq H.$$ \end{theorem}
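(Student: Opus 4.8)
There is a small notational mismatch in the excerpt: $J_d^{\mathcal D}$ appears in the theorem but only $J_d$ is defined above. I will proceed as if $J_d^{\mathcal D}(\mu|_B)$ denotes the Jones-type flatness $J_d(\mu|_B)$ (possibly in its discretized dyadic form), since that is evidently the intended quantity, and I will just write $J_d$ in the plan.

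\medskip

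\noindent\textbf{Proof proposal.}

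The plan is to reduce the global integral $c_d^2(\mu|_B) = \int_{B^{d+2}} c_d^2 \, \di\mu^{d+2}$ to a superposition of the local, single-scale estimates furnished by Proposition~\ref{proposition:upper-bound-big-scale}, and then to sum those estimates against the dyadic weighting built into $J_d$. The first step is a \emph{geometric decomposition of the configuration space} $B^{d+2}$: for a $(d+2)$-tuple $X = (v_1,\ldots,v_{d+2})$, the curvature $c_d^2(X)$ is scale-invariant-normalized by $\diam(X)^{d(d+1)}$ and carries the factors $\prod_{j\ne i}\|v_j - v_i\|^{-2}$, so its size is governed by (i) the overall diameter $\tau = \diam(X)$, which fixes a scale, (ii) a center point, which fixes a location, and (iii) the \emph{shape} of the simplex, i.e.\ how close it is to being degenerate or to having two coalescing vertices. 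I would partition $B^{d+2}$ according to the dyadic scale $\tau \sim 2^{-k} t$ (with $t = \diam(B)$) of $\diam(X)$, and within each scale according to a ``separation parameter'' $\lambda$ measuring $\min_{i<j}\|v_i - v_j\| / \tau$, again dyadically, $\lambda \sim 2^{-m}$. On each such piece the tuple lives, up to a bounded-overlap covering of $B$ by balls $B' = B(x', 2^{-k}t)$ with $x'\in\Supp$, inside $U_{\lambda}(B')$ for the corresponding $\lambda$, so Proposition~\ref{proposition:upper-bound-big-scale} applies with that $B'$ and that $\lambda$.

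The second step is the \emph{summation}. Summing \eqref{equation:upper-big-scale} over the bounded-overlap family of balls $B'$ at scale $2^{-k}t$ inside $B$, and using $d$-regularity ($\mu(B') \sim (2^{-k}t)^d$, and the number of such balls is $\lesssim \mu(B)/(2^{-k}t)^d$ up to $C_\mu$), replaces the right-hand side by $C_0 \lambda^{-(d(d+1)+4)}$ times $\sum_{B'} \beta_2^2(B')\mu(B')$; comparing the sum over the discrete net of balls at scale $2^{-k}t$ with the integral $\int_{B}\int_{\sim 2^{-k}t} \beta_2^2(x,s)\,\di\mu(x)\,\di s/s$ (a standard net-vs-integral comparison using that $\beta_2^2(x,s)$ is, up to $C_\mu$ and a bounded change of scale, comparable to $\beta_2^2(B')$ when $x\in B'$) shows this is $\lesssim$ the contribution of the $k$-th dyadic annulus to $J_d(\mu|_B)$. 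Finally I sum over the scales $k\ge 0$ and the separation levels $m\ge 0$: the $m$-sum of $c_d^2$ over the pieces where $\min_{i<j}\|v_i-v_j\|\sim 2^{-m}\tau$ must converge, and this is exactly where the shape of $c_d^2$ is used — one needs the geometric factor $\mathrm{Vol}_{d+1}^2 / (\tau^{d(d+1)}\prod_{j\ne i}\|v_j-v_i\|^2)$ to decay fast enough in the ``near-degenerate'' regime that $\sum_m 2^{m(d(d+1)+4)} \cdot (\text{measure of the }m\text{-th shell, suitably normalized})$ is summable. The polar-sine multiscale inequalities advertised in the abstract (the ``discrete and integral multiscale inequalities for the polar sine'') are what control this: they bound $c_d^2(X)$ on a near-degenerate tuple by a geometric series in the polar sines of the ``sub-simplices'', yielding the needed gain $2^{-cm}$ per level that beats the loss $\lambda^{-(d(d+1)+4)} = 2^{m(d(d+1)+4)}$ from Proposition~\ref{proposition:upper-bound-big-scale} — or, more precisely, one does not apply the Proposition on the tiny-$\lambda$ pieces at all, but instead uses the multiscale inequality to rewrite the integral over those pieces as a sum of integrals over pieces with $\lambda$ bounded below, each of which is then handled by the Proposition at a finer scale.

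The \textbf{main obstacle} is precisely this last point: Proposition~\ref{proposition:upper-bound-big-scale} is only useful for $\lambda$ bounded away from $0$ (its constant blows up like $\lambda^{-(d(d+1)+4)}$), yet the integral $\int_{B^{d+2}} c_d^2$ genuinely includes near-degenerate and nearly-coalescing configurations. Handling the full range $\lambda \in (0,1]$ — i.e., showing that the contribution of almost-degenerate simplices is not just finite but is itself controlled by $J_d$ rather than by something larger — is the heart of the argument and the reason the ``geometric multipoles'' machinery is needed: it organizes an almost-degenerate $(d+1)$-simplex at scale $\tau$ into a nested family of simplices at finer scales whose polar sines multiply, converting a single bad integral into a rapidly convergent sum of good ones, each estimated by the single-scale Proposition and reassembled against the dyadic weight defining $J_d$. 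A secondary, more technical obstacle is the bookkeeping of the bounded-overlap coverings and the net-versus-integral comparisons so that all constants remain functions of $d$ and $C_\mu$ only; this is routine but must be done carefully given that $\mu$ lives in a possibly infinite-dimensional $H$, so one must avoid any covering argument whose multiplicity depends on $\dim(H)$ — fortunately, every covering used is of a subset of $\Supp$ by balls of a fixed radius comparable to their separation, whose multiplicity is controlled by $d$-regularity alone.
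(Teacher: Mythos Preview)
Your high-level outline is right and matches the paper: decompose by scale and by ``shape,'' handle well-scaled simplices by a direct single-scale estimate (the paper's Proposition~\ref{proposition:journe} does exactly your first step, partitioning $\widehat S(Q)$ by $\max_{x_0}(X)\in(\alpha_0^{m+1},\alpha_0^m]$ and by location via a net $\mathcal P_m$, then invoking Proposition~\ref{proposition:psin-bound-deviations} to land on $\beta_2^2(B_{m,j})\mu(B_{m,j})$), and reduce poorly-scaled simplices to well-scaled ones before estimating. Your identification of the main obstacle --- that Proposition~\ref{proposition:upper-bound-big-scale} blows up as $\lambda\to 0$ and cannot be summed directly in $\lambda$ --- is also correct.

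Where your proposal has a genuine gap is in the mechanism for the poorly-scaled case. The polar sines do \emph{not} ``multiply'' along a nested family; the relevant inequality is additive and only holds on a set of large measure. Concretely, the paper uses a measure-theoretic two-term inequality (Proposition~\ref{proposition:concentration-inequality-1}): for any simplex $Z$ with base vertex $x_0$ and any pair of indices $i,j$, the set of $y\in B(x_0,r)$ for which $\pds_{x_0}(Z)\le C_{\mathrm p}\bigl(\pds_{x_0}(Z(y,i))+\pds_{x_0}(Z(y,j))\bigr)$ has $\mu$-measure at least $\tfrac34\,\mu(B(x_0,r))$. Iterating this $k\cdot d$ times along a carefully chosen chain of annuli (Definitions~\ref{definition:tilde-phi}--\ref{definition:phi}) yields $\pds_{x_0}^2(X)\le (kd+1)\,C_{\mathrm p}^{2kd}\sum_q\pds_{x_0}^2(X_q)$ with each $X_q$ well-scaled (Lemma~\ref{lemma:multiscale-toast}); the constant $\alpha_0$ is chosen so that $C_{\mathrm p}^2\alpha_0<1$, which makes the sum over $k$ converge. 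This is not a pointwise identity: one must \emph{augment} the integral by extra variables $y_1,\ldots,y_{kd}$ living in a conditionally defined set $\underline{\name_{k,p}^1}$, and then integrate against a normalizing density $1/f_k^1$ (Proposition~\ref{proposition:bounding-sum-integral}). Moreover, for $d>1$ the shape parameter $\lambda=\min(X)/\diam(X)$ is too crude: a poorly-scaled simplex can have anywhere from $1$ to $d$ ``long'' edges at $x_0$ (handles), and the paper first partitions $S_{k,1}$ by the number $n$ of handles (equation~\eqref{equation:butterfinger}), handles $n=1$ as just described (Proposition~\ref{proposition:poorly-scaled-integration-d}), and for $n\ge 2$ runs a \emph{second} interpolation that breaks an $n$-handled rake into $2^{n-1}$ single-handled rakes before applying the $n=1$ machinery (Section~\ref{section:later-integrate}). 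None of this structure is visible in your plan, and without it the ``rewriting'' step you invoke does not go through.
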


This theorem is relevant for the theory of uniform
rectifiability~\cite{DS91,DS93} (briefly reviewed in
Section~\ref{section:uniform_rect}) and it in fact implies the
following result.
\begin{theorem}
\label{theorem:uniform-rect-menger} If $\HH$ is a real separable
Hilbert space and $\mu$ is a $d$-dimensional uniformly rectifiable
measure then there exists a constant $C_2 = C_2(d,C_{\mu})$ such
that
\be \label{eq:multi_d_cond} c^2_{d}(\mu|_B)\leq C_2 \cdot \mu (B) \
\text{ for all balls } B \subseteq H. \ee
\end{theorem}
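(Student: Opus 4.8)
The plan is to derive Theorem~\ref{theorem:uniform-rect-menger} from Theorem~\ref{theorem:upper-main} by invoking the characterization of uniform rectifiability via a Carleson condition on the Jones-type $\beta_2$ numbers. Recall that, by the David--Semmes theory~\cite{DS91,DS93}, a $d$-dimensional Ahlfors regular measure $\mu$ is uniformly rectifiable if and only if the measure $\beta_2^2(x,t)\,\di\mu(x)\,\di t/t$ is a Carleson measure on $\Supp\times(0,\infty)$; that is, there is a constant $C_3=C_3(d,C_\mu)$ such that for every $x_0\in\Supp$ and every $0<R\le\diam(\Supp)$,
\[
\int_0^R\!\!\int_{B(x_0,R)}\beta_2^2(x,t)\,\di\mu(x)\,\frac{\di t}{t}\le C_3\cdot R^d .
\]
In the notation of the excerpt this says precisely that $J_d(\mu|_B)\le C_3\cdot\diam(B)^d$ for every ball $B$, since $J_d(\mu|_B)=\int_0^{\diam(B)}\int_B\beta_2^2(x,t)\,\di\mu(x)\,\di t/t$. (Strictly, the dyadic variant $J_d^{\mathcal D}$ appearing in Theorem~\ref{theorem:upper-main} is comparable to $J_d$ up to a dimensional constant, so the same Carleson bound holds for $J_d^{\mathcal D}$; I would state this comparability as a short lemma or cite where $J_d^{\mathcal D}$ is defined.)

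With this in hand the argument is essentially a one-line combination. Fix an arbitrary ball $B=B(x_0,R)\subseteq H$. If $B$ meets $\Supp$, apply Theorem~\ref{theorem:upper-main} to get $c_d^2(\mu|_B)\le C_1\cdot J_d^{\mathcal D}(\mu|_B)$, then apply the Carleson estimate above to bound $J_d^{\mathcal D}(\mu|_B)\le C_3'\cdot R^d$. Finally use Ahlfors regularity~\eqref{equation:measure-comp}: since $B$ meets $\Supp$, there is a point of $\Supp$ within distance $R$ of $x_0$, hence $\mu(B(x_0,2R))\ge C_\mu^{-1}R^d$ (adjusting the radius by a harmless factor), so $R^d\le C_\mu\cdot\mu(2B)\lesssim\mu(B)$ after another application of regularity to compare $\mu(2B)$ and $\mu(B)$; more simply, $R^d$ is comparable to $\mu(B\cap\Supp)=\mu(B)$ up to constants depending only on $d$ and $C_\mu$. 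Chaining the three inequalities yields $c_d^2(\mu|_B)\le C_2\cdot\mu(B)$ with $C_2=C_2(d,C_\mu)$. If $B$ does not meet $\Supp$ then $\mu(B)=0$ and the integrand $c_d^2$ vanishes $\mu^{d+2}$-a.e.\ on $B^{d+2}$, so the inequality is trivial.

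The only real content beyond bookkeeping is the invocation of the David--Semmes characterization, so the main obstacle is making sure the weighting and normalization conventions match: one must check that the definition of $\beta_2^2(x,t)$ used here (least squares, $L^2$-averaged against $\mu(B)$, normalized by $\diam(B)$) coincides, up to the $\diam(B)$ versus $t$ ambiguity and up to constants, with the $\beta_2$ numbers whose Carleson condition characterizes uniform rectifiability, and that passing from $J_d$ to the dyadic $J_d^{\mathcal D}$ costs only a dimensional constant. Once these normalizations are reconciled, the theorem follows immediately; I would therefore organize the proof as: (i) recall/cite the Carleson characterization; (ii) note the $J_d\sim J_d^{\mathcal D}$ comparability and the $R^d\sim\mu(B)$ comparability; (iii) combine with Theorem~\ref{theorem:upper-main}.
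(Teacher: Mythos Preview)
Your proposal is correct and follows essentially the same approach as the paper: the paper simply notes that Theorem~\ref{theorem:uniform-rect-menger} is an immediate consequence of Theorem~\ref{theorem:upper-main} together with the David--Semmes characterization (stated as Theorem~\ref{theorem_DS}, which gives $J_d(\mu|_Q)\le C\cdot\mu(Q)$ directly), with Proposition~\ref{proposition:discretization} supplying the $J_d^{\mathcal D}\lesssim J_d$ comparison. Your write-up just makes explicit the bookkeeping (the $J_d\sim J_d^{\mathcal D}$ step, the $R^d\sim\mu(B)$ conversion, and the trivial case $B\cap\Supp=\emptyset$) that the paper leaves implicit.
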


In~\cite{LW-part2} we show that the opposite direction of
Theorem~\ref{theorem:upper-main}, and thus also conclude the
opposite direction of Theorem~\ref{theorem:uniform-rect-menger},
that is, the Carleson-type estimate of
Theorem~\ref{theorem:upper-main} implies the uniform rectifiability
of $\mu$.  As such, we obtain  a characterization of uniformly
rectifiable measures by the Carleson-type estimate of
equation~\eqref{eq:multi_d_cond} (extending the one-dimensional
result of~\cite{MMV96}).

When $d=1$ a similar version of Theorem~\ref{theorem:upper-main} was
formulated and proved in~\cite[Theorem~31]{pajot_book} following
unpublished work of Peter Jones~\cite{Jo_unpublished}. The
difference is that~\cite[Theorem~31]{pajot_book} uses the larger
$\beta_\infty$ numbers (i.e., the analogs of the $\beta_2$ numbers
when using the $L_\infty$ norm instead of $L_2$) and restricts the
support of $\mu$ to be contained in a rectifiable curve. The latter
restriction requires only linear growth of $\mu$, i.e., one can
consider Borel measures satisfying only the RHS of
equation~\eqref{equation:measure-comp}.

The proof of Theorem~\ref{theorem:upper-main} when $d>1$ requires
more substantial development than that of~\cite{Jo_unpublished}
and~\cite[Theorem~31]{pajot_book}. This is for a few reasons, a
basic one being the greater complexity of higher-dimensional
simplices vis \'{a} vis the simplicity of triangles. The result is
that many more things can go wrong while trying to control the
curvature $c_d$ for $d>1$, and we are forced to invent strategies
for obtaining the proper control.

A more subtle reason involves the difference between the $L_\infty$
and $L_2$ quantities and the way that these interact with some
pointwise inequalities. In the case for $d=1$ and the $\beta_\infty$
numbers, much of the proof (as recorded in~\cite{pajot_book}) is
driven by the ``triangle inequality'' for the ordinary sine
function, i.e., the subadditivity of the absolute value of the sine
function. The ``robustness'' of this inequality combined with the
simplicity of a basic pointwise inequality between the sine function
and the $\beta_\infty$ numbers results in a relatively simple
integration procedure.  For $d>1$ and the $\beta_2$ numbers this
whole framework breaks down. One such breakdown is that the
``correct'' analog of the triangle inequality holds much more
sparsely (see
Proposition~\ref{proposition:concentration-inequality-1}). Other
reasons will become apparent in the rest of the work.

In principal, there are two kinds of methods in the current work. We
refer to the first as geometric multipoles. It decomposes the
underlying multivariate integral over a set of well-scaled simplices
(i.e., with comparable edge lengths) according to multiscale regions
in $H$, emphasizing in each region approximations of the support of
the measure by $d$-planes.  We view it as a $d$-way analog of the
zero-dimensional fast multipoles method~\cite{GR87}, which
decomposes an integral according to dyadic grids of ${\mathbf R}^n$
and emphasizes near-field interactions. This method, which is rather
implicit in~\cite[Theorem~31]{pajot_book}, can be used to decompose
integrals of many other multivariate functions.

Our second method relies on both discrete and integral multiscale
inequalities for the Menger-type curvature (or more precisely the
polar sine function defined in Section~\ref{sec:geo_prop}). They
allow us to bound $c^2_{d}(\mu|_B)$ by multivariate integrals
restricted to sets of well-scaled simplices, so that geometric
multipoles can then be applied. When $d=1$ both the multiscale
inequality and its application are rather trivial
(see~\cite[Lemma~36]{pajot_book} and the way it is used
in~\cite[Theorem~31]{pajot_book}).



\subsection{Organization of the Paper}
In Section~\ref{section:context} we describe the basic context,
notation, definitions and related elementary propositions. In
Section~\ref{sec:geo_prop} we review some geometric properties of
simplices as well as of the $d$-dimensional polar sine, and we
decompose simplices and correspondingly the Menger-type curvature
according to scales and configurations.
Section~\ref{section:uniform_rect} reviews aspects of the theory of
uniform rectifiability relevant to this work. In
Section~\ref{section:belowed} we establish
Proposition~\ref{proposition:upper-bound-big-scale}, whereas in
Section~\ref{section:major-reduction} we reduce
Theorem~\ref{theorem:upper-main} to three propositions, which we
subsequently prove in
Sections~\ref{section:journe}-\ref{section:later-integrate}.

\section{Basic Notation and Definitions}
\label{section:context}
%
We denote the support of the $d$-regular measure $\mu$ on $H$ by
$\Supp$. The inner product and induced norm on $H$ are denoted by
$<\cdot,\cdot>$ and $\|\cdot\|$. For $m \in \nats$, we denote the
Cartesian product of $m$ copies of $H$ by $H^m$ and the
corresponding product measure by $\mu^m$.

We summarize some notational conventions as follows. We typically
denote scalars larger than $1$ by upper-case plain letters, e.g.,
$C$; arbitrary integers by lowercase letters, e.g., $i,j$, and large
integers by $M$ and $N$; real numbers by lower-case Greek or script
letters, e.g., $\alpha_0$, $r$; subsets of $H^m$, where $m \in
\nats$, by upper-case plain letters, e.g., $A$; families of subsets
(e.g., collections of balls)  by calligraphic letters, e.g., $\cal
B$; subsets of $\nats$ (used for indexing) by capital Greek letters,
e.g., $\Lambda$; and measures on $H$ by Greek lower-case letters,
e.g., $\mu$.

We reserve  $x$, $y$ and $z$ to denote elements of $H$; $X$, $Y$ and
$Z$ to denote elements of $H^m$ for $m \geq 3$;  $L$ for a complete
affine subspace of $H$ (possibly a linear subspace); $V$ to denote a
complete linear subspace of $H$.
 If $x\in {\bf R}$, then we denote the corresponding ceiling and
floor functions by $\lceil x\rceil$ and $\lfloor x\rfloor$.

If $A\subseteq H$, we denote its diameter by $\diam(A)$, its
complement by $A^c$ and the restriction of $\mu$ to it by $\mu|_A$.

We denote the closed ball centered at $x\in\HH$ of radius $r$ by
$B(x,r)$, and  if both the center and radius are indeterminate, we
use the notation $B$ or $Q$. For a ball $B(x,r)$  and $\gamma>0$, we
denote the corresponding blow up by  $\gamma\cdot B(x,r)$, i.e.,
$\gamma\cdot B(x,r)=B(x,\gamma\cdot r)$. If $\mathcal{B}$ is a
family of balls, then we denote the corresponding blow up by $
\gamma\cdot\mathcal{B}=\left\{\gamma\cdot B: B
\in\mathcal{B}\right\}. $

If $L$ is a complete affine subspace of $\HH$ and $x\in\HH$, we
denote the distance between $x$ and $L$ by $\dist(x,L)$, that is,
$\dist(x,L)=\min_{y\in L}\|x-y\|$. If $n\leq\dim(H)$, we at times
use the phrase $n$-plane to refer to an $n$-dimensional affine
subspace of $H$.

If $\mathcal{B}$ is a family of balls, then we denote the union of
its elements  by $\bigcup\mathcal{B}$, and we distinguish the latter
notation from $\bigcup_{n \in\,\mathbb{Z}}\mathcal{B}_n$, which is a
family of balls formed by the countable union of other families.

We fix the constant
\begin{equation}
\label{equation:upper-bound-relaxed-constant}C_{\mathrm{p}}
=C_{\mathrm{p}}(d,C_{\mu})=
\begin{cases}
\displaystyle\frac{\sqrt{5}\cdot\pi^2}{4\cdot\arcsin\left(2^{-(5/2\cdot
d+1)}\cdot C_{\mu}^{-2}\right)}\,,  &\textup{if } \
d > 1;\\
\quad\quad\quad\quad\quad\quad 1, &\textup{if } \ d=1.
\end{cases}
\end{equation}
We also fix the following constant $\alpha_0$ and use its powers to
provide appropriate scales:
\begin{equation}\label{equation:alpha-loose}
\alpha_0 = \alpha_0(d,C_{\mu})=\min\left\{\frac{1}{2\cdot
C_{\mathrm{p}}^2}\,,\left(\frac{1}{4\cdot
C_{\mu}^2}\right)^{1/d}\right\}=\begin{cases}\displaystyle\frac{1}{4\cdot
C_{\mu}^2}\,,&\textup{ if }d=1;\\\displaystyle\frac{1}{2\cdot
C_{\mathrm{p}}^2}\,,&\textup{ if }d>1.\end{cases}
\end{equation}

 Finally, for a fixed $d$-plane $L$ and a ball $B=B(x,t)$, we  let
 $$\beta_2^2(B,L)=\beta_2(x,t,L)=\int_B\left(\frac{\dist(x,L)}{\diam(B)}\right)^2\frac{\di\mu(x)}{\mu(B)},$$
where $\beta_2^2(B,L)=0$ if $\mu(B)=0$.  We note that $$\beta_2^2(B)=\inf_{L}\beta_2^2(B,L).$$

\subsection{Notation Corresponding to Elements of $H^{n+1}$}

Throughout this paper, we frequently refer to $n$-simplices in $\HH$
for $n \geq 2$ where usually $n=d+1$. Rather than formulate our work
with respect to subsets of $\HH$, we  work with ordered
($n+1$)-tuples of the product space, $H^{n+1}$, representing the set
of vertices of the corresponding simplex in $H$. Fixing  $n \geq 2$,
we denote an element of $\HH^{n+1}$ by $X=(x_0,\ldots,x_{n})$, and
for $0 \leq i \leq n$, we let
$$(X)_i=x_i$$ denote the projection of $X$ onto its $i^{\text{th}}$
$H$-valued {\em coordinate}.
We make a clear distinction between the symbol $X_i$, denoting an
indexed element of the product space $\HH^{n+1}$, and the coordinate
$(X)_i$ which is a point in $\HH$. The zeroth coordinate $(X)_0=x_0$
is special in many of our calculations. With some abuse of notation
we refer to $X$ both as an ordered set of $d+2$ vertices and as a
$(d+1)$-simplex.

For $0\leq i\leq n$ and $X=(x_0,\ldots,x_{n})\in\HH^{n+1}$, let
$X(i)$ be the following element of $H^n$:
\begin{equation}\label{equation:def-removal}
\nonumber X(i)=(x_0,\ldots, x_{i-1},x_{i+1},\ldots,x_{n}).
\end{equation}
That is, $X(i)$ is the projection of $X$ onto $\HH^{n}$ that
eliminates its $i^{\text{th}}$ coordinate.

If $X\in H^{n+1}$, $y\in H$ and $1\leq i\leq n$, we form $X(y,i)\in
H^{n+1}$ as follows:
\begin{equation}
\nonumber X(y,i)=(x_0,\ldots,x_{i-1},y,x_{i+1},\ldots,x_n)\,
\end{equation}
That is, $X(y,i)$ is obtained from $X$ by replacing its
$i^{\text{th}}$ coordinate $(X)_i$ with $y$.

We say that $X$ is {\em non-degenerate} if the set
$\left\{x_1-x_0,\ldots, x_{n}-x_0\right\}$ is linearly independent.
For $X\in H^{n+1}$ as above, let $L[X]$ denote the affine subspace
of $H$ of minimal dimension containing set of vertices of $X$,
$\{x_0,\ldots,x_n\}$, and let $V[X]$ be the linear subspace parallel
to $L[X]$.

\section{Simplices, Polar Sines, and Menger-type Curvatures}
\label{sec:geo_prop}

We are only interested in simplices without any coinciding vertices,
that is, simplices represented by elements in the set
\be \label{eq:def_S} S = \{X\in H^{d+2}:{\rm min}(X)>0\} .\ee
We note that the restriction to $S$ is natural since
$\mu^{d+2}\left(H^{d+2}\setminus S\right) = 0$. We describe some
properties and functions of such simplices as follows.

\subsection{Height, Content and Scale}\label{subsection:content}

Fixing $X = (x_0,\ldots ,x_{d+1})\in \HH^{d+2}$ and  $0\leq i\leq
d+1$, we define the height of the simplex $X$ through the vertex
$x_i$ as
\begin{equation}
\label{equation:def-heights}h_{x_i}(X)=\dist
\left(x_i,L[X(i)]\right).
\end{equation}
We denote the minimal height  by
\begin{equation}
\label{equation:def-minimal-height}h(X)= \min_{x_i}\
h_{x_i}(X).\end{equation}

The {\em $n$-content} of $X$, denoted by $\MM_n(X)$, is
\begin{equation}\label{equation:def-content-funct}
\mathrm{M}_{n}(X)= \left(
\mathrm{det}\left[
\big\{\langle x_i-x_0,x_j-x_0\rangle\big\}_{i,j=1}^{n}
\right]
\right)^\frac{1}{2}\, .
\end{equation}
Alternatively, the  $n$-content of $X$ is the $n$-dimensional
Lebesgue measure of a parallelotope generated by the images of the
vertices of $X$ under any isometric embedding of $L[X]$ in $\RR^n$.

For $X$, we denote its largest edge length by $$\diam(X)=\max_{0\leq i<j\leq n}\|x_i-x_j\|,$$
and its minimal edge length by $$\min(X)=\min_{0\leq i<j\leq n}\|x_i-x_j\|.$$

 Given a simplex $X$, we quantify the
disparity between the largest and smallest edges of $X$ at  $x_0$
using the  functions $$
\max\nolimits_{x_0}(X)=\max_{1\leq i\leq d+1}\|x_i-x_0\| \ \textup{ and
}\  \min\nolimits_{x_0}(X)=\min_{1\leq i\leq
d+1}\|x_i-x_0\|, $$ as well as the function
$$
\SCale_{x_0}(X)=\frac{\min\nolimits_{x_0}(X)}{\max\nolimits_{x_0}(X)}.$$
\subsection{ Polar Sines and Elevation Sines}
\label{subsection:polar}

We define the $d$-{\em dimensional polar sine} of the element
$X=(x_0,\ldots,x_{d+1}) \in S$ with respect to the coordinate $x_i$,
$0\leq i\leq d+1$, as
\begin{equation}\label{equation:def-psin}
\pds_{x_i}(X) = \frac{\mathrm{M}_{d+1}(X)}{\prod_{\substack{0\leq j\leq d+1\\
j\not=i}}\|x_j-x_i\|},
\end{equation}
and exemplify it in Figure~\ref{subfig:polar}. If $X \notin S$, we
let $\pds_{x_i}(X)=0$. When $d=1$, the polar sine reduces to the
ordinary sine of the angle between two vectors.
Unlike~\cite{LW-semimetric}, our definition here only allows
non-negative values of the polar sine. We note that
$\pds_{x_i}(X)=0$ for some $0\leq i\leq d+1$ if and only if $X$ is
degenerate.

For $n\geq1$, $X\in S$, and $1\leq i \leq n+1$, we also define the
{\em elevation angle of $x_i-x_0$ with respect to
$V\left[X(i)\right]$}, denoted by $\theta_{i}(X)$, to be the acute
angle such that
\begin{equation}\label{eq:elevationsine}\sin(\theta_i(X)) = \frac{\dist(x_i,L[X(i)])}
{\|x_i-x_0\|}\,.
\end{equation}
We exemplify it in Figure~\ref{subfig:elevation}.

The polar sine has the following
product formula in terms of elevation angles~\cite{LW-semimetric}:
\begin{proposition}\label{proposition:product-sine}If
$X=(x_0,\ldots,x_{d+1})\in\HH^{d+2}$ and $1\leq i\leq d+1$, then
$$
\pds_{x_0}(X)=\sin\left(\theta_i(X)\right) \cdot
\pdms_{x_0}\left(X(i)\right).
$$
\end{proposition}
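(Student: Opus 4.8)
The plan is to reduce the product formula for the polar sine to an elementary determinant identity relating the $(d+1)$-content of $X$ to the $d$-content of the face $X(i)$ via the height $h_{x_i}(X)$. First I would observe that, by the geometric characterization of $\MM_n$ as an $n$-dimensional parallelotope volume, we have the standard ``base times height'' decomposition
\[
\MM_{d+1}(X) = h_{x_i}(X) \cdot \MM_{d}(X(i)),
\]
since $X(i)$ spans the affine subspace $L[X(i)]$ playing the role of the base, and $h_{x_i}(X) = \dist(x_i, L[X(i)])$ is exactly the altitude from the remaining vertex. This identity is most cleanly seen from the determinant definition \eqref{equation:def-content-funct} by writing $x_i - x_0$ as its orthogonal decomposition into a component in $V[X(i)]$ and a component orthogonal to it of length $h_{x_i}(X)$, and expanding the Gram determinant along the corresponding row and column; the orthogonal part contributes the factor $h_{x_i}(X)^2$ and the parallel part drops out.

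Next I would specialize to $i$ in the range $1 \le i \le d+1$ so that the vertex $x_0$ is retained in the face $X(i)$, and rewrite the height in terms of the elevation angle. By definition \eqref{eq:elevationsine},
\[
h_{x_i}(X) = \dist(x_i, L[X(i)]) = \|x_i - x_0\| \cdot \sin(\theta_i(X)).
\]
Substituting into the base-times-height identity gives
\[
\MM_{d+1}(X) = \|x_i - x_0\| \cdot \sin(\theta_i(X)) \cdot \MM_{d}(X(i)).
\]

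Finally I would divide both sides by $\prod_{0 \le j \le d+1,\, j \ne 0} \|x_j - x_0\|$ and match terms against the definition \eqref{equation:def-psin} of $\pds_{x_0}$. On the left this produces $\pds_{x_0}(X)$ exactly. On the right, the factor $\|x_i - x_0\|$ cancels the $j = i$ term of the product, leaving $\sin(\theta_i(X))$ times $\MM_d(X(i)) \big/ \prod_{1 \le j \le d+1,\, j \ne i} \|x_j - x_0\|$, and the latter quotient is precisely $\pdms_{x_0}(X(i))$, since $X(i)$ is a $(d+1)$-tuple and its $(d-1)$-dimensional polar sine at the vertex $x_0$ divides its $d$-content by the product of the $d-1$ edge lengths at $x_0$ (that is, all edges from $x_0$ except the deleted one). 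This yields the claimed formula, and for degenerate $X$ both sides vanish by the convention that the polar sine is zero off $S$. I expect the only genuinely delicate point to be the bookkeeping in the last step — verifying that the surviving product in the denominator has exactly the right index set to be the denominator in the definition of $\pdms_{x_0}(X(i))$, and that the content factor $\MM_d(X(i))$ is the correct numerator for the $(d-1)$-dimensional polar sine of a $(d+1)$-tuple; everything else is the routine determinant expansion.
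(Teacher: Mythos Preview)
Your argument is correct and is the natural one: the base-times-height identity $\MM_{d+1}(X) = h_{x_i}(X)\cdot \MM_d(X(i))$ together with $h_{x_i}(X) = \|x_i-x_0\|\sin\theta_i(X)$ and a straightforward regrouping of the edge-length products gives the formula. The paper itself does not prove this proposition; it simply cites \cite{LW-semimetric}, so there is no in-paper argument to compare against, but what you wrote is exactly the standard derivation one would expect to find there.

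One small slip in your final paragraph: $X(i)$ is a $(d+1)$-tuple, so it has $d$ edges at $x_0$, not $d-1$. Your algebra is unaffected --- you correctly cancel the single factor $\|x_i-x_0\|$ from the $(d+1)$-term product to leave a $d$-term product, which is indeed the denominator of $\pdms_{x_0}(X(i))$ --- but the verbal count is off by one.
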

Iterating this product formula we have the estimate
\begin{equation}
\label{equation:0-1-valued} 0\leq \pds_{x_i}(X)\leq 1, \ \text{ for
all}\ 0\leq i\leq d+1 .
\end{equation}

\begin{figure}[htbp] \centering
\subfigure[\small The induced parallelotope and normalizing edges
for computing $\mathrm{p}_{2}\sin_{x_0}(X)$]
{\label{subfig:polar}
\includegraphics[height=2in,width=3in]
{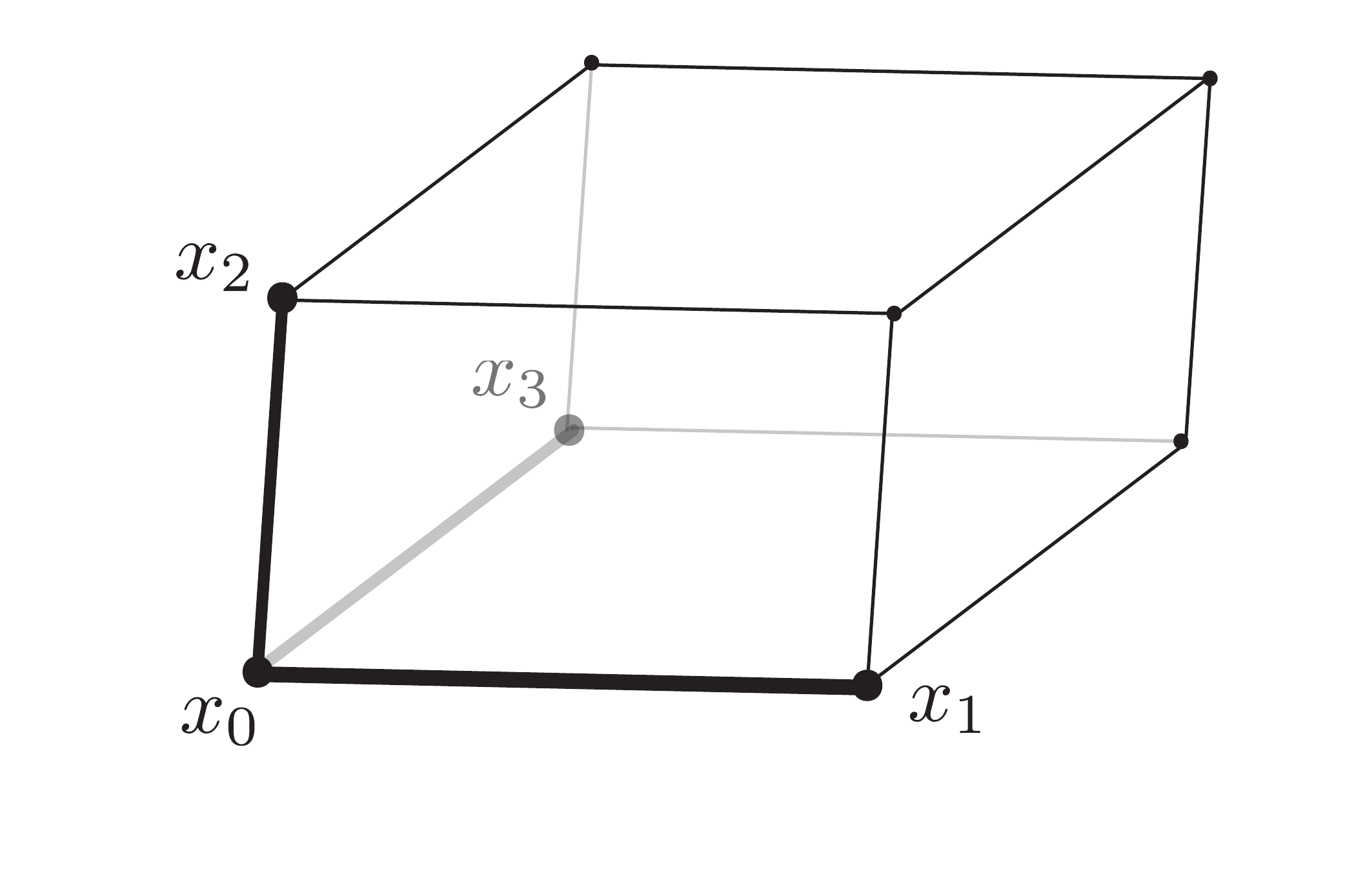}}
\subfigure[\small The elevation angle $\theta_1(X)$] 
{\label{subfig:elevation}
\includegraphics[height=2in,width=3in]
{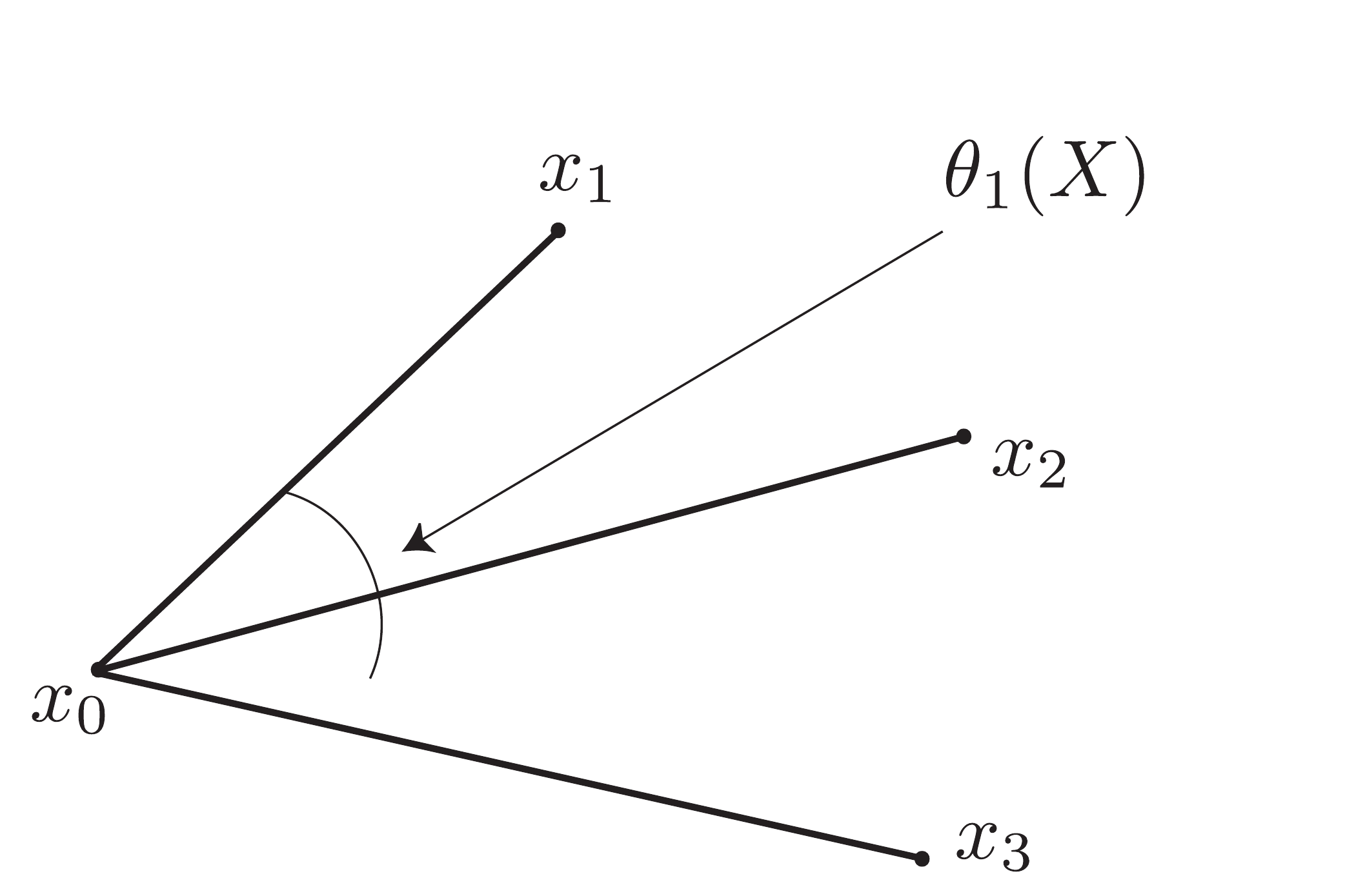}}
%
%
%
\caption{\label{figure:polar} {\small Exemplifying the concepts of
polar sine and elevation angle for tetrahedra of the form
$X=(x_0,x_1,x_2,x_3)$. The polar sine $\mathrm{p}_{2}\sin_{x_0}(X)$
is obtained by dividing the volume of the parallelotope in (a) by
the lengths of the corresponding edges through $x_0$ (indicated in
(a) by the thick lines, where the two in the foreground are dark and
the one in the background is light). The elevation angle
$\theta_1(X)$ in (b) is the angle between the vector $x_1-x_0$ and
its projection onto the face $X(1)$.}}
\end{figure}

\subsubsection{Linear Deviations and Their Use in Bounding the Polar
Sine}\label{subsection:deviations}

Fixing $X \in H^{d+2}$ and $L$ an affine subspace of $\HH$, we
define the  $\ell_2$ deviation of $X$ from $L$, denoted by
$D_2(X,L)$, as follows:
\begin{equation}\label{equation:def-linear-deviation-2}
D_2(X,L)=
\left(\sum_{i=0}^{d+1}\dist^2\left(x_i,L\right)\right)^{1/2}.
\end{equation}
Using this quantity, we get the following upper bound on the polar
sine, which we establish in Appendix~\ref{app:bound-deviate}.
\begin{proposition}\label{proposition:psin-bound-deviations}
If $X\in S$ and $L$ is an arbitrary $d$-plane of $\HH$, then
$$
\pds_{x_0}(X)\leq \frac{\sqrt{2}\cdot(d+1)\cdot
(d+2)}{\SCale_{x_0}(X)}\ \frac{D_2(X,L)}{\diam(X)}\,.
$$
\end{proposition}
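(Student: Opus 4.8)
The plan is to bound the polar sine $\pds_{x_0}(X)$ in terms of the deviation $D_2(X,L)$ by using the product formula of Proposition~\ref{proposition:product-sine} together with the observation that each elevation angle $\theta_i$ is controlled by the distances of the relevant vertices to $L$, provided the edge lengths through $x_0$ do not collapse. First I would recall from Proposition~\ref{proposition:product-sine} (iterated) that $\pds_{x_0}(X) = \prod_{i=1}^{d+1}\sin(\theta_i(X))$, so that since all the sines lie in $[0,1]$ it suffices to bound a single factor, say $\sin(\theta_1(X))$, by the right-hand side. By definition $\sin(\theta_1(X)) = \dist(x_1, L[X(1)])/\|x_1-x_0\|$, where $L[X(1)]$ is the affine span of $x_0, x_2,\ldots, x_{d+1}$. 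The numerator here is a distance to the \emph{exact} span of the other vertices, which we want to replace by distances to the \emph{given} $d$-plane $L$.

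The key step is the estimate $\dist(x_1, L[X(1)]) \le \dist(x_1, L) + (\text{error from tilting } L[X(1)] \text{ relative to } L)$, or more robustly, a direct comparison: since $L[X(1)]$ passes through $x_0, x_2, \ldots, x_{d+1}$, which all lie within distance $\max_i \dist(x_i,L) \le D_2(X,L)$ of $L$, and $L$ is a $d$-plane while $L[X(1)]$ has dimension at most $d$, one expects $\dist(x_1, L[X(1)]) \le \dist(x_1, L) + C\,D_2(X,L)$ for a combinatorial constant $C$ depending on $d$, \emph{as long as} the points $x_0, x_2, \ldots, x_{d+1}$ are not too clustered. The normalization by $\SCale_{x_0}(X) = \min_{x_0}(X)/\max_{x_0}(X)$ in the statement is exactly what quantifies non-clustering of the vertices as seen from $x_0$: the spread of $\{x_2-x_0,\ldots,x_{d+1}-x_0\}$ is at least $\min_{x_0}(X)$ in each direction, and dividing by this spread converts the absolute error $D_2(X,L)$ into an angular error. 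One clean way to run this: project everything orthogonally onto $L$ (or onto the linear subspace $V$ parallel to $L$), compare the projected simplex with the original, and use that the orthogonal projection $P_L$ satisfies $\|x - P_L x\| = \dist(x,L)$, so $\|(x_i - x_0) - P_V(x_i-x_0)\| \le \dist(x_i,L)+\dist(x_0,L)$. Then bound $\pds_{x_0}(X)$ by comparing the $(d+1)$-content of $X$ with that of its projection, which is degenerate (lies in the $d$-plane $L$), hence zero — so $\MM_{d+1}(X)$ itself is at most (perturbation terms) times products of edge lengths, and dividing by $\prod_{j\ne 0}\|x_j - x_0\| \ge (\min_{x_0}(X))^{d+1}$ and regrouping against $\max_{x_0}(X)$ produces the factor $1/\SCale_{x_0}(X)$ and the ratio $D_2(X,L)/\diam(X)$.

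Concretely, I expect the cleanest route is: write $\MM_{d+1}(X)$ as the volume of the parallelotope on $x_1-x_0,\ldots,x_{d+1}-x_0$; let $\tilde x_i = P_L(x_i)$ and note the parallelotope on $\tilde x_1 - \tilde x_0, \ldots, \tilde x_{d+1}-\tilde x_0$ has zero volume since these $d+1$ vectors lie in the $d$-dimensional $V$; then use multilinearity of the determinant / volume and the Hadamard-type bound to write $\MM_{d+1}(X)$ as a sum of at most $d+1$ terms, each of which replaces one factor $x_i - x_0$ by $(x_i - x_0) - (\tilde x_i - \tilde x_0)$ (of length $\le \dist(x_i,L)+\dist(x_0,L) \le \sqrt 2\, D_2(X,L)$ by Cauchy–Schwarz on two terms — this is where the $\sqrt 2$ enters) and keeps the remaining factors bounded by $\max_{x_0}(X)$. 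This yields $\MM_{d+1}(X) \le (d+1)\cdot \sqrt 2\, D_2(X,L)\cdot (\max_{x_0}(X))^d$. Dividing by $\prod_{j=1}^{d+1}\|x_j - x_0\| \ge \min_{x_0}(X)\cdot(\max_{x_0}(X))^{\,?}$ — here I must be slightly careful, bounding $d$ of the factors below by $\max_{x_0}(X)$ is false, so instead I bound all $d+1$ of them below by $\min_{x_0}(X)$, giving $\pds_{x_0}(X) \le (d+1)\sqrt2\, D_2(X,L)\,(\max_{x_0}(X))^d / (\min_{x_0}(X))^{d+1}$, which is already $\le C(d)\, D_2(X,L)/(\SCale_{x_0}(X)\cdot \max_{x_0}(X)) \le C(d)\,D_2(X,L)/(\SCale_{x_0}(X)\cdot\diam(X)/2)$ after absorbing the remaining $(\max_{x_0}(X)/\min_{x_0}(X))^{d}$ — but that overshoots the claimed single power of $1/\SCale_{x_0}$. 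The main obstacle, and the step I would spend the most care on, is therefore the bookkeeping that produces \emph{exactly one} power of $1/\SCale_{x_0}(X)$ and the factor $(d+1)(d+2)$: one must keep $d$ of the $d+1$ edge-length factors in the denominator paired with the $d$ surviving (full-length, $\le \max_{x_0}(X)$) factors in the numerator so they cancel, and only the single perturbed factor is divided by a single $\min_{x_0}(X)$ — plus the $\diam(X)$ versus $\max_{x_0}(X)$ comparison (which costs at most a factor $2$, since $\diam(X)\le 2\max_{x_0}(X)$ is false in general but $\max_{x_0}(X)\le \diam(X)$ always holds, so $1/\diam(X) \le 1/\max_{x_0}(X)$ is the wrong direction — one uses $\diam(X) \le 2\max_{x_0}(X)$ which \emph{does} hold by the triangle inequality through $x_0$). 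Getting these constants to land on $\sqrt2(d+1)(d+2)$ rather than something larger is the delicate part; everything else is routine multilinear algebra.
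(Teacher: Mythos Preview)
Your multilinear/telescoping approach is correct and is \emph{genuinely different} from the paper's proof. The paper does not expand the volume form; instead it passes through two geometric intermediaries: first it shows
\[
\pds_{x_0}(X)\le \frac{2(d+1)}{\SCale_{x_0}(X)}\,\frac{h(X)}{\diam(X)},
\]
where $h(X)$ is the minimal height of the simplex, and then it bounds $h(X)$ by the \emph{width} $w(X)$ using a result of Gritzmann--Lassak ($h(X)\le\lceil(d+1)/2\rceil\,w(X)$), and finally bounds $w(X)\le\sqrt{2}\,D_2(X,L)$ by a supporting-hyperplanes argument. Your route avoids the height/width comparison entirely and is more elementary; in fact, once the bookkeeping is done cleanly it yields the sharper constant $2\sqrt{2}(d+1)$ in place of $\sqrt{2}(d+1)(d+2)$.

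Your worry about ``getting exactly one power of $1/\SCale_{x_0}(X)$'' is not a real obstacle: the cancellation is automatic, not delicate. With $v_j=x_j-x_0$, $\tilde v_j=P_V v_j$, $e_j=v_j-\tilde v_j$, the telescoping identity gives
\[
\MM_{d+1}(X)=\|v_1\wedge\cdots\wedge v_{d+1}\|\le\sum_{i=1}^{d+1}\|e_i\|\prod_{j\ne i}\|v_j\|,
\]
and dividing by $\prod_{j=1}^{d+1}\|v_j\|$ yields directly $\pds_{x_0}(X)\le\sum_i\|e_i\|/\|v_i\|\le \min_{x_0}(X)^{-1}\sum_i\|e_i\|$ --- there is no need to bound numerator and denominator separately, which is where your spurious extra powers of $\SCale_{x_0}(X)^{-1}$ came from. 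Then $\|e_i\|\le\dist(x_i,L)+\dist(x_0,L)$, a Cauchy--Schwarz step bounds $\sum_i\|e_i\|\le\sqrt{2}(d+1)D_2(X,L)$, and the triangle inequality $\diam(X)\le 2\max_{x_0}(X)$ finishes it.
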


\subsubsection{Concentration Inequality for the Polar
Sine}\label{subsection:ineq-polar} For $X\in[\Supp]^{d+2}$,
$C\geq1$, and $1\leq i<j\leq d+1$, we define
\begin{equation}
\label{equation:two-term-set}U_C(X,i,j)=\Big\{y\in\Supp:\pds_{x_0}(X)\leq
C\cdot\left(\pds_{x_0}(X(y,i)) +\pds_{x_0}(X(y,j))\right)\Big\}.
\end{equation}
Using  $C_{\mathrm{p}}$ of
equation~(\ref{equation:upper-bound-relaxed-constant}), we have the
following concentration inequality proved in~\cite{LW-semimetric}.
\begin{proposition}\label{proposition:concentration-inequality-1}If $X\in[\Supp]^{d+2}$ with $x_0 = (X)_0$, and $1 \leq i < j \leq d+1$, then the
following inequality holds for all $r\in\RR$ such that
$0<r\leq\diam(\Supp):$
$$\frac{\mu\left(U_{C_{\mathrm{p}}}(X,i,j)\cap
B(x_0,r)\right)}{\mu(B(x_0,r))} \geq
\begin{cases}\displaystyle 1, &\textup{if } d=1;\\
\displaystyle 0.75, &\textup{if } d>1.
\end{cases}$$

\end{proposition}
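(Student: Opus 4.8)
The plan is to reduce the concentration inequality for the polar sine to the concentration inequality already available for the \emph{elevation angle} via the product formula of Proposition~\ref{proposition:product-sine}, and then to exploit the Ahlfors regularity of $\mu$ to control the exceptional set. First I would use Proposition~\ref{proposition:product-sine} to factor $\pds_{x_0}(X) = \sin(\theta_j(X))\cdot\pdms_{x_0}(X(j))$ (and symmetrically with $i$), so that the event $y\in U_{C_{\mathrm p}}(X,i,j)$ becomes a statement comparing the ``full'' polar sine against the two polar sines in which the $i$-th, respectively $j$-th, vertex has been moved to $y$. The key observation is that replacing one vertex only changes one elevation factor in the product decomposition, so the whole inequality localizes to a two-dimensional sub-picture: it suffices to show that for the plane $V[X(i,j)]$ spanned by the remaining $d-1$ edge vectors through $x_0$, the vectors $x_i-x_0$, $x_j-x_0$, and $y-x_0$ cannot all make a small angle with that $(d-1)$-plane unless $y$ lies in a thin slab; equivalently, if $\pds_{x_0}(X)$ is \emph{large} (bounded below), then either $\pds_{x_0}(X(y,i))$ or $\pds_{x_0}(X(y,j))$ is comparably large, except for $y$ in a set of small measure.

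Concretely, I would argue by contrapositive: suppose $y\notin U_{C_{\mathrm p}}(X,i,j)$, so $\pds_{x_0}(X) > C_{\mathrm p}\cdot(\pds_{x_0}(X(y,i)) + \pds_{x_0}(X(y,j)))$. Using Proposition~\ref{proposition:psin-bound-deviations} with the $d$-plane $L$ chosen to be (an extension of) the affine span $L[X(i,j)]$ of the vertices other than $x_i, x_j$ — which has dimension at most $d-1$, so it can be padded to a $d$-plane — one gets that $\pds_{x_0}(X(y,i))$ is controlled from below only if $\dist(y, L) $ is not too small; more precisely, a small value of $\pds_{x_0}(X(y,i))$ forces $y$ near a $d$-plane $L$ that contains all the vertices $x_k$, $k\neq i$. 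Running this for both $i$ and $j$, the set of ``bad'' $y$ is contained in a neighborhood of a fixed $d$-plane (or a bounded union of such), intersected with $B(x_0,r)$. Then the upper Ahlfors bound in \eqref{equation:measure-comp} — controlling $\mu$ of a thin tube around a $d$-plane by (thickness)$\,\cdot r^{d-1}$ up to $C_\mu$ — combined with the lower Ahlfors bound on $\mu(B(x_0,r))$, produces a quantitative small fraction; the explicit constant $C_{\mathrm p}$ in \eqref{equation:upper-bound-relaxed-constant}, involving $\arcsin(2^{-(5d/2+1)}C_\mu^{-2})$, is precisely calibrated so that this fraction is below $0.25$, giving the stated $0.75$ lower bound for $d>1$. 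For $d=1$ the polar sine is the ordinary sine and the statement is exactly the subadditivity of $|\sin|$ on angles, which forces $C_{\mathrm p}=1$ and the trivial fraction $1$.

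The main obstacle I anticipate is the geometric heart of the middle step: turning ``$\pds_{x_0}(X)$ bounded below'' into ``the span $V[X(i,j)]$ together with $x_i-x_0$ and $x_j-x_0$ is quantitatively non-degenerate,'' and then showing that moving either $x_i$ or $x_j$ to a generic $y$ \emph{restores} a comparable polar sine — i.e., that the only way to kill both $\pds_{x_0}(X(y,i))$ and $\pds_{x_0}(X(y,j))$ simultaneously is for $y$ to collapse onto a specific $d$-plane. This requires a careful quantitative linear-algebra estimate relating the content $\MM_{d+1}$ after a single vertex swap to the content before the swap and to $\dist(y,L)$, keeping track of the normalizing edge lengths $\|x_k - x_0\|$ and the scale function $\SCale_{x_0}$; the factors of $(d+1)(d+2)$ and $\sqrt2$ in Proposition~\ref{proposition:psin-bound-deviations} and the exponent $5d/2+1$ in $C_{\mathrm p}$ suggest the bookkeeping is delicate but not conceptually hard. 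Since Proposition~\ref{proposition:concentration-inequality-1} is attributed to \cite{LW-semimetric}, I would expect the actual proof to be carried out there; here the sensible move is to state that the inequality is a direct consequence of Proposition~\ref{proposition:product-sine}, Proposition~\ref{proposition:psin-bound-deviations}, and the Ahlfors regularity \eqref{equation:measure-comp}, with the constant $C_{\mathrm p}$ of \eqref{equation:upper-bound-relaxed-constant} chosen to make the tube-measure estimate yield $0.75$, and refer to \cite{LW-semimetric} for the details.
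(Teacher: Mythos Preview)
The paper does not prove this proposition; it is quoted from \cite{LW-semimetric}, and your closing recommendation to simply cite that reference is exactly what the paper does. Your handling of $d=1$ (the angle triangle inequality in Hilbert space combined with subadditivity of sine, yielding $C_{\mathrm p}=1$ and full density) is correct.

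Your sketch for $d>1$, however, contains two concrete errors. First, Proposition~\ref{proposition:psin-bound-deviations} is an \emph{upper} bound on the polar sine in terms of the deviation $D_2(X,L)$; it cannot be inverted to conclude that a small value of $\pds_{x_0}(X(y,i))$ forces $y$ close to a specific plane, which is what your contrapositive requires. Second, the assertion that Ahlfors $d$-regularity bounds the $\mu$-measure of a thin tube around a $d$-plane by $(\text{thickness})\cdot r^{d-1}$ is false: if $\mu$ is $d$-dimensional Hausdorff measure restricted to a $d$-plane, every tube around that plane carries full measure in each ball. A tube estimate of this form is available only around sets of dimension at most $d-1$. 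Your earlier mention of the $(d-1)$-plane $V[X(i,j)]$ points in the right direction, but to confine the bad $y$ near it you would need quantitative transversality of the two $d$-planes $V[X(i)]$ and $V[X(j)]$, and that transversality degenerates precisely when $\pds_{x_0}(X)$ is small. Balancing those two effects against one another --- so that either the left side of the defining inequality is already small, or the bad set is genuinely confined to a thin neighbourhood of a $(d-1)$-plane --- is the substantive content of the argument in \cite{LW-semimetric}, and it does not follow from the tools you invoke.
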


\subsection{Categorizing Simplices by Scales and Configurations}\label{subsection:cat-simplices}

We decompose the set  $S$ of equation~\eqref{eq:def_S} according to
the size of $\SCale_{x_0}(X)$ and the configuration of the vertices
of $X$.

\subsubsection{Decomposing the Set of Simplices
$S$ According to Scale}\label{subsubsection:simplex-covers}

Here we categorize simplices according to their scales (defined with
respect to their first coordinate $x_0$) and distinguish between
well-scaled and poorly-scaled simplices (with respect to $x_0$). For
$k \in \natsz$ and $p \in \nats$ we form the following subset of $S$
\begin{equation}
\label{equation:def-S-k}S_{k,p}=\left\{X\in S: \
\alpha_0^{k+p}<\SCale_{x_0}(X) \leq\alpha_0^k\right\}.
\end{equation}
We can then decompose $S$ by $\{S_{k,p}\}_{k,p \in \nats}$.

\begin{figure}[htbp]
\begin{center}
\includegraphics[height=2.858in,width=5in]
{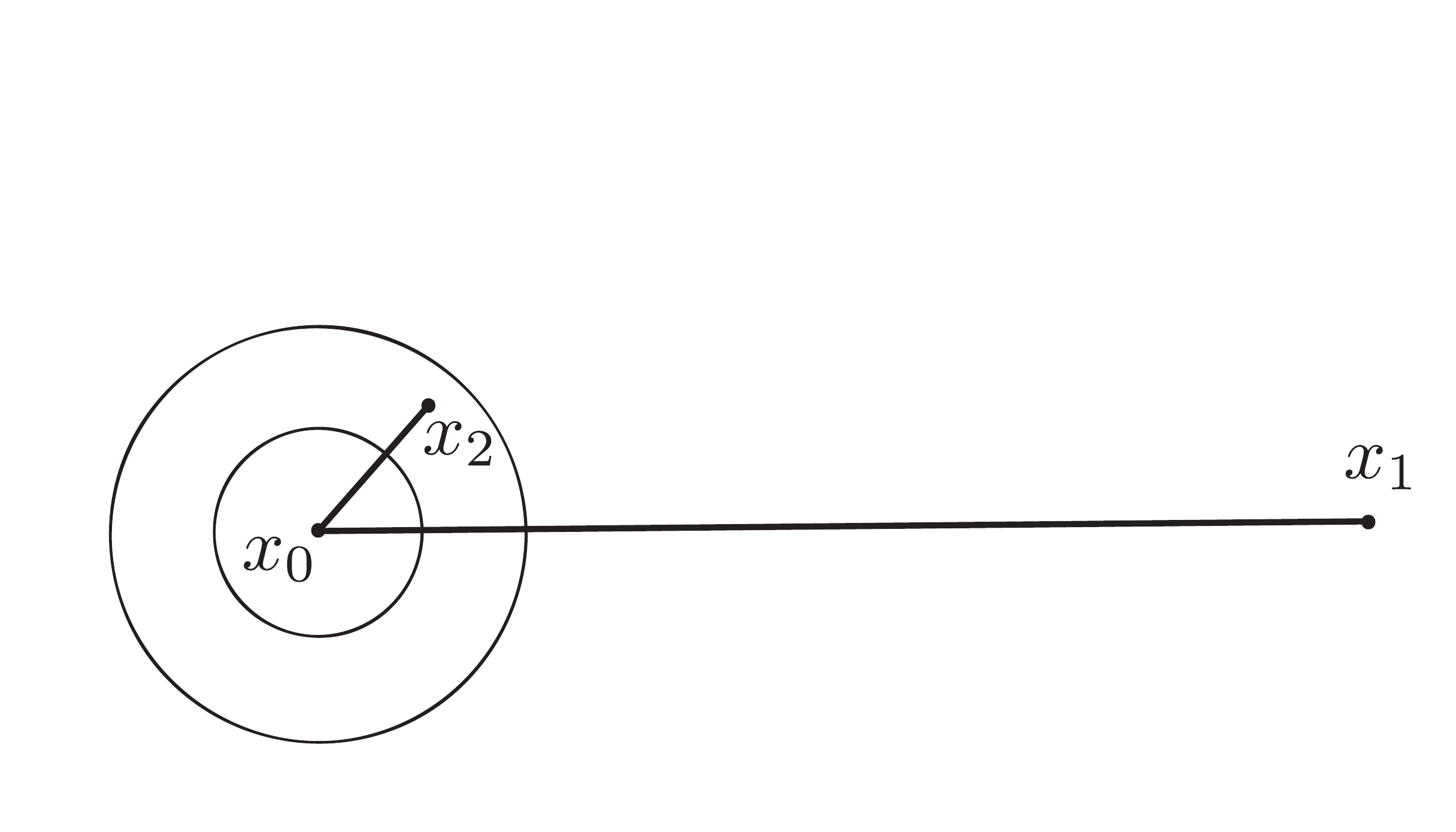}\caption{\label{figure:triangle}{\small
Exemplifying a poorly-scaled triangle (at $x_0$) for $d=1$, $k=3$,
and $p=1$, where the radii of the outer and inner circles are
$\alpha_0^3 \cdot \|x_1-x_0\|$ and $\alpha_0^4 \cdot \|x_1-x_0\|$
respectively. Note that if $x_2$ is relocated outside the two
circles but still closer to $x_0$ than $x_1$, then the modified
triangle is well-scaled. Following the terminology of
Subsection~\ref{section:decomposting} and recalling that $p=1$ and
$k=3$, we note that the edge connecting $x_0$ and $x_1$ is a handle
and the edge connecting $x_2$ and $x_1$ is a tine. Like all
poorly-scaled triangles it is a single-handled rake. }}
\end{center}
\end{figure}

Along these lines, we also denote
the distinguished set of simplices
$$
\widehat{S}= S_{0,3}.
$$
We refer to the elements of $\widehat{S}$ as {\em well-scaled
simplices  at $x_0$}, and the elements of $S \setminus \widehat{S}$
as {\em poorly-scaled simplices at $x_0$}. Quantifiably, $X\in S$ is
well-scaled at $x_0$ if and only if
\be \label{eq:well_scaled_ineq}
\frac{\min\nolimits_{x_0}(X)}{\max\nolimits_{x_0}(X)} >
\alpha_0^3\,. \ee A poorly-scaled triangle is exemplified in
Figure~\ref{figure:triangle}.

For a ball $Q$ in $H$ and $p \in \nats$, we often use localized
versions of the sets  $S$, $\widehat{S}$, and $S_{k,p}$, $k \geq
3$, defined as
\begin{equation}\label{equation:s-k-Q} S(Q)=S\cap Q^{d+2}\,, \ \
\widehat{S}(Q)=\widehat{S}\cap Q^{d+2} \  \text{ and } \
S_{k,p}(Q)=S_{k,p}\cap Q^{d+2} \,.
\end{equation}

In most of the paper it will be sufficient to assume $p=1$, however
in Section~\ref{section:later-integrate} we will need to consider
the case $p=2$, and we thus formulate some corresponding definitions
and propositions in other sections for both $p=1$ and $p=2$. We note
that if $p=1$, then the sets $S_{k,1}$, $k\geq 3$, partition
$S\setminus\widehat{S}$, whereas if $p > 1$, then they cover it.

\subsubsection{Decomposing Simplices in $S_{k,p}$
According to Configuration}\label{section:decomposting} So far we
have decomposed the set of simplices $S$ according to the ratio
between the shortest and longest lengths of edges at $x_0$. Next, we
further decompose simplices according to ratios of lengths of other
edges (at $x_0$) and the length of the largest edge length (at
$x_0$).

We start with some terminology, while fixing arbitrarily $k\geq3$,
$p\in\{1,2\}$, and $X=(x_0,\ldots,x_{d+1})\in S_{k,p}\,$. We say
that an edge connecting $x_0$ and $x_i$, $1 \leq i \leq d+1$, is a
{\em handle} if
\be \label{eq:cond_vertex}
\frac{\|x_0-x_i\|}{\Max_{x_0}(X)}>\alpha_0^k \ee
and a {\em tine} otherwise, i.e.,
\be \nonumber
\alpha_0^{k+p}<\frac{\|x_i-x_0\|}{\max\nolimits_{x_0}(X)}\leq
\alpha_0^k. \ee

This terminology is intended to evoke the image of the
gardening implement known in English as a {\em rake}. We say that
$X$ is a {\em rake}, or a single-handled rake, if it only has one
handle. Similarly, $X$ is an $n$-{\em handled rake} if it has $n$
handles for $1\leq n\leq d$.
 Clearly $X\in S_{k,p}$ has at least one handle (obtaining the
maximal edge length at $x_0$) and at most $d$ of them (excluding the
one of minimal edge length at $x_0$).
 We remark that these notions depend on our fixed choice of
$p$ which will be clear from the context. These notions are
illustrated in Figures~\ref{figure:triangle}
and~\ref{figure:example_eta}.

We partition $S_{k,p}$, $p=1,2$, according to the number of handles
in the elements $X=(x_0,\ldots,x_{d+1})$ at $x_0$.  Formally, for
$1\leq n\leq d$, we define the sets:
\begin{equation}\label{equation:decomposition-union}
S_{k,p}^n=\left\{X =(x_0,\ldots,x_{d+1}) \in S_{k,p}:
\frac{\|x_i-x_0\|} {\max\nolimits_{x_0}(X)}
>
\alpha_0^k\ \textup{ for exactly $n$ vertices}\ x_i
\right\}.
\end{equation}
We note that
\begin{equation}\label{equation:clyde-drexler}
S_{k,p}=\bigcup_{n=1}^d S_{k,p}^n, \ \textup{ and } \ S_{k,p}^n\cap
S_{k,p}^{n'}=\emptyset, \  \textup{ for }  \ 1\leq n\not= n'\leq d.
\end{equation}

\begin{figure}[htbp]
\begin{center}
\includegraphics[height=2.858in,width=5in]
{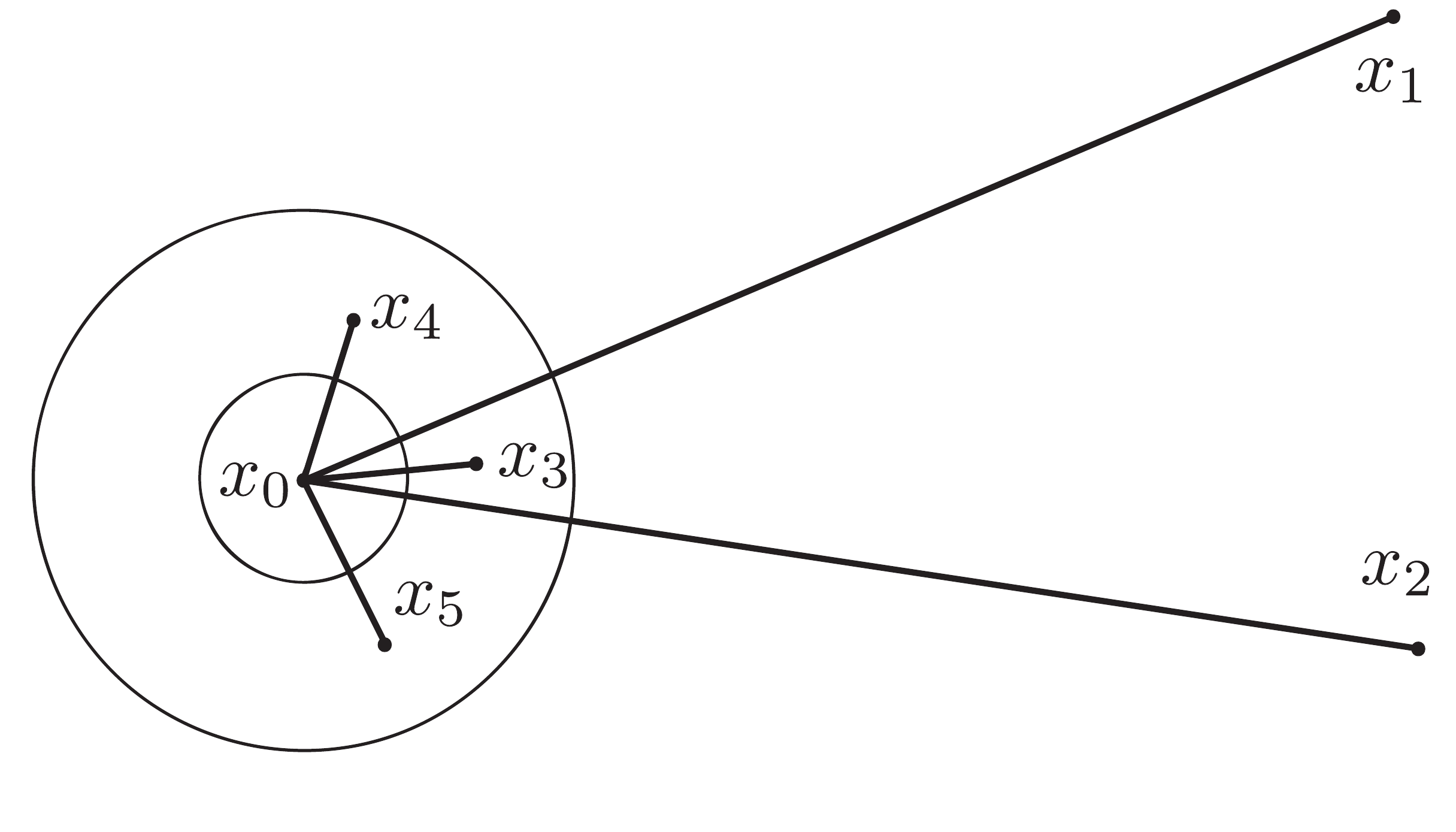}\caption{\label{figure:example_eta}{\small
Example of a simplex $X=(x_0,x_1,x_2,x_2,x_4,x_5)\in \name^2_{3,2}$.
The radii of the outer and inner circles are $\alpha_0^3\|x_1-x_0\|$
and $\alpha_0^5\|x_1-x_0\|$ respectively. }}
\end{center}
\end{figure}

In order to reduce unnecessary information regarding the position of the handles at $x_0$
 we concentrate on the following subset of
$S_{k,p}^n$.
\begin{equation} \nonumber
\name^n_{k,p}=\left\{X\in
S_{k,p}^n:\frac{\left\|(X)_\ell-x_0\right\|}
{\max\nolimits_{x_0}(X)}>\alpha_0^k\ \textup{ for all }\ 1 \leq \ell
\leq n \right\} .
\end{equation}
That is, $\name^n_{k,p}$ is the subset of $S_{k,p}^n$ whose edges
connecting $x_0$ with $x_1,\ldots,x_n$ are handles and whose edges
connecting $x_0$ with $x_{n+1},\ldots,x_{d+1}$ are tines. We
illustrate an element of $\name^2_{3,2}$ where $d=4$ in
Figure~\ref{figure:example_eta}.

Given a ball $Q$ in $H$ we denote the restrictions of the above sets
to $Q^{d+2}$ by $S_{k,p}^n(Q)$ and $\name^n_{k,p}(Q)$ respectively.

%

\subsection{Decomposing the Menger-Type Curvature}\label{section:menger}

We decompose the continuous Menger-type curvature according to the
regions described above (Subsection~\ref{subsection:cat-simplices}).
We start by expressing the discrete and continuous $d$-dimensional
Menger-type curvatures of $X\in S$ and $\mu|_Q$ respectively in
terms of the polar sine as follows:
\begin{equation}\label{equation:def-of-c-d}
c_d(X)=\displaystyle\sqrt{\frac{\sum_{i=0}^{d+1}\pds^2_{x_i}(X)}
{(d+2)\cdot\diam(X)^{d(d+1)}}}\end{equation}
and
\begin{equation}\label{equation:integral-simplification-1}
c_d^2\left(\mu|_Q\right) = \frac{1}{d+2}\,\sum_{i=0}^{d+1}
\int_{Q^{d+2}}\frac{\pds_{x_i}^2(X)}
{\diam(X)^{d(d+1)}}\di\mu^{d+2}(X)
=\int_{Q^{d+2}}\frac{\pds_{x_0}^2(X)}
{\diam(X)^{d(d+1)}}\di\mu^{d+2}(X).
\end{equation}

In order to control the continuous curvature
$c_d^2\left(\mu|_Q\right)$ we break it into ``smaller'' parts. We
first decompose it according to the sets $\{S_{k,1}(Q)\}_{k\geq3}$
and $\widehat{S}(Q)$ of Subsection~\ref{section:decomposting} in the
following way:
\begin{multline}\label{multline:tyleriscrying}
\int_{Q^{d+2}}\frac{\pds^2_{x_0}(X)}
{\diam(X)^{d(d+1)}}\di\mu^{d+2}(X)=\\
\int_{\widehat{S}(Q)}\frac{\pds_{x_0}^2(X)}{\diam(X)^{d(d+1)}}
\di\mu^{d+2}(X) + \sum_{{k \geq 3}}
\int_{S_{k,1}(Q)}\frac{\pds_{x_0}^2(X)}{\diam(X)^{d(d+1)}}
\di\mu^{d+2}(X)\,.
\end{multline}

We further break the terms of the infinite sum in
equation~\eqref{multline:tyleriscrying}  according to the regions
$\name^n_{k,1}(Q)$ of Subsection~\ref{section:decomposting} in the
following way:
\begin{equation}\label{equation:butterfinger}
\int_{S_{k,1}(Q)}\frac{\pds^2_{x_0}(X)}
{\diam(X)^{d(d+1)}}\di\mu^{d+2}(X)=\sum_{n=1}^d\binom{d+1}{n}\,
\int_{\name^n_{k,1}(Q)}\frac{\pds^2_{x_0}(X)}{\diam(X)^{d(d+1)}}\di\mu^{d+2}(X).
\end{equation}

To verify this formula we first decompose $S_{k,1}(Q)$ by
$S_{k,1}^n(Q)$, $n=1,\ldots,d$, according to
equation~\eqref{equation:clyde-drexler}. We then partition each
$S_{k,1}^n(Q)$, $n=1,\ldots,d$, according to the subsets of indices
representing the $n$ handles. Clearly this results in
$\binom{d+1}{n}$ subsets of $S_{k,1}^n(Q)$, where one of these is
associated with $\name^n_{k,p}(Q)$. Now the integral of
$\pds^2_{x_0}(X)/\diam(X)^{d(d+1)}$ over these subsets is the same
(due to the invariance of the polar sine to permutations fixing
$x_0$ and an immediate change of variables), and consequently
equation~\eqref{equation:butterfinger} is established.

Therefore, to control the LHS of
equation~\eqref{multline:tyleriscrying} we only need to concentrate
on the first term of the RHS of
equation~\eqref{multline:tyleriscrying} and the terms of the RHS of
equation~\eqref{equation:butterfinger} for all $k \geq 3$. This is
what we do for the rest of the paper.

\section{Uniform Rectifiability}
\label{section:uniform_rect}
We review here basic notions in the  theory of uniform
rectifiability~\cite{DS91,DS93}. Even though the original theory is
formulated in  finite dimensional Euclidean spaces, the part
presented here generalizes to any separable real Hilbert space.

\subsection{$A_1$ Weights and $\omega$-Regular Surfaces}

Let ${\cal L}_d$ denote the $d$-dimensional Lebesgue measure on
$\mathbb{R}^d$. Given a locally integrable function
$\omega:\mathbb{R}^d\to [0,\infty)$, we induce a measure on Borel
subsets $A$ of $\mathbb{R}^d$ by defining
$$
\omega (A) = \int_{A}\omega (x)\di {\cal L}_d(x) .
$$
We say that  $\omega$ is an $A_1$ weight  if there exists $C\geq1$
such that for any ball $Q$ in $\mathbb{R}^d$,
$$
\frac{\omega (Q)}{|Q|}\leq C\cdot\omega ({x}),\ \ \text{ for } {\cal
L}_d \text{ a.e.}~x\in Q .
$$

We note that the measure induced by $\omega$ is doubling in the
following sense: $\omega (Q) \approx \omega (2 \cdot Q)$ for any
ball $Q$. Consequently, the following function is a quasidistance
(i.e., a symmetric positive definite function satisfying a relaxed
version of the triangle inequality with controlling constant $C\geq
1$ instead of one):
\[
\qdist_\omega({x},{y}) = \sqrt[d]{\omega \left( B \left(
\frac{{x}+{y}}{2},\frac{|{x}-{y}|}{2}\right)\right)},\quad \text{
for all}\ {x},{y}\in \mathbb{R}^d .
\]

Given an $A_1$ weight $\omega$, we define $\omega$-regular
surfaces as follows.
\begin{definition}
\label{def:w_regular_surface} Let $\omega$ be an $A_1$ weight on
$\mathbb{R}^d$. A subset $\Gamma$ of $H$ is called an {\em
$\omega$-regular surface} if there exists a function
$f\colon\mathbb{R}^d\to \HH$ and constants $L$ and $C$ such
that $\Gamma = f(\mathbb{R}^d)$,
\be \label{eq:w_reg_l}
\norm{f({x})-f({y})} \leq L \cdot \qdist_\omega ({x},{y}),\qquad
\textup{ for all } {x},{y} \in \mathbb{R}^d ,
\ee
and
 \be
 \label{eq:flat_d}
 \omega \left(f^{-1} (B({x},r))\right)\leq C\cdot r^d,\qquad
\textup{ for all}\ {x}\in \HH \textup{ and } r>0 . \ee
\end{definition}

\subsection{Two Equivalent Definitions of Uniform Rectifiability}
We provide here two equivalent definitions of uniform
rectifiability. Many other definitions appear
in~\cite{DS91,DS93,MP05,tolsa_plms}.

Given a Borel measure $\mu$ on $H$, we let
\[
\widehat{\HH} =
\begin{cases} \HH \times \reals,\ &\text{if}\
\dim (H)< 2\cdot d;\\
\HH, &\text{otherwise},
\end{cases}
\]and we define the induced measure $\widehat{\mu}$ on
$\widehat{H}$ to be $\widehat{\mu}(A)=\mu(A\cap H),\textup{ for all
Borel sets }A\subseteq\widehat{H}.$

We define $d$-dimensional uniformly rectifiable measures  as
follows:
\begin{definition}
A Borel measure $\mu$ on $\HH$ is said to be  $d$-dimensional
uniformly rectifiable if it is $d$-regular and there exist an $A_1$
weight $\omega$ on $\mathbb{R}^d$ along with an $\omega$-regular
surface $\Gamma\subseteq\widehat{\HH}$ such that $\widehat{\mu}
\left(\widehat{H}\setminus \Gamma\right) = 0$.
\end{definition}

David and Semmes~\cite{DS91,DS93} have shown that the Jones-type
flatness of equation~\eqref{eq:def_jones_flat} can be used to
quantify and thus redefine uniform rectifiability as follows.
\begin{theorem}
\label{theorem_DS} A $d$-regular measure $\mu$ on $H$ is uniformly
rectifiable if and only if there exists a constant $C=C(d,C_{\mu})$
such that
$$
J_d(\mu|_Q)\leq C \cdot \mu (Q) \ \text{ for any ball } Q \text{ in
} H.
$$
\end{theorem}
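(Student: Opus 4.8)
The plan is to prove both implications by reducing to the model case of Lipschitz graphs via a corona (stopping-time) decomposition, in the spirit of David and Semmes. It is convenient to pass first to a dyadic formulation: one builds a system of David--Semmes ``cubes'' adapted to $\mu$ and replaces the scale integral in \eqref{eq:def_jones_flat} by a sum over such cubes $R$ of $\ell(R)^d\,\beta_2^2(c_R,\ell(R))$. Comparing integrals over dyadic annuli with the continuous scale integral shows the dyadic and continuous versions of $J_d$ are comparable with constants depending only on $d$ and $C_\mu$, so the two Carleson conditions are equivalent and it suffices to argue with the dyadic one.

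For the direction ``$\mu$ uniformly rectifiable $\Rightarrow$ Carleson estimate'': the definition of uniform rectifiability through $\omega$-regular surfaces yields that $\mu$ has big pieces of Lipschitz images, and from there one obtains the more flexible property that on every cube $R$ a fixed fraction of $\mu|_R$ lies on a single $d$-dimensional Lipschitz graph with uniformly controlled constant. The analytic heart is a Dorronsoro-type $L^2$ estimate for Lipschitz functions: if $\Gamma$ is such a graph, then $\int_0^{\diam(R)}\!\int_R \beta_2^2(\Gamma;x,t)\,d\mathcal{H}^d|_\Gamma(x)\,\frac{dt}{t}\lesssim \diam(R)^d$, proved by identifying the least-squares error with the deviation of the graphing function from its affine approximants and expanding in a Haar / Littlewood--Paley basis. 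One then transfers this to $\mu$ along the corona: the stopping-time family of cubes where the local Lipschitz approximation is switched is itself Carleson-sparse, on each coherent region $\beta_2(\mu;x,t)$ is dominated by $\beta_2(\Gamma;x,t)$ plus a stopping error, and summing the two contributions gives $J_d(\mu|_Q)\lesssim \mu(Q)$.

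For the direction ``$J_d(\mu|_Q)\le C\mu(Q)\Rightarrow\mu$ uniformly rectifiable'': the Carleson condition together with Chebyshev shows that $\beta_2(\mu;c_R,\ell(R))$ is small for every cube $R$ outside a Carleson-sparse exceptional set. One then runs a stopping-time construction: starting from a cube $R_0$, take the optimal $d$-plane at $R_0$ as the reference plane, descend to maximal subcubes, and stop a branch once the accumulated $\sum \beta_2^2$ along it exceeds a fixed small threshold, or the approximating plane has rotated too far, or flatness is lost. The Carleson bound forces the collection of stopping cubes to be Carleson-sparse, producing a corona decomposition into coherent regions. On each region one builds a single $d$-dimensional Lipschitz graph over the reference plane: the $\ell^2$-summability of the $\beta_2$ numbers down the region controls, through a telescoping argument and a partition of unity over the cubes of the region, both the Lipschitz norm of the graphing function and the fact that $\Supp$ stays within a thin neighborhood of the graph on that region. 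Patching these graphs over all coronas gives big pieces of Lipschitz graphs in every ball, one of the standard equivalent characterizations of uniform rectifiability.

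The main obstacle is the graph construction in the second direction. The $\beta_2$ numbers control the approximating planes only in an $L^2$-averaged sense over each cube, so upgrading this to the scale-coherent, essentially pointwise control of tangent directions needed to realize $\Supp$ as a Lipschitz graph on an entire coherent region --- and to verify the Lipschitz bound rather than mere flatness --- requires the delicate quantitative bookkeeping (comparison of $\beta_2$ at nearby centers and scales, interaction with the three stopping conditions, and the estimate that stopping cubes absorb only a Carleson amount of mass) that constitutes the bulk of the David--Semmes proof.
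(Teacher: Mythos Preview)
The paper does not prove Theorem~\ref{theorem_DS} at all: it is quoted as a known result of David and Semmes, with the sentence ``David and Semmes~\cite{DS91,DS93} have shown that the Jones-type flatness \ldots\ can be used to quantify and thus redefine uniform rectifiability as follows'' immediately preceding the statement. There is therefore no proof in the paper to compare your proposal against; the theorem functions purely as background, and the paper's only use of it is to combine it with Theorem~\ref{theorem:upper-main} to deduce Theorem~\ref{theorem:uniform-rect-menger}.

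That said, your outline is a reasonable high-level summary of the David--Semmes machinery (dyadic cubes, corona decomposition, Dorronsoro-type estimates on Lipschitz graphs, and the stopping-time construction of approximating graphs from the $\beta_2$ Carleson condition). Two points are worth flagging. First, in the forward direction you pass from the $\omega$-regular-surface definition to ``big pieces of Lipschitz graphs'' in one line; this step is itself one of the substantial equivalences in \cite{DS93} and is not a triviality, so your sketch hides real work there. Second, the paper is set in a separable Hilbert space $H$ of possibly infinite dimension, whereas the original David--Semmes arguments are in $\mathbb{R}^n$; the paper simply asserts that ``the part presented here generalizes to any separable real Hilbert space'' without giving details, and your proposal likewise does not address what, if anything, needs to be checked in that generality.
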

We note that Theorem~\ref{theorem:uniform-rect-menger} is an
immediate consequence of Theorems~\ref{theorem:upper-main}
and~\ref{theorem_DS}.

\subsection{Multiscale Resolutions and Modified Jones-type Flatness}
\label{section:multiscale-partitions}
Multiscale decomposition of $\Supp$ are common in the theory of
uniform rectifiability~\cite{D91_book, raanan_hlibert}, and they are
often used to compress information from all scales and locations.
Here we also use them to construct covers and partitions of
$S\cap[\Supp]^{d+2}$ and consequently control it by a ``compressed''
Jones-type flatness.

Following the spirit of~\cite{D91_book, raanan_hlibert} we cover
$\Supp$ by balls $\{\mathcal{B}_n\}_{n \in\,\mathbb{Z}}$ which
correspond to the length scales $\{\alpha_0^n\}_{n \in\,
{\mathbb{Z}}}$. We then use them to construct a corresponding
sequence of partitions, $\{\mathcal{P}_n\}_{n \in\,\mathbb{Z}}$ of
$\Supp$.

We say that a collection of points $E_n\subseteq\Supp$ is an {\em
$n$-net} for $\Supp$ if
\begin{enumerate}
\item $\|x-y\|>\alpha_0^n,$ for
all $x$ and $y$ in $E_n$. \item $\displaystyle \Supp \subseteq
\bigcup_{x\in\, E_n}B(x,\alpha_0^n).$
\end{enumerate}
For each $n \in {\mathbb Z}$, we arbitrarily form  an $n$-net,
$E_n$, and among all balls in the family $\{
B(x,4\cdot\alpha_0^n)\}_{x \in\, E_n}$ we fix a subfamily
$\mathcal{B}_n$ such that $\frac{1}{4}\cdot\mathcal{B}_n$ is
maximally mutually disjoint. Since $H$ is separable, $\Supp$ is
separable and $\mathcal{B}_n$ is countable.  Furthermore, we note
that $\mathcal{B}_n$ covers $\Supp$.

We index the elements of ${\cal B}_n$ by $\Lambda_n =
\{1,2,\ldots\}$, which is either finite or $\mathbb{N}$, so that
\begin{equation}\label{equation:lambda-b-good}
{\cal B}_n = \left\{{B}_{n,j}\right\}_{j\in\Lambda_n}.\end{equation}
We define the corresponding {\it multiresolution family} for $\Supp$
to be
\begin{equation}\label{equation:def-family-resolution}
\mathcal{D}=\bigcup_{n \in\,\mathbb{Z}}\mathcal{B}_n.
\end{equation}

These resolutions can be replaced by multiscale partitions of
$\Supp$ in the following manner (see proof in
Appendix~\ref{app:resolution-lemma}).
\begin{lemma}\label{lemma:set-partition}
For any $n \in\,\mathbb{Z}$ there exists a partition of $\Supp$,
$\mathcal{P}_n=\{P_{n,j}\}_{j\in\Lambda_n}$, such that for any
$j\in\Lambda_n$ there exists a unique $B_{n,j}\in {\cal B}_n$ with
$$
\Supp\,\bigcap\,\frac{1}{4} \cdot B_{n,j}\,\subseteq\,
P_{n,j}\,\subseteq\,\Supp\,\bigcap\,\frac{3}{4} \cdot B_{n,j}\, .
$$
\end{lemma}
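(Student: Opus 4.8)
The plan is to build the partition $\mathcal{P}_n$ from the cover $\mathcal{B}_n = \{B_{n,j}\}_{j\in\Lambda_n}$ by a standard "greedy disjointification" of the enumerated balls, and then to check that the resulting cells are squeezed between the prescribed blow-downs $\tfrac14\cdot B_{n,j}$ and $\tfrac34\cdot B_{n,j}$. Recall from the construction preceding the lemma that $\mathcal{B}_n$ arises from an $n$-net $E_n$: we have $B_{n,j}=B(e_j,4\alpha_0^n)$ for the points $e_j\in E_n$ indexed by $\Lambda_n$, the balls $\{\tfrac14\cdot B_{n,j}\}_j=\{B(e_j,\alpha_0^n)\}_j$ cover $\Supp$, and by the net separation property the smaller balls $B(e_j,\tfrac12\alpha_0^n)$ are pairwise disjoint (since distinct net points are more than $\alpha_0^n$ apart). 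In particular $B(e_j,\tfrac12\alpha_0^n)$ contains no net point other than $e_j$.

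First I would fix the enumeration of $\Lambda_n$ (finite or $\mathbb N$) and define, inductively on $j$,
$$
P_{n,j} \;=\; \Bigl(\Supp \cap \tfrac14\cdot B_{n,j}\Bigr)\;\setminus\;\bigcup_{i<j} P_{n,i}.
$$
Then the $P_{n,j}$ are pairwise disjoint by construction, and since $\{\tfrac14\cdot B_{n,j}\}_j$ covers $\Supp$, every point of $\Supp$ lands in some cell — indeed in the cell of smallest index whose quarter-ball contains it — so $\mathcal{P}_n$ is a genuine partition of $\Supp$. The left inclusion $\Supp\cap\tfrac14\cdot B_{n,j}\subseteq P_{n,j}$ fails in general for this definition, so the real content is to get it to hold, which forces a slight change: I would instead remove at step $j$ only the already-claimed points that do \emph{not} belong to $\Supp\cap\tfrac14\cdot B_{n,j}$, i.e. keep the full quarter-ball intact. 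The clean way to reconcile both inclusions is to process the cells so that each quarter-ball is assigned entirely to one cell and the overlaps are pushed outward into the three-quarter-balls. Concretely: assign a point $x\in\Supp$ to cell $j(x):=\min\{i:\ x\in\tfrac14\cdot B_{n,i}\}$. This gives disjoint cells covering $\Supp$ with $P_{n,j}\subseteq \Supp\cap\tfrac14\cdot B_{n,j}\subseteq\Supp\cap\tfrac34\cdot B_{n,j}$, the right inclusion being trivial. For the left inclusion $\Supp\cap\tfrac14\cdot B_{n,j}\subseteq P_{n,j}$ one needs: no point of $\tfrac14\cdot B_{n,j}$ has a strictly smaller competing index. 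This is where the net geometry enters — I would argue that if $x\in\tfrac14\cdot B_{n,j}\cap\tfrac14\cdot B_{n,i}$ with $i\neq j$, then $\|e_i-e_j\|\le 2\alpha_0^n$, which is consistent with the net only if... it is actually \emph{not} excluded, so the plain quarter-balls genuinely can overlap. This forces the asymmetric window $[\tfrac14,\tfrac34]$ in the statement: one should \emph{enlarge} rather than shrink. So the correct construction is $P_{n,j}=\bigl(\Supp\cap\tfrac34\cdot B_{n,j}\bigr)\setminus\bigcup_{i<j}\bigl(\Supp\cap\tfrac34\cdot B_{n,i}\bigr)$ together with a preliminary check that $\tfrac14\cdot B_{n,j}$ is not swallowed by an earlier three-quarter-ball.

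Here is the step I expect to be the genuine obstacle and how to handle it: showing that $\Supp\cap\tfrac14\cdot B_{n,j}\not\subseteq\bigcup_{i<j}\tfrac34\cdot B_{n,i}$ in a way that isolates the quarter-ball. The key quantitative input is that $\tfrac14\cdot\mathcal{B}_n$ was chosen \emph{maximally mutually disjoint}, so the balls $\tfrac14\cdot B_{n,i}=B(e_i,\alpha_0^n)$ are pairwise disjoint. Hence the center $e_j$ itself satisfies $e_j\notin B(e_i,\alpha_0^n)$ for $i\ne j$, i.e. $\|e_i-e_j\|\ge\alpha_0^n$ is not enough — but disjointness of the $B(e_i,\alpha_0^n)$ gives $\|e_i-e_j\|\ge 2\alpha_0^n$ for $i\ne j$ in $\mathcal{B}_n$. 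Then for $x\in B(e_i,\tfrac34\cdot 4\alpha_0^n)=B(e_i,3\alpha_0^n)$ we get $\dist(x,e_j)\ge\|e_i-e_j\|-3\alpha_0^n\ge 2\alpha_0^n-3\alpha_0^n$, which is negative — so this bound alone does not separate the quarter-ball from earlier three-quarter-balls either. The resolution is to recall that we only need the \emph{existence} of a partition with the stated sandwich, not that cells equal quarter-balls: I would define $P_{n,j}$ via the minimal-index rule on the quarter-balls, $j(x)=\min\{i: x\in\tfrac14\cdot B_{n,i}\}$, yielding $P_{n,j}\subseteq\tfrac14\cdot B_{n,j}\subseteq\tfrac34\cdot B_{n,j}$ automatically, and then observe that the left inclusion $\tfrac14\cdot B_{n,j}\cap\Supp\subseteq P_{n,j}$ is simply \emph{false} as stated unless one allows the window up to $\tfrac34$; so I would relabel, enlarging the "home" ball for cell $j$ to $\tfrac34\cdot B_{n,j}$ while keeping the competing balls at $\tfrac14$. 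Precisely: set $P_{n,j}=\bigl(\Supp\cap\tfrac34\cdot B_{n,j}\bigr)\setminus\bigcup_{i\ne j}\bigl(\Supp\cap\tfrac14\cdot B_{n,i}\ \text{with home priority}\bigr)$, and verify disjointness and covering from the quarter-ball cover. The honest obstacle is bookkeeping the priorities so that both inclusions survive simultaneously; all of it is elementary ball arithmetic using only the $n$-net separation $\|e_i-e_j\|>\alpha_0^n$ and the quarter-ball cover, with no appeal to $\mu$ or to $\dim H$. I would carry this out in Appendix~\ref{app:resolution-lemma} as indicated.
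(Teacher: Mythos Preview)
Your proposal has a genuine gap rooted in a misreading of the construction of $\mathcal{B}_n$. You write that ``the balls $\{\tfrac14\cdot B_{n,j}\}_j=\{B(e_j,\alpha_0^n)\}_j$ cover $\Supp$'', but this is false: $\mathcal{B}_n$ is only a \emph{subfamily} of $\{B(x,4\alpha_0^n)\}_{x\in E_n}$, chosen so that $\tfrac14\cdot\mathcal{B}_n$ is maximally pairwise disjoint. The full family of quarter-balls $\{B(x,\alpha_0^n)\}_{x\in E_n}$ covers $\Supp$ by the $n$-net property, but after discarding balls to achieve disjointness the remaining quarter-balls typically leave gaps. (Indeed, if the quarter-balls of $\mathcal{B}_n$ were simultaneously disjoint \emph{and} a cover, the lemma would be trivial with $P_{n,j}=\Supp\cap\tfrac14\cdot B_{n,j}$, and your repeated difficulty in making both inclusions hold is a symptom of this inconsistency.) All of your attempted constructions --- the minimal-index rule on quarter-balls, the greedy rule on three-quarter-balls, and the hybrid with ``home priority'' --- rely on this false covering assumption, so none of them yields a partition of $\Supp$.

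The paper's argument supplies exactly the missing idea: it brings back the \emph{discarded} balls. Writing $\mathcal{B}_n'=\{B(x,4\alpha_0^n)\}_{x\in E_n}$, the quarter-balls of $\mathcal{B}_n'$ do cover $\Supp$. Each discarded quarter-ball $B'\in\tfrac14\cdot[\mathcal{B}_n'\setminus\mathcal{B}_n]$ must, by maximality, intersect some $\tfrac14\cdot B_{n,j}$; one assigns $B'$ to the smallest such index $j$ via a map $g_n$. The key geometric observation --- which you never reach --- is that if $B'=B(x',\alpha_0^n)$ meets $B(e_j,\alpha_0^n)$ then $\|x'-e_j\|\le 2\alpha_0^n$, hence $B'\subseteq B(e_j,3\alpha_0^n)=\tfrac34\cdot B_{n,j}$. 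One then sets $P_{n,j}$ to be $\Supp\cap\tfrac14\cdot B_{n,j}$ together with the (suitably disjointified) pieces of the discarded quarter-balls assigned to $j$. The left inclusion holds because $\tfrac14\cdot\mathcal{B}_n$ is pairwise disjoint, and the right inclusion holds by the containment $B'\subseteq\tfrac34\cdot B_{n,j}$ just noted.
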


We typically work with localized resolutions which we define as
follows. For $Q$ a ball in $H$, we let $m(Q)$ be the smallest
integer $m$ such that $\alpha_0^m\leq\diam(Q)$, i.e.,
\begin{equation}\label{equation:integer-for-Q}
m(Q)=\left\lceil\frac{\ln(\diam(Q))}{\ln(\alpha_0)}\right\rceil,\end{equation}
For $n\geq m(Q)$ we define
\begin{equation}\label{equation:localized-restitutionolution}\mathcal{B}_n(Q)=\left\{B_{n,j}\in\mathcal{B}_n:B_{n,j}\cap
Q\not=\emptyset\right\},\end{equation}
and form the {\it local multiresolution family} as follows
\begin{equation}\label{equation:localized-resolution}
\mathcal{D}(Q)=\bigcup_{ n\geq m(Q)}\mathcal{B}_n(Q)
\,.\end{equation}
We also define the set of indices (possibly empty)
$$\displaystyle\Lambda_n(Q)=\{j\in\Lambda_n:P_{n,j}\cap
Q\not=\emptyset\}.$$
%


\subsubsection{Jones-type Flatness via Multiscale Resolutions}

For a ball $Q$ in $H$, and the local multiresolution
$\mathcal{D}(Q)$, we define the corresponding local Jones-type
$d$-flatness as follows:
$$
 J^{\mathcal{D}}_{d}(\mu|_Q) = \sum_{B \in{\cal D}(Q)}\beta^2_2(B) \cdot \mu
(B).
$$
Both quantities of Jones-type flatness, $J_d$ and
$J^{\mathcal{D}}_{d}$, are comparable.  We will only use the
following part of the comparability whose proof practically follows
the same arguments of~\cite[Lemma~3.2]{raanan_hlibert}:
\begin{proposition}\label{proposition:discretization}
There exists a constant $ C_3=C_3(d,C_{\mu})$ such that for any
multiresolution family $\mathcal{D}$ on $\Supp$ :
\be\label{eq:discretization}  J^{\mathcal{D}}_d(\mu|_Q)\leq C_3\cdot
J_d(\mu|_{6\cdot Q}) \ \textup{ for any ball } Q \textup{ in } H\,.
\ee

\end{proposition}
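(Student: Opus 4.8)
The plan is to compare the discrete sum $J^{\mathcal{D}}_d(\mu|_Q) = \sum_{B \in \mathcal{D}(Q)} \beta_2^2(B)\cdot\mu(B)$ with the integral $J_d(\mu|_{6\cdot Q}) = \int_0^{\diam(6\cdot Q)} \int_{6\cdot Q} \beta_2^2(x,t)\,\di\mu(x)\,\frac{\di t}{t}$ scale by scale. The local multiresolution $\mathcal{D}(Q)$ is the union over $n \geq m(Q)$ of the families $\mathcal{B}_n(Q)$, each of which consists of balls of radius $4\cdot\alpha_0^n$ whose $\frac14$-dilates are mutually disjoint and which meet $Q$; in particular these balls have diameter comparable (up to constants depending only on $\alpha_0$, hence on $d,C_\mu$) to $\alpha_0^n$ and, because they meet $Q$ and $\alpha_0^n \leq \alpha_0^{m(Q)} \leq \diam(Q)$, they are contained in $6\cdot Q$. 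So the first step is bookkeeping: fix $n \geq m(Q)$ and estimate $\sum_{B \in \mathcal{B}_n(Q)} \beta_2^2(B)\cdot\mu(B)$ from above by an integral of $\beta_2^2(x,t)$ over a range of scales $t$ of length comparable to $1$ (on the logarithmic scale $\di t / t$) and over a spatial region inside $6\cdot Q$, then sum over $n$.

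The heart of the argument is the pointwise/local comparison $\beta_2^2(B_{n,j}) \lesssim \beta_2^2(x,t)$ that lets one trade the single number $\beta_2^2(B_{n,j})$ for an average of $\beta_2^2(x,t)$ over nearby $(x,t)$. Concretely, for $B = B_{n,j} = B(z,4\alpha_0^n)$ and for any $x \in \frac14 B \cap \Supp$ and any $t$ in a fixed window, say $t \in [\alpha_0^n, 2\alpha_0^n]$ (or some comparable interval), one has $B \subseteq B(x, c\cdot t)$ with $c$ an absolute constant, so an optimal $d$-plane $L$ for $B(x,t)$ is an admissible competitor for $B$; combined with Ahlfors regularity ($\mu(B) \approx \mu(B(x,t)) \approx t^d$) and the fact that $\dist(\cdot, L)/\diam$ only grows by a bounded factor when passing from $B(x,t)$ to $B$, this yields $\beta_2^2(B) \lesssim \beta_2^2(x,t)$. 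Integrating this over $x \in \frac14 B \cap \Supp$ against $\di\mu$ and over $t$ in the window against $\di t / t$ produces $\beta_2^2(B)\cdot\mu(B) \lesssim \int \int_{\frac14 B} \beta_2^2(x,t)\,\di\mu(x)\,\frac{\di t}{t}$. Summing over $j \in \Lambda_n(Q)$: the sets $\frac14 B_{n,j}$ are mutually disjoint by construction, so the spatial integrals do not overlap, and the scale windows are all contained in a fixed multiple of $[\alpha_0^n,\alpha_0^n]$ so they overlap with bounded multiplicity as $n$ varies. Hence $\sum_{n \geq m(Q)}\sum_{j} \beta_2^2(B_{n,j})\cdot\mu(B_{n,j}) \lesssim \int_0^{\diam(6\cdot Q)}\int_{6\cdot Q}\beta_2^2(x,t)\,\di\mu(x)\,\frac{\di t}{t} = J_d(\mu|_{6\cdot Q})$, which is \eqref{eq:discretization}.

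I would organize this as: (i) record the geometric containments — each $B \in \mathcal{B}_n(Q)$ with $n \geq m(Q)$ satisfies $B \subseteq 6\cdot Q$ and $\diam(B) \approx \alpha_0^n$; (ii) prove the local comparison $\beta_2^2(B) \lesssim \beta_2^2(x,t)$ for $x \in \frac14 B \cap \Supp$, $t$ in a window around $\alpha_0^n$, using that the minimizing $d$-plane for $(x,t)$ is a competitor for $B$ together with \eqref{equation:measure-comp}; (iii) integrate in $x$ and $t$ and sum, using disjointness of the $\frac14$-dilates within a fixed scale $n$ and bounded overlap of scale windows across $n$; (iv) collect the constants, all of which depend only on $\alpha_0$ (and the fixed dilation factors $4$, $6$), hence only on $d$ and $C_\mu$, giving $C_3 = C_3(d,C_\mu)$.

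The main obstacle — really the only non-routine point — is step (ii): one must be careful that the $d$-plane realizing (or nearly realizing) $\beta_2^2(x,t)$ is genuinely usable as a test plane for the ball $B$, which requires $B$ to sit inside a controlled dilate of $B(x,t)$ with the right comparison of radii so that neither the numerator $\dist(\cdot,L)$, the denominator $\diam$, nor the normalizing mass $\mu(\cdot)$ degrades by more than a constant; getting the scale window and the $\frac14$ vs.\ $4$ dilation factors to line up so that simultaneously the $\frac14 B$'s are disjoint and $B \subseteq B(x,ct)$ holds is the delicate bookkeeping. Everything else is summation with bounded overlap, which is exactly the argument of~\cite[Lemma~3.2]{raanan_hlibert}.
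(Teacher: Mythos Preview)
Your proposal is correct and is precisely the standard argument the paper invokes by reference to \cite[Lemma~3.2]{raanan_hlibert}; the paper gives no further proof of its own. One minor bookkeeping point: your claim in step~(i) that every $B\in\mathcal{B}_n(Q)$ lies inside $6\cdot Q$ is not quite automatic at the top scale $n=m(Q)$ (where $4\alpha_0^n$ can be as large as $4\diam(Q)$), so when you write the details make sure your choice of $t$-window and $x$-region actually land in $[0,\diam(6\cdot Q)]\times (6\cdot Q)$, adjusting the window or treating the top scale separately if necessary.
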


\section{Proof of
Proposition~\ref{proposition:upper-bound-big-scale}}\label{section:belowed}We
first note that  $U_{\lambda}(B(x,t))$ is invariant under any
permutation of the coordinates. Thus, by the same argument producing
equation~(\ref{equation:integral-simplification-1}) we have the
equality
\begin{equation}\label{equation:above-1}\int_{U_{\lambda}(B(x,t))}c_d^2(X)\di\mu^{d+2}(X)=\int_{U_{\lambda}(B(x,t))}
\frac{\pds_{x_0}^2(X)}{\diam(X)^{d(d+1)}}\di\mu^{d+2}(X).\end{equation}
Furthermore,  $\diam(X)\geq\lambda\cdot t$ and
$\SCale_{x_0}(X)\geq\lambda$ for all $ X\in U_{\lambda}(B(x,t)).$
Hence, applying Proposition~\ref{proposition:psin-bound-deviations}
to the RHS of equation~(\ref{equation:above-1}) we get that for any
$d$-plane $L$
\begin{multline}\label{equation:above-2}\int_{U_{\lambda}(B(x,t))}c_d^2(X)\di\mu^{d+2}(X)
\leq\frac{2\cdot(d+1)^2\cdot(d+2)^2}{\lambda^{d(d+1)+4}}\int_{U_{\lambda}(B(x,t))}\frac{D_2^2(X,L)}{t^2}\,
\frac{\di\mu^{d+2}(X)}{t^{d(d+1)}} \
=\\\frac{8\cdot(d+1)^2\cdot(d+2)^2}{\lambda^{d(d+1)+4}}\,\sum_{i=0}^{d+1}
\int_{U_{\lambda}(B(x,t))}\left(\frac{\dist(x_i,L)}{2\cdot
t}\right)^2 \frac{\di\mu^{d+2}(X)}{t^{d(d+1)}}\,.\end{multline}For
$P_i$, $0 \leq i \leq d+1$, the projection of $H^{d+2}$ onto its
$i^{\text{th}}$ coordinate, we have the inclusion
\begin{equation}\label{equation:trivially-coffee}P_i(U_{\lambda}(B(x,t)))\subseteq B(x,t).\end{equation}
Hence, fixing $0\leq i\leq d+1$ and  applying
equation~(\ref{equation:trivially-coffee}) and Fubini's Theorem to
the corresponding term on the RHS of
equation~(\ref{equation:above-2}), we get the
inequality\begin{multline}\label{equation:above-3}\int_{U_{\lambda}(B(x,t))}
\left(\frac{\dist(x_i,L)}{2\cdot t}\right)^2
\frac{\di\mu^{d+2}(X)}{t^{d(d+1)}}\leq
\left(\frac{\mu(B(x,t))}{t^d}\right)^{d+1}
\int_{B(x,t)}\left(\frac{\dist(x_i,L)}{2\cdot
t}\right)^2\di\mu(x_i)=\\\left(\frac{\mu(B(x,t))}{t^d}\right)^{d+1}
\cdot\beta_2^2(x,t,L)\cdot\mu(B(x,t)).\end{multline}

Combining equations~(\ref{equation:above-2})
and~(\ref{equation:above-3}), while summing the RHS of
equation~(\ref{equation:above-3}) over $0\leq i\leq d+1$ as well as
applying the $d$-regularity of $\mu$, we obtain the following bound
on the LHS of equation~\eqref{equation:upper-big-scale}:
\begin{equation}\label{equation:above-4}\int_{U_{\lambda}(B(x,t))}c_d^2(X)\di\mu^{d+2}(X)
\leq\frac{8\cdot(d+1)^2\cdot(d+2)^3}{\lambda^{d(d+1)+4}}\cdot
C_{\mu}^{d+1}\cdot\beta_2^2(x,t,L)\cdot\mu(B(x,t)).\end{equation}
Since $L$ is  arbitrary, taking the infimum over all $L$ on the RHS
of equation~(\ref{equation:above-4}) proves the proposition.\qed

\section{Reduction of
Theorem~\ref{theorem:upper-main}}\label{section:major-reduction}
We reduce Theorem~\ref{theorem:upper-main} by applying the following
decomposition of the multivariate integral
$c_d^2\left(\mu|_Q\right)$, obtained by combining
equations~\eqref{multline:tyleriscrying}
and~\eqref{equation:butterfinger}:
\begin{multline}\label{multline:decomposed}
c^2_d(\mu|_Q) =
\int_{\widehat{S}(Q)}\frac{\pds_{x_0}^2(X)}{\diam(X)^{d(d+1)}}
\di\mu^{d+2}(X) + \\
\sum_{{k \geq 3}} \sum_{n=1}^d\binom{d+1}{n}\,
\int_{\name_{k,1}^n(Q)}\frac{\pds^2_{x_0}(X)}{\diam(X)^{d(d+1)}}\di\mu^{d+2}(X).
\end{multline}

We thus prove Theorem~\ref{theorem:upper-main} by establishing the
following three propositions:
\begin{proposition}\label{proposition:journe}There exists a constant $C_4=C_4(d,C_{\mu})$
such that
\begin{equation}\label{equation:blow-1}\int_{\widehat{S}(Q)}\frac{\pds_{x_0}^2(X)}{\diam(X)^{d(d+1)}}
\di\mu^{d+2}(X)\leq\frac{C_4}{\alpha_0^6} \cdot
J^{\mathcal{D}}_d\left(\mu|_Q\right)\,.\end{equation}\end{proposition}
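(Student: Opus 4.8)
The plan is to bound the integral over the well-scaled simplices $\widehat S(Q) = S_{0,3}(Q)$ directly in terms of the $\beta_2$ numbers associated to balls in the multiresolution $\mathcal D(Q)$. First I would localize: for a simplex $X \in \widehat S(Q)$ with first vertex $x_0$, all vertices lie within $\diam(X)$ of $x_0$, and since $X$ is well-scaled at $x_0$ we have $\min_{x_0}(X) > \alpha_0^3 \max_{x_0}(X)$, so in fact $\diam(X)$ is comparable (up to a power of $\alpha_0$) to each edge length at $x_0$; all $d+2$ vertices sit in a ball of radius $\asymp \diam(X)$. The natural move is to choose, for each dyadic-type scale $\alpha_0^n$, a ball $B = B_{n,j} \in \mathcal B_n$ from the multiresolution that (after a bounded blow-up) contains all the vertices of $X$ whenever $\diam(X) \asymp \alpha_0^n$ and $x_0 \in P_{n,j}$; the partition $\mathcal P_n$ of Lemma~\ref{lemma:set-partition} assigns $x_0$ to a unique cell, which picks out $B_{n,j}$.

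The key estimate is the pointwise bound on the polar sine coming from Proposition~\ref{proposition:psin-bound-deviations}: for any $d$-plane $L$,
$$
\pds_{x_0}(X) \le \frac{\sqrt2 \,(d+1)(d+2)}{\SCale_{x_0}(X)}\cdot\frac{D_2(X,L)}{\diam(X)} \le \frac{\sqrt2\,(d+1)(d+2)}{\alpha_0^3}\cdot\frac{D_2(X,L)}{\diam(X)},
$$
using $\SCale_{x_0}(X) > \alpha_0^3$ on $\widehat S(Q)$. Squaring and recalling $D_2^2(X,L) = \sum_{i=0}^{d+1}\dist^2(x_i,L)$, the integrand $\pds_{x_0}^2(X)/\diam(X)^{d(d+1)}$ is controlled, up to the constant $2(d+1)^2(d+2)^2/\alpha_0^6$, by $\sum_i \dist^2(x_i,L)/\diam(X)^{d(d+1)+2}$. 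Now I would sum over the scales $n$: on the piece where $\diam(X) \asymp \alpha_0^n$, choose $L = L_{n,j}$ to be a $d$-plane nearly optimal for $\beta_2(B_{n,j})$. For each fixed $i$, Fubini in the variable $x_i$ together with the inclusion of the remaining $d+1$ vertices in the chosen ball $B_{n,j}$ (whose $\mu$-measure is $\asymp \alpha_0^{nd}$ by $d$-regularity) collapses $d+1$ of the integrations, leaving $\int_{B_{n,j}} \dist^2(x_i, L_{n,j})\,\di\mu(x_i) \asymp \diam(B_{n,j})^2 \beta_2^2(B_{n,j})\mu(B_{n,j})$, exactly as in the computation leading to equation~\eqref{equation:above-3} in the proof of Proposition~\ref{proposition:upper-bound-big-scale}. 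The powers of $\alpha_0^n \asymp \diam(B_{n,j})$ balance: $\diam(X)^{d(d+1)+2}$ in the denominator against $(\mu(B_{n,j})/\alpha_0^{nd})^{d+1}\cdot \diam(B_{n,j})^2$ from the collapsed integrations, leaving precisely $\beta_2^2(B_{n,j})\mu(B_{n,j})$, and summing over $n \ge m(Q)$ and over the (boundedly overlapping) balls at each scale yields $C_4\alpha_0^{-6}\sum_{B \in \mathcal D(Q)}\beta_2^2(B)\mu(B) = C_4\alpha_0^{-6} J_d^{\mathcal D}(\mu|_Q)$.

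\textbf{Main obstacle.} The routine parts are the Fubini collapse and the bookkeeping of powers of $\alpha_0$. The delicate point is the combinatorial/covering step: I must verify that as $X$ ranges over $\widehat S(Q)$, the assignment $X \mapsto (n,j)$ — where $n$ is the scale with $\alpha_0^{n} \asymp \diam(X)$ (with a bounded range of admissible $n$ per simplex, since ``$\asymp$'' is not an equality) and $j$ is determined by which cell $P_{n,j}$ contains $x_0$ — covers $\widehat S(Q)$ with bounded multiplicity, and that the blow-up $\gamma \cdot B_{n,j}$ needed to contain \emph{all} vertices of $X$ has $\gamma = \gamma(d, C_\mu)$ independent of everything. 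This requires the geometry of well-scaled simplices (all edges at $x_0$ comparable to $\diam(X)$), the control $P_{n,j} \subseteq \frac34 B_{n,j}$ from Lemma~\ref{lemma:set-partition}, and the bounded overlap of $\{\mathcal B_n\}$ at each fixed scale built into its construction via the maximally-disjoint subfamily. Once the covering has bounded multiplicity, replacing $\beta_2^2(B_{n,j})$ by $\beta_2^2(\gamma B_{n,j})$ and absorbing the blow-up is handled by the standard comparability of $\beta_2$ numbers under bounded dilation together with $d$-regularity, at the cost of enlarging $C_4$.
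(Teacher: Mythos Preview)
Your proposal is correct and follows essentially the same route as the paper's proof in Section~\ref{section:journe}. The ``main obstacle'' you flag in fact dissolves with the paper's choices: indexing the scale by $\max_{x_0}(X)\in(\alpha_0^{m+1},\alpha_0^m]$ gives a \emph{unique} $m$ (not a range), the cells $\{P_{m,j}\}_j$ form a genuine partition (so the assignment $X\mapsto(m,j)$ has multiplicity exactly one, no bounded-overlap argument needed), and since each $B_{m,j}$ has radius $4\alpha_0^m$ while $x_0\in P_{m,j}\subseteq\tfrac34 B_{m,j}$ and $\|x_i-x_0\|\le\max_{x_0}(X)\le\alpha_0^m$, one obtains $\widehat P_{m,j}\subseteq B_{m,j}^{d+2}$ directly with no blow-up ($\gamma=1$).
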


\begin{proposition}\label{proposition:poorly-scaled-integration-d}
There exists a constant $C_5 = C_5(d,C_{\mu})$ such that for any
ball $Q$ in $H$
%
$$
\int_{\name_{k,1}^1(Q)}\frac{\pds_{x_0}^2(X)}{\diam(X)^{d(d+1)}}\di\mu^{d+2}\leq
C_5\cdot (k\cdot d+1)\cdot\left(\alpha_0^d\cdot
C_{\mathrm{p}}^2\right)^{k\cdot d}\cdot J^{\mathcal{D}}_d(\mu|_Q)\,.
$$
\end{proposition}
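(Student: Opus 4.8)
The plan is to estimate the integral of $\pds_{x_0}^2(X)/\diam(X)^{d(d+1)}$ over the single-handled rakes $\name_{k,1}^1(Q)$ by exploiting three facts: first, that on $\name_{k,1}^1(Q)$ the $d$ tine-vertices $x_2,\ldots,x_{d+1}$ all lie within distance $\alpha_0^k\cdot\Max_{x_0}(X)$ of $x_0$, so they are concentrated in a small ball around $x_0$; second, that the handle $x_1$ governs the diameter, $\diam(X)\approx\Max_{x_0}(X)=\|x_1-x_0\|$; and third, that the polar sine $\pds_{x_0}(X)$ is bounded using the concentration inequality of Proposition~\ref{proposition:concentration-inequality-1} together with the product formula of Proposition~\ref{proposition:product-sine}. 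The factor $(\alpha_0^d C_{\mathrm{p}}^2)^{kd}$ and the linear factor $k\cdot d+1$ in the target strongly suggest that one iterates a one-tine reduction $d$ times, each step costing a factor $\alpha_0\cdot C_{\mathrm{p}}^2$ per unit of $k$ (so $(\alpha_0 C_{\mathrm{p}}^2)^k$ per tine, $d$ tines) and each step contributing an additive $\approx k$ worth of dyadic subscales that must be summed, giving the $(kd+1)$ prefactor.

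Concretely, I would first fix $X\in\name_{k,1}^1(Q)$ and use Proposition~\ref{proposition:product-sine} iteratively to write $\pds_{x_0}(X)=\bigl(\prod_{i=2}^{d+1}\sin\theta_i(X)\bigr)\cdot\posin_{x_0}(x_0,x_1)$, where the last factor is $1$; thus controlling $\pds_{x_0}^2(X)$ reduces to controlling each elevation sine $\sin\theta_i(X)=\dist(x_i,L[X(i)])/\|x_i-x_0\|$. Since $x_i$ is a tine, $\|x_i-x_0\|\le\alpha_0^k\Max_{x_0}(X)\approx\alpha_0^k\diam(X)$, and the denominator $\diam(X)^{d(d+1)}$ combined with these small edge lengths is what produces the geometric gain in $k$. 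I would then integrate out the tine variables $x_2,\ldots,x_{d+1}$ one at a time. For the innermost tine, I partition the annular region $\{x_{d+1}:\alpha_0^{k+1}\Max_{x_0}(X)<\|x_{d+1}-x_0\|\le\alpha_0^k\Max_{x_0}(X)\}$ — in fact I partition over dyadic-type subscales indexed by $\alpha_0^j$ for $j$ between $k$ and (roughly) $k+m(Q)$-many relevant values — apply the concentration inequality (Proposition~\ref{proposition:concentration-inequality-1}) to bound $\pds_{x_0}(X)$ in terms of polar sines of simplices with one fewer tine (via the sets $U_{C_{\mathrm p}}(X,i,j)$), and use $d$-regularity of $\mu$ to convert the measure of each small ball around $x_0$ into $\approx C_\mu(\alpha_0^j\diam(X))^d$. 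Each such integration step loses a factor $\approx\alpha_0^d C_{\mathrm p}^2$ (the $\alpha_0^d$ from the measure of the small ball relative to $\diam(X)^d$ appearing in the remaining $\diam(X)^{d(d+1)}$ normalization, the $C_{\mathrm p}^2$ from the concentration inequality), raised to a power matching the subscale index, and summing the geometric series over the $\lesssim k$ admissible subscales yields $(\alpha_0^d C_{\mathrm p}^2)^{k}$ times a factor linear in $k$. Iterating over all $d$ tines gives $(\alpha_0^d C_{\mathrm p}^2)^{kd}$ and, after collecting the $d$ linear-in-$k$ factors into a single bound $\lesssim(kd+1)$, the stated estimate; the residual integral over $(x_0,x_1)$ — a $2$-point configuration with $\|x_1-x_0\|\approx\diam(X)$ and $x_0\in Q$ — is controlled by $\beta_2^2$-type quantities of the balls $B\in\mathcal D(Q)$ at the scale of $\|x_1-x_0\|$, producing the $J_d^{\mathcal D}(\mu|_Q)$ on the right.

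The main obstacle I anticipate is bookkeeping the interaction between the handle scale and the tine subscales while keeping the constants uniform: the concentration inequality only gives control on a set of relative measure $0.75$ (for $d>1$), so each tine-integration must be split into a ``good'' part where $\pds_{x_0}(X)\le C_{\mathrm p}(\pds_{x_0}(X(y,i))+\pds_{x_0}(X(y,j)))$ and a ``bad'' part of measure $\le 0.25\cdot\mu(\text{small ball})$; on the bad part one must re-apply the inequality at a finer scale (or to a different index pair), and showing that this recursion terminates with only a geometrically decaying total contribution — rather than blowing up the constant — is the delicate point. A secondary subtlety is that $\name_{k,1}^1(Q)$ is localized to $Q^{d+2}$ while the balls $B_{n,j}$ realizing the relevant $\beta_2$ numbers may only be comparable to $Q$ after a bounded blow-up; handling this requires the standard enlargement arguments and Proposition~\ref{proposition:discretization}-type comparisons, but introduces no essential difficulty beyond care with the multiresolution $\mathcal D(Q)$.
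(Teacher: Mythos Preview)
Your proposal has a genuine gap: the mechanism by which you hope to reach $J_d^{\mathcal D}(\mu|_Q)$ does not work as described. Two specific problems.

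First, the concentration inequality (Proposition~\ref{proposition:concentration-inequality-1}) does not produce ``simplices with one fewer tine.'' The set $U_{C_{\mathrm p}}(X,i,j)$ consists of points $y$ for which $\pds_{x_0}(X)\le C_{\mathrm p}\bigl(\pds_{x_0}(X(y,i))+\pds_{x_0}(X(y,j))\bigr)$; both $X(y,i)$ and $X(y,j)$ are still $(d+1)$-simplices in $H^{d+2}$. Nothing is removed. Likewise, the product formula $\pds_{x_0}(X)=\sin\theta_{d+1}(X)\cdot p_{d-1}\!\sin_{x_0}(X(d+1))$ does drop a vertex, but the resulting object is a $(d-1)$-dimensional polar sine of a $d$-simplex, which is controlled by deviations from $(d-1)$-planes, not $d$-planes, and hence does not feed into $J_d^{\mathcal D}$.

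Second, and more fundamentally, your endgame ``the residual integral over $(x_0,x_1)$ \ldots is controlled by $\beta_2^2$-type quantities'' cannot be right: a two-point configuration carries no $d$-flatness information whatsoever. The only way $\beta_2^2(B)$ enters this problem is through Proposition~\ref{proposition:psin-bound-deviations}, which bounds $\pds_{x_0}$ of a \emph{well-scaled} $(d+1)$-simplex by $D_2(\cdot,L)/\diam(\cdot)$; one then integrates the $\dist^2$ terms against $\mu$ to produce $\beta_2^2$. So you must somewhere manufacture well-scaled $(d+1)$-simplices whose vertices all lie in a common ball of the multiresolution.

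The paper's argument supplies exactly this missing ingredient. For $X\in\name_{k,1}^1$ one introduces $k\cdot d$ auxiliary points $y_1,\ldots,y_{kd}$ sitting in annuli at geometrically increasing radii $\alpha_0^{k-1}\max_{x_0}(X),\ldots,\alpha_0^0\max_{x_0}(X)$ around $x_0$, and from $(X,y_1,\ldots,y_{kd})$ builds a sequence $\{X_q\}_{q=1}^{kd+1}$ of \emph{well-scaled} $(d+1)$-simplices at the intermediate scales. Iterating the concentration inequality along this sequence (this is where your recursion intuition belongs) yields $\pds_{x_0}^2(X)\le (kd+1)\,C_{\mathrm p}^{2kd}\sum_q \pds_{x_0}^2(X_q)$. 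Each $X_q$ is then handled by geometric multipoles exactly as in Proposition~\ref{proposition:journe}, producing a $\beta_2^2(B)\mu(B)$ at the scale of $X_q$; summing over $q$ and over scales gives $J_d^{\mathcal D}(\mu|_Q)$. The factor $\alpha_0^{kd^2}=(\alpha_0^d)^{kd}$ arises not from an iterated single-step loss but from the volume bound $\mu\bigl(\prod_{i=2}^{d+1}B(x_0,\alpha_0^{m+k})\bigr)\lesssim \alpha_0^{(m+k)d\cdot d}$ against the normalization $\diam(X)^{d(d+1)}\approx\alpha_0^{m\cdot d(d+1)}$ when one finally integrates over the original tines. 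In short: the interpolating points $y_q$ are the essential idea your outline lacks.
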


\begin{proposition}\label{proposition:poorly-scaled-integration}
If  $1 < n \leq d$, then there exists a constant $C_6 =
C_6(d,C_{\mu})$ such that for any ball $Q$ in $H$
$$
\int_{\name_{k,1}^n(Q)}\frac{\pds_{x_0}^2(X)}
{\diam(X)^{d(d+1)}}\di\mu^{d+2}\leq C_6\cdot(k\cdot
d+1)^3\cdot\left(\alpha_0\cdot C_{\mathrm{p}}^2\right)^{k\cdot
d}\cdot  J^{\mathcal{D}}_d(\mu|_Q)\, .
$$
\end{proposition}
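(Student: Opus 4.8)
The strategy is to bound the integral over $\name^n_{k,1}(Q)$ for $n>1$ by localizing to balls of the multiresolution family $\mathcal{D}(Q)$ at the appropriate scale, then controlling the contribution of each ball by its $\beta_2$ number, and finally summing. Fix $k\geq 3$ and $1<n\leq d$, and fix $X=(x_0,\ldots,x_{d+1})\in\name^n_{k,1}(Q)$. Since $X\in S_{k,1}$, we have $\alpha_0^{k+1}<\SCale_{x_0}(X)\leq\alpha_0^k$, so the tine vertices $x_{n+1},\ldots,x_{d+1}$ all lie at distance comparable to $\alpha_0^k\cdot\Max_{x_0}(X)$ from $x_0$, while the handle vertices $x_1,\ldots,x_n$ lie at distance comparable to $\Max_{x_0}(X)\approx\diam(X)$. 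The key geometric input is Proposition~\ref{proposition:psin-bound-deviations}: for any $d$-plane $L$,
$$
\pds_{x_0}(X)\leq\frac{\sqrt{2}(d+1)(d+2)}{\SCale_{x_0}(X)}\cdot\frac{D_2(X,L)}{\diam(X)}\leq\frac{C(d)}{\alpha_0^k}\cdot\frac{D_2(X,L)}{\diam(X)},
$$
so $\pds^2_{x_0}(X)/\diam(X)^{d(d+1)}\leq C(d)\alpha_0^{-2k}D_2^2(X,L)/\diam(X)^{d(d+1)+2}$. However, this alone only gives the factor $\alpha_0^{-2k}$, which goes the wrong way; the gain $(\alpha_0 C_{\mathrm p}^2)^{kd}$ must come from iterating the concentration inequality of Proposition~\ref{proposition:concentration-inequality-1} over the $d-n$ tine coordinates together with the product formula of Proposition~\ref{proposition:product-sine}.

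\textbf{Key steps.} First, I would use the product formula $\pds_{x_0}(X)=\sin(\theta_i(X))\cdot\pdms_{x_0}(X(i))$ iteratively to peel off the tine vertices: writing $\pds_{x_0}(X)$ as a product of elevation sines times the polar sine of the subsimplex on the $n$ handle vertices, and observing that each elevation sine corresponding to a tine vertex $x_i$, $i>n$, satisfies $\sin(\theta_i(X))=\dist(x_i,L[X(i)])/\|x_i-x_0\|$ with $\|x_i-x_0\|\approx\alpha_0^k\diam(X)$. The factor $\alpha_0^{-k}$ appearing in each such denominator is what must be absorbed. The mechanism for absorption is to integrate out each tine coordinate against the concentration inequality (Proposition~\ref{proposition:concentration-inequality-1}): the constant $C_{\mathrm p}$ appears precisely because that inequality says that on a $0.75$-fraction of a ball the polar sine is subadditive with constant $C_{\mathrm p}$, and each such integration step over a ball of radius $\approx\alpha_0^k\diam(X)$ contributes, via $d$-regularity, a factor $\mu(\text{small ball})\approx(\alpha_0^k)^d\cdot\mu(\text{large ball})/C_\mu$ — hence the combinatorial factor $(\alpha_0^d)^{k}$ or, after accounting for all $d-n\geq 1$ tine directions versus the normalization, the stated $(\alpha_0 C_{\mathrm p}^2)^{kd}$. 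Since $n>1$ there are at most $d-1$ tines but at least one handle beyond $x_0$, which is what distinguishes this from Proposition~\ref{proposition:poorly-scaled-integration-d} (where $n=1$, all of $x_1,\ldots,x_{d+1}$ except the unique handle are tines) and produces the cubic polynomial factor $(kd+1)^3$ rather than the linear one: the extra powers of $(kd+1)$ arise from summing a geometric-type series in $k$ whose terms carry polynomial prefactors from the multiscale bookkeeping. After the tine coordinates are integrated out, I would bound the remaining integral over the handle coordinates and $x_0$ by localizing to balls $B\in\mathcal{D}(Q)$ of the scale $\approx\alpha_0^k\diam(X)$, applying Proposition~\ref{proposition:psin-bound-deviations} with $L$ the optimal $d$-plane for $B$ to produce $\beta_2^2(B)\mu(B)$, and summing over $B\in\mathcal{D}(Q)$ to obtain $J^{\mathcal{D}}_d(\mu|_Q)$.

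\textbf{Main obstacle.} The hard part will be the careful multiscale bookkeeping in the iterated application of the concentration inequality: one must set up a Fubini-type decomposition in which, for each tine coordinate, the ball $B(x_0,r)$ in Proposition~\ref{proposition:concentration-inequality-1} is chosen at the correct scale relative to $\diam(X)$, and one must track how the failure set (of measure at most $0.25\,\mu(B(x_0,r))$) propagates through the successive integrations without destroying the geometric decay. This is delicate because the scale $\diam(X)$ is itself a variable of integration, so the localization must be performed simultaneously with, not before, the scale decomposition; keeping the constants uniform in $k$ (so that only polynomial-in-$k$ factors, and not exponential ones with the wrong sign, are lost) is the crux. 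A secondary obstacle is handling the geometry when $1<n<d$, i.e., when there are both multiple handles and multiple tines: one must ensure that the polar sine on the handle subsimplex, which still must be estimated by a $\beta_2$ number, does not require a scale-dependent correction that reintroduces a bad power of $\alpha_0^{-k}$ — this is where the hypothesis $n>1$ is genuinely used, since the handle subsimplex is then a nondegenerate $(n-1)$-dimensional object whose scale at $x_0$ is bounded below by $\alpha_0^k$ but also effectively by a constant after the first handle is accounted for. Once these estimates are in place, the final summation over $k\geq 3$ of $(kd+1)^3(\alpha_0 C_{\mathrm p}^2)^{kd}$ converges because $\alpha_0\leq 1/(2C_{\mathrm p}^2)$ by~\eqref{equation:alpha-loose}, giving $\alpha_0 C_{\mathrm p}^2\leq 1/2<1$, and yields the claimed bound by $C_6\cdot J^{\mathcal{D}}_d(\mu|_Q)$.
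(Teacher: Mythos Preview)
Your proposal has a genuine gap: the mechanism you describe for producing the factor $(\alpha_0 C_{\mathrm p}^2)^{kd}$ is not correct, and the argument as written cannot close.

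The exponent $kd$ has nothing to do with the number of tines $d+1-n$. In the paper (and already in the proof of Proposition~\ref{proposition:poorly-scaled-integration-d}), the factor $C_{\mathrm p}^{2kd}(kd+1)$ arises because one builds a \emph{multiscale chain} of $kd+1$ well-scaled simplices bridging the scale gap from $\min_{x_0}(X)\approx\alpha_0^k\max_{x_0}(X)$ up to $\max_{x_0}(X)$: this requires introducing $kd$ new interpolating points $y_1,\ldots,y_{kd}$ at geometrically growing radii and iterating the two-term replacement inequality of Proposition~\ref{proposition:concentration-inequality-1} $kd$ times. Your sketch never constructs such a chain; you propose instead to ``integrate out'' tine coordinates directly, but Proposition~\ref{proposition:concentration-inequality-1} is a \emph{replacement} inequality (it bounds $\pds_{x_0}(X)$ by polar sines of simplices with a coordinate swapped for a new point $y$), not a tool for removing a coordinate. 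Integrating a tine coordinate over its small ball gives a volume factor $\alpha_0^{kd}$ but no factor of $C_{\mathrm p}$ and, worse, throws away the very quantity carrying the $\beta_2$-information. Similarly, peeling off tines via the product formula leaves you with the lower-dimensional polar sine $\mathrm{p}_{n-1}\!\sin_{x_0}(x_0,x_1,\ldots,x_n)$ on the handle subsimplex, which has no direct relation to the $d$-dimensional $\beta_2$ numbers; Proposition~\ref{proposition:psin-bound-deviations} applied to $X$ itself reintroduces the bad $\SCale_{x_0}(X)^{-1}\approx\alpha_0^{-k}$ you already flagged.

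The paper's actual route is a two-stage interpolation. First, it uses the concentration inequality to swap out the $n-1$ \emph{extra handles} (not the tines) for new points $z_1,\ldots,z_{N_n}$ at the tine scale, producing $2^{n-1}$ \emph{single-handled} rakes $X^s$; this step costs only $2^{n-1}C_{\mathrm p}^{2(n-1)}=O_d(1)$. Second, each $X^s$ lies in some $\name^1_{k',2}$ with $k'\leq k-1$, and one then applies the full machinery of Proposition~\ref{proposition:poorly-scaled-integration-d} (the well-scaled chain $\{X^s_q\}_{q=1}^{k'd+1}$, then geometric multipoles) to each $X^s$. The three powers of $(kd+1)$ come from: the length of this chain, the Cauchy--Schwarz step in Lemma~\ref{lemma:multiscale-toast}, and the sum over $k'$ from $2$ to $k-1$. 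The $\alpha_0^{kd}$ comes from volume gains in the multipoles step (Propositions~\ref{proposition:wide-hat-dead} and~\ref{proposition:under-over-dead}), exactly as in Proposition~\ref{proposition:gerry-halwell}. Without the chain construction there is no way to trade the bad scaling of $X$ for a controlled sequence of well-scaled simplices, and your argument stalls at the point where you need $\beta_2$-control with the right power of $\alpha_0$.
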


Proposition~\ref{proposition:journe} reduces the integration of the
Menger-type curvature in Theorem~\ref{theorem:upper-main} to
well-scaled simplices. We prove it in Section~\ref{section:journe}
by straightforward decomposition of the multivariate integral which
we refer to as geometric multipoles.

Propositions~\ref{proposition:poorly-scaled-integration-d}
and~\ref{proposition:poorly-scaled-integration} reduce the
integration of the Menger-type curvature in
Theorem~\ref{theorem:upper-main} to poorly-scaled simplices, which
are single-handled rakes in the former proposition and multi-handled
rakes in the latter one. We prove those propositions in
Sections~\ref{section:proof-prop-poorly-d}
and~\ref{section:later-integrate} respectively. Unlike
Proposition~\ref{proposition:journe}, the method of geometric
multipoles is not sufficient for their proof. Our basic idea is in
the spirit of the proof of~\cite{Jo_unpublished} (i.e.,
\cite[Theorem~31]{pajot_book}), and trades any poorly-scaled simplex
for a predictable sequence of well-scaled simplices satisfying an
extended version of the ``triangle inequality'' for the polar sine
(Proposition~\ref{proposition:concentration-inequality-1}). We then
apply the method of geometric multipoles to the well-scaled
simplices of the sequence.

\section{Proof of Proposition~\ref{proposition:journe} via Geometric Multipoles}
\label{section:journe}

Our proof of Proposition~\ref{proposition:journe} generates
approximate decompositions of the Menger-type curvature of $\mu$
according to goodness of approximations by $d$-planes at different
scales and locations. We refer to this strategy as {\em geometric
multipoles} and see it as a geometric analog of the decomposition of
special potentials by near-field interactions at different
locations, as applied in the fast multipoles algorithm~\cite{GR87}.
Unlike fast multipoles, which considers interactions between pairs
of points, geometric multipoles takes into account simultaneous
interactions between $d+2$ points. While fast multipoles  neglects
terms of distant interactions, $d$-dimensional geometric multipoles
may neglect locations and scales well-approximated by $d$-planes.

We first break the integral on the LHS of
equation~(\ref{equation:blow-1}) into a sum of integrals reflecting
different scales and locations in $Q\cap\Supp$. Then we control each
such integral by $\beta_2^2(B) \cdot \mu(B)$ for a unique $B \in
{\cal D}(Q)$.

For fixed $m\geq m(Q)$ (see
equation~(\ref{equation:integer-for-Q})), we define
\begin{equation}\widehat{S}(m)=
\left\{X\in \widehat{S}:\max\nolimits_{x_0}(X)\in
(\alpha_0^{m+1},\alpha_0^m]\right\},
\end{equation}and we let $\widehat{S}(m)(Q)$ denote the restriction of
$\widehat{S}(m)$ to the set $Q^{d+2}$. The argument $m$ indicates
the overall length scale of the elements and the subscript indicates
the relative scaling between the edges at $x_0$.  We note that the
family $\left\{\widehat{S}(m)(Q)\right\}_{m\in\,\mathbb{Z}}$
partitions $\widehat{S}(Q)$, and thus
\begin{equation}\label{equation:d-s-hat-sum}
\int_{\widehat{S}(
Q)}\frac{\pds^2_{x_0}(X)}{\diam(X)^{d(d+1)}}\di\mu^{d+2}(X)=\sum_{m\geq
m(Q)}
\int_{\widehat{S}(m)(Q)}\frac{\pds^2_{x_0}(X)}{\diam(X)^{d(d+1)}}\di\mu^{d+2}(X).
\end{equation}

Next, fixing the length scale determined by $m$, we partition each
$\widehat{S}(m)(Q)$ according to location in $\Supp$ determined by
the partition $\mathcal{P}_m$. For fixed $m\geq m(Q)$,
$j\in\Lambda_m$,  and  $P_{m,j}$ as in
Lemma~(\ref{lemma:set-partition}), let
\begin{equation}
\widehat{P}_{m,j}=\left\{X\in \widehat{S}(m)(Q):x_0\in
P_{m,j}\right\}.
\end{equation}
We thus obtain the inequality
$$
\int_{\widehat{S}(Q)}\frac{\pds^2_{x_0}(X)}{\diam(X)^{d(d+1)}}\di\mu^{d+2}(X)\leq\sum_{m\geq
m(Q)}\sum_{j\in\,\Lambda_m(Q)}\int_{\widehat{P}_{m,j}}\frac{\pds^2_{x_0}(X)}{\diam(X)^{d(d+1)}}\di\mu^{d+2}(X)\,.
$$

Fixing $m$ , $j\in\Lambda_m(Q)$, and $B_{m,j}\in{\cal B}_m(Q)$ such
that $\frac{1}{4}\cdot B_{m,j}\cap\Supp\subseteq
P_{m,j}\subseteq\frac{3}{4}\cdot B_{m,j}$ (see
Lemma~\ref{lemma:set-partition}) we will show that
\be \label{equation:well-scaled-constant}
\displaystyle\int_{\widehat{P}_{m,j}}\frac{\pds^2_{x_0}(X)}{\diam(X)^{d(d+1)}}\di\mu^{d+2}(X)
\leq
\frac{C_4}{\alpha_0^6} \cdot \beta^2_2(B_{m,j})\cdot\mu
(B_{m,j})\,.\ee
Summing over $j\in\Lambda_m(Q)$ and  $m\geq m(Q)$ will then conclude
the proposition.

We establish equation~\eqref{equation:well-scaled-constant} by
following the basic argument behind
Proposition~\ref{proposition:upper-bound-big-scale}. We first note
that for all $ X\in\widehat{P}_{m,j}\subseteq \widehat{S}(m)(Q)$
\begin{equation}\label{equation:goodly-good-tyler} \frac{\alpha_0}{8}\cdot\diam
\left(B_{m,j}\right)=\alpha_0^{m+1}<\max\nolimits_{x_0}(X)\leq\diam(X)\,.
\end{equation}
Then, fixing an arbitrary $d$-plane $L$ in $H$, and combining
Proposition~\ref{proposition:psin-bound-deviations} with
equation~(\ref{equation:goodly-good-tyler}), we obtain the following
inequality for all $X\in\widehat{P}_{m,j}$
\begin{equation}\label{equation:well-scaled-linear-bound}
\frac{\pds^2_{x_0}(X)}{\diam(X)^{d(d+1)}} \leq \frac{2\cdot 8^2
\cdot (d+1)^2\cdot
(d+2)^2}{\alpha^6\cdot\alpha_0^2}\cdot\frac{D^2_2\left(X,L\right)}{\diam^2\left(B_{m,j}\right)}\cdot\frac{1}
{\left(\alpha_0^{d(m+1)}\right)^{d+1}}.
\end{equation}
Furthermore, if $X = (x_0,\ldots,x_{d+1}) \in \widehat{P}_{m,j}$,
then $x_0 \in \frac{3}{4} \cdot B_{m,j}$, and $\|x_i-x_0\| \leq
\alpha_0^m = \diam(B_{m,j})/4$ for all $1\leq i \leq d+1$.
Consequently, $x_i \in B_{m,j}$ for all $0 \leq i \leq d+1$, and
thus
\begin{equation}\label{equation:well-scaled-nnclusion}
\widehat{P}_{m,j}\subseteq \left(B_{m,j}\right)^{d+2}.
\end{equation}
Combining equations~(\ref{equation:well-scaled-linear-bound})
and~(\ref{equation:well-scaled-nnclusion}) we obtain the bound
\begin{multline}\label{equation:individual-bound}
\int_{\widehat{P}_{m,j}}\frac{\pds_{x_0}^2(X)}{\diam(X)^{d(d+1)}}\di\mu^{d+2}(X)
\leq\\ \frac{2\cdot8^2\cdot(d+1)^2\cdot
(d+2)^2}{\alpha^6\cdot\alpha_0^2}\int_{\left( B_{m,j}\right)^{d+2}}
\frac{D^2_2\left(X,L\right)}{\diam^2\left(B_{m,j}\right)} \,
\frac{\fdi\mu^{d+2}(X)}{\left(\alpha_0^{d(m+1)}\right)^{d+1}}.
\end{multline}
Finally, applying the same types of computations to the RHS of
equation~\eqref{equation:individual-bound} that led to
equations~(\ref{equation:above-3}) and~(\ref{equation:above-4})  we
obtain equation~(\ref{equation:well-scaled-constant}) and hence the
proposition.\qed

\section{Proof of
Proposition~\ref{proposition:poorly-scaled-integration-d} via
Multiscale Inequalities} \label{section:proof-prop-poorly-d}

Here we develop a multiscale inequality for the polar sine that
results in a proof of
Proposition~\ref{proposition:poorly-scaled-integration-d}. The basic
idea is to take an $X\in\name_{k,1}$ and carefully construct a
sequence of simplices, $\{X_n\}_{n=1}^{n(X)}$, well-scaled at $x_0$
and depending on $X$ such that
\begin{equation}\label{firstofmany}\pds_{x_0}(X)\lesssim \sum_{j=1}^{n(X)}\pds_{x_0}(X_j),\end{equation}
where neither the length of the sequence, $n(X)$, nor the size of
the comparability constant in the above inequality (also depending
on $n(X)$) are ``too large''.

Such a sequence will have zeroth coordinate $x_0=(X)_0$, and overall
length scales progressing geometrically from $\min_{x_0}(X)$ to
$\max_{x_0}(X)$.  In this way the length of the sequence, $n(X)$,
will be such that $$n(X)\lesssim
\log\left(\frac{\max_{x_0}(X)}{\min_{x_0}(X)}\right)\lesssim k.$$

As we mentioned previously, a similar approach  used in the
one-dimensional case to control $c^2_M(\mu)$ by the $\beta_\infty$
numbers~\cite{Jo_unpublished,pajot_book} was the inspiration for our
approach. However, generalizing it to higher-dimensions, as well as
the $\beta_2$ numbers, required us to go substantially beyond what
was present for $d=1$ in a few ways as shown throughout the
arguments of the proof.

In order to construct these sequences and clearly formulate the
inequalities  we require some development in terms of ideas and
notation. Subsection~\ref{subsection:multiscale} develops the notion
of well-scaled sequences and a related discrete multiscale
inequality for the polar sine.
Subsection~\ref{subsection:background-integration} turns this
inequality into a multiscale integral inequality. Finally, in
Subsection~\ref{subsection:integration-d} we prove
Proposition~\ref{proposition:poorly-scaled-integration-d}.
Throughout this section we arbitrarily fix $k \geq 3$ and take $p=1$
or $p=2$ (depending on the context), where $p=2$ is only needed for
definitions or calculations that will be used in
Section~\ref{section:later-integrate}.

\subsection{From Rakes to Well-Scaled Sequences}\label{subsection:multiscale}We recall that $\name_{k,p}^1$
is the set of rakes whose single handles are obtained at their first
coordinate, such that
$\alpha_0^{k+p}<\SCale_{x_0}(X)\leq\alpha_0^k$.

We explain here how to ``decompose'' elements of $\name_{k,p}^1$
into a sequence of well-scaled simplices satisfying an inequality of
the form in equation~\eqref{firstofmany}.  That is, we  take an
element with one very large edge at $x_0$ and swap it for a
predictable sequence of elements, each with all edges at $x_0$
comparable, such that the sum of the polar sines controls that of
the original simplex.  This involves (regrettably) some technical
definitions and corresponding notation, the first of which is the
following shorthand notation for annuli.

If $n \in \ints$ and $\alpha_0$ is the fixed constant of
equation~\eqref{equation:alpha-loose}, we use the notation
\begin{equation}
\label{eq:def-annulus_n} A_n(x,r)=B(x,\alpha_0^n\cdot r) \setminus
B(x,\alpha_0^{n+1} \cdot r)\,.
\end{equation}

\subsubsection{Well-Scaled Pieces and Augmented Elements}
\label{section:well-scaled}
We define a {\em well-scaled piece} for $X\in \name_{k,p}^1$ to be a
($k\cdot d$)-tuple of the form
%
\begin{equation}\label{equation:def-well-scaled}Y_{X}=\left(y_1,\ldots,y_{k\cdot
d}\right)\in\,\prod_{q=1}^{k\cdot
d}A_{k-\left\lceil\frac{q}{d}\right\rceil}
(x_0,\max\nolimits_{x_0}\left(X\right))\,.\end{equation}
The coordinates of  $Y_X$ are  grouped into $k$ distinct clusters of
$d$ points, with each individual cluster lying in a distinct annulus
centered at $x_0$ (see Figure~\ref{figure:single}).

\begin{figure}[htbp]
     \centering
     \subfigure[simplex $X$ and $Y_X$]{\label{figure:single}
          \includegraphics[height=1.9in,width=1.9in]{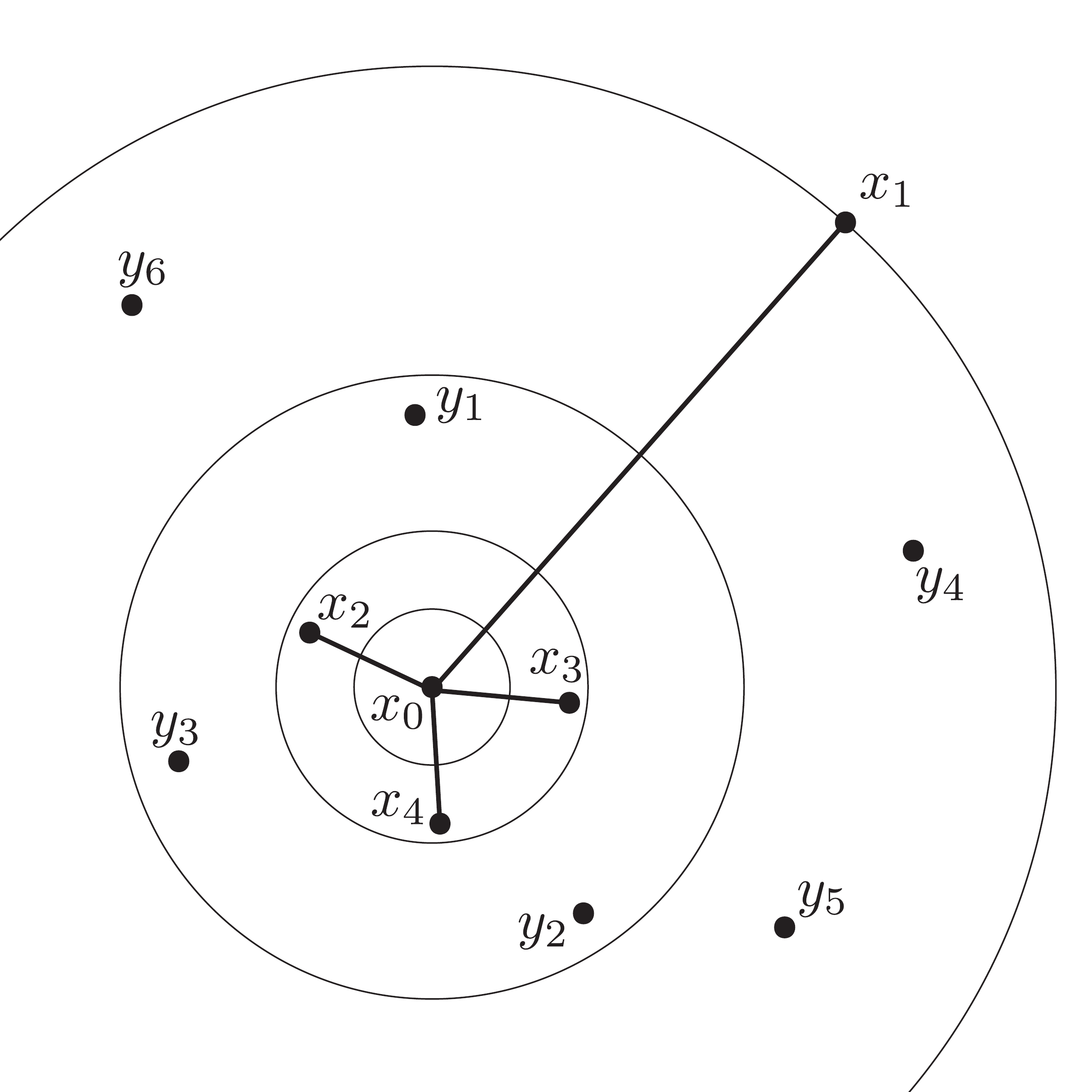}}
     \hspace{.1in}
     \subfigure[auxiliary simplex $\widetilde{X}_1$]{\label{figure:auxiliary}
           \includegraphics[height=1.9in,width=1.9in]{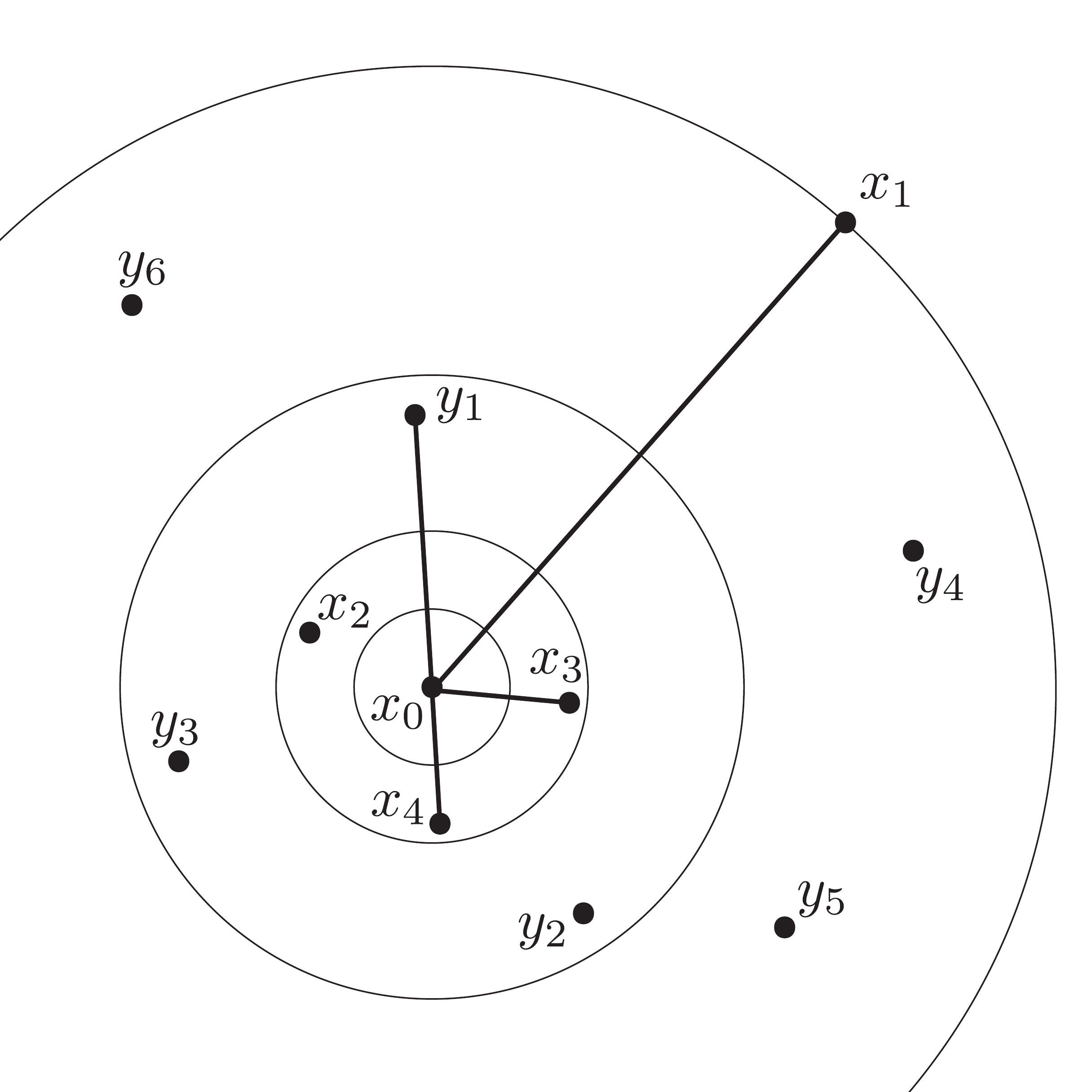}}
     \hspace{.1in}
     \subfigure[well-scaled simplex $X_1$]{\label{figure:shortscale}
           \includegraphics[height=1.9in,width=1.9in]{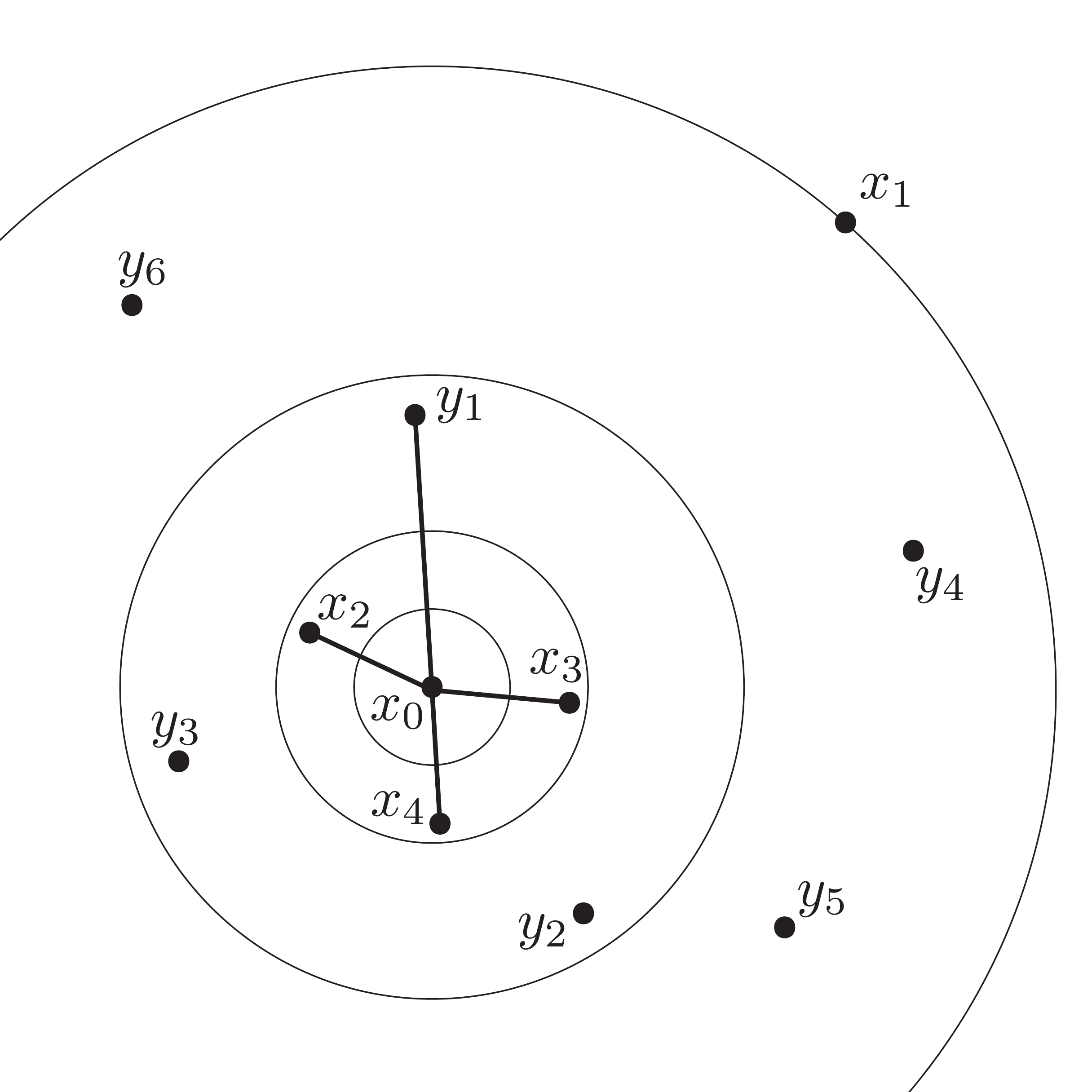}}
     \vspace{.3in}
     \subfigure[well-scaled simplex $X_2$]{\label{figure:shortscale1}
           \includegraphics[height=1.9in,width=1.9in]{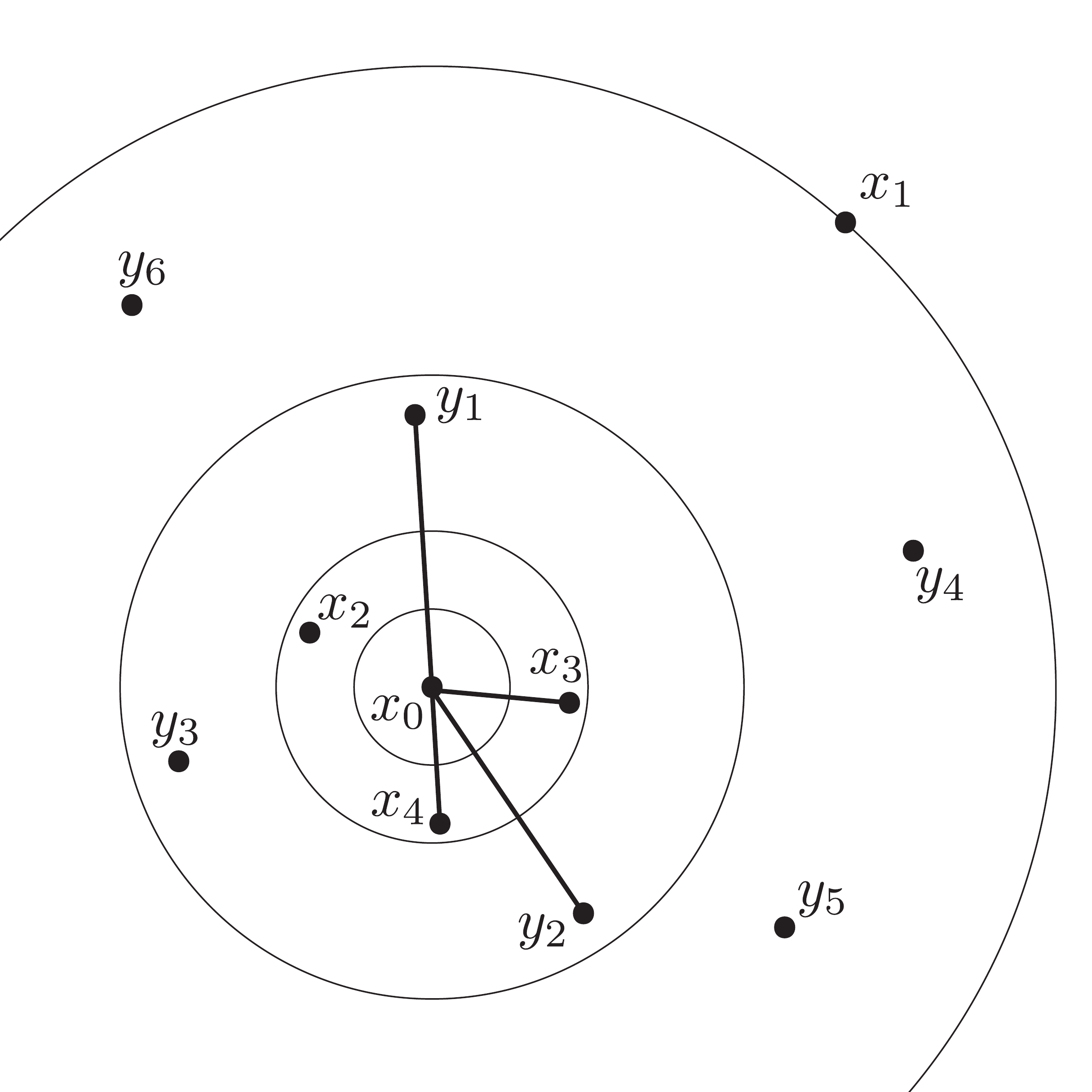}}
     \hspace{.1in}
     \subfigure[well-scaled simplex $X_4$]{\label{figure:shortscale2}
           \includegraphics[height=1.9in,width=1.9in]{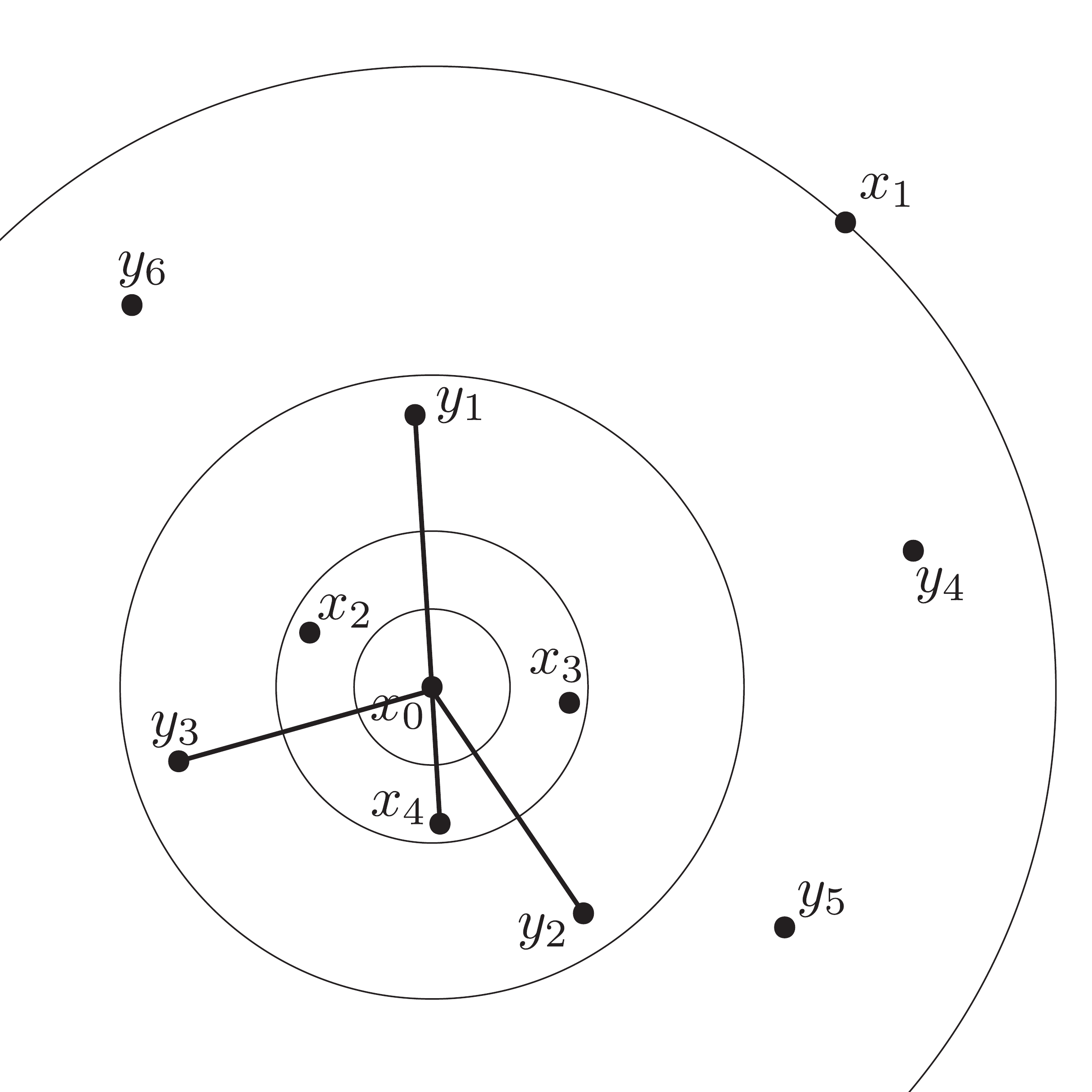}}
     \hspace{.1in}
     \subfigure[well-scaled simplex $X_7$]{\label{figure:shortscale3}
           \includegraphics[height=1.9in,width=1.9in]{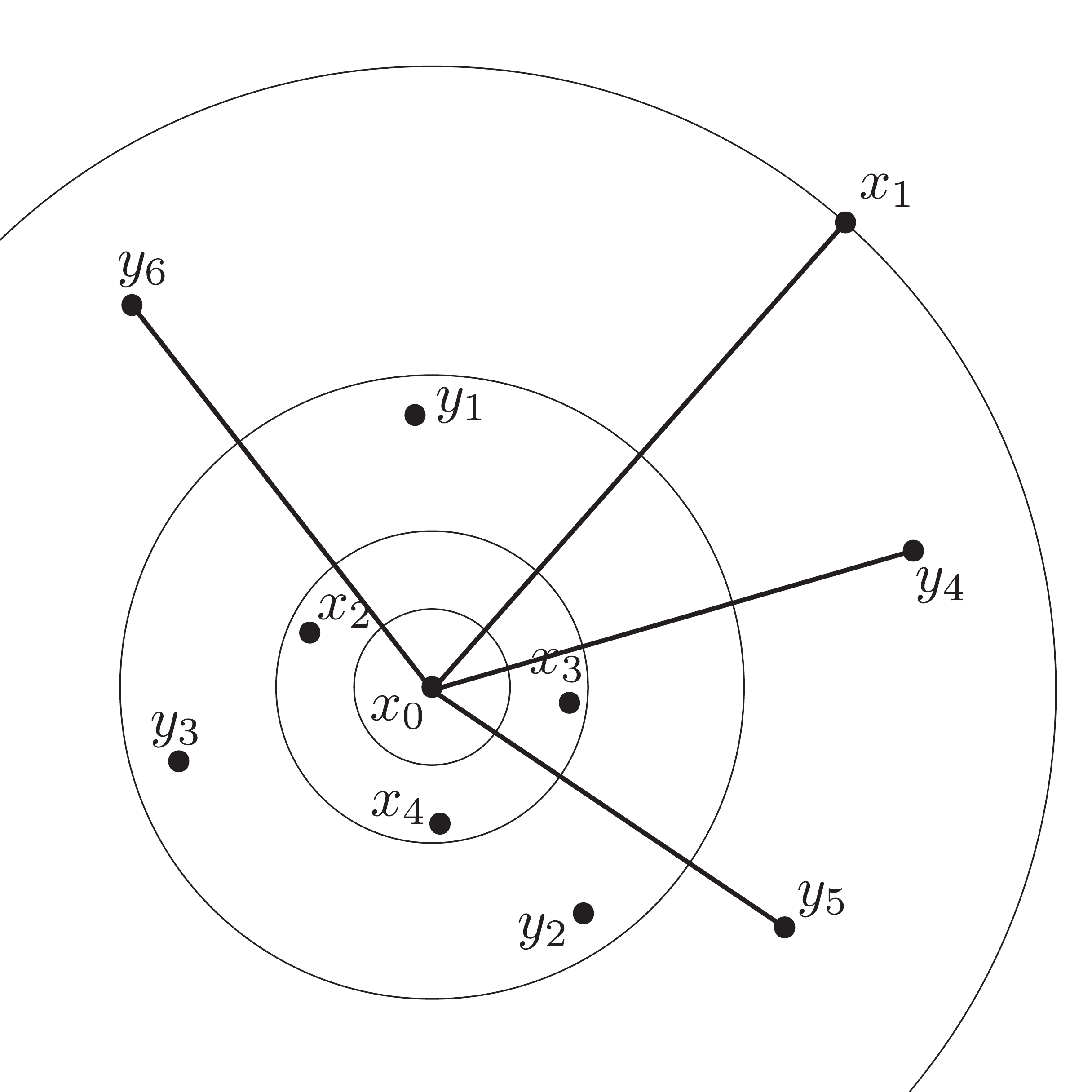}}
       \caption{Illustration of the well-scaled piece and induced sequences.
       We assume $d=3$, $k=2$, and $p=1$ and a simplex
       $X=(x_0,\ldots,x_5)$ (we use the case $k=2$ for convenience of drawing
       even though $X$ itself is well-scaled, but we notice that the construction produces simplices of smaller scales).
       The radii of the circles (ordered from outer to inner) are $\|x_1-x_0\|$,
       $\alpha_0\|x_1-x_0\|$, $\alpha_0^2\|x_1-x_0\|$,
       and $\alpha_0^3\|x_1-x_0\|$.
       The simplex $X$ and the well-scaled piece $Y_X$ are shown in (a). The auxiliary simplex,
       $\widetilde{X}_1$, is exemplified in (b) as an element meditating
       between the well-scaled simplex $X_1$ shown next and the original
       simplex $X$.
       The elements of the well-scaled sequence
       $X_1$, $X_2$, $X_4$ and $X_7$ are shown in~(c)-(f) respectively (note that $X_7$ is not listed in
       the example below).}
\end{figure}

For $X\in \name_{k,p}^1$ and a  well-scaled piece, $Y_X$, we define
the {\em augmentation of $X$ by $Y_X$} as
\begin{equation}\label{equation:def-underline-X}\underline{X}=X\times Y_X=(x_0,\ldots,x_{d+1},y_1,
\ldots,y_{k\cdot d})\in \name_{k,p}^1\times H^{k\cdot
d}.\end{equation}

For a fixed augmented element $\underline{X},$ we define two
sequences in $H^{d+2}$, the {\em auxiliary sequence},
$\widetilde{\Phi}_k\left(\underline{X}\right)
=\left\{\widetilde{X}_q\right\}_{q=0}^{k\cdot d}$, and  the {\em
well-scaled sequence}
$\Phi_k\left(\underline{X}\right)=\left\{X_q\right\}_{q=1}^{k\cdot
d+1}$. They will be used to formulate a multiscale inequality for
the polar sine function. We remark that the auxiliary sequence is
auxiliary in the sense that it is only used to establish the
inequality of equation~\eqref{firstofmany} for the well-scaled
sequence $\Phi(\underline{X})$.

\subsubsection{The Auxiliary Sequence
}\label{subsection:phi} If $a\in\,\mathbb{Z}$, then  let
$\overline{a}\in\{2,\ldots,d+1\}$ denote the unique integer such
that $\overline{a}=a \bmod{d}$.  We form the {\em auxiliary
sequence} $\left\{\widetilde{X}_q\right\}^{k\cdot d}_{q=0}$
recursively as follows.

\begin{definition}\label{definition:tilde-phi} If $X = (x_0,\ldots
,x_{d+1})\in \name_{k,p}^1$ and $\underline{X}=X\times
Y_X=(x_0,\ldots,x_{d+1} ,y_1,\ldots,y_{k\cdot d})$, then let
$\widetilde{\Phi}_k(\underline{X})=\left\{\widetilde{X}_q\right\}_{q=0}^{k\cdot
d}$ be the sequence of elements in $H^{d+2}$ defined recursively as
follows:
$$
\widetilde{X}_0=X,
$$
and
\begin{equation}\label{equation:def-X-q-tilde}\widetilde{X}_q=\widetilde{X}_{q-1}
\left(y_q,\overline{q+1}\right)\ \textup{ for }\ 1\leq q\leq k\cdot
d.
\end{equation}
\end{definition}
For example, if $d=3$, then
\begin{displaymath}\begin{array}{ccc}
\widetilde{X}_1=(x_0,x_1,y_1,x_3,x_4),&\widetilde{X}_2=(x_0,x_1,y_1,y_2,x_4),
&\widetilde{X}_3=(x_0,x_1,y_1,y_2,y_3);\\
\widetilde{X}_4=(x_0,x_1,y_4,y_2,y_3),&
\widetilde{X}_5=(x_0,x_1,y_4,y_5,y_3),&\widetilde{X}_6=(x_0,x_1,y_4,y_5,y_6).
\end{array}
\end{displaymath}
where $\widetilde{X}_1$ is illustrated in
Figure~\ref{figure:auxiliary}.

In general, we note that the elements $\widetilde{X}_q$ have the
following form:
\begin{equation}\label{equation:X-tilde}
\widetilde{X}_q=\begin{cases}(x_0,x_1,y_1,\ldots,y_q,x_{q+2},
\ldots,x_{d+1}),&\textup{if }1\leq q\leq d-1;\\(x_0,x_1,y_{j\cdot
d+1},\ldots,y_q,y_{q-d+1},\ldots,y_{j\cdot d}),&\textup{if } j\cdot
d<q<(j+1)\cdot d \ \textup{ for }1\leq j\leq
k-1;\\(x_0,x_1,y_{(j-1)\cdot d+1},\ldots,y_{j\cdot d}), &\textup{if
} q=j\cdot d \ \textup{ for }1\leq j\leq k.
\end{cases}
\end{equation}
In the special case where $d=1$, the first two cases of
equation~(\ref{equation:X-tilde}) are meaningless and
$\widetilde{X}_q=(x_0,x_1,y_q)$ for all $1\leq q\leq k$.

\subsubsection{The Well-Scaled Sequence
}\label{subsection:phi-1}

We derive the  {\em well-scaled} sequence $\Phi_k(\underline{X})$
from the auxiliary sequence $\widetilde{\Phi}_k(\underline{X})$ as
follows.

\begin{definition}\label{definition:phi} If $X\in \name_{k,p}^1$ and
$\underline{X}=X\times Y_X=(x_0,\ldots,x_{d+1},y_1,\ldots,y_{k\cdot
d})$, then let $\Phi_k(\underline{X})=\{X_q\}_{q=1}^{k\cdot d+1}$ be
the sequence of elements in $H^{d+2}$ such that
\begin{equation}\label{equation:def-X-q}
X_q=\begin{cases}\widetilde{X}_{q-1}\left(y_q,1\right), &\textup{if
}
1\leq q\leq k\cdot d;\\
\widetilde{X}_{k\cdot d},&\textup{if } q=k\cdot d+1.
\end{cases}
\end{equation}
\end{definition}

For example, if $d=3$, then the first six elements of the sequence
are
\begin{displaymath}\begin{array}{ccc}
X_1=(x_0,y_1,x_2,x_3,x_4),&X_2=(x_0,y_2,y_1,x_3,x_4),&X_3=(x_0,y_3,y_1,
y_2,x_4);\\
X_4=(x_0,y_4,y_1,y_2,y_3),&
X_5=(x_0,y_5,y_4,y_2,y_3),&X_6=(x_0,y_6,y_4, y_5,y_3).
\end{array}
\end{displaymath}
We illustrate $X_1$, $X_2$, $X_4$ and even $X_7$ in
Figures~\ref{figure:shortscale}-\ref{figure:shortscale3}.

We also note that in the very special case where $d=1$, then $X_1 =
(x_0,y_1,x_2)$, $X_q = (x_0,y_q,y_{q-1})$, $1<q\leq k\cdot d+1$ and
$X_{k\cdot d+1} = (x_0,x_1,y_{k\cdot d})$.

The following lemma shows that the elements
$X_q\in\,\Phi_k\left(\underline{X}\right)$ , $1\leq q\leq k\cdot
d+1$, are indeed well-scaled at the  vertex $x_0$. It follows
directly from the definition of the well-scaled sequence by checking
that each of the coordinates of the simplices $X_q$, $q=1, \ldots,
k\cdot d+1$, are in the correct annulus centered at $x_0$.
\begin{lemma}\label{lemma:index-function-no-tilde} If
$X\in \name_{k,p}^1$ and $\underline{X}=X\times Y_X$, then each term
of the sequence
$\Phi_k(\underline{X})=\left\{X_q\right\}_{q=1}^{k\cdot d+1}$ is
well-scaled at $x_0$ and we have the following estimates:
\begin{equation}\label{equation:max-bound-q}
\alpha_0^{k+1-\left\lceil\frac{q}{d}\right\rceil}\cdot\max\nolimits_{x_0}(X)<
\max\nolimits_{x_0}(X_q)\leq\alpha_0^{k-\left\lceil
\frac{q}{d}\right\rceil}\cdot\max\nolimits_{x_0}(X),\ \textup{ if }
\ 1\leq q\leq k\cdot d,
\end{equation}
and
\begin{equation}\label{equation:min-max-bound-kd+1}
\alpha_0\cdot\max\nolimits_{x_0}(X)<\min\nolimits_{x_0}(X_q)
\leq\max\nolimits_{x_0}(X_q)=\max\nolimits_{x_0}(X),\ \textup{ if }
\ q=k\cdot d+1.
\end{equation}
\end{lemma}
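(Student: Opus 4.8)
\textbf{Proof plan for Lemma~\ref{lemma:index-function-no-tilde}.}
The plan is to track, coordinate by coordinate, in which annulus $A_n(x_0,\max_{x_0}(X))$ each vertex of $X_q$ lives, and then read off the ratio $\min_{x_0}(X_q)/\max_{x_0}(X_q)$ directly from those annular bounds. Write $R=\max_{x_0}(X)$ for brevity. Recall from \eqref{equation:def-well-scaled} that the $q$-th entry $y_q$ of the well-scaled piece $Y_X$ lies in $A_{k-\lceil q/d\rceil}(x_0,R)$, i.e.
$$\alpha_0^{k-\lceil q/d\rceil+1}\cdot R<\|y_q-x_0\|\leq\alpha_0^{k-\lceil q/d\rceil}\cdot R,$$
and recall that, since $X\in\name_{k,p}^1$, the single handle is at $x_1$ (so $\|x_1-x_0\|=R$), while all tines $x_2,\ldots,x_{d+1}$ satisfy $\alpha_0^{k+p}R<\|x_i-x_0\|\leq\alpha_0^kR$.

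First I would settle the case $1\leq q\leq k\cdot d$. Using the explicit form of $\widetilde X_{q-1}$ given in \eqref{equation:X-tilde} together with the replacement rule \eqref{equation:def-X-q} (which puts $y_q$ in the zeroth-after-$x_0$ slot, i.e. replaces coordinate $1$, so the handle $x_1$ is discarded), one checks that the nonzero-index coordinates of $X_q$ are exactly $y_q$ (newly inserted) together with $d$ further vertices that are either fresh $y$'s with indices in the block $\{(\lceil q/d\rceil-1)d+1,\ldots\}$ lying in the \emph{same} annulus $A_{k-\lceil q/d\rceil}$, or (when $q\leq d-1$) the leftover tines $x_{q+2},\ldots,x_{d+1}$, which lie in annuli $A_k,\ldots,A_{k+p-1}$ — in all cases no deeper than $A_{k-\lceil q/d\rceil}$ once $q\geq1$ and no shallower than roughly $A_{k+p-1}$. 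Comparing the extreme radii across these annuli gives, for the claimed index,
$$\max\nolimits_{x_0}(X_q)\leq\alpha_0^{k-\lceil q/d\rceil}R\quad\text{and}\quad\min\nolimits_{x_0}(X_q)>\alpha_0^{k-\lceil q/d\rceil+1}R,$$
which is exactly \eqref{equation:max-bound-q}; the well-scaledness $\min_{x_0}(X_q)/\max_{x_0}(X_q)>\alpha_0$ (hence $>\alpha_0^3$ as $\alpha_0\leq1$) follows immediately, and one only has to double-check the transitional indices $q\in\{d-1,d,d+1\}$ where the mixed regime (leftover tines versus fresh $y$'s) must be reconciled — here the hypothesis $p\in\{1,2\}$ keeps the tine annuli within $O(1)$ of $A_k$, so the bound survives, possibly needing $\alpha_0^3$ rather than $\alpha_0$ as the universal lower ratio.

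For $q=k\cdot d+1$ the situation is different: by \eqref{equation:def-X-q} we have $X_{k\cdot d+1}=\widetilde X_{k\cdot d}=(x_0,x_1,y_{(k-1)d+1},\ldots,y_{kd})$, so the handle $x_1$ is retained and $\max_{x_0}(X_{k\cdot d+1})=\|x_1-x_0\|=R=\max_{x_0}(X)$, giving the equality in \eqref{equation:min-max-bound-kd+1}. The remaining vertices are the last block $y_{(k-1)d+1},\ldots,y_{kd}$, all lying in $A_{k-k}(x_0,R)=A_0(x_0,R)=B(x_0,R)\setminus B(x_0,\alpha_0 R)$, so $\|y_i-x_0\|>\alpha_0 R$; hence $\min_{x_0}(X_{k\cdot d+1})>\alpha_0 R=\alpha_0\max_{x_0}(X)$, which is the left inequality of \eqref{equation:min-max-bound-kd+1}, and again $\min_{x_0}/\max_{x_0}>\alpha_0^3$. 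I expect the main obstacle to be purely bookkeeping rather than conceptual: correctly reading off from \eqref{equation:X-tilde}–\eqref{equation:def-X-q} exactly which $y$-indices and which leftover $x$-tines appear in $X_q$ for each residue class of $q$ modulo $d$, and verifying that the ceiling function $\lceil q/d\rceil$ indexing the annulus matches the block structure at the boundary values of $q$; once that combinatorial indexing is pinned down, the inequalities are immediate comparisons of powers of $\alpha_0$.
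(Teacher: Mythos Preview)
Your plan is exactly what the paper does (it merely says the lemma ``follows directly from the definition of the well-scaled sequence by checking that each of the coordinates of the simplices $X_q$ \ldots are in the correct annulus centered at $x_0$''): track annuli coordinate by coordinate and read off the bounds. Two small bookkeeping slips to fix when you write it out: for $q>d$ the retained $y$-coordinates of $X_q$ actually span \emph{two} adjacent annuli $A_{k-\lceil q/d\rceil}$ and $A_{k-\lceil q/d\rceil+1}$ (not one), and a leftover tine persists through $q=d$ (not only $q\le d-1$), so your displayed inequality $\min_{x_0}(X_q)>\alpha_0^{k-\lceil q/d\rceil+1}R$ fails for $1\le q\le d$ --- but, as you anticipate, the ratio is still $>\alpha_0^{p+1}\ge\alpha_0^3$ there, and the lower bound in \eqref{equation:max-bound-q} comes from $\|y_q-x_0\|$ alone.
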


\subsubsection{Augmented Sets and a Discrete Multiscale Inequality
}

Lemma~\ref{lemma:index-function-no-tilde} assures us that the
elements $X_q$ have the correct structure in terms of relative
scale, but we still have to assure ourselves that we can pick the
sequence $\Phi(\underline{X})=\{X_q\}_{q=1}^{k\cdot d+1}$ so that it
satisfies the inequality of equation~\eqref{firstofmany}.  This is
easy to accomplish as long as we impose some conditions on this
sequence as well as the auxiliary sequence as we are choosing them.
We clarify this as follows.

Using the constant $C_{\mathrm{p}}$ of
Proposition~\ref{proposition:concentration-inequality-1}, we form
the {\em set augmentation} of the set $\name_{k,p}^1$, denoted by
$\underline{\name_{k,p}^1}$, as follows:
\begin{multline}\label{equation:def-big-omega}
\underline{\name_{k,p}^1}=\Bigg\{\underline{X}\in
\name_{k,p}^1\times[\Supp]^{k\cdot d} :\textup{ the sequences
}\widetilde{\Phi}_k(\underline{X})\textup{ and }
\Phi_k(\underline{X})\textup{ satisfy the inequality }\\
\pds_{x_0}\left(\widetilde{X}_q\right)\leq
C_{\mathrm{p}}\cdot\left(\pds_{x_0}\left(X_{q+1}\right)
+\pds_{x_0}\left(\widetilde{X}_{q+1}\right)\right),\ \textup{ for
all } \ 0\leq q< k \cdot d\Bigg\}\,.
\end{multline}
We note that this set has some complicated structure and that it is
not simply a product set.  Essentially it is a set of augmented
elements such that each coordinate is conditioned on the previous
coordinates in such a way as to satisfy a sequence of two-term
inequalities.  In this way the  well-scaled sequence
$\Phi(\underline{X})$ satisfies an inequality of the form in
equation~\eqref{firstofmany} by simply iterating the two-term
inequality of equation~\eqref{equation:def-big-omega}.

In fact, the  sets $\underline{\name_{k,p}^1}$ give rise to the
following multiscale inequality, whose direct proof (which we omit)
is based on a simple iterative argument followed by an application
of the Cauchy-Schwartz inequality.
\begin{lemma}\label{lemma:multiscale-toast}If $\underline{X}\in\underline{\name_{k,p}^1}$, then the
elements of the corresponding well-scaled sequence
$\Phi_k(\underline{X})=\{X_q\}_{q=1}^{k\cdot d+1}$ satisfy the
inequality
\begin{equation*}\pds^2_{x_0}(X)\leq (k\cdot d+1)\cdot
C_{\mathrm{p}}^{2\cdot k\cdot d}\, \sum_{q=1}^{k\cdot d+1}
\pds^2_{x_0} \left(X_q\right).
\end{equation*}
\end{lemma}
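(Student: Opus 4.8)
The plan is to prove Lemma~\ref{lemma:multiscale-toast} by iterating the defining two-term inequality of the set augmentation $\underline{\name_{k,p}^1}$ (equation~\eqref{equation:def-big-omega}) and then passing from an $\ell_1$-type bound to the $\ell_2$-type bound via Cauchy--Schwarz. First I would set up notation: write $a_q = \pds_{x_0}(\widetilde{X}_q)$ and $b_q = \pds_{x_0}(X_q)$, so that $\widetilde{X}_0 = X$ gives $a_0 = \pds_{x_0}(X)$, and the hypothesis $\underline{X}\in\underline{\name_{k,p}^1}$ says exactly that $a_q \leq C_{\mathrm{p}}\,(b_{q+1} + a_{q+1})$ for all $0\leq q < k\cdot d$. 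Also note $\widetilde{X}_{k\cdot d} = X_{k\cdot d+1}$ by the second case of equation~\eqref{equation:def-X-q}, so $a_{k\cdot d} = b_{k\cdot d+1}$.

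The key step is the iteration. Starting from $a_0 \leq C_{\mathrm{p}}(b_1 + a_1)$ and repeatedly substituting the bound for $a_1, a_2, \ldots$, I would obtain by a straightforward induction that
\[
a_0 \leq \sum_{q=1}^{k\cdot d} C_{\mathrm{p}}^{\,q}\, b_q + C_{\mathrm{p}}^{\,k\cdot d}\, a_{k\cdot d}
= \sum_{q=1}^{k\cdot d} C_{\mathrm{p}}^{\,q}\, b_q + C_{\mathrm{p}}^{\,k\cdot d}\, b_{k\cdot d+1}.
\]
Since $C_{\mathrm{p}}\geq 1$ (indeed $C_{\mathrm{p}}\geq \pi^2/4 > 1$ when $d>1$, and $C_{\mathrm{p}}=1$ when $d=1$), each coefficient $C_{\mathrm{p}}^{\,q}$ with $q\leq k\cdot d$ is bounded by $C_{\mathrm{p}}^{\,k\cdot d}$, so
\[
\pds_{x_0}(X) = a_0 \leq C_{\mathrm{p}}^{\,k\cdot d}\sum_{q=1}^{k\cdot d+1} \pds_{x_0}(X_q).
\]
Then I would apply the Cauchy--Schwarz inequality to the sum of $k\cdot d+1$ terms: $\left(\sum_{q=1}^{k\cdot d+1}\pds_{x_0}(X_q)\right)^2 \leq (k\cdot d+1)\sum_{q=1}^{k\cdot d+1}\pds^2_{x_0}(X_q)$. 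Squaring the displayed inequality and combining gives
\[
\pds^2_{x_0}(X) \leq C_{\mathrm{p}}^{\,2\cdot k\cdot d}\cdot (k\cdot d+1)\sum_{q=1}^{k\cdot d+1}\pds^2_{x_0}(X_q),
\]
which is exactly the claimed inequality.

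The only mild subtlety — and the place I would be most careful — is bookkeeping in the induction: making sure the index ranges line up so that the recursion terminates correctly at $q = k\cdot d$ and that the final term is accounted for by the identification $\widetilde{X}_{k\cdot d} = X_{k\cdot d+1}$, rather than double-counting or dropping a term. One should also verify that the induction hypothesis only ever invokes the two-term inequality for indices $q$ strictly less than $k\cdot d$, which is precisely the range guaranteed by the definition of $\underline{\name_{k,p}^1}$. Beyond this indexing care, there is no real obstacle: everything reduces to a finite geometric-type iteration plus one application of Cauchy--Schwarz, which is why the paper chooses to omit the details.
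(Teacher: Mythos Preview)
Your proposal is correct and matches the paper's indicated approach exactly: the paper states that the proof ``is based on a simple iterative argument followed by an application of the Cauchy-Schwartz inequality,'' which is precisely what you carry out, including the correct terminal identification $\widetilde{X}_{k\cdot d}=X_{k\cdot d+1}$ and the use of $C_{\mathrm{p}}\geq 1$ to absorb the varying powers.
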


At this point, the motivation behind our construction of the
well-scaled and auxiliary sequences may become somewhat more
apparent. Our construction was governed by the iteration of the
two-term inequality of equation~\eqref{equation:def-big-omega}, with
the purpose of iteratively swapping out the simplices with bad
scaling, $\widetilde{X}_q$, for a  well-scaled simplex, $X_q$, and a
simplex $\widetilde{X}_{q+1}$ whose scaling is slightly better than
that of $\widetilde{X}_q$ because {\em fewer} of its edges are
grossly disproportionate. In this way, we gradually move from bad to
better, with each stage leaving us with an acceptable ``remainder'',
i.e., $X_q$.  We must be somewhat careful in this process, because
if we move too quickly,  somewhere down the line we will generate
simplices with worse structure than what we want.  This is the basic
reason that we have $d$ interpolating coordinates, $y_i$, in a given
annulus.  We must make sure that while we are ``growing''  the
interpolating coordinates out from $x_0$, they still remain
concordant with the length scales of the previous step.

\subsection{Estimating the Size of $\underline{\name_{k,p}}^1$}
\label{subsection:background-integration} We show here that for any
$X\in \name_{k,p}^1$, the corresponding ``slice''  in
$\underline{\name_{k,p}^1}$ is uniformly ``quite large'' for each
such $X$, and thus Lemma~\ref{lemma:multiscale-toast} can be applied
somewhat indiscriminately.  Later in Section~\ref{sizematters} we
will use that fact to show that the integral over $\name_{k,1}^1$,
can be bounded in a meaningful way by a corresponding integral over
the set $\underline{\name_{k,1}^1}$.

Once again, for the sake of clarity we need to rely on a bit of
technical notation to account for the structure of the set
$\underline{\name_{k,p}^1}$. While the notation is a bit cumbersome,
we believe that the idea is simple enough.

\subsubsection{Truncations and Projections of $\underline{\name_{k,p}^1}$}\label{subsubsection:trunc-proj-1}
We fix $0\leq q\leq k\cdot d$. If
$\underline{X}=(x_0,\ldots,y_{k\cdot d})\in\underline{
\name_{k,p}^1}$, then we define the {\em $q^{\text{th}}$ truncation
of $\underline{X}$} to be the function $ T_q:\underline{
\name_{k,p}^1}\rightarrow H^{d+2+q}$, where
\begin{equation}
T_q\left(\underline{X}\right)=\begin{cases}\quad\quad\quad\quad
X,&\textup{if }q=0;
\\(x_0,\ldots,x_{d+1},y_1,\ldots,y_q),&\textup{if }1\leq q\leq k\cdot
d.\end{cases}
\end{equation}
The ($d+2+q$)-tuple $ T_q\left(\underline{X}\right)$ is not to be
confused with the projection $(\underline{X})_q\in H$. If
$A\subseteq\underline{\name_{k,p}^1}$, then we denote the image of
$A$ by $T_q(A)$.

For $\underline{X}=(x_0,\ldots,y_{k\cdot
d})\in\underline{\name_{k,p}^1}$, we denote the pre-image of
$T_q\left(\underline{X}\right)=(x_0,\ldots,y_q)$ by
\begin{equation}
T^{-1}_q(x_0,\ldots,y_q)=\left\{\underline{X}'\in
\underline{\name_{k,p}^1}: \
T_q\left(\underline{X}'\right)=T_q\left(\underline{X}\right)=(x_0,
\ldots,y_q)\right\},
\end{equation}
where $(x_0,\ldots,y_q)$ is taken to mean $X$ if $q=0$.


Now, fixing $1\leq q\leq k\cdot d$, we define the $q^{th}$
projection of $\underline{X}$ onto $H$. For
$\underline{X}=(x_0,\ldots,y_{k\cdot
d})\in\underline{\name_{k,p}^1}$, let
$$
\pi_q(\underline{X})=y_q=\left(\underline{X}\right)_{d+1+q}.
$$
The set
$\pi_q\left(T_{q-1}^{-1}\left(x_0,\ldots,y_{q-1}\right)\right)$ is
composed of all possible $q^{th}$ coordinates of the well-scaled
pieces $Y_X=(y_1,\ldots,y_{k\cdot d})$ such that
$\underline{X}'=X\times Y_X\in T^{-1}_{q-1}(x_0, \ldots,y_{q-1})$.

We note that in this way we are giving another clear indication of
how the coordinates $y_q$ of
$\underline{X}\in\underline{\name_{k,p}^1}$ are ``conditioned'' on
the previous coordinates $(x_0,\ldots,x_{d+1},\ldots,y_{q-1})$.

\subsubsection{The ``Size'' of $\underline{\name_{k,p}^1}$}
For any $\underline{X}\in\underline{\name_{k,p}^1}$ and all $1\leq
q\leq k\cdot d$, we define the functions
\begin{equation}\label{equation:def-h-k}
g_{k,q}^1\left(\underline{X}\right)=
\mu\left(\pi_q\left(
T_{q-1}^{-1}\left(T_{q-1}\left(\underline{X}\right)\right)\right)\right).
\end{equation}The double subscript in this
case is not intended to evoke the index $p=1,2$.  It is important to
note that the functions $g^1_{k,q}(\underline{X})$ are independent
of the values of the coordinates $y_\ell$ for $\ell>q$, and as such,
they are well defined on the truncations $T_q(\underline{X})$.  That
is, we always have the equality (with a slight abuse of
notation)$$g^1_{k,q}(\underline{X})=g^1_{k,q}(T_q(\underline{X}))$$

For $1\leq q\leq k\cdot d,$ the following proposition (which is
proved in Appendix~\ref{app:nolyn}) estimates the sizes of the
functions $g^1_{k,q}$ (defined on $\underline{\name_{k,p}^1}$):
\begin{proposition}\label{proposition:multi-scale-good-case}
If $\underline{X}\in\underline{\name_{k,p}^1}$ and $1\leq q\leq
k\cdot d$, then
\begin{equation}
\mu\left(B(x_0,\alpha_0^{k-\left\lceil\frac{q}{d}\right\rceil}
\cdot\max\nolimits_{x_0}(X))\right)\geq
g_{k,q}^1 \left(\underline{X}\right)
\geq\frac{1}{2}
\cdot\mu\left(B(x_0,\alpha_0^{k-\left\lceil\frac{q}{d}\right\rceil}\cdot
\max\nolimits_{x_0}(X))\right).
\end{equation}
\end{proposition}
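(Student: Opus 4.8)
The statement is about the measure of $\pi_q(T_{q-1}^{-1}(T_{q-1}(\underline{X})))$, the set of admissible $q$-th coordinates $y_q$ of well-scaled pieces extending the fixed truncation $T_{q-1}(\underline{X})$. The upper bound is the easy half: by construction (see equation~\eqref{equation:def-well-scaled}), every admissible $y_q$ lies in the annulus $A_{k-\lceil q/d\rceil}(x_0,\max_{x_0}(X))$, hence in the ball $B(x_0,\alpha_0^{k-\lceil q/d\rceil}\cdot\max_{x_0}(X))$, so monotonicity of $\mu$ gives the upper inequality immediately. The real work is the lower bound $g_{k,q}^1(\underline{X})\geq\frac12\mu(B(x_0,\alpha_0^{k-\lceil q/d\rceil}\cdot\max_{x_0}(X)))$.

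For the lower bound, the plan is to show that the admissible set contains \emph{most} of the relevant annulus, where ``admissible'' means that appending $y_q$ (and then later coordinates) still allows one to complete $\underline{X}$ to an element of $\underline{\name_{k,p}^1}$, i.e.\ so that all the two-term inequalities $\pds_{x_0}(\widetilde{X}_j)\leq C_{\mathrm p}(\pds_{x_0}(X_{j+1})+\pds_{x_0}(\widetilde{X}_{j+1}))$ of equation~\eqref{equation:def-big-omega} can be satisfied. The key point is that when one chooses $y_q$, it enters the simplices $\widetilde{X}_q$ and $X_q$ (and only those among the constraints indexed near $q$); the relevant constraint at stage $q-1$ reads $\pds_{x_0}(\widetilde{X}_{q-1})\leq C_{\mathrm p}(\pds_{x_0}(X_q)+\pds_{x_0}(\widetilde{X}_q))$, where $X_q=\widetilde{X}_{q-1}(y_q,1)$ and $\widetilde{X}_q=\widetilde{X}_{q-1}(y_q,\overline{q+1})$. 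By Proposition~\ref{proposition:concentration-inequality-1} (the concentration inequality for the polar sine), the set of $y$ in a ball around $x_0$ for which $\pds_{x_0}(\widetilde{X}_{q-1})\leq C_{\mathrm p}(\pds_{x_0}(\widetilde{X}_{q-1}(y,i))+\pds_{x_0}(\widetilde{X}_{q-1}(y,j)))$ has $\mu$-measure at least $0.75$ of that of the ball (and all of it when $d=1$). So I would intersect this ``good'' set with the annulus $A_{k-\lceil q/d\rceil}(x_0,\max_{x_0}(X))$ and argue it captures at least half the ball $B(x_0,\alpha_0^{k-\lceil q/d\rceil}\cdot\max_{x_0}(X))$, using $d$-regularity of $\mu$ to compare the mass of the annulus to the mass of the full ball. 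Here the choice $\alpha_0\leq 1/(2C_{\mathrm p}^2)$ (equation~\eqref{equation:alpha-loose}), and more to the point the fact that the inner radius $\alpha_0^{k-\lceil q/d\rceil+1}\cdot\max_{x_0}(X)$ is $\alpha_0$ times the outer radius, means $\mu$ of the removed inner ball is at most $C_\mu^2\alpha_0^d\leq C_\mu^2\cdot\frac14 C_\mu^{-2}=\frac14$ times $\mu$ of the outer ball; combining the $0.75$ concentration bound with this $\frac14$ loss from passing to the annulus yields the factor $\frac12$.

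There is one subtlety to handle with care: the set $\pi_q(T_{q-1}^{-1}(T_{q-1}(\underline{X})))$ requires not just that the single constraint at index $q-1$ be satisfiable, but that \emph{all} subsequent constraints (indices $q,\ldots,kd-1$) can also be satisfied by an appropriate choice of $y_{q+1},\ldots,y_{kd}$. The plan is to note that this is automatic: by the same concentration-inequality argument applied inductively at each later stage, for \emph{any} choice of $y_q$ in the relevant annulus one can always find valid $y_{q+1},\ldots,y_{kd}$ (each later good set is nonempty, indeed of measure at least $\frac12$ of the corresponding annulus, hence nonempty since $\mu$ of the annulus is positive by $d$-regularity). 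So the admissibility of $y_q$ really does reduce to the single constraint at index $q-1$ together with the geometric constraint that $y_q$ lie in the correct annulus, and the measure computation above applies. I expect the main obstacle to be bookkeeping: making the indexing of which $y_q$ enters which constraint precise (via equations~\eqref{equation:def-X-q-tilde} and~\eqref{equation:def-X-q}), verifying that $y_q$ does not appear in constraints indexed below $q-1$ so that conditioning on $T_{q-1}(\underline{X})$ genuinely fixes everything those constraints depend on, and carrying the $d$-regularity estimates with the right powers of $\alpha_0$ and $C_\mu$ so that the constant comes out to exactly $\frac12$ rather than something weaker.
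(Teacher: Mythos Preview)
Your proposal is correct and follows essentially the same approach as the paper's proof: the paper likewise identifies the admissible set $\pi_q(T_{q-1}^{-1}(T_{q-1}(\underline{X})))$ with $U_{C_{\mathrm p}}(\widetilde{X}_{q-1},1,\overline{q+1})\cap A_{k-\lceil q/d\rceil}(x_0,\max_{x_0}(X))$, obtains the lower bound by combining the $3/4$ concentration estimate of Proposition~\ref{proposition:concentration-inequality-1} with the $d$-regularity bound $\mu(B_{\mathrm{inner}})\leq C_\mu^2\alpha_0^d\,\mu(B_{\mathrm{outer}})\leq\frac14\mu(B_{\mathrm{outer}})$, and handles the extendability of later coordinates by the same inductive application of the annulus estimate. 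Your subtraction argument $\mu(U\cap A)\geq\mu(U\cap B)-\mu(B_{\mathrm{inner}})\geq\frac34-\frac14$ is in fact slightly cleaner than the paper's inclusion--exclusion, but the content is identical.
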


\subsection{Multiscale Integral Inequality}\label{sizematters}
Taking the  product of the functions $g^1_{k,q}$ over $q$ we have
the strictly positive function
\begin{equation}\label{equation:def-f-k}
f_{k}^1 \left(\underline{X}\right)= \prod_{q=1}^{k\cdot d}
g_{k,q}^1\left(\underline{X}\right).\end{equation}Thus, if we take
the measure induced by
\begin{equation}\label{biggieone}\frac{\di\mu^{d+2+k\cdot d}(\underline{X})}{f_{k}^1 \left(\underline{X}\right)}\Bigg|_{\underline{X}\in\underline{\name_{k,p}^1}}\ ,\end{equation}
then we essentially obtain the measure $\di\mu^{d+2}(X)$ over
$\name_{k,p}^1$ modified by a set of conditional probability
distributions on the coordinates $y_q$ for $1\leq q\leq k\cdot d$.

As such, integrating a function of $X$ according to the measure of
equation~\eqref{biggieone} reduces (after Fubini's Theorem) to
integrating the function with respect to $\mu^{d+2}$.  In this way
we can turn Lemma~\ref{lemma:multiscale-toast} into a meaningful
integral inequality. We formulate such an inequality as follows,
while using the notation
$$ N_k=(k+1)\cdot d+2.
$$

\begin{proposition}\label{proposition:bounding-sum-integral}
If $Q$ is a ball in $H$, $k\geq 3$, and $p=1,2$, then
\be \label{equation:bounding-sum-integral} \int_{\name_{k,p}^1(
Q)}\frac{\pds_{x_0}^2(X)}{\diam(X)^{d(d+1)}}\di\mu^{d+2} \leq
\left(k\cdot d+1\right)\cdot C_{\mathrm{p}}^{2\cdot k\cdot
d}\sum_{q=1}^{k\cdot d+1}\int_{\underline{\name_{k,p}^1( Q)}}
\frac{\pds_{x_0}^2(X_q)}{\diam(X)^{d(d+1)}}
\frac{\fdi\mu^{N_k}(\underline{X})}{f_k^1(\underline{X})}. \ee
\end{proposition}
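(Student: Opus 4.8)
The plan is to combine Lemma~\ref{lemma:multiscale-toast} (the pointwise discrete multiscale inequality on $\underline{\name_{k,p}^1}$) with a change-of-variables argument that converts the measure in~\eqref{biggieone} into $\di\mu^{d+2}(X)$ over $\name_{k,p}^1(Q)$. First I would start from Lemma~\ref{lemma:multiscale-toast}: for every $\underline{X}\in\underline{\name_{k,p}^1}$,
$$\pds^2_{x_0}(X)\leq(k\cdot d+1)\cdot C_{\mathrm{p}}^{2\cdot k\cdot d}\sum_{q=1}^{k\cdot d+1}\pds^2_{x_0}(X_q).$$
Dividing by $\diam(X)^{d(d+1)}$ and integrating this pointwise inequality against the measure $\di\mu^{N_k}(\underline{X})/f_k^1(\underline{X})$ over $\underline{\name_{k,p}^1(Q)}$ immediately produces the right-hand side of~\eqref{equation:bounding-sum-integral}, provided we can show that the left-hand side, namely the integral of $\pds^2_{x_0}(X)/\diam(X)^{d(d+1)}$ against $\di\mu^{d+2}(X)$ over $\name_{k,p}^1(Q)$, is dominated by the integral of the \emph{same} integrand against $\di\mu^{N_k}(\underline{X})/f_k^1(\underline{X})$ over $\underline{\name_{k,p}^1(Q)}$. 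Since the integrand $\pds^2_{x_0}(X)/\diam(X)^{d(d+1)}$ depends only on the first $d+2$ coordinates of $\underline{X}$, this domination is where the weight $1/f_k^1$ earns its keep.

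The key step is therefore a Fubini/disintegration argument showing
$$\int_{\name_{k,p}^1(Q)}\frac{\pds^2_{x_0}(X)}{\diam(X)^{d(d+1)}}\di\mu^{d+2}(X)\leq\int_{\underline{\name_{k,p}^1(Q)}}\frac{\pds^2_{x_0}(X)}{\diam(X)^{d(d+1)}}\frac{\di\mu^{N_k}(\underline{X})}{f_k^1(\underline{X})}.$$
To see this I would peel off the integrations over $y_{k\cdot d}, y_{k\cdot d-1},\ldots,y_1$ one at a time, in that order, using the tower structure of the truncations $T_q$. At stage $q$ (integrating out $y_q$, with $y_1,\ldots,y_{q-1}$ and $X$ fixed), the fibre of admissible values of $y_q$ is exactly $\pi_q\bigl(T_{q-1}^{-1}(T_{q-1}(\underline{X}))\bigr)$, whose $\mu$-measure is by definition $g_{k,q}^1(\underline{X})$, and this function is independent of $y_q,\ldots,y_{k\cdot d}$. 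Hence
$$\int_{\text{fibre}}\frac{1}{g_{k,q}^1}\di\mu(y_q)=\frac{\mu(\text{fibre})}{g_{k,q}^1}=1,$$
so integrating out $y_q$ against $\di\mu(y_q)/g_{k,q}^1$ leaves the remaining integrand (which does not involve $y_q$) unchanged, while the truncated domain $T_{q-1}(\underline{\name_{k,p}^1(Q)})$ projects onto a set no smaller than $\name_{k,p}^1(Q)$ once all $y$'s are removed. Iterating over $q=k\cdot d$ down to $q=1$ telescopes $\di\mu^{N_k}(\underline{X})/f_k^1(\underline{X})=\di\mu^{d+2}(X)\prod_q\di\mu(y_q)/g_{k,q}^1(\underline{X})$ down to $\di\mu^{d+2}(X)$ over a domain containing $\name_{k,p}^1(Q)$, which gives the displayed inequality. (One should check the set-theoretic containment that the projection of $\underline{\name_{k,p}^1(Q)}$ onto the first $d+2$ coordinates is all of $\name_{k,p}^1(Q)$; this follows because for every $X\in\name_{k,p}^1(Q)$ the concentration inequality Proposition~\ref{proposition:concentration-inequality-1} — together with Proposition~\ref{proposition:multi-scale-good-case} guaranteeing the fibres are non-null — permits a valid choice of each successive $y_q$ satisfying the defining inequalities of $\underline{\name_{k,p}^1}$, so the fibres are non-empty.)

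I expect the main obstacle to be the careful bookkeeping of the disintegration: one must verify that at each stage the conditional fibre really is $\pi_q(T_{q-1}^{-1}(T_{q-1}(\underline{X})))$, that $g_{k,q}^1$ depends only on $T_q(\underline{X})$ (stated after~\eqref{equation:def-h-k}) so that it can be pulled out of the $y_q$-integral, and that all fibres have positive finite $\mu$-measure so the normalization $\mu(\text{fibre})/g_{k,q}^1=1$ is legitimate — this last point is exactly what Proposition~\ref{proposition:multi-scale-good-case} provides, since it pins $g_{k,q}^1(\underline{X})$ between $\tfrac12\mu(B(x_0,\alpha_0^{k-\lceil q/d\rceil}\max_{x_0}(X)))$ and $\mu(B(x_0,\alpha_0^{k-\lceil q/d\rceil}\max_{x_0}(X)))$, both strictly positive and finite for $x_0\in\Supp$. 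The measurability of the fibre maps and of $g_{k,q}^1$, needed to apply Fubini, is routine and I would relegate it to a remark. Once this disintegration identity is in hand, combining it with Lemma~\ref{lemma:multiscale-toast} as above yields~\eqref{equation:bounding-sum-integral} and completes the proof. \qed
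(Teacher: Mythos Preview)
Your proposal is correct and follows essentially the same route as the paper: apply Lemma~\ref{lemma:multiscale-toast} pointwise, then reduce the augmented integral to the plain one by iterated Fubini over the $y_q$'s, using that $g_{k,q}^1$ equals the $\mu$-measure of the $q$-th conditional fibre so each peel contributes a factor of $1$. The only cosmetic difference is that the paper records the disintegration step as an \emph{equality} (equation~\eqref{equation:differential-identity-1}) rather than the one-sided inequality you state; once you observe that $T_0(\underline{\name_{k,p}^1(Q)})=\name_{k,p}^1(Q)$ exactly (which your parenthetical argument already gives), your computation yields equality too.
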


\begin{proof}
This proposition is a direct consequence of
Lemma~\ref{lemma:multiscale-toast} and the following equation
\begin{equation}\label{equation:differential-identity-1}
\int_{\name_{k,p}^1(Q)}
\frac{\pds^2_{x_0}(X)}{\diam(X)^{d(d+1)}}\di\mu^{d+2}(X)=
\int_{\underline{\name_{k,p}^1(Q)}}
\frac{\pds_{x_0}^2(X)}{\diam(X)^{d(d+1)}}\frac{\fdi\mu^{N_k}(\underline{X})}
{f_{k}^1\left(\underline{X}\right)}, \ \forall  \text{ balls } Q
\subseteq H\,.
\end{equation}

For simplification, we prove
equation~\eqref{equation:differential-identity-1} for $Q=H$,
however, the idea applies to any ball $Q$ in $H$. First, using
Fubini's Theorem we obtain
\begin{multline}
\label{equation:iterated_integral}
\int_{\underline{\name_{k,p}^1}}\frac{\pds_{x_0}^2\left(X\right)}
{\diam\left(X\right)^{d(d+1)}}\frac{\fdi\mu^{N_k}(\underline{X})}{f_k^1\left(\underline{X}\right)}\\
=\int_{\name_{k,p}^1}\frac{\pds^2_{x_0}\left(X\right)}
{\diam\left(X\right)^{d(d+1)}}\left(\int_{\left\{Y_X:X\times
Y_X\in\,\underline{ \name_{k,p}^1}\right\}}\frac{\fdi\mu^{k\cdot
d}(Y_X)}{f_k^1(\underline{X})}\right)\di\mu^{d+2}(X).
\end{multline}

Then, we iterate the inner integral on the RHS of
equation~\eqref{equation:iterated_integral} and get
\begin{multline}\label{equation:dinner-time}
\int_{\left\{Y_X:X\times Y_X\in\,\underline{
\name_{k,p}^1}\right\}}\frac{\fdi\mu^{k\cdot
d}(Y_X)}{f_k^1(\underline{X})}\\
=\int_{\pi_1\left(T_0^{-1}(X)\right)}\cdots \int_{\pi_q\left(
T^{-1}_{q-1}(x_0,\ldots,y_{q-1})\right)} \cdots\int_{\pi_{k\cdot
d}\left( T_{k\cdot d-1}^{-1}(x_0,\ldots,y_{k\cdot
d-1})\right)}\frac{\fdi\mu\left(y_{k\cdot
d}\right)\cdots\di\mu(y_1)}{\prod_{q=1}^{k\cdot d}
g_{k,q}^1 \left(\underline{X}\right)},
\end{multline}
for the sets $\pi_{q}\left(T_{q-1}^{-1}(x_0,\ldots,y_{q-1})\right)$
conditionally defined given
$y_{q-1}\in\pi_{q-1}\left(T_{q-2}^{-1}(x_0,\ldots,y_{q-2})\right)$,
for $2\leq q\leq k\cdot d$.

For any fixed $(x_0,\ldots,y_{q-1}) \in H^{(d+1+q) },$ by the
definition of $g^1_{k,q}$ we have the equality
\begin{equation}\label{equation:nevada}
\int_{\pi_q\left( T_{q-1}^{-1}(x_0,\ldots,y_{q-1})\right)}
\frac{\fdi\mu(y_q)}{g_{k,q}^1 \left(\underline{X}\right)}=
\int_{\pi_q\left( T_{q-1}^{-1}(x_0,\ldots,y_{q-1})\right)}
\frac{\fdi\mu(y_q)}{\mu\left(\pi_q\left(
T_{q-1}^{-1}(x_0,\ldots,y_{q-1})\right)\right)}=1.
\end{equation}
Applying this to the iterated integral on the RHS of
equation~(\ref{equation:dinner-time}) we obtain
\begin{equation}\label{equation:saved-for-later}
\int_{\left\{Y_X:X\times Y_X\in\,\underline{
\name_{k,p}^1}\right\}}\frac{\fdi\mu^{k\cdot
d}(Y_X)}{f_k^1(\underline{X})}=1,\textup{ for all }X\in
\name_{k,p}^1.
\end{equation}
Combining equations~(\ref{equation:iterated_integral})
and~(\ref{equation:saved-for-later}), we conclude
equation~(\ref{equation:differential-identity-1}) and the
proposition.
\end{proof}
\begin{remark}\label{remark:big-shot}Since we do not take the time to establish the measurability of
$f_{k}^1$, one can instead follow  an alternative strategy:
Proposition~\ref{proposition:multi-scale-good-case} implies that
\begin{equation*}\label{equation:good-normal}f_{k}^1 \approx \prod_{q=1}^{k\cdot d}
\mu\left(B(x_0,\alpha_0^{k-\left\lceil\frac{q}{d}\right\rceil}
\cdot\max\nolimits_{x_0}(X))\right),\end{equation*}where the
constant of comparability is at worst $2^{k\cdot d}.$
The latter function is clearly measurable in $\underline{X}$, and
one can thus use it instead of $f_{k}^1$ for
Proposition~\ref{proposition:bounding-sum-integral}. Nevertheless
such a strategy will increase the estimate on the constant $C_1$ of
Theorem~\ref{theorem:upper-main}, and requires a slight change in
the choice of $\alpha_0$ in
equation~(\ref{equation:alpha-loose}).\end{remark}

\subsection{Concluding the Proof of Proposition~\ref{proposition:poorly-scaled-integration-d}}
\label{subsection:integration-d}

The rest of our analysis for
Proposition~\ref{proposition:poorly-scaled-integration-d} consists
of taking each term from the RHS of
equation~\eqref{equation:bounding-sum-integral} and adapting the
argument of Proposition~\ref{proposition:journe}. The basic idea is
to chop the set $\underline{\name_{k,1}^1(Q)}$ in a multiscale
fashion depending on the relative sizes of $X_q$ and $X$. This
requires some careful book keeping, but it allows us to involve the
very small (relative to $\diam(X)$) length scales in the
integration, and this will produce a constant (uniform in $q$) that
decays rapidly enough with $k$.

More specifically, we verify the following proposition, whose
combination with Proposition~\ref{proposition:bounding-sum-integral}
establishes
Proposition~\ref{proposition:poorly-scaled-integration-d}.
\begin{proposition}\label{proposition:gerry-halwell}
If  $1\leq q\leq k\cdot d+1$, then there exists a constant
$C_7=C_7(d,C_{\mu})$ such that
\begin{equation}\label{equation:rapper}
\int_{\underline{\name^1_{k,1}(Q)}}\frac{\pds_{x_0}(X_q)}
{\diam(X)^{d(d+1)}}\frac{\fdi\mu^{N_k}(\underline{X})}{f_k^1
(\underline{X})}\leq C_7\cdot\alpha_0^{k\cdot d^2}\cdot
J_d^{\mathcal{D}}(\mu|_Q)
\end{equation}for any ball $Q\subseteq H$.
\end{proposition}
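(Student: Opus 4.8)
The plan is to adapt the geometric-multipoles argument from the proof of Proposition~\ref{proposition:journe} to each term $\pds_{x_0}(X_q)$ on the RHS of equation~\eqref{equation:bounding-sum-integral}. Fix $q$ with $1 \leq q \leq k\cdot d+1$. By Lemma~\ref{lemma:index-function-no-tilde}, the simplex $X_q$ is well-scaled at $x_0$, and its overall scale $\max_{x_0}(X_q)$ is comparable to $\alpha_0^{k-\lceil q/d\rceil}\cdot\max_{x_0}(X)$ (for $q \leq k\cdot d$; the case $q = k\cdot d+1$ is the ``top'' scale and handled analogously). The idea is to decompose the domain $\underline{\name^1_{k,1}(Q)}$ according to the dyadic-type scale $m$ on which $\max_{x_0}(X)$ lies and the location of $x_0$ within the partition $\mathcal{P}_m$, exactly as in Section~\ref{section:journe}. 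Since $X_q$ is well-scaled, Proposition~\ref{proposition:psin-bound-deviations} bounds $\pds_{x_0}(X_q)$ by $\sqrt{2}\cdot(d+1)\cdot(d+2)\cdot\alpha_0^{-3}\cdot D_2(X_q,L)/\diam(X_q)$ for any $d$-plane $L$, and $D_2^2(X_q,L) = \sum_i \dist^2((X_q)_i, L)$ splits into a sum over the $d+2$ coordinates of $X_q$, each of which is one of $x_0, x_1$, or one of the interpolating points $y_\ell$ with $\ell \leq q$.

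The key bookkeeping point is this: the coordinates of $X_q$ all lie in $B(x_0, C\cdot\alpha_0^{k-\lceil q/d\rceil}\cdot\max_{x_0}(X))$, i.e., in a ball whose radius is a fixed power of $\alpha_0$ times the ball $B_{m,j}$ associated to $x_0$'s location at scale $m$. Concretely, if $x_0 \in P_{m,j}$, then the relevant ball is roughly $B_{m + k - \lceil q/d\rceil, j'}$ for the appropriate element of $\mathcal{B}$ at scale $m + k - \lceil q/d\rceil$. Thus each $\dist^2((X_q)_i, L)$-term, integrated against the conditional measure $\di\mu^{N_k}(\underline{X})/f_k^1(\underline{X})$, collapses (using equation~\eqref{equation:saved-for-later}-type normalization to integrate out all the coordinates $y_\ell$ not appearing in $X_q$, and the definition of $g_{k,q}^1$ to handle the ones that do) to an expression of the form $\beta_2^2(B')\cdot\mu(B')$ for a ball $B' \in \mathcal{D}(Q)$ at the shifted scale — once we choose $L$ to be the optimal $d$-plane for $B'$. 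The factor $1/\diam(X)^{d(d+1)}$ combines with the remaining powers of $\diam(B')$ from $d$-regularity of $\mu$: since $\diam(X) \geq \max_{x_0}(X) \approx \alpha_0^m$ and $\diam(B') \approx \alpha_0^{m + k - \lceil q/d\rceil}$, the mismatch produces exactly a power $\alpha_0^{(k - \lceil q/d\rceil)\cdot d\cdot(d+1)}$ or similar, whose worst case over $q$ (namely $q$ small, so $\lceil q/d\rceil = 1$) gives the stated factor $\alpha_0^{k\cdot d^2}$ after absorbing lower-order corrections into $C_7$.

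Having reduced a single $(m,j,q)$-piece to $C_7\cdot\alpha_0^{k\cdot d^2}\cdot\beta_2^2(B')\cdot\mu(B')$, I would sum over $j \in \Lambda_m(Q)$ and over $m \geq m(Q)$. Because the map $(m,j) \mapsto B'$ is injective onto a subfamily of $\mathcal{D}(Q)$ (the scale-shift is a fixed integer depending only on $q$, $k$, $d$, and the net construction gives bounded overlap), the sum $\sum_{m,j} \beta_2^2(B')\cdot\mu(B')$ is bounded by a dimensional constant times $J_d^{\mathcal{D}}(\mu|_Q) = \sum_{B \in \mathcal{D}(Q)} \beta_2^2(B)\cdot\mu(B)$, which yields equation~\eqref{equation:rapper}. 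The main obstacle I anticipate is the careful verification that, after integrating out the irrelevant $y_\ell$'s against the conditional densities, what remains is genuinely an integral of $\dist^2(\cdot, L)$ over a single ball with respect to $\mu$ (times a controlled product of $\mu$-measures of nested balls that telescopes via Proposition~\ref{proposition:multi-scale-good-case} to cancel $f_k^1$) — in other words, correctly tracking which coordinates of $X_q$ are ``frozen'' (namely $x_0$, $x_1$, and some $y_\ell$'s with $\ell < q$) versus which is the ``active'' integration variable, and ensuring the Fubini reordering is legitimate given the conditional structure of $\underline{\name^1_{k,1}}$. The exponent chase to confirm the precise power $\alpha_0^{k\cdot d^2}$ (rather than something slightly off) is also delicate but routine once the geometric picture is pinned down.
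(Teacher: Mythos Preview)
Your approach is essentially the paper's, but two points need correction before it goes through.

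First, the spatial partition must be taken at the \emph{shifted} scale $\scl(m,q) := m + k - \lceil q/d\rceil$ (for $q \leq k\cdot d$; and $\scl(m,q) := m$ for $q = k\cdot d + 1$), not at the coarse scale $m$. The paper sets $\underline{P_{\scl(m,q),j}} = \{\underline{X} \in \underline{\name^1_{k,1}(m)} : x_0 \in P_{\scl(m,q),j}\}$ and bounds each such piece by $C\cdot\alpha_0^{k\cdot d^2}\cdot\beta_2^2(B_{\scl(m,q),j})\cdot\mu(B_{\scl(m,q),j})$. Your assertion that there is a well-defined injective map $(m,j)\mapsto B'$ from $\mathcal{P}_m$ to balls at the fine scale is false: a single $P_{m,j}$ meets roughly $\alpha_0^{-(k-\lceil q/d\rceil)d}$ elements of $\mathcal{P}_{\scl(m,q)}$. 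Partitioning directly at the fine scale resolves this and makes the sum over $(m,j)$ match $J_d^{\mathcal{D}}(\mu|_Q)$ cleanly.

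Second, your list of coordinates of $X_q$ is incomplete: for $1\leq q\leq d-1$ the simplex $X_q$ still carries some of the original tine vertices $x_i$ with $q+2\leq i\leq d+1$ (see equation~\eqref{equation:X-tilde}), and the paper treats these as a separate case. Finally, the factor $\alpha_0^{k\cdot d^2}$ does not come from the $\diam(B')/\diam(X)$ mismatch you describe; it arises (uniformly in $q$) because the $d$ tines $x_2,\ldots,x_{d+1}$ of the original $X$ lie in $B(x_0,\alpha_0^{m+k})$ while $\diam(X)^{d(d+1)}\gtrsim \alpha_0^{(m+1)d(d+1)}$, so integrating them out against $\diam(X)^{-d(d+1)}$ yields $\alpha_0^{(m+k)d^2 + md - m\,d(d+1)} = \alpha_0^{k\cdot d^2}$.
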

\begin{proof}[Proof of Proposition~\ref{proposition:gerry-halwell}]
We partition the sets $\underline{\name_{k,1}^1}(Q)$ by the size of
$\max\nolimits_{x_0}(X)$. If $m\geq m(Q)$, then let
\begin{equation}\label{easydecomp}
\underline{\name_{k,1}^1(m)(Q)}=\left\{\underline{X}\in
\underline{S_1^{k,1}(\widetilde{\eta})(Q)}:\max\nolimits_{x_0}(X)\in
(\alpha_0^{m+1},\alpha_0^m]\right\}.
\end{equation}

Throughout the rest of the proof we fix $m \geq m(Q)$ and $1 \leq q
\leq k \cdot d+1$, and we further partition the set
$\underline{\name_{k,p}^1(m)(Q)}$ according to location in $\Supp$
in order to  reflect the quantity $\max\nolimits_{x_0}(X_q)$.
Specifically, we define the {\em scale exponent of $m$ and $q$}
\begin{equation}\label{equation:m-q-is}
\scl(m,q)=\begin{cases}m+k-\left\lceil\frac{q}{d}\right\rceil,&\textup{ if }1\leq q\leq k\cdot d;\\
\quad\quad m,&\textup{ if }q=k\cdot d+1.\end{cases}
\end{equation}The exponent $\scl(m,q)$  indicates the
correct length scale for the decomposition of the set
$\underline{\name_{k,1}^1(m)(Q)}$. Specifically, according to the
estimates of Lemma~\ref{lemma:index-function-no-tilde}, we have that
$$\max\nolimits_{x_0}(X_q)\in\left(\alpha_0^{\scl(m,q)+2},\alpha_0^{\scl(m,q)}\right].$$Thus,
for $j\in\Lambda_{\scl(m,q)}(Q)$, we let
\begin{equation}
\underline{P_{\scl(m,q),j}}=\left\{\underline{X}\in
\underline{\name_{k,1}^1(m)} :x_0\in P_{\scl(m,q),j}\right\},
\end{equation}
and obtain the following cover of $\underline{\name_{k,1}^1(m)(Q)}:$
\begin{equation}\label{easypartition}
\underline{\mathcal{P}_{\scl(m,q)}}(Q)=
\left\{\underline{P_{\scl(m,q),j}}\right\}_{j\in\Lambda_{\scl(m,q)}(Q)}.
\end{equation}

Letting $m\geq m(Q)$ and $j$ vary, the LHS of
equation~(\ref{equation:rapper}) satisfies the following inequality:
\begin{multline}\label{equation:homer}
\int_{\underline{\name_{k,1}^1}(Q)}\frac{\pds_{x_0}(X_q)}
{\diam(X)^{d(d+1)}}\frac{\fdi\mu^{N_k}(\underline{X})}{f_k^1(\underline{X})}\leq\\\sum_{m\geq
m(Q)}
\left[\sum_{j\in\,\Lambda_{\scl(m,q)}(Q)}\int_{\underline{P_{\scl(m,q),j}}}
\frac{\pds_{x_0}(X_q)}{\diam(X)^{d(d+1)}}\frac{\fdi\mu^{N_k}(\underline{X})}{f_k^1(\underline{X})}\right].
\end{multline}

Fixing  $j\in\Lambda_{\scl(m,q)}(Q)$, in addition to the fixed $k$,
$m$ and $q$, we will establish that
\begin{equation}\label{equation:scary-spice}
\int_{\underline{P_{\scl(m,q),j}}}\frac{\pds_{x_0}(X_q)}{\diam(X)^{d(d+1)}}
\frac{\fdi\mu^{N_k}(\underline{X})}{f_k^1(\underline{X})}\leq C_1
\cdot\alpha_0^{k\cdot
d^2}\cdot\beta_2^2\left(B_{\scl(m,q),j}\right)\cdot\mu\left(B_{\scl(m,q),j}\right).
\end{equation}Equations~(\ref{equation:homer}) and~(\ref{equation:scary-spice})
will directly imply Proposition~\ref{proposition:gerry-halwell}.

We prove equation~(\ref{equation:scary-spice}) as follows. Fix an
arbitrary $d$-plane $L$. Since the elements $\{X_q\}_{q=1}^{k\cdot
d+1}$ are well-scaled at $x_0$, by
Proposition~\ref{proposition:psin-bound-deviations} and
equation~\eqref{eq:well_scaled_ineq} we have the following bound on
the LHS of equation~(\ref{equation:scary-spice}):
\begin{multline}\label{equation:dunn-bros-1}
\int_{\underline{P_{\scl(m,q),j}}}\pds^2_{x_0}(X_q)
\frac{\fdi\mu^{N_k}(\underline{X})}{\diam(X)^{d(d+1)}\cdot
f_{k}^1(\underline{X})} \leq \\
\frac{2 \cdot (d+1)^2\cdot
(d+2)^2}{\alpha_0^{6}}\int_{\underline{P_{\scl(m,q),j}}}
\frac{D^2_2(X_q,L)}{\diam(X_q)^2}\cdot
\frac{\fdi\mu^{N_k}(\underline{X})}{\diam(X)^{d(d+1)}\cdot
f_{k}^1(\underline{X})}.
\end{multline}
To bound the RHS of equation~(\ref{equation:dunn-bros-1}) we focus
on the individual terms of
$$
\frac{D^2_2\left(X_q,L\right)}{\diam^2(X_q)}=\sum_{s=0}^{d+1}\frac{\dist^2
\left((X_q)_s,L\right)} {\diam^2(X_q)}.
$$
We arbitrarily fix $0\leq s\leq d+1$ and note the following cases of
possible values of $(X_q)_s$:
\vskip .2cm \noindent Case 1: $(X_q)_s = x_0$.  In this case $q$ has
no restriction, that is, $1\leq q\leq k\cdot d+1$.
\vskip .3cm \noindent Case 2: $(X_q)_s = x_1$. In this case $q =
k\cdot d+1$ (see
equations~\eqref{equation:X-tilde}-\eqref{equation:def-X-q}) and
thus $\scl(m,q) = m$.
\vskip .3cm \noindent Case 3: $(X_q)_s = x_i$, where $2\leq i\leq
d+1$.  In this case $1\leq q\leq d$ (see
equations~\eqref{equation:X-tilde}-\eqref{equation:def-X-q}) and
thus $\scl(m,q) = m+k-1$.
\vskip .3cm \noindent Case 4: $(X_q)_s = y_\ell$, where
$1\leq\ell\leq k\cdot d$.  In this case for each $1\leq q\leq k\cdot
d+1$, we have the following restriction on $\ell$:
$\max\{1,q-d\}\leq\ell\leq q$ and thus
\be \label{equation:restrict_q_l}
\max\left\{1,\left\lceil\frac{q}{d}\right\rceil -1\right\}
\leq\left\lceil\frac{\ell}{d}\right\rceil\leq\left\lceil\frac{q}{d}\right\rceil
. \ee
The calculation of the upper bound varies slightly according to
which case we consider.

Considering the first three cases simultaneously, we let $0\leq
i\leq d+1$ and examine the integrals decomposing the RHS of
equation~(\ref{equation:dunn-bros-1}), each of the form
$$
\int_{\underline{P_{\scl(m,q),j}}}\left(\frac{\dist\left(x_i,L\right)}
{\diam(X_q)}\right)^2\frac{\fdi\mu^{N_k}(\underline{X})}
{f_{k}^1(\underline{X})\cdot\diam(X)^{d(d+1)}}.
$$
Per Fubini's Theorem we obtain the equality
\begin{multline}\label{equation:three-ball}
\int_{\underline{P_{\scl(m,q),j}}}\left(\frac{\dist\left(x_i,L\right)}
{\diam(X_q)}\right)^2\frac{\fdi\mu^{N_k}(\underline{X})}{f_{k}^1(\underline{X})\cdot\diam(X)^{d(d+1)}}
=\\
\int_{ T_0\left(\underline{P_{\scl(m,q),j}}\right)}
\left(\int_{\left\{Y_X:X\times
Y_X\in\underline{P_{\scl(m,q),j}}\right\}} \frac{\fdi\mu^{k\cdot
d}(Y_X)}{\diam^2(X_q)\cdot f_k^1(\underline{X})}\right)
\,\dist^2(x_i,L)\,\frac{\fdi\mu^{d+2}(X)}{\diam(X)^{d(d+1)}}\,.
\end{multline}

To bound the inner integral on the RHS of
equation~(\ref{equation:three-ball}) we first note that
\begin{equation}\label{equation:larry-bird-1}
\diam (X) \geq \max\nolimits_{x_0}(X) =\|x_0-x_1\| > \alpha_0^{m+1}
\ \textup{ for all } \ X\in
T_0\left(\underline{P_{\scl(m,q),j}}\right).
\end{equation}
Combining Lemma~\ref{lemma:index-function-no-tilde} with
equations~\eqref{equation:m-q-is} and~(\ref{equation:larry-bird-1})
we see that
\begin{equation}\label{equation:measure-X-q-bound}
\diam(X_q)\geq\max\nolimits_{x_0}(X_q)>\alpha_0^{\scl(m,q)+1}.
\end{equation}
Since
$\displaystyle\diam\left(B_{\scl(m,q),j}\right)=8\cdot\alpha_0^{\scl(m,q)}$,
we rewrite equation~(\ref{equation:measure-X-q-bound}) as follows:
\begin{equation}\label{equation:diameter-lower-bound-measure}
\diam(X_q)\geq\frac{\alpha_0}{8}\cdot\diam\left(B_{\scl(m,q),j}\right).
\end{equation}
Combining equations~(\ref{equation:saved-for-later})
and~(\ref{equation:diameter-lower-bound-measure}), we obtain that if
$X\in T_0\left(\underline{P_{\scl(m,q),j}}\right)$, then
\begin{equation}\label{equation:dolly}
\int_{\left\{Y_X:X\times Y_X\in\underline{P_{\scl(m,q),j}}\right\}}
\frac{\fdi\mu^{k\cdot
d}(Y_X)}{f_k^1(\underline{X})\cdot\diam^2(X_q)}
\leq\frac{64}{\alpha_0^2}\cdot\frac{1}{\diam^2(B_{\scl(m,q),j})}\,.
\end{equation}

By the definition of $\underline{P_{\scl(m,q),j}}$,
$\underline{\name_{k,1}^1(m)}$ and $\name_{k,1}^1(m)$ we have the
equality
$$
T_0\left(\underline{P_{\scl(m,q),j}}\right)=\bigcup_{x_0\in\,P_{\scl(m,q),j}}
\left[\bigcup_{x_1\in A_m(x_0,1)}\left\{(x_0,x_1)\right\}\times
\left[A_k(x_0,\|x_1-x_0\|)\right]^d\right].
$$From this we trivially obtain the inclusion
$$
T_0\left(\underline{P_{\scl(m,q),j}}\right)\subseteq\bigcup_{x_0\in\,
P_{\scl(m,q),j}}\{x_0\}\times B(x_0,\alpha_0^m)\times
\left[B(x_0,\alpha_0^{m+k})\right]^d.
$$
Applying this together with the inequalities of
equations~(\ref{equation:larry-bird-1}) and~(\ref{equation:dolly})
to the RHS of equation~(\ref{equation:three-ball}) gives the
following inequality for all $0\leq i\leq d+1$:
\begin{multline}\label{multline:s-0-bound}
\int_{\underline{P_{\scl(m,q),j}}}\left(\frac{\dist\left(x_i,L\right)}
{\diam(X_q)}\right)^2\frac{\fdi\mu^{N_k}(\underline{X})}{f_{k}^1(\underline{X})\cdot\diam(X)^{d(d+1)}}
\\
\leq\frac{64}{\alpha_0^{d(d+1)+2}}\,\int_{P_{\scl(m,q),j}}\int_{B(x_0,\alpha_0^m)}
\int_{\left[B(x_0,\alpha_0^{m+k})\right]^d}\left(\frac{\dist\left(x_i,L\right)}
{\diam\left(B_{\scl(m,q),j}\right)}\right)^2\frac{\fdi\mu^{d+2}(X)}
{\left[\alpha_0^{m}\right]^{d(d+1)}}\,.
\end{multline}

Assume Case~1, that is, $i=0$. Then, after iterating the integral on
the RHS of equation~(\ref{multline:s-0-bound}), applying the
defining property of $d$-regular measure and the inclusion
$P_{\scl(m,q),j}\subseteq B_{\scl(m,q),j}$, we see that the term on
the RHS of equation~(\ref{multline:s-0-bound}) has the bound
\begin{equation}\label{multline:vinnie-johnson-1}
\frac{64\cdot
C_{\mu}^{d+1}}{\alpha_0^{d(d+1)+2}}\cdot\alpha_0^{k\cdot
d^2}\cdot\beta_2^2\left(B_{\scl(m,q),j},L\right)\cdot\mu
\left(B_{\scl(m,q),j}\right).
\end{equation}

Assume Case~2, that is, $i=1$, and recall that in this case
$q=k\cdot d+1$ and $\scl(m,q) = m$.  Thus we have the inclusion
$$
B(x_0,\alpha_0^m)\subseteq B_{\scl(m,q),j},\textup{ for all }x_0\in
P_{\scl(m,q),j}.
$$
Hence, iterating the integral on the RHS of
equation~(\ref{multline:s-0-bound}) and then applying similar
arguments to Case~1, we obtain the following bound for the LHS of
equation~(\ref{multline:s-0-bound}):
\begin{equation}\label{equation:bill-walton}\frac{64\cdot4^d\cdot
C_{\mu}^{d+1}}{\alpha_0^{d(d+1)+2}}\cdot\alpha_0^{k\cdot
d^2}\cdot\beta_2^2\left(B_{m,j},L\right)\cdot
\mu\left(B_{m,j}\right).
\end{equation}

Next, assume Case~3, that is, $2\leq i\leq d+1$ and recall that in
this case $1\leq q\leq d$ and $\scl(m,q)=m+k-1$. Using the fact that
$P_{\scl(m,q),j}\subseteq \frac{3}{4}\cdot B_{\scl(m,q),j}$, and the
defining property of $d$-regular measures we have the inequality
$$
\mu\left(P_{\scl(m,q),j}\right)\leq\mu\left(\frac{3}{4}\cdot
B_{\scl(m,q),j}\right) \leq C_{\mu}\cdot\left(3\cdot
\alpha_0^{m+k-1}\right)^d.
$$
Furthermore, we have the inclusion
$$
B\left(x_0,\alpha_0^{m+k}\right)\subseteq B_{\scl(m,q),j},\textup{
for all } x_0\in P_{\scl(m,q),j}.
$$
Iterating the integral as in the previous calculations, the LHS of
equation~(\ref{multline:s-0-bound}) is bounded by
\begin{equation}\label{equation:bimbo-gone}
\frac{64\cdot 3^d \cdot C_{\mu}^{d+1}}{\alpha_0^{d(d+1)+d+2}}
\cdot\alpha_0^{k\cdot d^2}\cdot\beta_2^2\left(B_{\scl(m,q),j},L
\right)\cdot\mu\left(B_{\scl(m,q),j}\right).
\end{equation}
Therefore, taking the maximal coefficient from
equations~(\ref{multline:vinnie-johnson-1}),
~(\ref{equation:bill-walton}) and~(\ref{equation:bimbo-gone}), the
LHS of equation~(\ref{multline:s-0-bound}) has the following uniform
bound for all $0\leq i\leq d+1$:
\begin{equation}\label{equation:multline-s-0-bound-infinite}
\frac{3^d\cdot2^7\cdot C_{\mu}^{d+1}}
{\alpha_0^{d(d+1)+d+2}}\cdot\beta_2^2\left(B_{\scl(m,q),j},L\right)
\cdot\mu\left(B_{\scl(m,q),j}\right).
\end{equation}

At last we consider Case~4, where the terms in the sum comprising
the RHS of equation~(\ref{equation:dunn-bros-1}) are of the form:
$$
\int_{\underline{P_{\scl(m,q),j}}}\left(\frac{\dist\left(y_{\ell},L\right)}
{\diam(X_q)}\right)^2\frac{\fdi\mu^{N_k}(\underline{X})}
{f_k^1(\underline{X})\cdot\diam(X)^{d(d+1)}}\,,
$$
where $1\leq l\leq k\cdot d$.

Iterating the integral and applying
equation~(\ref{equation:diameter-lower-bound-measure}), we obtain
\begin{multline}\label{multline:robert-parish-1}
\int_{\underline{P_{\scl(m,q),j}}}\left(
\frac{\dist\left(y_{\ell},L\right)}{\diam(X_q)}\right)^2
\frac{\fdi\mu^{N_k}(\underline{X})}{f_k^1(\underline{X})
\cdot\diam(X)^{d(d+1)}} \leq\\
\frac{64}{\alpha_0^2} \,
\int_{T_0\left(\underline{P_{\scl(m,q),j}}\right)}
\left(\int_{\left\{Y_X:X\times
Y_X\in\,\underline{\name_{k,p}^1}\right\}}
\left(\frac{\dist\left(y_{\ell},L\right)}{\diam\left(B_{\scl(m,q),j}
\right)}\right)^2\frac{\fdi\mu^{k\cdot
d}(Y_X)}{f_k^1(\underline{X})}\right)
\frac{\fdi\mu^{d+2}(X)}{\diam(X)^{d(d+1)}}\,.
\end{multline}
In order to bound the RHS of
equation~(\ref{multline:robert-parish-1}), we first calculate a
uniform bound  in $1\leq l\leq k\cdot d$ for the interior integral.
Then, completing the integration with respect to $X\in
T_0\left(\underline{P_{\scl(m,q),j}}\right)$ will give the desired
bound in terms of the corresponding $\beta_2$ number.

For fixed $X\in T_0\left(\underline{P_{\scl(m,q),j}}\right)$ and
$1\leq\ell\leq k\cdot d$, after iterating the interior integral on
the RHS of equation~(\ref{multline:robert-parish-1}) and applying
equation~(\ref{equation:nevada}) we have that
\begin{multline}\label{equation:lucky}
\int_{\left\{Y_X:X\times Y_X\in\,\underline{\name_{k,p}^1}\right\}}
\left(\frac{\dist\left(y_{\ell},L\right)}{\diam\left(B_{\scl(m,q),j}\right)}\right)^2\frac{\fdi\mu^{k\cdot
d}(Y_X)}{f_k^1(\underline{X})}=\\\int_{\pi_1\left(
T^{-1}_0(X)\right)}\cdots\int_{\pi_{\ell}\left(
T^{-1}_{\ell-1}(x_0,\ldots,y_{\ell-1})\right)}
\left(\frac{\dist\left(y_{\ell},L\right)}{\diam\left(B_{\scl(m,q),j}\right)}\right)^2\frac{\fdi\mu\left(y_{\ell}\right)
\cdots\di\mu(y_1)}{\prod_{s=1}^{\ell}
%
g_{k,s}^1}\,,
\,.
\end{multline}
where we used the notation $g_{k,s}^1$ defined in
equation~\eqref{equation:def-h-k}.
Given $\underline{X}\in\underline{P_{\scl(m,q),j}}$ we fix
$(x_0,\ldots,y_{\ell-1})=T_{\ell-1}(\underline{X})$ and calculate a
bound for the integral
$$
\int_{\pi_{\ell}\left(
T^{-1}_{\ell-1}(x_0,\ldots,y_{\ell-1})\right)}
\left(\frac{\dist\left(y_{\ell},L\right)}{\diam\left(B_{\scl(m,q),j}\right)}
\right)^2\frac{\fdi\mu\left(y_{\ell}\right)}{g_{k,\ell}^1}\,.
$$
We first obtain an upper bound for
$$
\frac{1}{g_{k,\ell}^1} = \frac{1}{\mu\left(\pi_{\ell}\left(
T^{-1}_{\ell-1}(x_0,\ldots,y_{\ell-1})\right)\right)},
$$
and then complete the integration.

To obtain that bound, we apply
Proposition~\ref{proposition:multi-scale-good-case} to get that for
all $1\leq\ell\leq k \cdot d$:
\be \label{equation:contessa} \mu \left(\pi_{\ell}\left(
T^{-1}_{\ell-1}(x_0,\ldots,y_{\ell-1})\right)\right) \geq
\frac{1}{2} \cdot \mu \left(B
\left(x_0,\alpha_0^{k-\left\lceil\frac{\ell}{d}\right\rceil}\cdot\max\nolimits_{x_0}(X)\right)\right)\,.
\ee
Next, applying equations~\eqref{equation:m-q-is},
\eqref{equation:restrict_q_l} and~(\ref{equation:larry-bird-1}) as
well as the fact that $\alpha_0<1$, we note that
\be \label{eq:alpha_sc_bounds}
\alpha_0^{k-\left\lceil\frac{\ell}{d}\right\rceil}\cdot\max\nolimits_{x_0}(X)\geq
\alpha_0^{k-\left\lceil\frac{\ell}{d}\right\rceil +m+1} \geq
\alpha_0^{k-\left\lceil\frac{q}{d}\right\rceil+m+2} = \alpha_0^{{\rm
sc}(m,q)+2}. \ee
We note that the RHS of equation~\eqref{equation:measure-comp}
extends to all $r>0$ and apply it to obtain the following bound
\be \label{equation:contessa2}
\mu\left(B\left(x_0,\alpha_0^{\scl(m,q)+2}\right)\right)\geq\frac{1}{
C_{\mu}^2}\cdot\left(\frac{\alpha_0^2}{4}\right)^d\cdot\mu
\left(B_{\scl(m,q),j}\right). \ee
Finally, combining equations~\eqref{equation:contessa},
\eqref{eq:alpha_sc_bounds} and~\eqref{equation:contessa2} we
conclude that
\be \label{equation:contessa3} \mu \left(\pi_{\ell}\left(
T^{-1}_{\ell-1}(x_0,\ldots,y_{\ell-1})\right)\right) \geq
\frac{1}{2\cdot
C_{\mu}^2}\cdot\left(\frac{\alpha_0^2}{4}\right)^d\cdot\mu
\left(B_{\scl(m,q),j}\right). \ee

Noting that $\pi_{\ell}\left(
T^{-1}_{\ell-1}(x_0,\ldots,y_{\ell-1})\right)\subseteq
B_{\scl(m,q),j}$ and applying equation~(\ref{equation:contessa3}),
we have the inequality
\begin{multline}\label{equation:garcia}
\int_{\pi_{\ell}\left(
T_{\ell-1}^{-1}(x_0,\ldots,y_{\ell-1})\right)}
\left(\frac{\dist\left(y_{\ell},L\right)}
{\diam\left(B_{\scl(m,q),j}\right)}\right)^2\cdot\frac{\fdi\mu(y_\ell)}
{\mu\left(\pi_{\ell}\left(
T^{-1}_{\ell-1}(x_0,\ldots,y_{\ell-1})\right)\right)}\leq\\\frac{2\cdot
4^d\cdot C^2_{\mu}}{\alpha_0^{2\cdot
d}}\cdot\beta_2^2\left(B_{\scl(m,q),j}\right).
\end{multline}
Then, applying this and equation~(\ref{equation:nevada}) to the RHS
of equation~(\ref{equation:lucky}), we have the following inequality
for all $X\in T_0\left(\underline{P_{\scl(m,q),j}}\right)$:
\begin{equation}\label{equation:fonzi}
\int_{\left\{Y_X:X\times Y_X\in\,\underline{\name_{k,p}^1}\right\}}
\left(\frac{\dist\left(y_{\ell},L\right)}{\diam\left(B_{\scl(m,q),j}\right)}\right)^2\frac{\fdi\mu^{k\cdot
d}(Y_X)}{f_k^1(\underline{X})}\leq \frac{2\cdot 4^d\cdot
C^2_{\mu}}{\alpha_0^{2\cdot
d}}\cdot\beta_2^2\left(B_{\scl(m,q),j}\right).
\end{equation}
Furthermore, noting that
$$
\int_{T_0\left(\underline{P_{\scl(m,q),j}}\right)}\frac{\fdi\mu^{d+2}(X)}{\diam(X)^{d(d+1)}}\leq
\frac{C_{\mu}^{d+1}}{\alpha_0^{d(d+1)}}\cdot\alpha_0^{k\cdot
d^2}\cdot\mu\left(B_{\scl(m,q),j}\right),
$$
per equations~(\ref{multline:robert-parish-1})
and~(\ref{equation:fonzi}), we have the following uniform bound for
all $1\leq \ell\leq k\cdot d$:
\begin{multline}\label{equation:chachi}
\int_{\underline{P_{\scl(m,q),j}}}\left(\frac{\dist\left(y_{\ell},L\right)}
{\diam(X_q)}\right)^2\frac{\fdi\mu^{N_k}(\underline{X})}
{f_k^1(\underline{X})\cdot\diam(X)^{d(d+1)}}\leq\\
\frac{128\cdot4^d\cdot C_{\mu}^{d+3}}{\alpha_0^{d^2+3\cdot
d+2}}\cdot\alpha_0^{k\cdot
d^2}\cdot\beta_2^2(B_{\scl(m,q),j})\cdot\mu\left(B_{\scl(m,q),j}\right).
\end{multline}

Finally, taking largest coefficient from
equations~(\ref{equation:multline-s-0-bound-infinite})
and~(\ref{equation:chachi}), we have the bound
\begin{multline*}
\int_{\underline{P_{\scl(m,q),j}}}\frac{D_2\left(X_q,L\right)}{\diam^2(X_q)}\frac{\fdi\mu^{N_k}(\underline{X})}
{f_{k}^1(\underline{X})\cdot\diam(X)^{d(d+1)}}\leq\\
\frac{(d+2)\cdot128\cdot4^d\cdot C_{\mu}^{d+3}}{\alpha_0^{d^2+3\cdot
d +2}}\cdot\alpha_0^{k\cdot
d^2}\cdot\beta_2^2(B_{\scl(m,q),j},L)\cdot\mu\left(B_{\scl(m,q),j}\right).
\end{multline*}Therefore, taking the infimum over all such
$d$-planes $L$ we obtain the bound \begin{multline*}
\int_{\underline{P_{\scl(m,q),j}}}\frac{D_2\left(X_q,L\right)}{\diam^2(X_q)}\frac{\fdi\mu^{N_k}(\underline{X})}
{f_{k}^1(\underline{X})\cdot\diam(X)^{d(d+1)}}\leq\\
\frac{(d+2)\cdot128\cdot4^d\cdot C_{\mu}^{d+3}}{\alpha_0^{d^2+3\cdot
d +2}}\cdot\alpha_0^{k\cdot
d^2}\cdot\beta_2^2(B_{\scl(m,q),j})\cdot\mu\left(B_{\scl(m,q),j}\right).
\end{multline*}Combining this with equation~(\ref{equation:dunn-bros-1}) establishes the
conclusion of equation~(\ref{equation:scary-spice}).
\end{proof}

\begin{remark}
\label{remark:direct_menger} The interpolation
 procedure applied above verifies the intuitive idea that the small length scales
 ought not to contribute much information when the overall length scale of a simplex is large.
One reason that our argument will not work for the curvature
specified in equation~\eqref{eq:direct_menger} is that such a
curvature is ``resistant'' to our current interpolation procedure.
The gains of small length scales made via the interpolation is
nullified by the very singular behavior of this curvature on
simplices with bad scaling.  In a sense, this curvature incorporates
too much information for our current method to handle, and if the
continuous version of this curvature can be bounded by the
Jones-type flatness (in the spirit of
Theorem~\ref{theorem:upper-main}), then we must use a different
method to show it.
\end{remark}

\section{Proof of
Proposition~\ref{proposition:poorly-scaled-integration}}\label{section:later-integrate}
Due to the similarity of various parts of the proof of
Proposition~\ref{proposition:poorly-scaled-integration} with the
ideas and computations of Section~\ref{section:proof-prop-poorly-d},
this current section focuses mostly on the new ideas required to
prove the proposition, including some technical notation and
statements.  Computations and ideas presented previously are
referenced to as needed.

We define the constant
$$N_n=2^{n-1}-1,$$and we remark that
$N_n$ and $N_k$ (defined in Subsection~\ref{sizematters}) are two
different constants. We also define the
constant$$M_n=d+2+N_n=d+1+2^{n-1}.$$   We recall that
$\name_{k,1}^n$ is the set of multi-handled rakes whose handles
occur at their first $n$ coordinates and whose tines occur at their
last $d+1-n$ coordinates.

Here we adapt the methods of
Section~\ref{section:proof-prop-poorly-d} to deal with the problems
present in integrating the curvature over the regions
$\name_{k,1}^n$. Our adaptation consists of two stages.  First we
split the simplex $X\in\name_{k,1}^n$ into a sequence of {\em
single-handled} rakes using an interpolation procedure similar to
that of section~\ref{section:proof-prop-poorly-d}.  The basic idea
is to use such a procedure to ``break off'' each of the $n$ handles
from the simplex $X$, thereby forming a sequence of single-handled
rakes with elements denoted by $X^s$.   This procedure generates an
``augmentation'' of $\di\mu^{d+2}(X),$ as well as an integral
inequality along the lines of
Proposition~\ref{proposition:bounding-sum-integral} of
Section~\ref{sizematters}.

Once we have made this exchange, we can again apply the methods of
Section~\ref{subsection:multiscale} to the single-handled rakes
obtained from the first step and obtain the proper control in terms
of the $\beta_2$ numbers.

The result is that we exchange integrals over $\di\mu^{d+2}(X)$ for
integrals over a ``doubly augmented'' measure
 (depending on a much larger ``variable'') which allows us to incorporate
the necessary information from small scales. Simply put, we iterate
the methods of Section~\ref{subsection:multiscale} in an highly
adaptive way.  We remark that the type of analysis done in this
section was unnecessary in the case $d=1$ due to the extreme
simplicity of the ``combinatorial structure'' of triangles.  Much of
the work that we had to do revolved around dealing with the problems
of disparities of scale that arise from the more complicated
structure of $(d+1)$-simplices for $d\geq2$.

\subsection{Rake Sequences and  Pre-Multiscale Inequality}
We define a {\em short-scale piece} for $X\in \name_{k,1}^n$ to be
an $N_n$-tuple of the form
\begin{equation}\label{equation:poor-scaled}Z_X=(z_1,\ldots,z_{N_n})\in
 \left[A_k(x_0,\max\nolimits_{x_0}(X))\right]^{N_n},\end{equation}
and illustrate it in Figure~\ref{figure:help}.

For $X\in \name_{k,1}^n$ and  $Z_X$ we define an augmentation of $X$
by $Z_X$ as
\begin{equation}\overline{X}=X\times Z_X=(x_0,\ldots,x_{d+1},z_1,\ldots,z_{N_n})\in
\name_{k,1}^n\times H^{N_n}.\end{equation} We note that
$\overline{X}\in H^{M_n}$ and that all coordinates of $Z_X$ are in
the annulus centered at $x_0$ and determined by  $\max_{x_0}(X)$.

For $\overline{X}$ we construct a sequence of {\em single-handled
rakes}, $\Psi(\overline{X})=\{X^s\}_{s=1}^{2^{n-1}}$, in $H^{d+2}$
(\'{a} la the construction in Section~\ref{section:well-scaled}) and
use them to formulate an inequality for the polar sine on
$\name_{k,1}^n$ \'{a} la equation~\eqref{firstofmany}.  The major
difference is that the $X^s$ are not well-scaled, and the length of
the sequence is determined by $1<n\leq d$, not $k$.  Despite the
fact that they are not necessarily well-scaled, we can  construct
them so that their scaling is better than the original simplex $X$,
i.e., $$\SCale_{x_0}(X^s)\geq\SCale_{x_0}(X).$$

Just as before, in order to get the type of sequence we want, we
must first define an auxiliary sequence which will only be used to
construct the desired type of sequence.
\begin{definition}\label{definition:tilde-Z}If $X\in
\name_{k,1}^n$ and $\overline{X}=X\times
Z_X=(x_0,\ldots,x_{d+1},z_1,\ldots,z_{N_n})$, then let $Z_m^j$,
$j=0, \ldots, n-1$, $m=1, \ldots, 2^j$, be the doubly indexed
sequence of elements of $H^{d+2}$ defined recursively as follows:
$$Z_1^0=X,$$ and for $0 \leq j<n-1$ and $\sigma_j$ denoting the
transposition of $n-j-1$ and $n-j$ (acting on $Z^j_m$ by replacing
its coordinates at those indices)
 \begin{equation}\label{gbba} Z_{2m-1}^{j+1}=
Z^j_m\left(n-j,z_{2^j+(m-1)}\right),\end{equation}and
\begin{equation} Z_{2m}^{j+1}=\sigma_j
\left(
Z^j_m\left(n-j-1,z_{2^j+(m-1)}\right)\right).\end{equation}\end{definition}

Realizing that this is a fairly technical definition, we remark that
its purpose is only to give a sensible and formal framework for
isolating the individual handles of the original simplex $X$.
Furthermore, any such  method must work   under the restrictions
imposed by the two-term inequality for the polar sine.  As such, we
must have some sort of iterative scheme which allows us to swap out
one at a time.  For this reason, we construct the different
generations of the simplices $Z^j_m$, with the simplices of each
successive generation having one less handle than the previous
generation.  The final generation will have only one handle, and
this is the sequence of simplices that we really want to work with.

\begin{figure}[htbp]
     \centering
     \subfigure[simplex $X$ and $Z_X$]{\label{figure:help}
          \includegraphics[height=2in,width=2in]{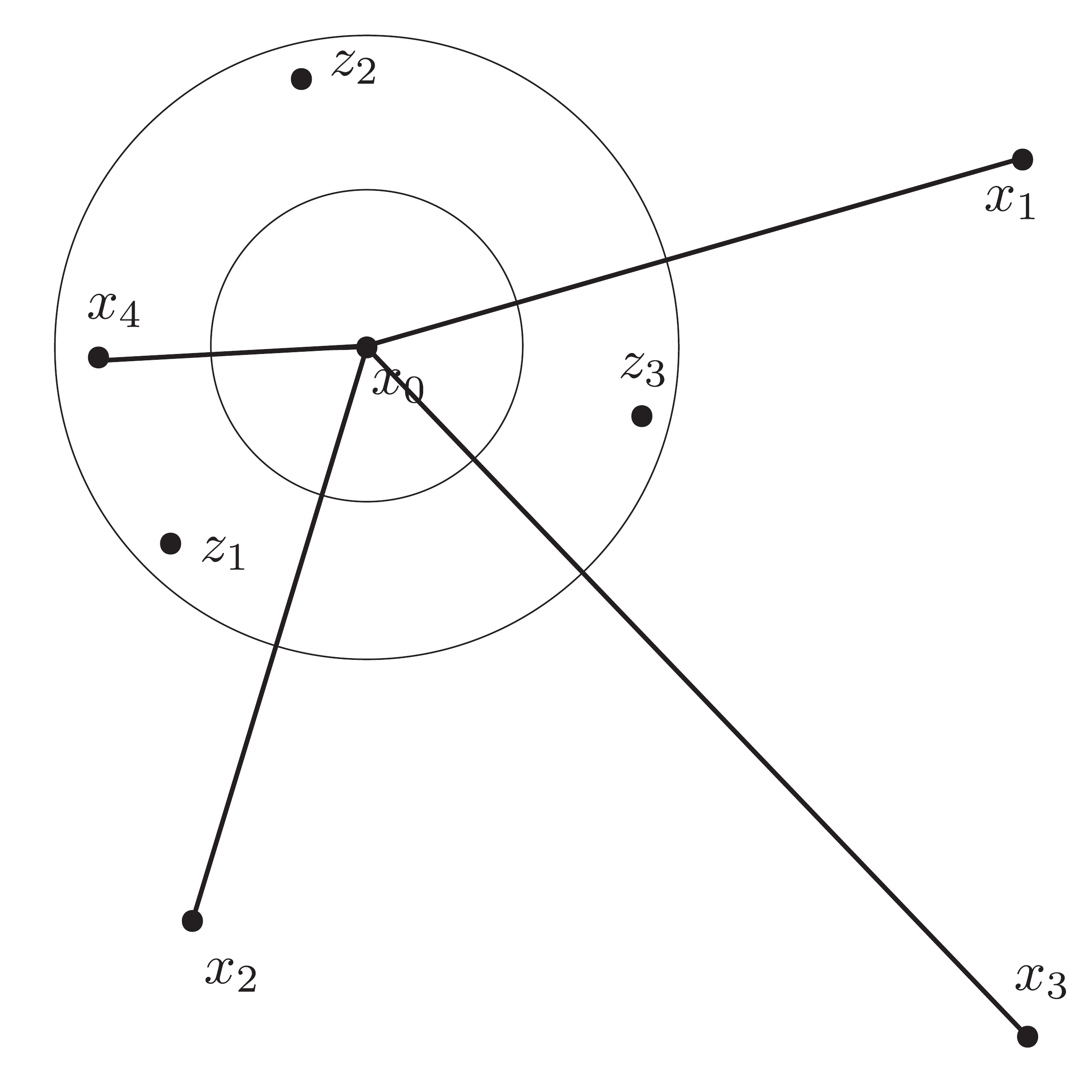}}
     \subfigure[auxiliary simplex $Z^1_2$]{\label{figure:nope}
           \includegraphics[height=2in,width=2in]{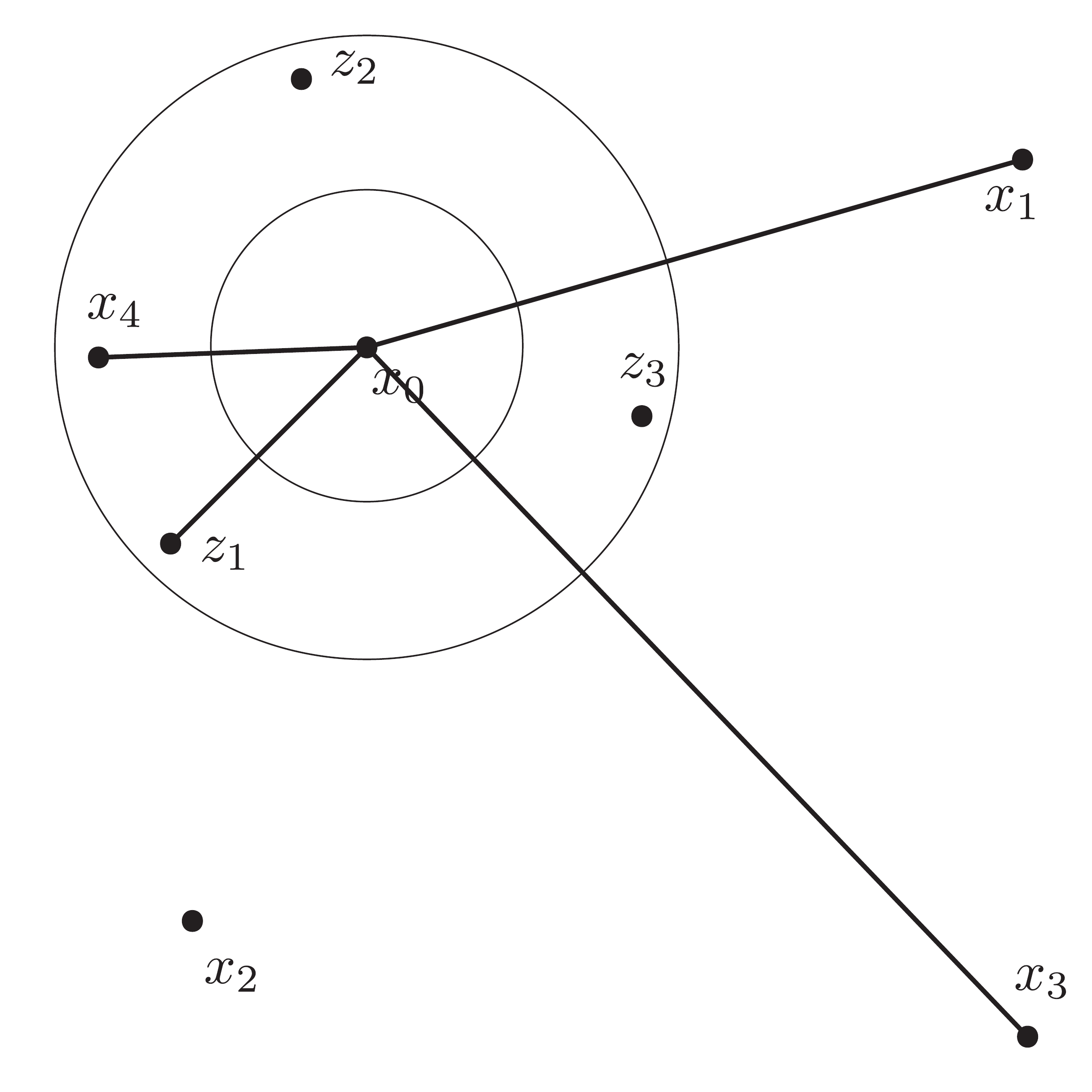}}
     \subfigure[rake simplex $X^2=Z^2_2$]{\label{figure:dope}
           \includegraphics[height=2in,width=2in]{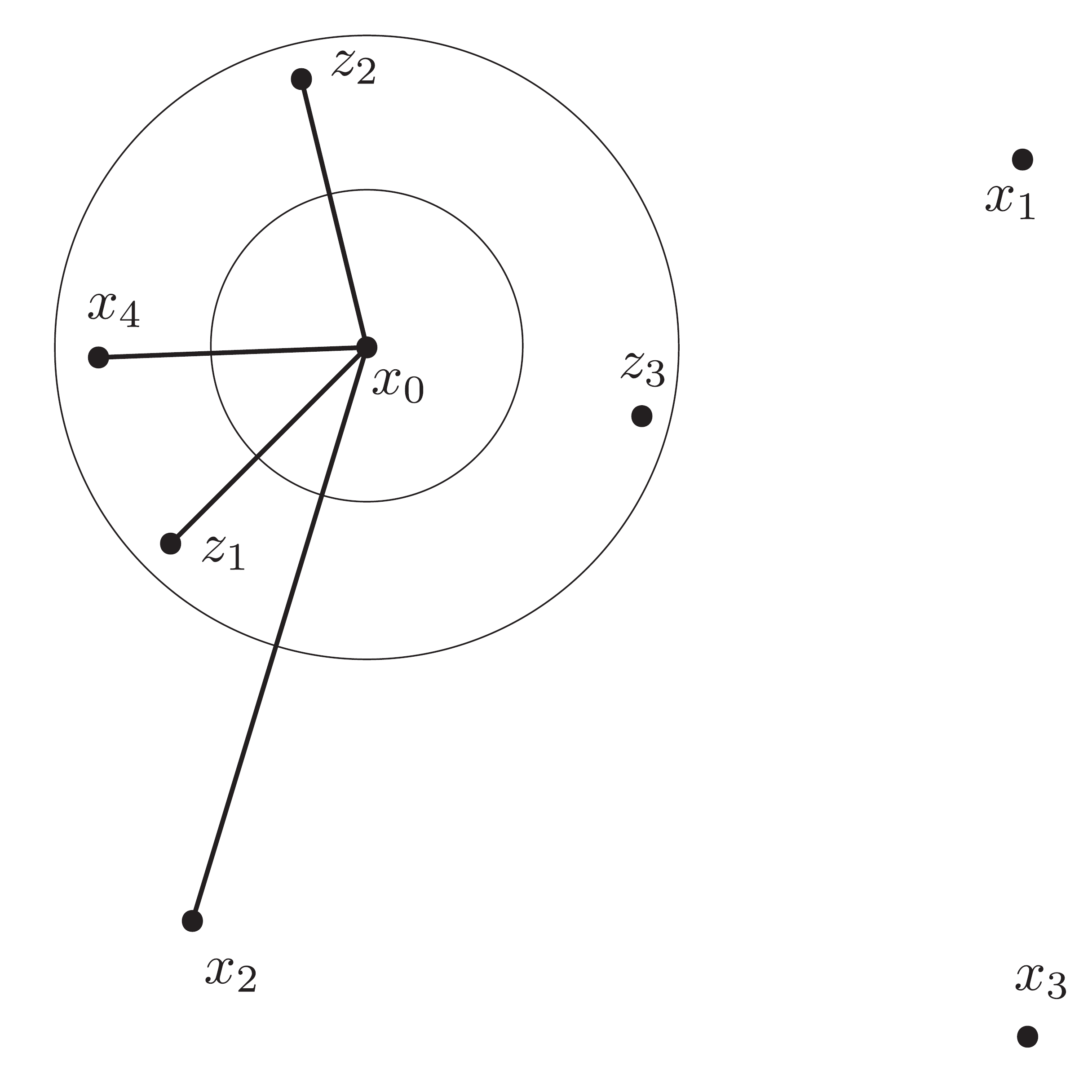}}
     \caption{Illustration of the short-scale piece and elements of the auxiliary and rake sequences.
     Here $X$ is a $3$-handled rake. Note that the coordinates of $Z_X$ are
contained in a single annulus rather than many annuli as was the
case for $Y_X$. The element $Z^1_2=(x_0,x_1,x_3,z_1,x_4)$ of the
auxiliary sequence has two handles rather than three (as $X$). The
element $Z_2^2=(x_0,x_2,z_2,z_1,x_4)$ has one handle and indeed it
is in the rake sequence (denoted by $X^2$).}
     \label{fig:three_noisy_lines}
\end{figure}

Using the $(n-1)^{\text{th}}$-generation of elements of the
auxiliary sequence above we define the {\em rake sequence }
$\Psi_k\left(\overline{X}\right)$ as follows.
\begin{definition}\label{definition:helped}If $X\in
\name_{k,1}^n$, $\overline{X}=X\times Z_X,$ and $Z^j_m$ as above,
then let $\Psi_k\left(\overline{X}\right)=\{X^s\}_{s=1}^{2^{n-1}}$
be the sequence of elements in $H^{d+2}$ such that
\begin{equation}\label{equation:def-X^s} X^s= Z_{s}^{n-1},
\textup{ for }1\leq s\leq 2^{n-1}.\end{equation}\end{definition}
We note that the simplex $X^s$ has exactly three coordinates taken from the original simplex $X$: the ``base'' vertex
$x_0$, the handle vertex $x_{i_s}$ for some index $1\leq i_s\leq n$, and the tines $x_i$ for some $n+1\leq i\leq d+1$.  The
rest of the coordinates are taken from the short scale piece $Z_X.$  This fact is apparent from following the recursive definitions, as well as the fact that $X^s$ can have only one
possible handle, this handled being inherited from one of the handles for $X$.

For example, if $d=3$ and $n=3$, then the simplex $X$ has three
handles located at the first $3$ coordinates, with the
$4^{\mathrm{th}}$ coordinate being the tine. Hence the sequences
$\widetilde{\Psi}_k\left(\overline{X}\right)$ and
$\Psi_k\left(\overline{X}\right)$ fit into the following tree:
\begin{equation*}\label{equation:example-poor}\xymatrix@R.5ex{& &Z_1^2=(x_0,x_1,z_2,z_1,x_4)=X^1\\
 &Z_1^1=(x_0,x_1,x_2,z_1,x_4)\ar[ur]\ar[dr]&\\& &Z_2^2=(x_0,x_2,z_2,z_1,x_4)=X^2\\ X=(x_0,x_1,x_2,x_3,x_4)\ar[uur]\ar[ddr]& &\\&
& Z_3^2=(x_0,x_1,z_3,z_1,x_4)=X^3&
\\& Z_2^1=(x_0,x_1,x_3,z_1,x_4)\ar[dr]\ar[ur]&\\& & Z_4^2=(x_0,x_3,z_3,z_1,x_4)=X^4}\end{equation*}
We illustrate the elements $Z_2^1$ and $Z_2^2$ of this tree in
Figures~\ref{figure:nope} and~\ref{figure:dope} respectively.

We note that the information provided by the original simplex
$X=(x_0,x_1,x_2,x_3,x_4)$ has been ``distributed'' over the new
sequence of simplices, and in a very real sense has been
``decoupled''.  As such, this will allow us to pursue our analysis
of these simplices more or less independently.

The following lemma establishes that all elements of
$\Psi_{k}\left(\overline{X}\right)$ are single-handled rakes, and
follows directly from the definition of the rake sequence.

\begin{lemma}\label{lemma:poor-scaled-property-size}If $X\in
\name_{k,1}^n$, $\overline{X}=X\times Z_X$,  and
$X^s\in\,\Psi_k\left(\overline{X}\right)$ with $1\leq s\leq
2^{n-1}$, then  each $X^s$ is a rake, and for $0\leq k'\leq  k-1$,
we have that
\begin{equation}\label{equation:multi-to-single}X^s\in
S^1_{k',2}.\end{equation}
\end{lemma}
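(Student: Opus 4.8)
The plan is to read the statement straight off the recursive description of the tree $\{Z^j_m\}$ in Definition~\ref{definition:tilde-Z}: the proof then splits into a combinatorial half, tracking which coordinates survive into $X^s$, and a scale half, substituting the inequalities that define $\name^n_{k,1}$ and the annulus $A_k$.

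First I would prove, by induction on $j$ for $0\le j\le n-1$, the normal-form claim that each $Z^j_m$ ($1\le m\le 2^j$) has $(Z^j_m)_0=x_0$, carries the original tines $x_{n+1},\dots,x_{d+1}$ of $X$ in positions $n+1,\dots,d+1$, carries exactly $n-j$ of the original handle vertices $x_1,\dots,x_n$ (in their original relative order) in positions $1,\dots,n-j$, and carries $j$ of the short-scale points $z_1,\dots,z_{N_n}$ in positions $n-j+1,\dots,n$. The base case $Z^0_1=X$ is immediate. For the step, the hypothesis says position $n-j$ of $Z^j_m$ is the last of its surviving original handle vertices, so $Z^{j+1}_{2m-1}=Z^j_m(n-j,z_{2^j+m-1})$ replaces exactly that coordinate by a short-scale point, while $Z^{j+1}_{2m}=\sigma_j\big(Z^j_m(n-j-1,z_{2^j+m-1})\big)$ replaces the handle vertex at position $n-j-1$ by a short-scale point and then transposes positions $n-j-1$ and $n-j$; in both cases the result lies in the normal form with $j+1$ in place of $j$, as one checks against the displayed $d=3$, $n=3$ tree. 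Setting $j=n-1$ shows $X^s=Z^{n-1}_s$ consists of $x_0$, a single original handle vertex $x_{i_s}$ at position $1$, short-scale points at positions $2,\dots,n$, and the original tines at positions $n+1,\dots,d+1$.

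Next I would insert the sizes. Write $M=\max\nolimits_{x_0}(X)$. Since $X\in\name^n_{k,1}$ (so $p=1$), each original handle obeys $\alpha_0^k<\|x_i-x_0\|/M\le 1$ and each original tine obeys $\alpha_0^{k+1}<\|x_i-x_0\|/M\le\alpha_0^k$, while each $z_\ell\in A_k(x_0,M)$ obeys $\alpha_0^{k+1}<\|z_\ell-x_0\|/M\le\alpha_0^k$ by~\eqref{eq:def-annulus_n}. Hence, among the $d+1$ nonzero coordinates of $X^s$, the vertex $x_{i_s}$ is the unique one lying outside $B(x_0,\alpha_0^k M)$, so $\max\nolimits_{x_0}(X^s)=\|x_{i_s}-x_0\|$, and the remaining $d$ coordinates all sit in the single annulus $A_k(x_0,M)$, so they are mutually within a factor $\alpha_0$ and each is strictly shorter than $\|x_{i_s}-x_0\|$. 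Let $v$ be the largest of the ratios $\|(X^s)_s-x_0\|/\|x_{i_s}-x_0\|$ for $2\le s\le d+1$, and let $k'$ be the largest integer with $\alpha_0^{k'}\ge v$; the bounds $\alpha_0^{k+1}<v<1$ force $0\le k'\le k$, and the factor-$\alpha_0$ clustering of the last $d$ coordinates gives $\alpha_0^{k'+2}<\alpha_0 v\le\SCale_{x_0}(X^s)\le v\le\alpha_0^{k'}$, hence $X^s\in S_{k',2}$ with position $1$ the only coordinate whose ratio to $\max\nolimits_{x_0}(X^s)$ exceeds $\alpha_0^{k'}$; that is, $X^s$ is a single-handled rake and $X^s\in S^1_{k',2}$. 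If this $k'$ equals $k$, then since also $\SCale_{x_0}(X^s)>\alpha_0^{k+1}$ (every coordinate of $X^s$ lies outside $B(x_0,\alpha_0^{k+1}M)$ whereas $\max\nolimits_{x_0}(X^s)\le M$), the interval $(\alpha_0^{k+1},\alpha_0^k]$ containing $\SCale_{x_0}(X^s)$ shows equally that $X^s\in S^1_{k-1,2}$; so one may always choose $0\le k'\le k-1$, which is the assertion.

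I do not expect a genuine obstacle, consistent with the lemma's ``follows directly.'' The one step needing care is the inductive normal-form claim, specifically verifying that the transposition $\sigma_j$ in the even branch is exactly what repacks the surviving handle vertices into the initial positions $1,\dots,n-j-1$, so that the recursion can keep stripping one handle from the top index at each generation (the even/odd branching being what distributes the $n$ handles of $X$ across the $2^{n-1}$ rakes $X^s$). Once the normal form is in hand, everything else is a direct substitution of the defining inequalities for $\name^n_{k,1}$ and $A_k(x_0,M)$.
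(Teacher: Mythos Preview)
Your proposal is correct and is precisely the direct verification the paper alludes to when it says the lemma ``follows directly from the definition of the rake sequence'': you trace the recursion to obtain the normal form of $X^s$ (one original handle at position~1, short-scale $z$-points and original tines elsewhere), then read off the scale estimates from the defining inequalities for $\name^n_{k,1}$ and $A_k(x_0,M)$. Two cosmetic points: you reuse the letter $s$ both for the rake index in $X^s$ and for the coordinate index in $\|(X^s)_s-x_0\|$, and at $k'=0$ the phrase ``ratio exceeds $\alpha_0^{k'}$'' is off by a non-strict inequality (the ratio at position~1 equals $1=\alpha_0^0$), though this boundary case is absorbed into the well-scaled regime $\widehat{S}$ in the subsequent decomposition anyway.
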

%


We remark that we take the tolerance for the subscript index to be
$p=2$ in Lemma~\ref{lemma:poor-scaled-property-size}, and this is
simply because in our construction of the rakes, $X^s$, we lose a
tiny bit of accuracy in determining the relative lengths of the
smallest and largest edges at $x_0$. This slight change in accuracy
is accounted for by the extra power of $\alpha_0$ in determining the
length of the interval that $\SCale_{x_0}(X^s)$ sits in.

Using the  ideas of Section~\ref{subsection:background-integration}
we  construct a set augmentation of  $\name_{k,1}^n$, denoted by
$\overline{\name_{k,1}^n}$, which has a uniformly large size for a
given $X\in\name_{k,1}^n$ in the sense given by
Proposition~\ref{proposition:multi-scale-good-case}, and is such
that we have a version of equation~\eqref{firstofmany}. Using the
constant $C_{\mathrm{p}}$ of
Proposition~\ref{proposition:concentration-inequality-1},  we define
the set augmentation $\overline{\name_{k,1}^n}$ as
\begin{multline}\label{equation:set-overline}\overline{\name_{k,1}^n}=\Bigg\{\overline{X}\in \name_{k,1}^n\times[\Supp]^{N_n}:
\textup{ for all } 0\leq j<n-1\,\textup{ and } 1\leq m\leq
2^j,\textup{ the sequence }Z^j_m\\\textup{ satisfies the
inequalities: }\pds_{x_0}\left( Z_m^j\right)\leq
C_{\mathrm{p}}\left[\pds_{x_0}\left(
Z_{2m-1}^{j+1}\right)+\pds_{x_0}\left(
Z_{2m}^{j+1}\right)\right]\Bigg\}.\end{multline}

The sequence $\Psi_k\left(\overline{X}\right)$ gives rise to the
following pre-multiscale inequality for the polar sine, which is
analogous to Lemma~\ref{lemma:multiscale-toast} and similarly can be
immediately proved by iterative application of the defining
inequality followed by the Cauchy-Schwartz inequality.
\begin{lemma}\label{lemma:X^s-inequality}If $\overline{X}\in\overline{\name_{k,1}^n}$, then the sequence of single-handled rakes
$\Psi_k(\overline{X})=\{X^s\}_{s=1}^{2^{n-1}}$ satisfy the
inequality$$\pds_{x_0}^2(X)\leq 2^{n-1}\cdot
C_{\mathrm{p}}^{2\cdot(n-1)}\sum_{s=1}^{2^{n-1}}\pds^2_{x_0}(X^s).$$\end{lemma}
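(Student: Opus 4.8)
The plan is to exploit the complete-binary-tree structure encoded in the definition of $\overline{\name_{k,1}^n}$ given in equation~\eqref{equation:set-overline}. By Definition~\ref{definition:tilde-Z} the auxiliary family $\{Z^j_m : 0\le j\le n-1,\ 1\le m\le 2^j\}$ forms a binary tree with root $Z^0_1 = X$ in which each node $Z^j_m$ at level $j$ has exactly the two children $Z^{j+1}_{2m-1}$ and $Z^{j+1}_{2m}$, so level $j$ contains $2^j$ nodes; by Definition~\ref{definition:helped} the $2^{n-1}$ leaves at level $n-1$ are precisely the single-handled rakes $X^s = Z^{n-1}_s$. The membership condition $\overline{X}\in\overline{\name_{k,1}^n}$ asserts exactly that the two-term polar-sine inequality $\pds_{x_0}(Z^j_m)\le C_{\mathrm{p}}\bigl(\pds_{x_0}(Z^{j+1}_{2m-1})+\pds_{x_0}(Z^{j+1}_{2m})\bigr)$ holds at every parent--child branching, for $0\le j<n-1$.

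First I would iterate this two-term inequality from the root downward. An easy induction on $j$ — the base case $j=0$ being the trivial identity, and the inductive step obtained by applying the defining inequality to each of the $2^j$ summands and noting that the index pairs $(2m-1,2m)$, $1\le m\le 2^j$, exhaust $1,\dots,2^{j+1}$ — yields
\[
\pds_{x_0}(X)\ \le\ C_{\mathrm{p}}^{\,j}\sum_{m=1}^{2^j}\pds_{x_0}\bigl(Z^j_m\bigr),\qquad 0\le j\le n-1.
\]
Taking $j=n-1$ and recalling $X^s = Z^{n-1}_s$ gives the first-power estimate $\pds_{x_0}(X)\le C_{\mathrm{p}}^{\,n-1}\sum_{s=1}^{2^{n-1}}\pds_{x_0}(X^s)$. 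Squaring and then applying the Cauchy--Schwarz inequality in the form $\bigl(\sum_{s=1}^{N}a_s\bigr)^2\le N\sum_{s=1}^{N}a_s^2$ with $N=2^{n-1}$ and $a_s=\pds_{x_0}(X^s)$ converts this into
\[
\pds_{x_0}^2(X)\ \le\ C_{\mathrm{p}}^{\,2(n-1)}\Bigl(\sum_{s=1}^{2^{n-1}}\pds_{x_0}(X^s)\Bigr)^{2}\ \le\ 2^{n-1}\,C_{\mathrm{p}}^{\,2(n-1)}\sum_{s=1}^{2^{n-1}}\pds_{x_0}^2(X^s),
\]
which is the claimed bound (it also covers the degenerate case $n=1$, where it reduces to the identity $\pds_{x_0}^2(X)=\pds_{x_0}^2(X^1)$). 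This mirrors the proof of Lemma~\ref{lemma:multiscale-toast}.

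There is no genuine analytic difficulty here; the only point requiring care is the combinatorial bookkeeping — verifying from the recursion in Definition~\ref{definition:tilde-Z} that the tree really is complete of depth $n-1$, so that exactly $n-1$ factors of $C_{\mathrm{p}}$ accumulate and the leaf sum runs over the entire sequence $\Psi_k(\overline{X})$. The transpositions $\sigma_j$ and the substitutions of short-scale coordinates $z_\ell$ appearing in Definition~\ref{definition:tilde-Z} are irrelevant for this lemma, since $\pds_{x_0}$ is invariant under permutations of the coordinates fixing $x_0$; they matter only for the scaling claim of Lemma~\ref{lemma:poor-scaled-property-size}.
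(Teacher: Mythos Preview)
Your proof is correct and follows exactly the approach the paper indicates: iterative application of the defining two-term inequality in equation~\eqref{equation:set-overline} down the binary tree of depth $n-1$, followed by Cauchy--Schwarz on the resulting sum of $2^{n-1}$ terms. The paper states precisely this (``immediately proved by iterative application of the defining inequality followed by the Cauchy-Schwartz inequality'') without writing out the details, so your version is simply a fleshed-out form of the intended argument.
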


Just as in Subsection~\ref{subsection:background-integration}, for
$0\leq s\leq N_n$ we define the truncations, $T_s$, the projections
$\pi_s$, and  the functions
\begin{equation}\label{equation:measure-function-s}g_{k,s}^n\left(\overline{X}\right)=
\mu\left(\pi_s\left(T_{s-1}^{-1}\left(T_{s-1}(\overline{X})\right)\right)\right),\textup{
for all }1\leq s\leq N_n.\end{equation}These functions are again
positive and satisfy similar estimates as before.  Specifically,
adapting the proof of
Proposition~\ref{proposition:multi-scale-good-case} to our current
purposes we can demonstrate the
following.\begin{proposition}\label{proposition:multi-scale-bad-case-tilde}If
$\overline{X}\in\overline{\name_{k,1}^n}$ and $1\leq s\leq N_n$,
then
\begin{equation}\label{equation:left-eye}\mu\left(B(x_0,\alpha_0^k\cdot\max\nolimits_{x_0}(X))\right)
\geq g_{k,s}^n\left(\overline{X}\right)\geq\frac{1}{2}
\cdot\mu\left(B(x_0,\alpha_0^k\cdot\max\nolimits_{x_0}(X))\right).\end{equation}\end{proposition}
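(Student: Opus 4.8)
The plan is to mimic the argument behind Proposition~\ref{proposition:multi-scale-good-case} from Appendix~\ref{app:nolyn}, adjusted to the binary-tree construction of Definition~\ref{definition:tilde-Z}. Fix $\overline{X}=X\times Z_X\in\overline{\name_{k,1}^n}$ and $1\le s\le N_n$, and write $s=2^j+(m-1)$ with $0\le j\le n-2$ and $1\le m\le 2^j$ (i.e., $j=\lfloor\log_2 s\rfloor$). The first, purely combinatorial, step is to recognize that $z_s$ is exactly the coordinate of the short-scale piece inserted when passing from $Z^j_m$ to $Z^{j+1}_{2m-1}$ and $Z^{j+1}_{2m}$ in equation~\eqref{gbba}, that every $z_\ell$ used along the path from $Z^0_1=X$ down to $Z^j_m$ has index $\ell<2^j\le s$, and hence that $Z^j_m$ is a function of $T_{s-1}(\overline{X})$ alone. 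Consequently, once $T_{s-1}(\overline{X})$ is frozen, the only defining inequality of $\overline{\name_{k,1}^n}$ that involves $z_s$ and no later coordinate is the one at the node $(j,m)$, namely $\pds_{x_0}(Z^j_m)\le C_{\mathrm{p}}\bigl(\pds_{x_0}(Z^{j+1}_{2m-1})+\pds_{x_0}(Z^{j+1}_{2m})\bigr)$; using the invariance of $\pds_{x_0}$ under permutations of the coordinates $1,\dots,d+1$ to erase the transposition $\sigma_j$, this is precisely the condition $z_s\in U_{C_{\mathrm{p}}}(Z^j_m,n-j-1,n-j)$, and one checks that $1\le n-j-1<n-j\le d+1$ because $0\le j\le n-2$ and $n\le d$.

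The upper bound is immediate: every $\overline{X}'\in T_{s-1}^{-1}(T_{s-1}(\overline{X}))$ has its $s$-th short-scale coordinate in $A_k(x_0,\max\nolimits_{x_0}(X))\cap\Supp\subseteq B\bigl(x_0,\alpha_0^k\max\nolimits_{x_0}(X)\bigr)$ by equations~\eqref{equation:poor-scaled} and~\eqref{equation:set-overline}, so $g^n_{k,s}(\overline{X})=\mu\bigl(\pi_s(T_{s-1}^{-1}(T_{s-1}(\overline{X})))\bigr)\le\mu\bigl(B(x_0,\alpha_0^k\max\nolimits_{x_0}(X))\bigr)$.

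For the lower bound I will show that $\pi_s\bigl(T_{s-1}^{-1}(T_{s-1}(\overline{X}))\bigr)$ contains every $z_s\in\Supp\cap A_k(x_0,\max\nolimits_{x_0}(X))$ satisfying the node-$(j,m)$ inequality. Given such a $z_s$, build a full short-scale piece by picking $z_{s+1},\dots,z_{N_n}$ in increasing order of index: when $z_\ell$ is chosen (write $\ell=2^{j_\ell}+(m_\ell-1)$), the node inequalities whose governing coordinate has index $<\ell$ are already determined by the frozen coordinates and hold (those of index $<s$ because they agree with $\overline{X}\in\overline{\name_{k,1}^n}$, the index-$s$ one by our choice of $z_s$), while the node-$(j_\ell,m_\ell)$ inequality has a determined left-hand side $\pds_{x_0}(Z^{j_\ell}_{m_\ell})$ with $Z^{j_\ell}_{m_\ell}\in[\Supp]^{d+2}$ and zeroth coordinate $x_0$; Proposition~\ref{proposition:concentration-inequality-1}, applied with radius $r=\alpha_0^k\max\nolimits_{x_0}(X)\le\diam(\Supp)$, then gives that the set of admissible $z_\ell\in\Supp\cap B(x_0,r)$ has measure at least $0.75\,\mu(B(x_0,r))$. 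Because the choice of $\alpha_0$ in equation~\eqref{equation:alpha-loose} and the $d$-regularity of $\mu$ force $\mu(B(x_0,\alpha_0 r))\le\tfrac14\mu(B(x_0,r))$, the annulus $A_k(x_0,\max\nolimits_{x_0}(X))$ carries measure at least $\tfrac34\mu(B(x_0,r))$, so intersecting the two sets still leaves measure at least $\tfrac12\mu(B(x_0,r))>0$ and an admissible $z_\ell$ exists. Iterating yields a valid completion, proving the containment. The same arithmetic at the node $(j,m)$ itself shows the set in the containment has measure at least $\tfrac12\mu\bigl(B(x_0,\alpha_0^k\max\nolimits_{x_0}(X))\bigr)$, which is the asserted lower bound; when $d=1$ the concentration fraction is $1$ and the estimate only improves.

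The step I expect to be the real obstacle is the combinatorial bookkeeping of the first paragraph: pinning down, for each $s$, the unique defining inequality of $\overline{\name_{k,1}^n}$ that becomes active on $z_s$ after the earlier coordinates are fixed, and confirming that none of the remaining inequalities can secretly shrink $\pi_s\bigl(T_{s-1}^{-1}(T_{s-1}(\overline{X}))\bigr)$ --- i.e., that every downstream inequality can be satisfied by a not-yet-chosen coordinate via the concentration inequality. Once this structure is established, the measure estimates are exactly those already used for Proposition~\ref{proposition:multi-scale-good-case}.
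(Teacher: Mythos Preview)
Your proof is correct and follows exactly the approach the paper indicates (``adapting the proof of Proposition~\ref{proposition:multi-scale-good-case}''): you identify the unique two-term node inequality governing $z_s$, show the projection contains $U_{C_{\mathrm p}}(Z^j_m,n-j-1,n-j)\cap A_k(x_0,\max_{x_0}(X))\cap\Supp$ by iteratively extending via the concentration inequality, and apply the same $\tfrac34+\tfrac34-1$ measure arithmetic from Appendix~\ref{app:nolyn}. The only expository gap is that in the extension step you explicitly account for node inequalities with governing index $<s$ and $=s$ but leave the ones with index in $(s,\ell)$ to the reader; these of course hold by the inductive choice of $z_{s+1},\dots,z_{\ell-1}$, so the argument is complete.
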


Analogous to the normalizing function $f_k^1$ of
Section~\ref{sizematters}, we then define
\begin{equation}\label{equation:lampy}f_k^n\left(\overline{X}\right)=
\prod_{s=1}^{N_n}g^n_{k,s}\left(\overline{X}\right),\end{equation}
and the corresponding ``augmentation'' of the measure
$\di\mu^{d+2}(X)$ (restricted to $X\in\name_{k,1}^n$)
\begin{equation}\label{ooohsono}\frac{\di\mu^{M_n}(\overline{X})}{f_k^n(\overline{X})}\Bigg|_{\overline{\name_{k,1}^n}}.\end{equation}
The following proposition is then established in parallel to
Proposition~\ref{proposition:bounding-sum-integral} (i.e., applying
the inequality of Lemma~\ref{lemma:X^s-inequality} together with a
direct relation between plain and augmented integration as in
equation~\eqref{equation:differential-identity-1}).

\begin{proposition}\label{proposition:multi-scale-integral-bound-tilde-1}If
$Q$ is a ball in $H$, then
$$\int_{\name_{k,1}^n(Q)}\frac{\pds^2_{x_0}(X)}{\diam(X)^{d(d+1)}}\di\mu^{d+2}(X)\leq 2^{n-1}
\cdot C_{\mathrm{p}}^{2\cdot
(n-1)}\sum_{s=1}^{N_n}\int_{\overline{\name_{k,1}^n(Q)}}\frac{\pds^2_{x_0}(X^s)}{\diam(X)^{d(d+1)}}
\frac{\fdi\mu^{M_n}\left(\overline{X}\right)}{f_k^n\left(\overline{X}\right)}.$$\end{proposition}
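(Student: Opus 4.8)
The plan is to reproduce, \emph{mutatis mutandis}, the proof of Proposition~\ref{proposition:bounding-sum-integral}. The crux is the ``change of variables'' identity
$$
\int_{\name_{k,1}^n(Q)}\frac{\pds^2_{x_0}(X)}{\diam(X)^{d(d+1)}}\di\mu^{d+2}(X)=\int_{\overline{\name_{k,1}^n(Q)}}\frac{\pds^2_{x_0}(X)}{\diam(X)^{d(d+1)}}\frac{\fdi\mu^{M_n}(\overline{X})}{f_k^n(\overline{X})},
$$
which is the exact analogue of equation~\eqref{equation:differential-identity-1}. Granting this, the proposition follows immediately: by Lemma~\ref{lemma:X^s-inequality} one has $\pds_{x_0}^2(X)\le 2^{n-1}\cdot C_{\mathrm{p}}^{2(n-1)}\sum_s\pds_{x_0}^2(X^s)$ pointwise on $\overline{\name_{k,1}^n(Q)}$, the sum being over the elements $X^s$ of the rake sequence $\Psi_k(\overline{X})$; substituting this bound into the right-hand side of the identity and using linearity and monotonicity of the integral produces exactly the claimed inequality with the constant $2^{n-1}\cdot C_{\mathrm{p}}^{2(n-1)}$.

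To prove the change-of-variables identity I would argue as in equations~\eqref{equation:iterated_integral}--\eqref{equation:saved-for-later}, doing the case $Q=H$ first and handling a general ball by the same localization used in Proposition~\ref{proposition:bounding-sum-integral}. First, Fubini's theorem peels off the integration in the base variable $X\in\name_{k,1}^n$, reducing the right-hand side to $\int_{\name_{k,1}^n}\bigl(\int_{\{Z_X:\,X\times Z_X\in\overline{\name_{k,1}^n}\}}\fdi\mu^{N_n}(Z_X)/f_k^n(\overline{X})\bigr)\,\pds_{x_0}^2(X)\diam(X)^{-d(d+1)}\di\mu^{d+2}(X)$, so it suffices to show the inner integral equals $1$ for each $X$. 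I would obtain this by iterating the inner integral one coordinate $z_s$ at a time, $s=1,\ldots,N_n$: since $g_{k,s}^n(\overline{X})=\mu\bigl(\pi_s(T_{s-1}^{-1}(T_{s-1}(\overline{X})))\bigr)$ depends only on the truncation $T_{s-1}(\overline{X})$, i.e.\ on $X$ and $z_1,\ldots,z_{s-1}$, each successive conditional integral is
$$
\int_{\pi_s\left(T_{s-1}^{-1}(x_0,\ldots,z_{s-1})\right)}\frac{\fdi\mu(z_s)}{\mu\!\left(\pi_s\!\left(T_{s-1}^{-1}(x_0,\ldots,z_{s-1})\right)\right)}=1,
$$
precisely as in equations~\eqref{equation:nevada} and~\eqref{equation:saved-for-later}. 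Proposition~\ref{proposition:multi-scale-bad-case-tilde} guarantees $g_{k,s}^n>0$ (hence $f_k^n>0$ and the conditionally-defined sets $\pi_s(T_{s-1}^{-1}(\cdot))$ are non-null), so every normalization above is legitimate; here the coordinates of $Z_X$ all sit in the single annulus $A_k(x_0,\max\nolimits_{x_0}(X))$, which only simplifies the bookkeeping of scales relative to the single-handled case.

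The one point that genuinely requires care — and which I expect to be the main, though modest, obstacle — is verifying that membership in $\overline{\name_{k,1}^n}$ really has the sequential/conditional structure the iteration demands, namely that the constraint ``activated'' when $z_s$ is introduced involves only $z_1,\ldots,z_s$ together with $X$. This is a bookkeeping check on Definition~\ref{definition:tilde-Z} and equation~\eqref{equation:set-overline}: the two-term inequalities defining $\overline{\name_{k,1}^n}$ are indexed by pairs $(j,m)$ with $0\le j<n-1$ and $1\le m\le 2^j$, these pairs are in bijection with $s=1,\ldots,N_n$ via $s=2^j+(m-1)$, and the inequality attached to $(j,m)$ is $\pds_{x_0}(Z_m^j)\le C_{\mathrm{p}}\bigl(\pds_{x_0}(Z_{2m-1}^{j+1})+\pds_{x_0}(Z_{2m}^{j+1})\bigr)$, where by the recursion of Definition~\ref{definition:tilde-Z} the simplex $Z_m^j$ depends only on $X$ and on earlier-introduced coordinates, while $Z_{2m-1}^{j+1}$ and $Z_{2m}^{j+1}$ depend on $Z_m^j$ and the freshly-introduced coordinate $z_{2^j+(m-1)}$. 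Hence the slice $\{Z_X:X\times Z_X\in\overline{\name_{k,1}^n}\}$ decomposes as an iterated union of conditionally-defined sets $\pi_s(T_{s-1}^{-1}(\cdot))$ exactly as in Section~\ref{sizematters}, the iteration closes, and the remainder is routine. Finally, as we do not verify the measurability of $f_k^n$, one may instead — exactly as in Remark~\ref{remark:big-shot} — replace $f_k^n$ throughout by the manifestly measurable comparable product of ball masses supplied by Proposition~\ref{proposition:multi-scale-bad-case-tilde}, at the cost of a harmless factor of at most $2^{N_n}$.
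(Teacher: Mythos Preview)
Your proposal is correct and follows exactly the route the paper indicates: it states that the proposition ``is established in parallel to Proposition~\ref{proposition:bounding-sum-integral} (i.e., applying the inequality of Lemma~\ref{lemma:X^s-inequality} together with a direct relation between plain and augmented integration as in equation~\eqref{equation:differential-identity-1}),'' and you have carried this out in detail, including the sequential-constraint bookkeeping that justifies the Fubini iteration. Your observation that the index $s=2^j+(m-1)$ bijects pairs $(j,m)$ with $\{1,\ldots,N_n\}$ and that the constraint attached to $(j,m)$ involves only $z_1,\ldots,z_s$ is the right verification, and your appeal to Remark~\ref{remark:big-shot} for measurability mirrors the paper's own caveat.
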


We now focus on applying the methods of
Section~\ref{section:proof-prop-poorly-d} to the individual terms on
the RHS of the inequality above.

\subsection{Generating Multiscale Discrete and Integral Inequalities}\label{subsection:dog-meat}

The individual integrals on the RHS of
Proposition~\ref{proposition:multi-scale-integral-bound-tilde-1} are
similar to the integral on the LHS of
Proposition~\ref{proposition:poorly-scaled-integration-d}, mainly
because the argument $X^s$, $1\leq s\leq N_n$, is a rake.  In
principle, one would like to  change variables in order to directly
apply Proposition~\ref{proposition:poorly-scaled-integration-d} to
these integrals. We avoid this for various reasons.  The most
immediate is because the region $\overline{\name_{k,1}^n}$, $1 < n
\leq d-1$, is relatively complicated, and any change of variables
would be further obstructed by the normalization of the function
$f_k^n$. Furthermore, for a rake $X^s$ that is poorly-scaled, we
must somehow use the small length scales present in $X^s$, and these
may not be accessible via such a change of variables.  As such, we
find it more straightforward to adapt the methods of
Section~\ref{section:proof-prop-poorly-d} to the individual terms on
the RHS of
Proposition~\ref{proposition:multi-scale-integral-bound-tilde-1}.

Throughout the rest of this section, we  fix $1 < n \leq d-1$,
$k\geq3$, and $1\leq s\leq N_n$.

\subsubsection{The Decomposition of $\overline{\name_{k,1}^n}$
According to $\SCale_{x_0}(X^s)$}We first decompose
$\overline{\name_{k,1}^n}$ according to the variety of the rakes
$\{X^s\}_{s=1}^{N_n}$,  the only meaningful variation being the
length of the handle and size of the function $\SCale_{x_0}(X^s)$.
Furthermore, following the argument leading to
equation~\eqref{equation:butterfinger} we may assume that the handle
occurs at the first coordinate of $X^s$.

The initial task is accounting for the discrepancies of scale for
$X^s$, that is we must decompose the set $\overline{\name_{k,1}^n}$
according to $\SCale_{x_0}(X^s)$.
 Let
\begin{equation*}\widehat{R}^s
=\left\{\overline{X}\in\overline{\name_{k,1}^n}:X^s\textup{ is
well-scaled at }x_0\right\}.\end{equation*} By construction, the
scaling of the simplices $X^s$ is no worse (actually slightly
better) than that of the original simplex $X$, and thus for $2\leq
k'\leq k-1$ we define the sets \begin{equation*}R_{k'}^s=
\left\{\overline{X}\in\overline{\name_{k,1}^n}:X^s\in
\name_{k',2}^1\right\}.\end{equation*} Furthermore, if $Q$ is a ball
in $H$, then we restrict those sets to $Q^{d+2}$ as before to obtain $\widehat{R}^s(Q)$ 
and $R_{k'}^s(Q)$, and we note the following set equality:
\begin{equation}\label{equation:non-disjoint-union}\overline{\name_{k,1}^n(Q)}=\widehat{R}^s(Q)\
\bigcup\, \bigcup_{k'=2}^{k-1}R_{k'}^s(Q).\end{equation}This
decomposition (which is not a partition because the sets
$R_{k'}^s(Q)$ may overlap) yields the following inequality
\begin{multline}\label{equation:sand}
\int_{\overline{\name_{k,1}^n(Q)}}
\frac{\pds_{x_0}^2(X^s)}{\diam(X)^{d(d+1)}}
\frac{\fdi\mu^{M_n}\left(\overline{X}\right)}{f_k^n\left(\overline{X}\right)}
\leq\\
%
%
\int_{\widehat{R}^s(Q)}\frac{\pds_{x_0}^2(X^s)}
{\diam(X)^{d(d+1)}}\frac{\fdi\mu^{M_n}\left(\overline{X}\right)}
{f_k^n\left(\overline{X}\right)}+
\sum_{k'=2}^{k-1}\int_{R_{k'}^s(Q)}\frac{\pds_{x_0}^2(X^s)}
{\diam(X)^{d(d+1)}}\frac{\fdi\mu^{M_n}\left(\overline{X}\right)}
{f_k^n\left(\overline{X}\right)}\,.
\end{multline}Note that the inequality of equation~\eqref{equation:sand} is analogous to the equality of equation~\eqref{multline:decomposed},
with one of the differences being the fact that we now only need to control a finite sum, rather than an infinite one.
The rest of our efforts focus on showing that each term on the RHS
above is ``small'' with respect to the quantity
$J_d^{\mathcal{D}}(\mu|_Q)$, that is, we can control them by
something that looks basically like
 $\alpha_0^{k\cdot d}\cdot J_d^{\mathcal{D}}(\mu|_Q)$.

The first term on the RHS of equation~(\ref{equation:sand}) can be
controlled via geometric multipoles.  In fact, via the well-scaling
of $X^s$ and the small length scales produced by the interpolation
procedure, we get such control  by chopping it
 according to the length scales of $X^s$, and then following the computations of Section~\ref{subsection:integration-d} on these pieces.
The result is the following proposition, whose proof appears in Appendix~\ref{app:wide-hat}
\begin{proposition}\label{proposition:wide-hat-dead}If $Q$ is a ball in $H$,
then there exists a constant $C_8=C_8(d,C_{\mu})$ such that
\be \label{eq:wide-hat-dead}
\int_{\widehat{R}^s(Q)}\frac{\pds_{x_0}^2(X^s)}
{\diam(X)^{d(d+1)}}\frac{\fdi\mu^{M_n}\left(\overline{X}\right)}
{f_k^n\left(\overline{X}\right)}\leq C_8\cdot\alpha_0^{k\cdot d\cdot
(d-n+2)}\cdot J_d^{\mathcal{D}}(\mu|_Q)\,.
\ee
\end{proposition}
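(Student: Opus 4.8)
The plan is to run the geometric-multipoles argument of Section~\ref{subsection:integration-d} (the proof of Proposition~\ref{proposition:gerry-halwell}) on the augmented region $\widehat{R}^s(Q)$. The mechanism producing the power $\alpha_0^{k\cdot d\cdot(d-n+2)}$ is a disparity of scales inside a typical $\overline{X}\in\widehat{R}^s(Q)$: recall that $X^s$ has only three coordinates coming from $X$ — the base $x_0$, one inherited handle $x_{i_s}$ ($1\le i_s\le n$), and the $d+1-n$ tines $x_{n+1},\ldots,x_{d+1}$ of $X$ — the remaining $n-1$ coordinates of $X^s$ coming from $Z_X$. Since the tines and every coordinate of $Z_X$ lie within $\alpha_0^{k}\cdot\max_{x_0}(X)$ of $x_0$, and $X^s$ is well-scaled at $x_0$ by the definition of $\widehat{R}^s$, \eqref{eq:well_scaled_ineq} forces $\|x_{i_s}-x_0\|=\max_{x_0}(X^s)<\alpha_0^{k-3}\cdot\max_{x_0}(X)$ as well; thus every vertex of $X^s$ and every coordinate of $Z_X$ concentrates at the small length scale $\approx\alpha_0^{k}\cdot\max_{x_0}(X)$, while the $n-1$ handle vertices of $X$ other than $x_{i_s}$ still range over the much larger ball $B(x_0,\max_{x_0}(X))$. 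Integrating out the small-scale coordinates at this smaller scale is exactly what manufactures the gain, just as integrating out $d$ small-scale interpolating coordinates produced the gain $\alpha_0^{k\cdot d^2}$ in the single-handle case.

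Concretely, I would first apply Proposition~\ref{proposition:psin-bound-deviations} to the well-scaled simplex $X^s$ to obtain, for every $d$-plane $L$,
$$\pds_{x_0}^2(X^s)\ \le\ \frac{2\cdot(d+1)^2\cdot(d+2)^2}{\alpha_0^{6}}\sum_{s'=0}^{d+1}\frac{\dist^2\big((X^s)_{s'},L\big)}{\diam(X^s)^{2}},$$
and then, in the spirit of \eqref{easydecomp}--\eqref{easypartition}, decompose $\widehat{R}^s(Q)$ first by the overall scale $m\ge m(Q)$ for which $\max_{x_0}(X)\in(\alpha_0^{m+1},\alpha_0^{m}]$, and then by the location of $x_0$ via the partition $\mathcal{P}_{\scl(m)}$ at the scale $\scl(m):=m+k-3$, which is comparable to the scale $\approx\alpha_0^{m+k}$ on which $X^s$ lives; this produces pieces $\overline{P_{\scl(m),j}}$ with $j\in\Lambda_{\scl(m)}(Q)$. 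On each piece I would record — using the construction of $X^s$, the bound $\max_{x_0}(X^s)<\alpha_0^{k-3}\max_{x_0}(X)$, Lemma~\ref{lemma:poor-scaled-property-size}, and \eqref{equation:measure-comp} — that $\diam(X)>\alpha_0^{m+1}$, that $\diam(X^s)>\tfrac{\alpha_0^4}{8}\diam(B_{\scl(m),j})$, that all $d+2$ vertices of $X^s$ and all $N_n$ coordinates of $Z_X$ lie in $B_{\scl(m),j}$, that the $n-1$ remaining handle vertices of $X$ lie in $B(x_0,\alpha_0^m)$, and that each factor $g_{k,s'}^n(\overline{X})$ of $f_k^n(\overline{X})$ is comparable, up to constants depending only on $d$ and $C_\mu$, to $\mu(B_{\scl(m),j})$ (by Proposition~\ref{proposition:multi-scale-bad-case-tilde}).

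For a fixed piece $\overline{P_{\scl(m),j}}$, a fixed $d$-plane $L$, and a fixed index $s'$, I would then apply Fubini's theorem and integrate coordinate by coordinate: the coordinate bearing $\dist^2(\cdot,L)$ against $\int_{B_{\scl(m),j}}\dist^2(x,L)\di\mu(x)=\beta_2^2(B_{\scl(m),j},L)\cdot\diam(B_{\scl(m),j})^2\cdot\mu(B_{\scl(m),j})$, whose factor $\diam(B_{\scl(m),j})^2$ absorbs $\diam(X^s)^{-2}$ up to $64/\alpha_0^8$; the $N_n$ coordinates of $Z_X$ against the $N_n$ factors $1/g_{k,s'}^n$ of $f_k^n$, each matched pair contributing $1$ exactly (cf.\ \eqref{equation:nevada}--\eqref{equation:saved-for-later}); and each remaining vertex of $X$ against $\mu(B(x_0,\alpha_0^{m+k}))\lesssim\alpha_0^{(m+k)d}$ if it is small-scale ($x_0$, a tine, or $x_{i_s}$) or $\mu(B(x_0,\alpha_0^{m}))\lesssim\alpha_0^{md}$ if it is one of the $n-1$ large-scale handles. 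Multiplying by $\diam(X)^{-d(d+1)}\lesssim\alpha_0^{-md(d+1)}$ and collecting exponents of $\alpha_0$ (the $\alpha_0^m$-powers cancel) gives
$$\int_{\overline{P_{\scl(m),j}}}\frac{\pds_{x_0}^2(X^s)}{\diam(X)^{d(d+1)}}\,\frac{\fdi\mu^{M_n}(\overline{X})}{f_k^n(\overline{X})}\ \le\ C\cdot\alpha_0^{k\cdot d\cdot(d-n+2)}\cdot\beta_2^2\big(B_{\scl(m),j},L\big)\cdot\mu\big(B_{\scl(m),j}\big)$$
for some $C=C(d,C_\mu)$. Taking the infimum over $L$ replaces $\beta_2^2(B_{\scl(m),j},L)$ by $\beta_2^2(B_{\scl(m),j})$, and summing over $j\in\Lambda_{\scl(m)}(Q)$ and $m\ge m(Q)$ — the balls $B_{\scl(m),j}$ being pairwise distinct and, since $k\ge3$, all contained in $\mathcal{D}(Q)$ — bounds the left side of \eqref{eq:wide-hat-dead} by $C\cdot\alpha_0^{k\cdot d\cdot(d-n+2)}\cdot\sum_{B\in\mathcal{D}(Q)}\beta_2^2(B)\mu(B)=C\cdot\alpha_0^{k\cdot d\cdot(d-n+2)}\cdot J_d^{\mathcal{D}}(\mu|_Q)$, i.e.\ \eqref{eq:wide-hat-dead} with $C_8=C$.

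I expect the main obstacle to be the exponent count leading to the displayed per-piece bound: one must keep precise track of which of the $M_n=d+1+2^{n-1}$ coordinates of $\overline{X}$ sit at the small scale $\alpha_0^{m+k}$ and which of the $n-1$ handle coordinates sit at the large scale $\alpha_0^m$, perform the mild case analysis according to whether the coordinate carrying $\dist^2(\cdot,L)$ is $x_0$, another vertex of $X$, or a coordinate of $Z_X$ (so that the $\mu(B_{\scl(m),j})$ and $\beta_2^2(B_{\scl(m),j},L)$ factors attach to the right coordinates), and verify that the $\alpha_0^m$-powers cancel so that precisely $k\cdot d\cdot(d-n+2)$ powers of $\alpha_0$ survive. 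This is a lengthy but routine generalization of the computation in Cases~1--4 of the proof of Proposition~\ref{proposition:gerry-halwell}, complicated only by the $f_k^n$-normalization and by the extra $n-1$ far-roaming handle vertices; a secondary routine point is verifying that the choice $\scl(m)=m+k-3$ makes $B_{\scl(m),j}$ large enough to contain all of $X^s$, which uses exactly the bound $\max_{x_0}(X^s)<\alpha_0^{k-3}\max_{x_0}(X)$.
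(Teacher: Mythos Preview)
Your proposal is correct and follows essentially the same route as the paper's proof in Appendix~\ref{app:wide-hat}: decompose $\widehat{R}^s(Q)$ by the overall scale $m$ of $X$, localize $x_0$ via $\mathcal{P}_{\scl(m,k)}$ with $\scl(m,k)=m+k-3$, apply Proposition~\ref{proposition:psin-bound-deviations} to the well-scaled $X^s$, split into cases according to whether $(X^s)_t$ comes from $X$ or from $Z_X$, match the $Z_X$-coordinates against the factors of $f_k^n$ via Proposition~\ref{proposition:multi-scale-bad-case-tilde}, and integrate the $X$-coordinates over balls of radii governed by the $p_i$'s to extract $\alpha_0^{k\cdot d\cdot(d-n+2)}\cdot\beta_2^2(B_{\scl(m,k),j})\cdot\mu(B_{\scl(m,k),j})$. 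The only cosmetic difference is bookkeeping (the paper records the inclusion \eqref{equation:set-inclusion-doozy} explicitly, whereas you phrase it as ``integrate each remaining vertex of $X$ against $\mu(B(x_0,\alpha_0^{m+k}))$ or $\mu(B(x_0,\alpha_0^{m}))$''); the exponent count and the final summation over $m,j$ are identical.
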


We note that the coefficient $\alpha_0^{k\cdot d\cdot (d-n+2)}$ is
larger than   the coefficient $\alpha_0^{k\cdot d^2}$ of
Proposition~\ref{proposition:gerry-halwell}, and this is because the
$n$-handled simplex $X$ has fewer small length scales to work with.
In the previous case we had $d$  ``small'' edges helping us obtain a
sufficiently small coefficient in $k$. In the current case we have
less help because now there are only $d+1-n$ small edges, and the
coefficient is hence slightly larger.

The terms of the finite sum on the RHS of
equation~(\ref{equation:sand}) require further analysis before we
can establish the appropriate bounds,  and we develop this in the
rest of the section.

\subsection{Doubly-Augmented Elements and a Multiscale
Inequality}We fix $2\leq k'\leq k-1$ and concentrate on the set
$R_{k'}^s$. The integral over the augmented region $R_{k'}^s$ can be
exchanged for yet another augmented integral, but  we must  perform
two different types of augmentations. The first element is defined
as follows. If $\overline{X}\in R_{k'}^s$, then we take a
well-scaled piece for the rake $X^s\in \name_{k',2}^1,$
$$Y_{X^s}=(y_1,\ldots,y_{k'\cdot d})\,\in\,\prod_{q=1}^{k'\cdot d}A_{k'-\left\lceil\frac{q}{d}\right\rceil}
(x_0,\max\nolimits_{x_0}\left(X^s\right)),$$ and we form the
``doubly-augmented'' element
$$\overline{X}\times
Y_{X^s}=(x_0,\ldots,x_{d+1},z_1,\ldots,z_{N_n},y_1,\ldots,y_{k'\cdot
d})\in R_{k'}^s\times H^{k'\cdot d}.$$ The ``variable''
$\overline{X}\times Y_{X^s}$ is the underlying piece of information
that controls our process, but the actual simplex driving our
decisions is $X^s$.  As such, we introduce the symbol
$\overline{X}\times Y_{X^s}$ for clarity, and we focus our
development on another type of augmentation.  We clarify this as
follows.

 If $\overline{X}\in
R_{k'}^s$ and $Y_{X^s}$ is a well-scaled piece for $X^s$, then we
form the augmented element$$\underline{X^s}=X^s\times Y_{X^s}
,$$ which is an augmentation of the type introduced in
Subsection~\ref{subsection:multiscale}.  We then form the sequences
$$\widetilde{\Phi}_{k'}(\underline{X^s})=
\left\{\widetilde{X}^s_q\right\}_{q=0}^{k'\cdot d}\ \textup{ and }\
\Phi_{k'}(\underline{X^s})=\left\{X^s_q\right\}_{q=1}^{k'\cdot
d+1}$$ as given in Definitions~\ref{definition:tilde-phi}
and~\ref{definition:phi} of Subsection~\ref{subsection:multiscale},
and we note that these  depend only  on $\underline{X^s}$. Following
this line of reasoning we define the set
augmentation\begin{multline}\label{equation:mystery}\underline{R_{k'}^s}:
=\Bigg\{\overline{X}\times Y_{X^s}:\overline{X}\in R_{k'}^s\textup{
and the sequences }\widetilde{\Phi}_{k'}(\underline{X^s})\textup{
and }\Phi_{k'}(\underline{X^s})\textup{ satisfy the inequality
}\\\pds_{x_0}\left(\widetilde{X}^s_q\right)\leq
C_{\mathrm{p}}\cdot\left[\pds_{x_0}\left(X^s_{q+1}\right)+\pds_{x_0}\left(\widetilde{X}^s_{q+1}\right)\right]\textup{
for all }0\leq q< k'\cdot d\Bigg\}.\end{multline}We have the
following inequality which is a direct application of
Lemma~\ref{lemma:multiscale-toast}.\begin{lemma}\label{lemma:late-date}If
$\overline{X}\times Y_{X^s}\in\underline{R_{k'}^s}$, then the
well-scaled sequence
$\Phi_{k'}(\underline{X^s})=\left\{X^s_q\right\}_{q=1}^{k'\cdot
d+1}$ satisfies the inequality$$\pds^2_{x_0}(X^s)\leq(k'\cdot
d+1)\cdot C_{\mathrm{p}}^{2\cdot k'\cdot d}\ \sum_{q=1}^{k'\cdot
d+1}\pds^2_{x_0}\left(X^s_q\right).$$\end{lemma}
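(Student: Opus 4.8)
\textbf{Proof proposal for Lemma~\ref{lemma:late-date}.} The plan is to recognize this lemma as nothing more than Lemma~\ref{lemma:multiscale-toast} applied to the augmented element $\underline{X^s}=X^s\times Y_{X^s}$, with the parameters $(k,p)$ of that lemma replaced by $(k',2)$. First I would unwind the hypothesis. If $\overline{X}\times Y_{X^s}\in\underline{R_{k'}^s}$, then by the definition in equation~\eqref{equation:mystery} we have $\overline{X}\in R_{k'}^s$, so $X^s\in\name_{k',2}^1$ by Lemma~\ref{lemma:poor-scaled-property-size} (the tolerance index is $p=2$ here, which is precisely why the constructions and propositions of Subsection~\ref{subsection:multiscale} were formulated for both $p=1$ and $p=2$). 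In particular $X^s$ is a single-handled rake whose handle sits at its first coordinate, so the annular construction of Subsection~\ref{subsection:multiscale} applies to $X^s$ verbatim: $Y_{X^s}$ is a well-scaled piece for $X^s$ in the sense of equation~\eqref{equation:def-well-scaled} with $k$ replaced by $k'$, and $\widetilde{\Phi}_{k'}(\underline{X^s})$, $\Phi_{k'}(\underline{X^s})$ are exactly the auxiliary and well-scaled sequences produced from $\underline{X^s}$ by Definitions~\ref{definition:tilde-phi} and~\ref{definition:phi}.

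The second step is to match the two-term inequalities. The inequalities built into the definition of $\underline{R_{k'}^s}$, namely $\pds_{x_0}(\widetilde{X}^s_q)\leq C_{\mathrm{p}}\,[\,\pds_{x_0}(X^s_{q+1})+\pds_{x_0}(\widetilde{X}^s_{q+1})\,]$ for $0\leq q<k'\cdot d$, are literally the inequalities defining $\underline{\name_{k',2}^1}$ in equation~\eqref{equation:def-big-omega}. Hence $\underline{X^s}\in\underline{\name_{k',2}^1}$, and Lemma~\ref{lemma:multiscale-toast} (invoked with $k=k'$, $p=2$, and $X^s$ in place of $X$) yields immediately $\pds^2_{x_0}(X^s)\leq(k'\cdot d+1)\cdot C_{\mathrm{p}}^{2\cdot k'\cdot d}\sum_{q=1}^{k'\cdot d+1}\pds^2_{x_0}(X^s_q)$, which is the assertion.

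If a self-contained argument is preferred over the black-box appeal, I would reproduce the proof of Lemma~\ref{lemma:multiscale-toast} in this setting: starting from $\widetilde{X}^s_0=X^s$, a trivial induction on $q$ using the two-term inequality gives $\pds_{x_0}(X^s)\leq\sum_{j=1}^{q}C_{\mathrm{p}}^{\,j}\pds_{x_0}(X^s_j)+C_{\mathrm{p}}^{\,q}\pds_{x_0}(\widetilde{X}^s_q)$ for $1\leq q\leq k'\cdot d$; taking $q=k'\cdot d$, using $X^s_{k'\cdot d+1}=\widetilde{X}^s_{k'\cdot d}$ from Definition~\ref{definition:phi} together with $C_{\mathrm{p}}\geq1$, this becomes $\pds_{x_0}(X^s)\leq C_{\mathrm{p}}^{\,k'\cdot d}\sum_{q=1}^{k'\cdot d+1}\pds_{x_0}(X^s_q)$; squaring and applying the Cauchy--Schwarz inequality to the sum of $k'\cdot d+1$ nonnegative terms produces the stated quadratic inequality. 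Either way the computation is routine, and I do not anticipate a genuine obstacle: the only point deserving care is the bookkeeping that certifies $\underline{X^s}$ as a legitimate element of $\underline{\name_{k',2}^1}$ — that each rake $X^s$ output by the rake sequence $\Psi_k(\overline{X})$ has its unique handle at the first coordinate and its scale in the $p=2$ window — and this has already been carried out in Lemma~\ref{lemma:poor-scaled-property-size} and the remark on the tolerance $p=2$ following it, so nothing new is required.
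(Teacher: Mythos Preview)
Your proposal is correct and matches the paper's own treatment: the paper states that Lemma~\ref{lemma:late-date} is ``a direct application of Lemma~\ref{lemma:multiscale-toast},'' and you have unpacked precisely that application, verifying that $\underline{X^s}\in\underline{\name_{k',2}^1}$ via the definition of $\underline{R_{k'}^s}$ and then invoking (or reproducing) the iterative-plus-Cauchy--Schwarz argument. One tiny bibliographic quibble: the fact that $X^s\in\name_{k',2}^1$ for the specific $k'$ at hand is literally the defining condition of $R_{k'}^s$, not a consequence of Lemma~\ref{lemma:poor-scaled-property-size} (which only guarantees $X^s\in S^1_{k',2}$ for \emph{some} $k'$); this does not affect the argument.
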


With these definitions, we formulate our ultimate multiscale
integral inequality whose proof is hardly different from the proof
of Proposition~\ref{proposition:multi-scale-integral-bound-tilde-1}.
\begin{lemma}\label{lemma:yet-another-sum}If $Q$ is a ball in $H$ then
\begin{multline*}\int_{R_{k'}^s(Q)}
\frac{\pds_{x_0}^2(X^s)}{\diam(X)^{d(d+1)}}
\frac{\fdi\mu^{M_n}\left(\overline{X}\right)}{f_k^n\left(\overline{X}\right)}\leq\\
C^{2k'\cdot d}_{\mathrm{p}}\cdot(k'\cdot d+1)\sum_{q=1}^{k'\cdot
d+1}\,\int_{\underline{R_{k'}^s(Q)}}
\frac{\pds_{x_0}^2(X^s_q)}{\diam(X)^{d(d+1)}}
\frac{\fdi\mu^{M_n+k'\cdot d}(\overline{X}\times
Y_{X^s})}{f_k^n\left(\overline{X}\right)\cdot
f_{k'}^1\left(\underline{X^s}\right)}.\end{multline*}\end{lemma}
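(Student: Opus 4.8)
The plan is to imitate, almost verbatim, the proof of Proposition~\ref{proposition:bounding-sum-integral} (equivalently of Proposition~\ref{proposition:multi-scale-integral-bound-tilde-1}), now carried out with the second, ``inner'' augmentation that sends $\overline{X}\in R_{k'}^s$ to $\overline{X}\times Y_{X^s}\in\underline{R_{k'}^s}$. The whole content is the differential identity
\[
\int_{R_{k'}^s(Q)}\frac{\pds_{x_0}^2(X^s)}{\diam(X)^{d(d+1)}}\frac{\fdi\mu^{M_n}(\overline{X})}{f_k^n(\overline{X})}=\int_{\underline{R_{k'}^s(Q)}}\frac{\pds_{x_0}^2(X^s)}{\diam(X)^{d(d+1)}}\frac{\fdi\mu^{M_n+k'\cdot d}(\overline{X}\times Y_{X^s})}{f_k^n(\overline{X})\cdot f_{k'}^1(\underline{X^s})}\,,
\]
granting which one bounds $\pds_{x_0}^2(X^s)$ pointwise on $\underline{R_{k'}^s}$ by Lemma~\ref{lemma:late-date}, divides through by $\diam(X)^{d(d+1)}\,f_k^n(\overline{X})\,f_{k'}^1(\underline{X^s})$, integrates over $\underline{R_{k'}^s(Q)}$ against $\fdi\mu^{M_n+k'\cdot d}(\overline{X}\times Y_{X^s})$, and interchanges the finite sum over $q$ with the integral. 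This yields exactly the right-hand side of the asserted inequality, so nothing beyond the identity and Lemma~\ref{lemma:late-date} is needed.

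To establish the identity I would apply Fubini's theorem to the right-hand side, writing it as the integral over $\overline{X}\in R_{k'}^s(Q)$ of $\pds_{x_0}^2(X^s)/(\diam(X)^{d(d+1)}f_k^n(\overline{X}))$ times the inner integral $\int_{\{Y_{X^s}:\,\overline{X}\times Y_{X^s}\in\underline{R_{k'}^s}\}}\fdi\mu^{k'\cdot d}(Y_{X^s})/f_{k'}^1(\underline{X^s})$; this is legitimate because both $\pds_{x_0}^2(X^s)/\diam(X)^{d(d+1)}$ and $f_k^n(\overline{X})$ depend only on $\overline{X}$, and the restriction ``the first $d+2$ coordinates lie in $Q$'' likewise constrains only $\overline{X}$. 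The key observation is that for fixed $\overline{X}\in R_{k'}^s$ the slice $\{Y_{X^s}:\overline{X}\times Y_{X^s}\in\underline{R_{k'}^s}\}$ depends on $\overline{X}$ only through $X^s\in\name_{k',2}^1$, and it coincides with the slice $\{Y_{X^s}:X^s\times Y_{X^s}\in\underline{\name_{k',2}^1}\}$ cut out by the two-term inequalities of equation~\eqref{equation:def-big-omega} (with $k'$ in place of $k$ and $p=2$). Hence the inner integral is precisely the quantity computed in equation~\eqref{equation:saved-for-later}: iterating it over $y_1,\dots,y_{k'\cdot d}$ and using equation~\eqref{equation:nevada} at each stage — each conditional density $1/g_{k',q}^1(\underline{X^s})$ integrates to $1$ over $\pi_q(T_{q-1}^{-1}(\,\cdot\,))$ — shows it equals $1$ for every $X^s\in\name_{k',2}^1$; here one uses that the definitions and equation~\eqref{equation:saved-for-later} of Section~\ref{sizematters} were set up for both $p=1$ and $p=2$. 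Carrying out the reduction for a general ball $Q$ exactly as in the proof of Proposition~\ref{proposition:bounding-sum-integral} then completes the identity.

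The one genuinely fiddly point — and really the only obstacle — is keeping the two layers of conditioning straight: $\overline{X}$ is already a conditioned augmentation of $X$ carrying the measure $\fdi\mu^{M_n}(\overline{X})/f_k^n(\overline{X})$, and on top of it the $k'\cdot d$ new coordinates $Y_{X^s}$ are conditioned through a construction ($\widetilde{\Phi}_{k'},\Phi_{k'}$, hence $\underline{R_{k'}^s}$) that is driven by $X^s$, which is itself a function of $\overline{X}$. One must check that the inner integration does not disturb the first-layer normalization: since $f_k^n(\overline{X})$ and $\pds_{x_0}^2(X^s)/\diam(X)^{d(d+1)}$ are functions of $\overline{X}$ alone, Fubini legitimately peels off the $Y_{X^s}$-integration first, and there the sole fact required is that the conditional densities $1/g_{k',q}^1(\underline{X^s})$ integrate to $1$, which is equation~\eqref{equation:nevada}/\eqref{equation:saved-for-later} transported verbatim to the present setting. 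As in Remark~\ref{remark:big-shot}, any discomfort about the measurability of $f_k^n$ or $f_{k'}^1$ can be removed by replacing them throughout with the explicitly measurable comparable products of ball measures, at the cost of a factor at most $2^{k'\cdot d}$ in the constant.
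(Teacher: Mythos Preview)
Your proposal is correct and follows essentially the same route as the paper: the paper itself says the proof is ``hardly different from the proof of Proposition~\ref{proposition:multi-scale-integral-bound-tilde-1}'', which in turn parallels Proposition~\ref{proposition:bounding-sum-integral} by combining the pointwise multiscale inequality (here Lemma~\ref{lemma:late-date}) with the differential identity analogous to equation~\eqref{equation:differential-identity-1}. Your handling of the two nested augmentations and the reduction of the inner integral to $1$ via equations~\eqref{equation:nevada} and~\eqref{equation:saved-for-later} is exactly the intended argument.
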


The terms on the RHS of Lemma~\ref{lemma:yet-another-sum} can be
controlled by geometric multipoles as in the proofs of
Propositions~\ref{proposition:poorly-scaled-integration-d}
and~\ref{proposition:wide-hat-dead}. However, the computations here
have more information to take into account because we are dealing
with various length scales at the same time, i.e., those of  the
doubly-indexed well-scaled element $X^s_q$ as well as the original
simplex $X$. We perform these computations in
Appendix~\ref{app:over-under} and conclude the following bounds on
the terms on the RHS of Lemma~\ref{lemma:yet-another-sum} and thus
also Proposition~\ref{proposition:poorly-scaled-integration}.

\begin{proposition}\label{proposition:under-over-dead}If $Q$ is a ball in $H$ and $2\leq k'\leq k-1$, then
there exists a constant $C_9=C_9(d,C_{\mu})$ such that \be
\label{eq:wide-hat-dead2} \int_{\underline{R_{k'}^s(Q)}}
\frac{\pds_{x_0}^2(X^s_q)}{\diam(X)^{d(d+1)}}
\frac{\fdi\mu^{M_n+k'\cdot d}(\overline{X}\times
Y_{X^s})}{f_k^n\left(\overline{X}\right)\cdot
f_{k'}^d\left(\underline{X^s}\right)}\leq C_9\cdot\alpha_0^{k\cdot
d\cdot(d-n+1) }\cdot J_d^{\mathcal{D}}(\mu|_Q)\,. \ee
%
\end{proposition}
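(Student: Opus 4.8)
The plan is to prove Proposition~\ref{proposition:under-over-dead} by the geometric multipoles method, adapting almost verbatim the argument of Proposition~\ref{proposition:gerry-halwell} in Subsection~\ref{subsection:integration-d}, the only genuinely new ingredient being the extra book-keeping forced by the doubly-augmented measure and by the fact that three length scales now interact: the macro-scale of $X$, the scale of the single-handled rake $X^s$ together with its inherited handle, and the micro-scales of the well-scaled simplex $X^s_q$. As in Subsection~\ref{subsection:integration-d}, the integrand carries $\pds_{x_0}$ of the \emph{well-scaled} element $X^s_q$, so the engine is unchanged: chop the domain into multiscale pieces; on each piece bound $\pds_{x_0}(X^s_q)$ from above by a scaled $\ell_2$ deviation using Proposition~\ref{proposition:psin-bound-deviations} together with the well-scaling estimate~\eqref{eq:well_scaled_ineq}; then integrate the deviation against the augmented measure, collapsing the conditional fibres to unit mass exactly as in equations~\eqref{equation:saved-for-later} and~\eqref{equation:nevada}.

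Concretely, I would first partition $\underline{R_{k'}^s(Q)}$ according to the overall scale $\max_{x_0}(X)\in(\alpha_0^{m+1},\alpha_0^m]$ for $m\geq m(Q)$, and then, for each fixed $m$ (and the already-fixed $k'$, $n$, $s$, $q$), partition further by the location of $x_0$ using the partition $\mathcal{P}_{\scl}$ at a scale exponent $\scl=\scl(m,q,s)$ that tracks $\max_{x_0}(X^s_q)$. Here Lemma~\ref{lemma:poor-scaled-property-size} controls the scale of $X^s$ relative to $X$ (its handle being one of the handles of $X$), and Lemma~\ref{lemma:index-function-no-tilde}, applied with $k'$ in place of $k$ and $X^s$ in place of $X$, places $\max_{x_0}(X^s_q)$ in an interval of the form $(\alpha_0^{\scl+2},\alpha_0^{\scl}]$, so that $\scl$ is the correct exponent and $\Lambda_{\scl}(Q)$ is the relevant index set. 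On each resulting piece $\underline{P}$, Proposition~\ref{proposition:psin-bound-deviations} gives $\pds_{x_0}^2(X^s_q)\lesssim\alpha_0^{-6}\,D_2^2(X^s_q,L)/\diam(X^s_q)^2$ for an arbitrary $d$-plane $L$, and I expand $D_2^2(X^s_q,L)=\sum_{s'}\dist^2\big((X^s_q)_{s'},L\big)$ and run a case analysis on the nature of the coordinate $(X^s_q)_{s'}$: it is either $x_0$, the preserved handle of $X^s$, an original tine $x_i$ of $X$, a short-scale point $z_\ell$, or a well-scale point $y_\ell$. In each case I bound the corresponding reciprocal masses $1/g_{k,s'}^n(\overline{X})$ and $1/g_{k',q'}^1(\underline{X^s})$ by means of Propositions~\ref{proposition:multi-scale-bad-case-tilde} and~\ref{proposition:multi-scale-good-case} (which identify these, up to a factor $2$, with $\mu$ of an explicit ball centered at $x_0$), apply Fubini to integrate out the $z$- and $y$-coordinates — each normalized conditional fibre integrating to $1$, except the single coordinate weighted by $\dist^2(\cdot,L)$, which integrates to a bounded multiple of $\beta_2^2(B_{\scl,j})$ — and finally invoke the $d$-regularity of $\mu$ to trade every remaining ball measure for $\mu(B_{\scl,j})$ times a power of $\alpha_0$.

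The power of $\alpha_0$ in the conclusion is harvested entirely from these $d$-regularity substitutions. The simplex $X^s_q$ has $d+1-n$ coordinates forced to live at the genuinely small scale $\approx\alpha_0^k\cdot\max_{x_0}(X)$ (the tines inherited from $X$, and the short-scale/interpolated points pushed down to a comparable scale); integrating these $d+1-n$ coordinates out against $\mu$ contributes a factor $(\alpha_0^k)^{d}$ apiece by $d$-regularity, for a total of $(\alpha_0^{k})^{d(d+1-n)}=\alpha_0^{k\cdot d\cdot(d-n+1)}$, which is the stated exponent — exactly one power of $\alpha_0^{kd}$ worse than the exponent of Proposition~\ref{proposition:wide-hat-dead}, consistent with the fact that passing from $X^s$ to the well-scaled $X^s_q$ consumes one of the formerly-small scales. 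Summing the resulting pointwise bound $\lesssim\alpha_0^{k\cdot d\cdot(d-n+1)}\,\beta_2^2(B_{\scl,j})\,\mu(B_{\scl,j})$ over $j\in\Lambda_{\scl}(Q)$ and over $m\geq m(Q)$ — and recognizing this sum as $\lesssim J_d^{\mathcal{D}}(\mu|_Q)$ because each $B\in\mathcal{D}(Q)$ is hit only boundedly often (for fixed $q$ the map $(m,j)\mapsto B_{\scl(m,q,s),j}$ is essentially injective) — and then taking the infimum over all $d$-planes $L$, yields~\eqref{eq:wide-hat-dead2}.

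The main obstacle will not be any single step but the scale book-keeping that glues them together: one must verify that the location partition is taken at precisely the exponent $\scl(m,q,s)$ for which all inclusions invoked in the case analysis hold — namely $\pi_\ell\big(T_{\ell-1}^{-1}(x_0,\ldots,y_{\ell-1})\big)\subseteq B_{\scl,j}$, $B(x_0,\alpha_0^{k}\max_{x_0}(X))\subseteq B_{\scl,j}$, and the $z$-analogues — and, more delicately, that the exponent of $\alpha_0$ accumulated over all substitutions is no worse than $k\cdot d\cdot(d-n+1)$, uniformly in $q$ and in $2\leq k'\leq k-1$, so that after the outer summations of Lemma~\ref{lemma:yet-another-sum} and Proposition~\ref{proposition:poorly-scaled-integration} the factors $C_{\mathrm{p}}^{2k'd}$ are absorbed by the chosen value of $\alpha_0$. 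This is conceptually identical to the computation in Section~\ref{section:proof-prop-poorly-d} but with one more layer of indices, and the constants and index ranges must be tracked carefully.
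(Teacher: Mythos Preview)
Your proposal is correct and follows essentially the same route as the paper: partition $\underline{R_{k'}^s(Q)}$ by the macro-scale $\max_{x_0}(X)$ and then by location at a scale exponent governed by $\max_{x_0}(X^s_q)$, bound $\pds_{x_0}^2(X^s_q)$ via Proposition~\ref{proposition:psin-bound-deviations}, run the five-case analysis on the coordinates $(X^s_q)_t\in\{x_0,\,x_{i_s},\,x_i,\,z_\ell,\,y_\ell\}$, and collapse the conditional fibres using Propositions~\ref{proposition:multi-scale-good-case} and~\ref{proposition:multi-scale-bad-case-tilde} together with $d$-regularity to extract the factor $\alpha_0^{k\cdot d\cdot(d-n+1)}$. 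The only cosmetic discrepancies are that the correct scale exponent depends on $k'$ rather than on $s$ (the paper writes $\scl(m,k',q)$ and introduces an auxiliary $e(m,k')$ to handle the boundary case $k'=k-1$), and the interval containing $\max_{x_0}(X^s_q)$ has width $5$ rather than $2$ in the exponent because the $p=2$ tolerance from Lemma~\ref{lemma:poor-scaled-property-size} compounds with the annulus width; these affect only the value of $C_9$.
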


We remark that the coefficient  $\alpha_0^{k\cdot d(d-n+1)}$ is
slightly worse than the coefficient produced in analyzing the set
$\widehat{R}^s(Q)$, the reason being that the poor scaling of
$X^s\in R_{k'}^s(Q)$ deprives us (in a small way) of that extra
factor.  

\appendix
\section{Appendix}
\subsection{Proof of Proposition~\ref{proposition:psin-bound-deviations}}\label{app:bound-deviate}Proposition~\ref{proposition:psin-bound-deviations} follows from the
two observations:
\begin{equation}\label{equation:bound-before}
\pds_{x_0}(X)\leq\frac{2\cdot (d+1)}{\SCale_{x_0}(X)}\cdot
\frac{h(X)}{\diam(X)}\,,
\end{equation}
and \be \label{eq:height-deviation}h(X)\leq \sqrt{2} \cdot
\left\lceil\frac{d+1}{2}\right\rceil\cdot D_2(X,L), \ \ \textup{ for
any $d$-plane $L$.} \ee
The two equations follow from elementary geometric estimates as
follows.
\subsubsection{Proof of Equation~\eqref{equation:bound-before}}
\label{app:bound-before}

We first note that
$$
\max_{0\leq i\leq d+1}\,\mathrm{M}_d(X(i))\leq\, (d+1) \cdot
\max_{1\leq i\leq d+1}\,\mathrm{M}_d(X(i)).
$$
Indeed, if the maximum on the LHS of the above equation is obtained
at $1 \leq i \leq d+1$, then the above inequality is trivial. If on
the other hand this maximum is obtained at $i=0$, then the
inequality follows from the fact that the $d$-content of a face of a
($d+1$)-simplex is less than the sum of the $d$-contents of the
other faces (this is since the $d$-content does not increase under
projections and is subadditive on $\reals^d$).

Then, using the fact that the product of any height of a
$(d+1)$-simplex with the $d$-content of the opposite side is a
constant (proportional to the $(d+1)$-content of the simplex), we
obtain that
$$\min_{1\leq i \leq d+1}\,h_{x_i}(X)\cdot\max_{1\leq
i\leq d+1}\,\mathrm{M}_d(X(i))=h(X)\cdot\max_{0\leq i\leq
d+1}\,\mathrm{M}_d(X(i)).$$
Combining the last two equations we deduce the inequality
\begin{equation}\label{equation:inf-height}
\min_{1\leq i\leq d+1}\,h_{x_i}(X)\leq (d+1)\cdot h(X)\,.
\end{equation}
Next, by equation~(\ref{eq:elevationsine}),
Proposition~\ref{proposition:product-sine}, and also
equation~(\ref{equation:0-1-valued}) we obtain that
$$
\pds_{x_0}(X)\leq \min_{1\leq i\leq
d+1}\,\frac{h_{x_i}(X)}{\|x_i-x_0\|}\
\leq\frac{\displaystyle\min_{1\leq i\leq
d+1}\,h_{x_i}(X)}{\min\nolimits_{x_0}(X)}.
$$
Applying equation~(\ref{equation:inf-height}) to the RHS above, we
have that
$$
\pds_{x_0}(X) \leq(d+1)\cdot\frac{h(X)}{\min\nolimits_{x_0}(X)}.
$$
Finally, applying the definition of $\SCale_{x_0}(X)$ as well as the
bound: $ \diam(X)\leq2\cdot\max\nolimits_{x_0}(X) $ to the latter
equation establishes equation~(\ref{equation:bound-before}), and
consequently the current proposition.

\subsubsection{Proof of Equation~\eqref{eq:height-deviation}}
\label{app:control_height}
We may assume that $X$ is non-degenerate, because otherwise $h(X)=0$
and the bound holds trivially. Furthermore, since orthogonal
projection decreases distances and reduces dimension of subspaces,
we may assume that $\dim(H)=d+1$, in particular, $H = \RR^{d+1}$.

Our proof utilizes the comparability of the height of the simplex
represented by $X$ and its width $w(X)$, which is given by the
following infimum over all $d$-planes $L$:
\begin{equation}
\label{eq:def_w}w(X)=2\, \min_{L} \,\,\max_{x\,\in\, \textup{ the
convex hull of }X}\,\dist(x,L).
\end{equation}
Equivalently, the width $w(X)$  is the shortest distance between any
two parallel $d$-planes supporting the convex hull of $X$, i.e., the
convex hull is trapped between them and its boundary touches them.
Gritzman and Lassak~\cite[Lemma~3]{Gritzmann} established the
following bound on $h(X)$ in terms of $w(X)$:
\be \label{equation:height-to-width}
h(X) \leq \left\lceil\frac{d+1}{2}\right\rceil \cdot w(X).
\ee

Equation~\eqref{eq:height-deviation} thus follows from combining
equation~(\ref{equation:height-to-width}) with the following bound
on $w(X)$, which we verify below.
\begin{equation}\label{equation:saturday}w(X)\leq
\sqrt{2}\cdot D_2(X,L) \  \textup{ for an arbitrary }
d\textup{-plane } L\,,
\end{equation}

We verify equation~\eqref{equation:saturday}, and thus conclude
equation~\eqref{eq:height-deviation}, as follows. For a given
$d$-plane $L$, let $L_1$ and $L_2$ be the two unique translates of
$L$ supporting the simplex represented by $X$ and let $w_L(X)$
denote the distance between $L_1$ and $L_2$. Furthermore, let
$x_{L_1}$ be a vertex of $X$ contained in $L_1$ and $x_{L_2}$ be a
vertex contained in $L_2$. The $d$-planes $L_1$ and $L_2$ separate
$\RR^{d+1}$ into three regions, being the two disjoint half spaces
of $\RR^{d+1}$ and the intermediate region bounded by the $d$-planes
whose closure contains the simplex represented by $X$.

If  $L$ is contained in one of the disjoint half spaces described
above, then  we may assume that $L_1$ sits  between $L$ and $L_2$.
We establish equation~\eqref{equation:saturday} in this case as
follows:
$$
w(X)\leq
w_L(X)=\dist\left(x_{L_2},L_1\right)\leq\dist\left(x_{L_2},L\right)\leq
D_2(X,L).
$$

In the second case, where the plane $L$ is contained in the
intermediate region, we obtain equation~\eqref{equation:saturday} in
the following way.
\begin{multline*}
w(X)\leq w_L(X)=\dist\left(x_{L_1},L\right)+\dist
\left(x_{L_2},L\right)\leq\\\sqrt{2}\cdot\left(\dist^2\left(x_{L_1},L\right)+\dist^2
\left(x_{L_2},L\right)\right)^{1/2}\leq\sqrt{2}\cdot D_2(X,L).
\end{multline*}

\subsection{Proof of Lemma~\ref{lemma:set-partition}}\label{app:resolution-lemma}
Given the $n$-net $E_n$, let $\mathcal{B}_n'=\{B(x,4\cdot\alpha_0^n)\}_{x\in E_n}$.  We note that both $\mathcal{B}_n'$ and
$\frac{1}{4}\cdot\mathcal{B}_n'$ cover $\Supp$ since $E_n$ is an $n$-net.  We take $\mathcal{B}_n$ to be a subfamily
of $\mathcal{B}_n'$ such that $\frac{1}{4}\cdot\mathcal{B}_n$ is a maximal, mutually disjoint collection of balls.  In this case,
we note that $\mathcal{B}_n$ still provides a cover of $\Supp$ due to the maximality.

The idea is to categorize the elements
$B'\in\frac{1}{4}\cdot\left[\mathcal{B}'_n \setminus
\mathcal{B}_n\right] $ according to the first element of
$\frac{1}{4}\cdot\mathcal{B}_n$ they intersect, and then use this to
take the ``appropriate'' part of $B'$. Then, once this is done, for
each $j\in \Lambda_n$ the element $P_{n,j}$ is formed by adding
these appropriate pieces to the corresponding ball $\frac{1}{4}
\cdot B_{n,j}$. We clarify this as follows.

If
$\frac{1}{4}\cdot\left[\mathcal{B}'_n\setminus\mathcal{B}_n\right]=\emptyset$,
then we take the partition ${\cal
P}_n=\left\{P_{n,j}\right\}_{j\in\Lambda_m},$ where for fixed
$j\in\Lambda_n$
$$P_{n,j}=\Supp\cap\frac{1}{4}\cdot B_{n,j},\textup{ for }B_{n,j}\in\mathcal{B}_n.$$

We thus assume that
$\frac{1}{4}\cdot\left[\mathcal{B}'_n\setminus\mathcal{B}_n\right]\not=\emptyset$,
and we index the elements of $\frac{1}{4}\cdot\left[\mathcal{B}'_n
\setminus {\cal B}_n\right]$ by the set $\Omega_n = \{1,2,\ldots\}$,
which is either finite or ${\mathbb N}$, i.e.,
$\frac{1}{4}\cdot\left[\mathcal{B}'_n \setminus {\cal B}_n \right]=
\{B_m'\}_{m \in\, \Omega_n}$. From this set of balls, we then
recursively form the following sets. For $m=1$, let
$$\bar B'_1 =
B_1'\cap\left(\bigcup\,\frac{1}{4}\cdot\mathcal{B}_n\right)^c,$$
and for $m\geq 2$, let
$$
\bar B'_m= B_m'\bigcap
\left(\bigcup\,\frac{1}{4}\cdot\mathcal{B}_n\bigcup
\bigcup_{i=1}^{m-1}\bar B'_i\right)^c.
$$
Note that the elements of $\left\{\bar
B'_m\right\}_{m\in\,\Omega_n}$ are mutually disjoint, and that
$\Supp$ is covered by the collection of sets
$$\frac{1}{4}\cdot\mathcal{B}_n\,\bigcup\,\left\{\bar
B'_m\right\}_{m\in\,\Omega}.$$

Let the function $g_n:\frac{1}{4}\cdot\left[\mathcal{B}'_n\setminus
{\cal B}_n\right]\to\Lambda_n$ be defined as follows:
\begin{equation}\label{equation:coffee-ting}
g_n\left(B'\right)=\min \left\{j\in\Lambda_n: \frac{1}{4}\cdot
B_{n,j} \cap B' \not=\emptyset\right\}.
\end{equation}It follows from
the maximality of $\frac{1}{4}\cdot\mathcal{B}_n$ that for every $
B'\in \frac{1}{4}\cdot\mathcal{B}'_n$, there exists a $  B\in
\frac{1}{4}\cdot{\cal B}_n $ such that $ B\cap B'\ne\emptyset$.
Consequently, the   minimum of equation~(\ref{equation:coffee-ting})
is obtained at an element of $\Lambda_n$, and taking
\begin{equation}\label{equation:definition-P-m-j}
P_{n,j}= \Supp\,\bigcap\,\left( \frac{1}{4}\cdot B_{n,j}\ \bigcup\
\bigcup_{g_n\left(B_m'\right)=j}\bar B'_m  \right),
\end{equation}
we note that the sets $P_{n,j}$ are disjoint and cover $\Supp$. The
desired set inclusions follow from the definition of $P_{n,j}$ and
observing that $B'\subseteq\frac{3}{4} \cdot B_{n,j}$ for any
$B'\in\frac{1}{4}\cdot\left[\mathcal{B}'_n\setminus\mathcal{B}_n\right]$
such that $g_n\left(B'\right)=j$.

\subsection{Proof of Proposition~\ref{proposition:multi-scale-good-case}}\label{app:nolyn}
First, for any $X\in \name_{k,p}^1$, along with $Z\in[\Supp]^{d+2}$
such that $(Z)_0=(X)_0=x_0$, as well as $0<r\leq\diam (\Supp)$ and
$1\leq j\leq k\cdot d$, the following estimate holds
\begin{equation}\label{equation:two-term-annulus-thing}
\mu \left( B(x_0,r) \right)\geq\mu
\left( U_{C_{\mathrm{p}}}(Z,1,\overline{j+1})\bigcap
A_0(x_0,r)\right)
\geq\frac{1}{2}\cdot\mu\left( B(x_0,r)\right) >0.
\end{equation}In other words, there is a sufficient amount of
$\Supp$ in the annulus $A_0(x_0,r)$ satisfying the relaxed two-term
inequality defined by the set
$U_{C_{\mathrm{p}}}(Z,1,\overline{j+1})$. Furthermore, for such $X$
we have the inequality $\max\nolimits_{x_0}(X)\leq\diam (X)\leq\diam
(\Supp)$, and we note that the radius
\be \label{eq:radius_choice}
r=\alpha_0^{k-\left\lceil\frac{j}{d}\right\rceil}\cdot
\max\nolimits_{x_0}(X)\ee
is in the above range for all $1 \leq j \leq k \cdot d$, i.e., $0 <
r \leq \diam(\supp(\mu)).$  Substituting this choice of radius into
equation~\eqref{equation:two-term-annulus-thing} we obtain the
estimate
\begin{multline}\label{equation:miami_vice}
\mu\left(B(x_0,\alpha_0^{k-\left\lceil\frac{j}{d}\right\rceil}
\cdot\max\nolimits_{x_0}(X))\right)\geq
\mu\left(U_{C_{\mathrm{p}}}\left(Z,1,\overline{j+1}\right)\,\bigcap\,
A_{k-\left\lceil\frac{j}{d}\right\rceil}(x_0,\max\nolimits_{x_0}(X))\right)\geq\\
\frac{1}{2}\cdot\mu
\left(B(x_0,\alpha_0^{k-\left\lceil\frac{j}{d}\right\rceil}\cdot\max\nolimits_{x_0}(X))\right).
\end{multline}

Next, we will prove that for $\underline{X} \in\underline{\name_{k,p}^1}$ and
$\widetilde{X}_{q-1}$, $1\leq q\leq k\cdot d$,  of the sequence
$\widetilde{\Phi}_k(\underline{X})$:
\begin{equation}\label{equation:miami}
U_{C_{\mathrm{p}}}\left(\widetilde{X}_{q-1},1,\overline{q+1}\right)\,\bigcap\,
A_{k-\left\lceil\frac{q}{d}\right\rceil}(x_0,\max\nolimits_{x_0}(X))=
\pi_q\left(T_{q-1}^{-1}\left(x_0,\ldots,y_{q-1}\right)\right).
\end{equation}
Taking $Z=\widetilde{X}_{q-1}$ in
equation~\eqref{equation:miami_vice} and noting
equation~\eqref{equation:miami} establishes the proposition.

We start by proving
equation~(\ref{equation:two-term-annulus-thing}). The inequality of the LHS of equation~(\ref{equation:two-term-annulus-thing}) is
trivial, and to prove the inequality of
the RHS of
equation~(\ref{equation:two-term-annulus-thing}) we note that
\begin{multline}\label{equation:measure-equal-stuff}\mu\left(U_{C_{\mathrm{p}}}\left(Z,1,\overline{j+1}\right)\bigcap
A_0(x_0,r)\right)=\mu\left(U_{C_{\mathrm{p}}}\left(Z,1,\overline{j+1}\right)\bigcap
B(x_0,r)\right)+\\
\mu\left(A_0(x_0,r)\right)-\mu\left(\left[U_{C_{\mathrm{p}}}\left(Z,1,\overline{j+1}\right)\bigcap
B(x_0,r)\right]\bigcup A_0(x_0,r) \right).
\end{multline}
By formulating lower bounds for the terms on the RHS of the above
equation we can then establish the inequality on the RHS of
equation~(\ref{equation:two-term-annulus-thing}).

With the above assumptions on $X$, $Z$, and $r$, by
Proposition~\ref{proposition:concentration-inequality-1} we have the
inequality
\begin{equation}\label{equation:chicago}
\mu\left(U_{C_{\mathrm{p}}}\left(Z,1,\overline{j+1})\right)\cap
B(x_0,r)\right)\geq\frac{3}{4}\cdot\mu(B(x_0,r)).
\end{equation}
Furthermore, by the  $d$-regularity and the
constant $\alpha_0$ (equation~\eqref{equation:alpha-loose})
we have the inequality
\begin{equation}\label{equation:newyork}
\mu\left(A_0(x_0,r)\right) \geq \left(1-\alpha_0^d\cdot
C^2_\mu\right) \geq\frac{3}{4}\cdot\mu (B(x_0,r)).
\end{equation}
Noting the inclusion
$\left[U_{C_{\mathrm{p}}}\left(Z,1,\overline{j+1}\right)\bigcap
B(x_0,r)\right]\bigcup A_0(x_0,r)\subseteq B(x_0,r), $ we obtain
$$
\mu\left(\left[U_{C_{\mathrm{p}}}\left(Z,1,\overline{j+1}\right)\bigcap
B(x_0,r)\right]\bigcup A_0(x_0,r)\right)\leq \mu(B(x_0,r)).
$$
Finally, applying this and equations~(\ref{equation:chicago})
and~(\ref{equation:newyork}) to the RHS of
equation~(\ref{equation:measure-equal-stuff}) we obtain
\begin{equation}\label{equation:seattle}
\mu\left(U_{C_{\mathrm{p}}}\left(Z,1,\overline{j+1}\right)\bigcap
A_0(x_0,r)\right)\geq \frac{1}{2}\cdot\mu(B(x_0,r)),
\end{equation}
and thus conclude equation~\eqref{equation:two-term-annulus-thing}.

Next, for  a fixed
$(x_0,\ldots,y_{q-1})= T_{q-1}(\underline{X})$, we establish
equation~(\ref{equation:miami}) via the inclusion
\be \label{equation:inclusion_c_p}
U_{C_{\mathrm{p}}}\left(\widetilde{X}_{q-1},1,\overline{q+1}\right)\,\bigcap\,
A_{k-\left\lceil\frac{q}{d}\right\rceil}(x_0,\max\nolimits_{x_0}(X))\subseteq
\pi_q\left( T_{q-1}^{-1} \left(x_0,\ldots,y_{q-1}\right)\right). \ee
The opposite inclusion follows directly from the definitions of the
sets $U_{C_{\mathrm{p}}}(\widetilde{X}_{q-1},1,\overline{q+1})$, $1
\leq q \leq k\cdot d$, and $\underline{\name_{k,p}^1}$ (see
equations~(\ref{equation:two-term-set})
and~(\ref{equation:def-big-omega})).

Our approach to proving equation~\eqref{equation:inclusion_c_p} is
to fix $1\leq q\leq k\cdot d$ and take an arbitrary point
\be \label{eq:fix_y_q} y'_q\in
U_{C_{\mathrm{p}}}\left(\widetilde{X}_{q-1},1,\overline{q+1}\right)\,\bigcap\,
A_{k-\left\lceil\frac{q}{d}\right\rceil}(x_0,\max\nolimits_{x_0}(X))\,.
\ee
We then iteratively use the inequality of
equation~(\ref{equation:two-term-annulus-thing}) to construct an
element
$$\underline{X}'=(x_0,\ldots,y_{q-1},y'_q,\ldots,y'_{k\cdot d})\in
T_{q-1}^{-1} \left(x_0,\ldots,y_{q-1}\right),$$
so that $y_q'=\pi_q(\underline{X}')\in
\pi_q\left(T_q\left(\pi_{q-1}^{-1}(\pi_{q-1}(\underline{X}))\right)\right)$.

Fixing $1\leq q\leq k\cdot d$ and $y'_q$ satisfying
equation~\eqref{eq:fix_y_q}, we recursively form the sequence
$\{y'_i\}_{i=q+1}^{k \cdot d}$ together with additional elements of
an auxiliary sequence $\left\{\widetilde{X}'_i\right\}_{i=q}^{k
\cdot d}$ as follows. First we initialize the auxiliary sequence by
defining
$$
\widetilde{X}'_{q}=\widetilde{X}_{q-1}(y'_q,\overline{q+1})\,.
$$
Next, given $q+1 \leq i \leq k \cdot d$ and assuming that
$\{y'_i\}_{i=q}^{i-1}$ and
$\left\{\widetilde{X}'_i\right\}_{i=q}^{i-1}$ have already been
defined, we fix arbitrarily
\begin{equation} \label{equation:splinter1} y'_i\in
U_{C_{\mathrm{p}}}(\widetilde{X}_{i-1}^\prime,1,\overline{i+1})\cap
A_{k-\left\lceil\frac{i}{d}\right\rceil}(x_0,\max\nolimits_{x_0}(X))\,,\end{equation}
and form
\begin{equation} \nonumber \label{equation:splinter2}
\widetilde{X}_i'=\widetilde{X}_{i-1}'(y'_i,\overline{i+1})\,.
\end{equation}
This procedure is well defined since
equation~(\ref{equation:two-term-annulus-thing}) implies that for
each $q+1\leq j\leq k\cdot d$:
$$\mu\left(U_{C_{\mathrm{p}}}\left(\widetilde{X}'_{j-1},1,\overline{j+1}\right)\,\bigcap\,
A_{k-\left\lceil\frac{j}{d}\right\rceil}(x_0,\max\nolimits_{x_0}(X))\right)>0\,.$$

Finally, forming
$$\underline{X}'=(x_0,\ldots,y_{q-1},y_q',\ldots,y'_{k\cdot
d})=X\times Y_X' \in H^{(k+1)\cdot d+2}\,,$$
we note that $Y_X'$ is a well-scaled element for $X$ and the
elements of the sequences
$\widetilde{\Phi}_k(\underline{X}')=\left\{\widetilde{X}_q\right\}_{q=0}^{k\cdot
d}$ and $\Phi_k(\underline{X}')=\{X_q\}_{q=1}^{k\cdot d+1}$ satisfy
the inequality
$$\pds_{x_0}(\widetilde{X}_{j-1})\leq C_{\mathrm{p}}\left(\pds_{x_0}\left({X}_j\right)+\pds_{x_0}\left(\widetilde{X}_j\right)\right).$$
Therefore, $\underline{X}'\in\underline{\name_{k,p}^1}.$
Furthermore, $ T_{q-1}(\underline{X}')=(x_0,\ldots,y_{q-1}),$ and
thus
$$\underline{X}'\in \pi_{q-1}^{-1}(\pi_{q-1}(x_0,\ldots,y_{q-1})).$$
Since $\pi_q(\underline{X}')=y'_q$,
equation~\eqref{equation:inclusion_c_p} and consequently
equation~(\ref{equation:miami}) are now established.

\subsection{Proof of Proposition~\ref{proposition:wide-hat-dead}}\label{app:wide-hat}
For $m\geq m(Q)$ we define
\begin{equation}\label{equation:again-s-hat-m}\widehat{R}^s(m)(Q)=
\left\{\overline{X}\in\widehat{R}^s(Q):\max\nolimits_{x_0}(X)\in(\alpha_0^{m+1},\alpha_0^m]\right\},\end{equation}and this
gives the following decomposition of the integral
\begin{equation}\label{equation:land-ho}\int_{\widehat{R}^s(Q)}\frac{\pds^2_{x_0}\left(X^s\right)}{\diam(X)^{d(d+1)}}
\frac{\fdi\mu^{M_n}\left(\overline{X}\right)}{f_k^n\left(\overline{X}\right)}
=\sum_{m\geq
m(Q)}\int_{\widehat{R}^s(m)(Q)}\frac{\pds^2_{x_0}\left(X^s\right)}{\diam(X)^{d(d+1)}}
\frac{\fdi\mu^{M_n}\left(\overline{X}\right)}{f_k^n\left(\overline{X}\right)}.\end{equation}
Fixing $m\geq m(Q)$, we partition $\widehat{R}^s(m)(Q)$ according to
 $\max\nolimits_{x_0}\left(X^s\right)$ in the following way. 

If $\overline{X}\in\widehat{R}^s(m)$, then
Lemma~\ref{lemma:poor-scaled-property-size} 
%
%
and the well scaling of $X^s$ imply
\begin{equation}\label{equation:max-s-where}\max\nolimits_{x_0}(X^s)\in\left(\alpha_0^k\cdot
\max\nolimits_{x_0}(X),\alpha_0^{k-3}\cdot\max\nolimits_{x_0}(X)\right]\,.\end{equation}
According to equations~(\ref{equation:again-s-hat-m})
and~(\ref{equation:max-s-where}), we use the following scale
exponent:
\begin{equation}\label{equation:chirp}\scl(m,k)=m+k-3\geq m(Q).\end{equation}

Since
$\left\{P_{\scl(m,k),j}\right\}_{j\in\Lambda_{\scl(m,k)}(Q)}$ covers
$Q\cap\Supp$, we cover $\widehat{R}^s(m)(Q)$ by
\begin{equation}\label{equation:yet-another-partition-m-s}\widehat{P}_{\scl(m,k),j}=
\left\{\overline{X}\in\widehat{R}^s(m):x_0\in
P_{\scl(m,k),j}\right\},\textup{ for all
}j\in\Lambda_{\scl(m,k)(Q)}.\end{equation}
Hence we obtain the inequality
\begin{multline}\label{equation:gotta-finish}\int_{\widehat{R}^s(Q)}\frac{\pds_{x_0}^2(X^s)}
{\diam(X)^{d(d+1)}}\frac{\fdi\mu^{M_n}\left(\overline{X}\right)}{f_k^n\left(\overline{X}\right)}\leq\\\sum_{m\geq
m(Q)}\sum_{j\in\,\Lambda_{\scl(m,k)}(Q)}
\int_{\widehat{P}_{\scl(m,k),j}}\frac{\pds_{x_0}^2(X^s)}
{\diam(X)^{d(d+1)}}\frac{\fdi\mu^{M_n}\left(\overline{X}\right)}{f_k^n\left(\overline{X}\right)}.\end{multline}

Now we fix $m\geq m(Q)$ and $j\in\,\Lambda_{\scl(m,k)}(Q)$ and concentrate on the individual terms of the RHS of equation~\eqref{equation:gotta-finish}.
Obtaining the ``proper'' bounds here will clearly imply the proposition.

We fix an arbitrary $d$-plane $L$. If
$\overline{X}\in\widehat{P}_{\scl(m,k),j}$, then equations~\eqref{equation:again-s-hat-m} and~\eqref{equation:max-s-where} imply
$$\diam(X^s)\geq\alpha_0^{m+k+1}=\frac{\alpha_0^4}{8}\cdot\diam(B_{\scl(m,k),j}).$$Applying this
and Proposition~\ref{proposition:psin-bound-deviations} we get that
\begin{multline}\label{equation:major-thing-1}\int_{\widehat{P}_{\scl(m,k),j}}\frac{\pds_{x_0}^2(X^s)}
{\diam(X)^{d(d+1)}}\frac{\fdi\mu^{M_n}\left(\overline{X}\right)}{f_k^n\left(\overline{X}\right)}\leq\\\frac{2^7\cdot(d+1)^2\cdot(d+2)^2}
{\alpha_0^{14}}\,\int_{\widehat{P}_{\scl(m,k),j}}\frac{D_2^2\left(X^s,L\right)}
{\diam^2(B_{\scl(m,k),j})}\frac{\fdi\mu^{M_n}\left(\overline{X}\right)}{f_k^n\left(\overline{X}\right)\cdot\diam(X)^{d(d+1)}}.\end{multline}
Hence we focus
on the individual terms of
$\displaystyle\frac{D^2_2\left(X^s,L\right)}{\diam\left(B_{\scl(m,k),j}\right)}.$

We arbitrarily fix
$0\leq t\leq d+1$ and note the cases for the possible values of
$\left(X^s\right)_t\,$.  Per
Lemma~\ref{lemma:poor-scaled-property-size} and the construction of the elements $X^s$ we have the following
cases.

\vskip .2cm \noindent Case 1: $t\in\{0,1,d+1\}$.  In this case we note that by our construction $(X^s)_0 = x_0,$ $(X^s)_1=x_{i_s}$ for $1\leq i_s\leq n$, and
 $(X^s)_{d+1}=x_i$, where $n+1 \leq i\leq
d+1$.
\vskip .3cm \noindent Case 2:  $2\leq t\leq d$.  In this case, again by the construction we have that $(X^s)_t = z_\ell$, for exactly $d-1$ distinct indices $\ell$, $1\leq
\ell\leq 2^{n-2}+\left\lceil\frac{s}{2}\right\rceil-1.$

Assume Case~1. Per Fubini's, the
corresponding terms on the RHS of
equation~(\ref{equation:major-thing-1}) are
\begin{multline}\label{equation:basic-first-3}\int_{\widehat{P}_{\scl(m,k),j}}\frac{\dist^2\left(\left(X^s\right)_t,L\right)}
{\diam^2\left(B_{\scl(m,k),j}\right)}\frac{\fdi\mu^{M_n}\left(\overline{X}\right)}
{f_k^n\left(\overline{X}\right)\cdot\diam(X)^{d(d+1)}}=\\\int_{T_0\left(\widehat{P}_{\scl(m,k),j}\right)}\frac{\dist^2\left(\left(X^s\right)_t,L\right)}
{\diam^2\left(B_{\scl(m,k),j}\right)}\frac{\fdi\mu^{d+2}(X)}{\diam(X)^{d(d+1)}}.\end{multline}Equation~(\ref{equation:max-s-where})
and Lemma~\ref{lemma:poor-scaled-property-size} imply the set
inclusion\begin{equation}\label{equation:set-inclusion-doozy}
T_0\left(\widehat{P}_{\scl(m,k),j}\right) \subseteq
\bigcup_{x_0\in\, P_{\scl(m,k),j}}
\{x_0\}\times\prod_{i=1}^{d+1}B(x_0,\alpha_0^{p_i}),\end{equation}
where $$p_i=\begin{cases}m+k-3,&\textup{ if } i=i_s, (\textup{See
Case $1$ above});
\\m+k,&\textup{ if }n+1\leq i\leq d+1;\\
m,&\textup{ otherwise.}\end{cases}$$
Equation~\eqref{equation:set-inclusion-doozy} then yields the
following bound on the RHS of
equation~(\ref{equation:basic-first-3})
\begin{multline}\label{equation:basic-second-3}\int_{T_0\left(\widehat{P}_{\scl(m,k),j}\right)}\frac{\dist^2\left(\left(X^s\right)_t,L\right)}
{\diam^2\left(B_{}\right)}\frac{\fdi\mu^{d+2}(X)}{\diam(X)^{d(d+1)}}\leq\\\frac{1}{\alpha_0^{d(d+1)}}\,\int_{P_{\scl(m,k),j}}
\int_{\prod_{i=1}^{d+1}B(x_0,\alpha_0^{p_i})}
\frac{\dist^2\left(\left(X^s\right)_t,L\right)}
{\diam^2\left(B_{\scl(m,k),j}\right)}\frac{\fdi\mu(x_{d+1})\cdots\fdi\mu(x_0)}{[\alpha_0^m]^{
d(d+1)}}\,.\end{multline}Applying the usual calculations (see the
proofs of Propositions~\ref{proposition:journe}
and~\ref{proposition:poorly-scaled-integration-d}) to the RHS of
equation~(\ref{equation:basic-second-3}), we obtain the following
bound on the RHS of equation~(\ref{equation:basic-first-3})
\begin{equation}\label{equation:easy-maximal}\frac{3^d\cdot
C_{\mu}^{d+1}\cdot\alpha_0^{k\cdot d\cdot
(d-n+2)}}{\alpha_0^{d(d+1)+3\cdot
d}}\cdot\beta^2_2\left(B_{\scl(m,k),j},L\right)\cdot
\mu\left(B_{\scl(m,k),j}\right).\end{equation}

For Case~2,  we iterate the integral to obtain the equality
%
\begin{multline}\label{equation:pour-it}\int_{\widehat{P}_{\scl(m,k),j}}
\frac{\dist^2(z_\ell,L)}{\diam^2(B_{\scl(m,k),j})}
\frac{\fdi\mu^{M_n}\left(\overline{X}\right)}{f_k^n\left(\overline{X}\right)}=\\
\int_{
T_0\left(\widehat{P}_{\scl(m,k),j}\right)}\left(\int_{\left\{Z_X:X\times
Z_X\in\widehat{P}_{\scl(m,k),j}\right\}}\frac{\dist^2(z_\ell,L)}
{\diam^2(B_{\scl(m,k),j})}\frac{\fdi\mu^{N_n}(Z_X)}{f_k^n\left(\overline{X}\right)}\right)\frac{\fdi\mu^{d+2}(X)}{\diam(X)^{d(d+1)}}.\end{multline}

To
uniformly bound for the inner integral, it is
sufficient to uniformly bound
$$\int_{\pi_\ell\left(T^{-1}_{\ell-1}\left(x_0,\ldots,z_{\ell-1}\right)\right)}\left(\frac{\dist\left(z_{\ell},L\right)}
{\diam\left(B_{\scl(m,q),j}\right)}\right)^2\frac{\fdi\mu(z_\ell)}
{\mu\left(\pi_\ell\left(T^{-1}_{\ell-1}\left(x_0,\ldots,z_{\ell-1}\right)\right)\right)}.$$
Proposition~\ref{proposition:multi-scale-bad-case-tilde},
equation~(\ref{equation:again-s-hat-m}), and the $d$-regularity of $\mu$ imply
\begin{equation}\label{equation:lower-blond}\mu\left(\pi_{\ell}\left(T_{\ell-1}^{-1}\left(x_0,\ldots,z_{\ell-1}\right)\right)\right)
\geq\frac{1}{2\cdot
C_{\mu}^2}\cdot\left(\frac{\alpha_0^4}{4}\right)^d\cdot\mu\left(B_{\scl(m,k),j}\right).\end{equation}
Furthermore,
$\pi_\ell\left(T^{-1}_{\ell-1}\left(x_0,\ldots,z_{\ell-1}\right)\right)\subseteq
B_{\scl(m,k),j}$, and thus
\begin{multline}\int_{\pi_{\ell}\left(T_{\ell-1}^{-1}\left(x_0,\ldots,z_{\ell-1}\right)\right)}
\frac{\dist^2\left(z_{\ell},L\right)}{\diam^2(B_{\scl(m,k),j})}
\frac{\fdi\mu(z_{\ell_i})}{\mu\left(\pi_{\ell}\left(T_{\ell-1}^{-1}(x_0,\ldots,z_{\ell-1})\right)\right)}\leq\\\frac{2\cdot4^d\cdot
C_{\mu}^2}{\alpha_0^{4\cdot
d}}\cdot\beta_2^2(B_{\scl(m,k),j},L).\end{multline}
Therefore the inner integral on the RHS of
equation~(\ref{equation:pour-it}) is bounded by
\begin{equation}\int_{\left\{Z_X:X\times
Z_X\in\widehat{P}_{\scl(m,k),j}\right\}}\frac{\dist^2\left(z_{\ell_i},L\right)}
{\diam^2(B_{\scl(m,k),j})}\frac{\fdi\mu^{N_n}(Z_X)}{f_k^n\left(\overline{X}\right)}\leq\frac{2\cdot4^d\cdot
C_{\mu}^2}{\alpha_0^{4\cdot
d}}\cdot\beta_2^2(B_{\scl(m,k),j}).\end{equation}Applying equation~\eqref{equation:set-inclusion-doozy} gives the following upper bound for the RHS of
equation~(\ref{equation:pour-it}):
\begin{equation}\label{equation:deadly-do-wrong}\frac{2\cdot4^d\cdot
C_{\mu}^{d+3}}{\alpha_0^{d(d+1)+6\cdot d}}\cdot\alpha_0^{k\cdot
d\cdot(d-n+2)}\cdot\beta_2^2(B_{\scl(m,k),j},L)\cdot\mu\left(B_{\scl(m,k),j}\right).\end{equation}
Applying equations~(\ref{equation:easy-maximal})
and~(\ref{equation:deadly-do-wrong}) to the RHS of
equation~(\ref{equation:major-thing-1}) and then taking the infimum
over all $d$-planes $L$ establishes  the proposition.

\subsection{Proof of Proposition~\ref{proposition:under-over-dead}}\label{app:over-under}

Fixing $2\leq k'\leq k-1$, $1\leq q\leq
k'\cdot d+1$, and $m\geq m(Q)$, we define
\begin{equation}\underline{R_{k'}^s}(m)(Q)=\left\{\overline{X}\times Y_{X^s}\in
\underline{R_{k'}^s(Q)}:\max\nolimits_{x_0}(X)\in(\alpha_0^{m+1},\alpha_0^m]\right\}.\end{equation}This
gives the following decomposition of the integral on the LHS of
equation~\eqref{eq:wide-hat-dead}
\begin{multline}\label{equation:lappy}\int_{\underline{R_{k'}^s}(Q)}
\frac{\pds_{x_0}^2(X^s_q)}{\diam(X)^{d(d+1)}}
\frac{\fdi\mu^{M_n+k'\cdot d}(\overline{X}\times
Y_{X^s})}{f_k^n\left(\overline{X}\right)\cdot
f_{k'}^d\left(\underline{X^s}\right)}=\\\sum_{m\geq
m(Q)}\int_{\underline{R_{k'}^s}(m)(Q)}
\frac{\pds_{x_0}^2(X^s_q)}{\diam(X)^{d(d+1)}}
\frac{\fdi\mu^{M_n+k'\cdot d}(\overline{X}\times
Y_{X^s})}{f_k^n\left(\overline{X}\right)\cdot
f_{k'}^d\left(\underline{X^s}\right)}.\end{multline}

Fixing $m\geq m(Q)$, we partition $\underline{R_{k'}^s}(m)(Q)$
according to $\max\nolimits_{x_0}\left(X^s_q\right)$ in the following way.
By Lemma~\ref{lemma:poor-scaled-property-size}, for
$\overline{X}\in\underline{R_{k'}^s}$ we have that
\begin{equation}\label{equation:alcantar}\alpha_0^{m+k-k'+2}\leq\max\nolimits_{x_0}(X^s)
<\alpha_0^{m+k-k'-2}.\end{equation} If $k'=k-1$, then
the upper bound on the RHS of equation~(\ref{equation:alcantar}) is
too large since we  always have that
$\max\nolimits_{x_0}(X^s)\leq\alpha_0^m<\alpha_0^{m-1}$. So, we
amend equation~(\ref{equation:alcantar}) in the
following way. Let
\begin{equation}e(m,k')=\begin{cases}m+k-k'-2,&\textup{ if }2\leq
k'\leq k-2;\\\quad\quad m,&\textup{ if
}k'=k-1.\end{cases}\end{equation} We note that $e(m,k')\geq m$ and
\begin{equation}\label{equation:kessler}\alpha_0^{e(m,k')+4}\leq\max\nolimits_{x_0}(X^s)
\leq\alpha_0^{e(m,k')}.\end{equation}Now, combining
Lemma~\ref{lemma:index-function-no-tilde} and
equation~(\ref{equation:kessler}) we have the following estimate
\begin{equation}\label{equation:raposo}\max\nolimits_{x_0}\left(X^s_q\right)\in\begin{cases}
\displaystyle\left(\alpha_0^{k'+e(m,k')-\left\lceil\frac{q}{d}\right\rceil+5},\alpha_0^{k'+e(m,k')-\left\lceil\frac{q}{d}\right\rceil}\right],&
\textup{ if }1\leq q\leq k'\cdot
d;\\\displaystyle\left(\alpha_0^{e(m,k')+1},\alpha_0^{e(m,k')}\right],&
\textup{ if }q=k'\cdot d+1.\end{cases}\end{equation}
Hence we define the scale exponent as follows
\begin{equation}\label{equation:go-skate}\scl(m,k',q)=\scl(m,k,k',q)=
\begin{cases}\displaystyle k'+e(m,k')-\left\lceil\frac{q}{d}\right\rceil,
&\textup{ if }1\leq q\leq k'\cdot d;\\e(m,k'), &\textup{ if
}q=k'\cdot d+1.\end{cases}\end{equation}
We note that the scale exponent is independent of $s$, and
furthermore, we have the inequality
\begin{equation}\label{equation:life-saver}\scl(m,k',q)\,\geq\, e(m,k'),\textup{ for
all }1\leq q\leq k'\cdot d+1.\end{equation}

Next, $\left\{P_{\scl(m,k',q),j}\right\}_{j\in\Lambda_{\scl(m,k',q)}(Q)}$
covers $Q\cap\Supp$, and  so we cover
$\underline{R_{k'}^s}(m)(Q)$ by
\begin{equation}\label{equation:dale}\underline{P_{\scl(m,k',q),j}}=\left\{\overline{X}
\times Y_{X^s}\in\underline{R_{k'}^s}(m):x_0\in
P_{\scl(m,k',q),j}\right\}, \textup{ for
}j\in\Lambda_{\scl(m,k',q)(Q)}.\end{equation}Letting $m\geq m(Q)$
and $j$ vary we obtain the inequality
\begin{multline}\label{equation:chu}\int_{\underline{R_{k'}^s}} \frac{\pds_{x_0}^2(X^s_q)}{\diam(X)^{d(d+1)}}
\frac{\fdi\mu^{M_n+k'\cdot d}(\overline{X}\times
Y_{X^s})}{f_k^n\left(\overline{X}\right)\cdot
f_{k'}^d\left(\underline{X^s}\right)}\leq\\\sum_{m\geq
m(Q)}\sum_{j\in\Lambda_{\scl(m,k',q)}(Q)}\int_{\underline{P_{\scl(m,k',q),j}}}
\frac{\pds_{x_0}^2(X^s_q)}{\diam(X)^{d(d+1)}}
\frac{\fdi\mu^{M_n+k'\cdot d}(\overline{X}\times
Y_{X^s})}{f_k^n\left(\overline{X}\right)\cdot
f_{k'}^d\left(\underline{X^s}\right)}.\end{multline} Fixing $m\geq
m(Q)$ and $j\in\Lambda_{\scl(m,k',q)}(Q)$,  we now concentrate on
the terms of the RHS of equation~\eqref{equation:chu}.  We note that
the ``proper'' control on these terms implies the proposition.

Let $L$ be an arbitrary  $d$-plane. If $\overline{X}\times
Y_{X^s}\in\underline{P_{\scl(m,k',q),j}}$, then by
equations~(\ref{equation:raposo}) and~(\ref{equation:go-skate})
$$\diam\left({X^s}_q\right)\geq \alpha_0^{\scl(m,k',q)+5}=\frac{\alpha_0^5}{8}\cdot\diam\left(B_{\scl(m,k',q),j}\right).$$

Applying this and
Proposition~\ref{proposition:psin-bound-deviations} we obtain the
inequality
\begin{multline}\label{equation:chavez}\int_{\underline{P_{\scl(m,k',q),j}}}
\pds_{x_0}^2\left(X^s_q\right) \frac{\fdi\mu^{M_n+k'\cdot
d}(\overline{X}\times Y_{X^s})}{\diam(X)^{d(d+1)}\cdot
f_k^n\left(\overline{X}\right)\cdot
f_{k'}^d\left(\underline{X^s}\right)}\\\leq\frac{2^7
\cdot(d+1)^2\cdot(d+2)^2}{\alpha_0^{16}}\int_{\underline{P_{\scl(m,k',q),j}}}
\frac{D^2_2\left(X^s_q,L\right)}
{\diam^2\left(B_{\scl(m,k',q),j}\right)} \frac{\fdi\mu^{M_n+k'\cdot
d}(\overline{X}\times Y_{X^s})}{\diam(X)^{d(d+1)}\cdot
f_k^n\left(\overline{X}\right)\cdot
f_{k'}^d\left(\underline{X^s}\right)}.\end{multline}

To bound the RHS of equation~(\ref{equation:chavez}), we focus on
the individual terms of
$\displaystyle\frac{D^2_2\left(X^s_q,L\right)}{\diam^2\left(B_{\scl(m,k',q)}\right)}.$
Fixing $0\leq t\leq d+1$, per
equations~\eqref{equation:X-tilde}-\eqref{equation:def-X-q} and Lemma~\ref{lemma:poor-scaled-property-size},
we have the following
cases:

\vskip .2cm \noindent Case 1: $\left(X^s_q\right)_t = x_0$. In this
case $q$ has no restriction, that is, $1\leq q\leq k'\cdot d$.
\vskip .3cm \noindent Case 2: $\left(X^s_q\right)_t = x_{i_s}$. In this case $q=k'\cdot d+1$ because $x_{i_s}$ is the handle of $X^s$, and
only the last element of the sequence has this handle. Hence
$\scl(m,k',q)=k'+e(m,k')$ by
equation~\eqref{equation:go-skate}.
\vskip .3cm \noindent Case 3: $\left(X^s_q\right)_t = x_i$, for
$n+1\leq i\leq d+1$. In this case $1\leq q\leq d$ since only the first $d$ elements of the sequence contain the tines  $X^s$.
\vskip .3cm \noindent Case 4: $\left(X^s_q\right)_t = z_\ell$, where
$1\leq \ell\leq 2^{n-2}+\left\lceil\frac{s}{2}\right\rceil-1.$ We again have $1\leq q\leq d$ as in case 3.
\vskip .3cm \noindent Case 5: $\left(X^s_q\right)_t=y_{i}$, where
$1\leq i \leq k'\cdot d.$ In this case, for each $1\leq q\leq
k'\cdot d+1$, we have the following restriction on the quantity
$\left\lceil\frac{i}{d}\right\rceil,$ just as in
equation~(\ref{equation:restrict_q_l}):\begin{equation}\label{equation:coldy-feet}
\max\left\{1,\,\left\lceil\frac{q}{d}\right\rceil-1\right\}
\leq\left\lceil\frac{i}{d}\right\rceil\leq\left\lceil\frac{q}{d}\right\rceil.\end{equation}

For the first three cases, per Fubini's the corresponding terms of
equation~(\ref{equation:chavez}) reduce
to\begin{equation}\label{equation:loonatic}\int_{T_0\left(\underline{P_{\scl(m,k',q),j}}\right)}\frac{\dist^2\left(\left(X^s_q\right)_t,L\right)}
{\diam^2\left(B_{\scl(m,k',q),j}\right)}\frac{\fdi\mu^{d+2}(X)}{\diam(X)^{d(d+1)}}.\end{equation}
For the set
$\underline{P_{\scl(m,k',q),j}},$
equation~(\ref{equation:alcantar}) and
Lemma~\ref{lemma:poor-scaled-property-size} imply the
set inclusion
\begin{equation}\label{equation:agro-inclusion}T_0\left(\underline{P_{\scl(m,k',q),j}}\right)\subseteq\bigcup_{x_0\in
P_{\scl(m,k',q),j}}
\{x_0\}\times\prod_{i=1}^{d+2}B(x_0,\alpha_0^{p_i}),\end{equation}where\begin{equation}p_i=
\begin{cases}\quad m+k,&\textup{ if }n+1\leq i\leq d+1;\\e(m,k'),&\textup{ if }i=i_s;\\\quad\quad m,&\textup{ otherwise}.\end{cases}\end{equation}
Via the usual computations and noting the values
of $\scl(m,k',q)$ and $e(m,k')$,  the RHS of
equation~(\ref{equation:loonatic}) has the bound
\begin{equation}\label{equation:maximal-honest}\frac{3^d\cdot C_{\mu}^{d+1}}{\alpha_0^{d(d+1)+3\cdot
d}}\cdot\alpha_0^{k\cdot d\cdot
(d-n+1)}\cdot\beta_2^2\left(B_{\scl(m,k',q),j},L\right)
\cdot\mu\left(B_{\scl(m,k',q),j}\right).\end{equation}

Assume Case~4. Fix $1\leq \ell\leq
2^{n-2}+\left\lceil\frac{s}{2}\right\rceil-1$ and iterate the
integral to obtain
\begin{multline}\label{equation:rutger}\int_{\underline{P_{\scl(m,k',q),j}}}\left(\frac{\dist\left(z_{\ell},L\right)}
{\diam\left(B_{\scl(m,k',q),j}\right)}\right)^2\frac{\fdi\mu^{M_n+k'\cdot
d}\left(\overline{X}\times Y_{X^s}\right)}{\diam(X)^{d(d+1)}\cdot
f_k^n\left(\overline{X}\right)\cdot
f_{k'}^d\left(\underline{X^s}\right)}=\\\int_{T_0\left(\underline{P_{\scl(m,k',q),j}}\right)}\left(\int_{\left\{Z_X:X\times
Z_X\in R_{k'}^s\right\}}\left(\frac{\dist\left(z_{\ell},L\right)}
{\diam\left(B_{\scl(m,k',q),j}\right)}\right)^2\frac{\fdi\mu^{N_n}\left(Z_X\right)}
{f_k^n(\overline{X})}\right)\frac{\fdi\mu^{d+2}(X)}{\diam(X)^{d(d+1)}}.\end{multline}
Again we want to control the inner integral, and this clearly reduces to  controlling  the quantity
\begin{equation}\label{equation:llorar}\int_{\pi_{\ell}\left(T^{-1}_{\ell-1}(x_0,\ldots,z_{\ell-1})\right)}
\left(\frac{\dist\left(z_{\ell},L\right)}
{\diam\left(B_{\scl(m,k',q),j}\right)}\right)^2\frac{\fdi\mu(z_{\ell})}
{\mu\left(\pi_\ell\left(T^{-1}_{\ell-1}(x_0,\ldots,z_{\ell-1})\right)\right)}.
\end{equation}
To do this, we use the
definitions of  $\scl(m,k',q)$ and $e(m,k')$ obtaining
$$m+k-3\leq\scl(m,k',q)\leq m+k-2.$$Hence,
Proposition~\ref{proposition:multi-scale-bad-case-tilde} and the
$d$-regularity of $\mu$ imply the following  for all
$1\leq \ell\leq2^{n-2}+\left\lceil\frac{s}{2}\right\rceil-1$:
\be\label{equation:futbal}\mu\left(\pi_{\ell}\left(T^{-1}_{\ell-1}(x_0,\ldots,z_{\ell-1})\right)\right)
\geq
\frac{\alpha_0^{4\cdot d}}{2\cdot4^d\cdot
C_{\mu}^2}\cdot\mu\left(B_{\scl(m,k',q),j}\right).\ee
Moreover, for fixed $(x_0,\ldots,z_{\ell-1})\in
T_{\ell-1}\left(\underline{P_{\scl(m,k',q),j}}\right),$ we have that
\begin{equation}\label{equation:football}\pi_{\ell}\left(T^{-1}_{\ell-1}(x_0,\ldots,z_{\ell-1})\right)\subseteq
B\left(x_0,\alpha_0^k\cdot\max\nolimits_{x_0}(X)\right)\subseteq
\frac{3}{4}\cdot B_{\scl(m,k',q),j}.\end{equation}
Applying equations~(\ref{equation:futbal})
and~(\ref{equation:football}) to equation~\eqref{equation:llorar}, we see that equation~\eqref{equation:llorar}, and
hence the inner integral of equation~\eqref{equation:rutger} is bounded (uniformly in $X\in T_0\left(\underline{P_{\scl(m,k',q),j}}\right)$) by
\begin{equation}\label{eq:nanana}\frac{2\cdot4^d\cdot
C_{\mu}^2}{\alpha_0^{4\cdot
d}}\cdot\beta_2^2\left(B_{\scl(m,k',q),j},L\right).\end{equation}
Applying equations~\eqref{eq:nanana},~\eqref{equation:agro-inclusion}, and the usual computations to the RHS of
equation~(\ref{equation:rutger}) gives
\begin{multline}\label{equation:john-smith}\int_{\underline{P_{\scl(m,k',q),j}}}\left(\frac{\dist\left(z_{\ell},L\right)}
{\diam\left(B_{\scl(m,k',q),j}\right)}\right)^2\frac{\fdi\mu^{M_n}\left(\overline{X}\times
Y_{X^s}\right)}{\diam(X)^{d(d+1)}\cdot
f_k^n\left(\overline{X}\right)\cdot
f_{k'}^d\left(\underline{X^s}\right)}\leq\\\frac{2\cdot4^d\cdot
C_{\mu}^{d+3}}{\alpha_0^{d(d+1)+4\cdot d}}\cdot\alpha_0^{k\cdot
d\cdot
(d-n+1)}\cdot\beta_2^2\left(B_{\scl(m,k',q),j},L\right)\cdot\mu\left(B_{\scl(m,k',q),j}\right).\end{multline}

At last we consider Case~5. Here we must be a little bit careful in how we use notation, in the sense that we must remember the pertinent ``variables''.  In this case we make the following harmless abuse of notation for the truncation $T_0$, taking $$T_0(\underline{X^s})=T_0(\overline{X}\times Y_{x^s})=\overline{X},$$
instead of $T_0(\underline{X^s})=X^s$ as we originally defined the notion in Section~\ref{subsubsection:trunc-proj-1}.

Then, via the usual computations, for  $1\leq i\leq k'\cdot d$ and
$\overline{X}\in
T_0\left(\underline{P_{\scl(m,k',q),j}}\right)$ we have
\begin{multline}\label{equation:slowly-dying}\int_{\left\{Y_{X^s}:\overline{X}\times
Y_{X^s}\in\underline{P_{\scl(m,k',q),j}}\right\}}\left(\frac{\dist\left(y_{i},L\right)}
{\diam\left(B_{\scl(m,k',q),j}\right)}\right)^2\frac{\fdi\mu^{k'\cdot
d}\left(Y_{X^s}\right)}{f_{k'}^d\left(\underline{X^s}\right)}\leq\\\frac{2\cdot
4^d\cdot C_{\mu}^2}{\alpha_0^{6\cdot d}}\cdot
\beta_2^2\left(B_{\scl(m,k',q),j},L\right).\end{multline} Hence, iterating the integral over
$\underline{P_{\scl(m,k',q),j}}$ gives the inequality
\begin{multline}\label{equation:arnold}\int_{\underline{P_{\scl(m,k',q),j}}}\left(\frac{\dist\left(y_{i},L\right)}
{\diam\left(B_{\scl(m,k',q),j}\right)}\right)^2\frac{\fdi\mu^{M_n}\left(\overline{X}\times
Y_{X^s}\right)}{\diam(X)^{d(d+1)}\cdot
f_k^n\left(\overline{X}\right)\cdot
f_{k'}^d\left(\underline{X^s}\right)}\leq\\\frac{2\cdot 4^d\cdot
C_{\mu}^{d+3}}{\alpha_0^{d(d+1)+6\cdot d}}\cdot\alpha_0^{k\cdot
d\cdot(d-
n+1)}\cdot\beta_2^2\left(B_{\scl(m,k',q),j},L\right)\cdot\mu\left(B_{\scl(m,k',q),j}\right).\end{multline}

At long last, applying
equations~(\ref{equation:maximal-honest}),~(\ref{equation:john-smith}),
and~(\ref{equation:arnold}) to the RHS of
equation~(\ref{equation:chavez}) and taking the infimum over
all $d$-planes $L$ proves the desired proposition.


\section*{Acknowledgement}
Thanks to
Immo Hahlomaa
for careful reading of an earlier version of this manuscript and his
helpful and detailed comments, and Peter Jones as well as Pertti
Mattila for various constructive suggestions and advice. This work
has been supported by NSF grant \#0612608.

\bibliographystyle{plain}
\bibliography{menger-less-betas_revista}

\end{document}